\newcommand{\E}{\mathbb{E}}
\newcommand{\R}{\mathbb{R}}
\newcommand{\D}{\mathbb{D}}
\newcommand{\1}{\mathbf{1}}
\newcommand{\nt}{ \lfloor nt \rfloor}
\newtheorem{theorem}{Theorem}[section]
\newtheorem{proposition}[theorem]{Proposition}
\newtheorem{lemma}[theorem]{Lemma}
\newtheorem{definition}[theorem]{Definition}
\begin{document}
\numberwithin{equation}{section}
\title{H\"{o}lder continuity of the solutions to a class of SPDEs arising from multidimensional superprocesses in random environment
}

\author[1]{Yaozhong Hu \footnote{Supported by an NSERC discovery  grant.\  Email: yaozhong@ualberta.ca}}

\author[2]{David Nualart \footnote{Supported by the NSF grant DMS 1811181. \  Email: nualart@ku.edu}}
\author[2]{Panqiu Xia \footnote{Email: pqxia@ku.edu}}

\affil[1]{\small Department of Mathematical and Statistical Science, University of Alberta at Edmonton, Edmonton, Canada, T6G 2G1.}

\affil[2]{Department of Mathematics, University of Kansas, Lawrence, KS, 66044, USA.}
\date{}
\maketitle

\begin{abstract}	We consider a $d$-dimensional branching particle system in a random environment. Suppose that the initial measures converge weakly to a measure with bounded density. Under the Mytnik-Sturm branching mechanism, we prove that the corresponding empirical measure $X_t^n$  converges weakly in the Skorohod space $D([0,T];M_F(\R^d))$ 
 and the limit has a density $u_t(x)$, where $M_F(\R^d)$ is the space of finite measures on $\R^d$. We also derive a stochastic partial differential equation    $u_t(x)$ satisfies. By using the techniques of Malliavin calculus, we prove that $u_t(x)$ is jointly H\"{o}lder continuous in time with exponent $\frac{1}{2}-\epsilon$ and in space with exponent $1-\epsilon$ for any $\epsilon>0$.  
\end{abstract}
\smallskip 
\noindent \textbf{Keywords.} Superprocesses, random environment,  stochastic partial differential equations, 
 Malliavin calculus,  H\"{o}lder continuity.

\section{Introduction}
Consider a $d$-dimensional branching particle system in a random environment. For any integer $n\geq 1$, the branching events happen at time $\frac{k}{n}$, $k=1,2,\dots$. The dynamics of each particle, labelled by a multi-index $\alpha$, is described by the stochastic differential equation (SDE):
\begin{align}\label{pmer}
dx^{\alpha,n}_t=dB^{\alpha}_t+\int_{\mathbb{R}^d} h(y-x^{\alpha,n}_t)W(dt,dy),
\end{align}
where $h$ is a $d\times d$ matrix-valued function on $\mathbb{R}^d$, whose entries $h^{ij} \in L^2 (\mathbb{R}^d)$, $B^{\alpha}$ are $d$-dimensional independent Brownian motions, and $W$ is a $d$-dimensional space-time white Gaussian random field on $\R_+\times \R^d$ independent of the family $\{B^{\alpha}\}$. The random field $W$ can be regarded as the random environment for the particle system.

At any   branching time each particle dies and it randomly generates offspring. The new particles are born at the death position of their parents, and inherit the  branching-dynamics mechanism.
The branching mechanism we use in this paper follows the one 
 introduced by Mytnik \cite{ap-96-mytnik}, and studied further  by Sturm \cite{ejp-03-sturm}. Let $X^n=\{X^n_t, t\geq 0\}$ denote the empirical measure of the particle system.   One of the main results  of this work is to
  prove that the empirical measure-valued processes converge weakly to a process $X=\{X_t, t\geq 0\}$, such that for almost every $t\geq 0$, $X_t$ has a density $u_t(x)$ almost surely. By using the techniques of Malliavin calculus, we also establish the almost surely joint H\"{o}lder continuity of $u$ with exponent $\frac{1}{2}-\epsilon$ in time and $1-\epsilon$ in space for any $\epsilon>0$.  

To compare our results with the classical ones.
Let us recall   briefly some existing work 
in the literature.    The one-dimensional model was initially introduced and 
studied by Wang (\cite{ptrf-97-wang, saa-98-wang}). In these papers, he proved that under the classical Dawson-Watanabe branching mechanism, the empirical measure $X^n$ converges weakly to a   process $X=\{X_t, t\geq 0\}$, which  is the unique solution to a martingale problem. 

For  the above one dimensional model Dawson et al. \cite{aihpps-00-dawson-vaillancourt-wang} proved that for almost every $t>0$, the limit measure-value process $X$ has a density $u_t(x)$ a.s. 
and  $u$ is the weak solution to the following  stochastic partial differential equation (SPDE):
\begin{align}\label{ldf1}
u_t(x)=&\mu(x)+\int_0^t \frac{1}{2}(1+\|h\|_2^2)\Delta u_s(x)ds-\int_0^t\int_{\mathbb{R}}\nabla_x[h(y-x)u_s(x)]W(ds,dy)\nonumber\\
&+\int_0^t\sqrt{u_s(x)}\frac{V(ds,dx)}{dx},
\end{align}
where $\|h\|_2$ is the $L^2$-norm of $h$, and $V$ is a space-time white Gaussian random field on $\mathbb{R}_+\times \mathbb{R}$ independent of $W$.

Suppose further that $h$ is in the Sobolev space $H_2^2(\mathbb{R})$ and the initial measure has a density $\mu\in H_2^1(\R)$. Then Li et al. \cite{ptrf-12-li-wang-xiong-zhou} proved  that $u_t(x)$ is almost surely jointly H\"{o}lder continuous. By using the techniques of Malliavin calculus, Hu et al. \cite{ptrf-13-hu-lu-nualart} improved their result to obtain the  sharp H\"older continuity: they proved that the H\"{o}lder exponents are $\frac{1}{4}-\epsilon$ in time and $\frac{1}{2}-\epsilon$ in space, for any $\epsilon>0$.

Our paper is concerned with higher dimension ($d>1$). 
However in this case, the super Brownian motion (a special case when $h=0$) does not have a density (see e.g.  Corollary 2.4 of Dawson and Hochberg \cite{ap-79-dawson-hochberg}). Thus 
in higher dimensional case  we   have to abandon the   
classical Dawson-Watanabe branching mechanism  and adopt 
the Mytnik-Sturm one.  As a consequence, the difficult  term 
$\sqrt{u_s(x)}$ in the   SPDE \eqref{ldf1}   becomes 
$ u_s(x)$ (see equation \eqref{dqmvp}  in Section \ref{s.3} for
the exact form of the equation).  In Section \ref{s.2} 
 we shall briefly 
describe the branching mechanism used in this paper.  

 
In  Section  \ref{s.3} we   state the main results obtained in this paper.  These include three theorems. The first one 
(Theorem \ref{unique}) is about the existence and uniqueness of a (linear) stochastic partial differential equation (equation \eqref{dqmvp}), which is proved (Theorem \ref{tmpd})
to be satisfied by the density of the limiting  empirical measure
process $X^n$  of the particle system (see \eqref{napem}).
The core result of this paper is Theorem \ref{tjhc} which 
intends to give sharp   H\"older continuity of the solution $u_t(x)$ 
to \eqref{dqmvp}.  

Section 4 presents the proofs for Theorems \ref{unique} and \ref{tmpd}.   
The proof of Theorem  \ref{tjhc} is the objective of  the remaining sections. 
First, 
in Section 5, we focus on the one-particle motion with no branching. By using the techniques from  Malliavin calculus, we obtain a Gaussian type estimates for the transition probability density of the particle motion conditional on $W$.  This estimate plays a crucial role in the proof of Theorem  \ref{tjhc}. 
In Section 6, we derive a conditional convolution representation of the weak solution to the SPDE (\ref{dqmvp}), which is used to establish the H\"{o}lder continuity.
In Section 7, we show that the solution $u$
to \eqref{dqmvp} is H\"{o}lder continuous.

\section{Branching particle system}\label{s.2}
In this section, we briefly construct the branching particle system. 
For further study  of this branching mechanism, we refer the readers to  Mytnik's  and Sturm's  papers (see \cite{ap-96-mytnik, ejp-03-sturm}).

We start this section by introducing some notation. For any integer $k\geq 0$, we denote by $C_b^k(\R^d)$ the space of $k$ times continuously differentiable functions on $\R^d$ which are bounded together with their derivatives up to the order $k$. Also, $H_2^k(\R^d)$ is the Sobolev space of square integrable functions on $\R^d$ which have square integrable derivatives up to the order $k$. For any differentiable function $\phi$ on $\R^d$, we make use of the notation $\partial_{i_1\cdots i_m} \phi(x)=\frac{\partial^m}{\partial x_{i_1}   \cdots \partial x_{i_m}}\phi(x)$. 

We write $M_F(\mathbb{R}^d)$ for the space of finite measures on $\mathbb{R}^d$. We denote by  \break $D([0,T], M_F(\mathbb{R}^d))$ the Skorohod space of c\`{a}dl\`{a}g functions on time interval $[0,T]$, taking values in $M_F(\mathbb{R}^d)$, and equipped with the weak topology. For any $\phi\in C_b(\R^d)$ and $\mu\in M_F(\R^d)$, we write 
\begin{align}\label{itg}
\langle \mu, \phi \rangle=\mu(\phi):=\int_{\R^d}\phi(x)\mu(dx).
\end{align}

Let $\mathcal{I}:=\{\alpha=(\alpha_0,\alpha_1,\dots,\alpha_N), \alpha_0\in\{1,2,3\dots\}, \alpha_i\in\{1,2\},\ \textrm{for}\  1\leq i\leq N\}$ be a set of multi-indexes. In our model $\mathcal{I}$ is the index set of all possible particles. In other words, initially there are a finite number of particles and each particle generates at most $2$ offspring. For any particle $\alpha=(\alpha_0,\alpha_1,\dots, \alpha_N)\in\mathcal{I}$, let  $\alpha -1=(\alpha_0\dots, \alpha_{N-1}), \alpha -2=(\alpha_0,\dots, \alpha_{N-2}), \dots, \alpha -N=(\alpha_0)$ be the ancestors of $\alpha$. Then, $|\alpha|=N$ is the number of the ancestors of the particle $\alpha$. It is easy to see that the ancestors of any particle $\alpha$ are uniquely determined.

Fix a time interval $[0,T]$. Let $(\Omega, \mathcal{F}, P)$ be a complete probability space, on which $\{B^{\alpha}_t, t\in[0,T]\}_{\alpha\in \mathcal{I}}$ are independent $d$-dimensional standard Brownian motions, and $W$ is a $d$-dimensional space-time white Gaussian random field on $[0,T]\times \mathbb{R}^d$ independent of the family $\{B^{\alpha}\}$.

 Let $x_t=x(x_0,B^{\alpha},r,t)$, $t\in[0,T]$, be the unique solution to the following SDE:
\begin{align}\label{pmern}
x_t=x_0+B^{\alpha}_t-B^{\alpha}_r+\int_r^t\int_{\mathbb{R}^d} h(y-x_s)W(ds,dy),
\end{align}
where $x_0\in\mathbb{R}^d$, and $h$ is a $d\times d$ matrix-valued function, with entries $h^{ij}\in H_2^3(\mathbb{R}^d)$. We denote by $H^3_2(\R^d;\R^d\otimes \R^d)$ the space of such functions $h$, and equip it with the Sobolev norm:
\begin{align*}
\|h\|_{3,2}^2:=\sum_{i,j=1}^d\|h^{ij}\|_{3,2}^2.
\end{align*}
Let $\rho:\R^d\to\R^d\otimes\R^d$ be given by
\begin{align}\label{rho}
\rho(x)=\int_{\mathbb{R}^d}h(z-x)h^*(z)dz.
\end{align}
Then, for any $1\leq i,j\leq d$, and $x\in\R^d$, by Cauchy-Schwarz's inequality, we have 
\begin{align*}
|\rho^{ij}(x)|\leq \sum_{k=1}^d\|h^{ik}\|_2\|h^{kj}\|_2.
\end{align*}
We denote by $\|\cdot\|_2$ the Hilbert Schmidt norm for matrices. Then, by Cauchy-Schwarz's inequality again, we have
\begin{align*}
\|\rho\|_{\infty}:=&\sup_{x\in \R^d}\|\rho(x)\|_2=\sup_{x\in\R^d}\Big(\sum_{i,j=1}^d|\rho^{ij}(x)|^2\Big)^{\frac{1}{2}}\\
\leq &\Big(\sum_{i,j=1}^d\Big|\sum_{k=1}^d\|h^{ik}\|_2\|h^{kj}\|_2\Big|^2\Big)^{\frac{1}{2}}\\
\leq& \Big(\sum_{i,k=1}^d\|h^{ik}\|_2^2\sum_{j,k=1}^d\|h^{kj}\|_2^2\Big)^{\frac{1}{2}}\leq \|h\|_2^2\le  \|h\|_{3,2}^2.
\end{align*}
Denote by $A$ the infinitesimal generator of $x_t$. That is, $A$ is a differential operator on $C_b^2(\mathbb{R}^d)$, with values in $C_b(\R^d)$, given by
\begin{align}\label{fpgt}
A\phi(x)=\frac{1}{2}\sum_{i,j=1}^d \left(\rho^{ij}(0)\partial_{ij}\phi(x)\right)+\frac{1}{2}\Delta \phi(x),\ x\in \R^d.
\end{align}

For any $t\in[0,T]$, let $t_n=\frac{\nt}{n}$ be the last branching time before $t$. For any $\alpha=(\alpha_0,\alpha_1,\dots, \alpha_N)$, if $nt_n=\nt\leq N$, let $\alpha_t=(\alpha_0,\dots, \alpha_{\nt})$ be the ancestor of $\alpha$ at time $t$. Suppose that each particle, which starts from the death place of its parent, moves in $\R^d$ following the motion described by the SDE (\ref{pmern}) during its lifetime. Then, the path of any particle $\alpha$ and all its ancestors, denoted by $x^{\alpha,n}_t$, is given by
  \begin{equation*}
x_t^{\alpha,n}=x_t^{\alpha_t,n}=\begin{cases}
x\left(x^n_{\alpha_0}, B^{(\alpha_0)}, 0, t\right),\quad &0\leq t<\frac{1}{n},\\
 x\big(x^{\alpha_t -1, n}_{t_n^-}, B^{\alpha_t}, t_n, t\big),\quad &\frac{1}{n}\leq t< \frac{N+1}{n},\\
 \partial, & \mathrm{otherwise}.
 \end{cases}
 \end{equation*}
Here $x_{\alpha_0}^n\in\R^d$ is the initial position of particle $(\alpha_0)$, $x^{\alpha_t-1,n}_{t_n^-}:=\lim_{s\uparrow t_n}x^{\alpha_t-1,n}_s$, and  $\partial$ denotes the ``cemetery''-state, that can be understood as a point at infinity.

Let $\xi=\{\xi(x),x\in\R^d\}$ be a real-valued random field on $\R^d$ with covariance 
\begin{align}\label{crlt}
\E\big(\xi(x)\xi(y)\big)=\kappa(x,y),
\end{align}
for all $x,y\in\R^d$. Assume that $\xi$ satisfies the following conditions:
\begin{enumerate}[\textbf{[H1]}]\label{h1}
\item \begin{enumerate}[(i)]
\item $\xi$ is symmetric, that is $\mathbb{P}(\xi(x)> z)=\mathbb{P}(\xi(x)<-z)$ for all $x\in\R^d$ and $z\in\R$.
\item $\displaystyle\sup_{x\in \R^d}\E\big(|\xi(x)|^p\big)<\infty$ for some $p>2$.
\item $\kappa$ vanishes at infinity, that is $\displaystyle\lim_{|x|+|y|\to\infty}\kappa(x,y)=0$.
\end{enumerate}
\end{enumerate}
For any $n\geq 1$, the random field $\xi$ is used to define the offspring distribution after a scaling $\frac{1}{\sqrt{n}}$. 
In order to make the offspring distribution  a probability measure, we introduce the truncation of the random field $\xi$, denoted by $\xi^n$, as follows:
\begin{equation}
\xi^n(x)=
\begin{cases}
\sqrt{n},& \text{if}\ \xi(x)>\sqrt{n},\\
-\sqrt{n},& \text{if}\ \xi(x)<-\sqrt{n},\\
\xi(x),& \text{otherwise}.
\end{cases}
\end{equation}
The correlation function of the truncated random field is then given by 
\[
\kappa_n(x,y)=\E\big(\xi^n(x)\xi^n(y)\big).
\]

Let $(\xi^n_i)_{i\geq 0}$ be independent copies of $\xi^n$. Denote by $\xi_i^{n+}$ and $\xi_i^{n-}$ the positive and negative part of $\xi^n_i$ respectively. Let $N^{\alpha, n}\in\{0,1,2\}$ be the offspring number of the particle $\alpha$ at the branching time $\frac{|\alpha|+1}{n}$. Assume that $\{N^{\alpha,n}, |\alpha|=i\}$ are conditionally independent given $\xi^n_i$ and the position of $\alpha$ at its branching time, with a distribution given by
\begin{align*}
&P\Big(\left.N^{\alpha,n}=2\right|\xi^n_i, x^{\alpha,n}_{\frac{i+1}{n}^-}\Big)=\frac{1}{\sqrt{n}}\xi_i^{n+}\Big(x^{\alpha,n}_{\frac{i+1}{n}^-}\Big),\\
&P\Big(\left.N^{\alpha,n}=0\right|\xi^n_i, x^{\alpha,n}_{\frac{i+1}{n}^-}\Big)=\frac{1}{\sqrt{n}}\xi_i^{n-}\Big(x^{\alpha,n}_{\frac{i+1}{n}^-}\Big),\\
&P\Big(\left.N^{\alpha,n}=1\right|\xi^n_i, x^{\alpha,n}_{\frac{i+1}{n}^-}\Big)=1-\frac{1}{\sqrt{n}}|\xi_i^{n}|\Big(x^{\alpha,n}_{\frac{i+1}{n}^-}\Big).
\end{align*}

For any particle $\alpha=(\alpha_0,\dots, \alpha_N)$, $\alpha$ is called to be alive at time $t$, denoted by $\alpha\sim_n t$, if the following conditions are satisfied:
\begin{enumerate}[(i)]
\item There are exactly  $N$ branching before or at $t$: $\nt=N$.
\item $\alpha$ has an unbroken ancestors line: $\alpha_{N-i+1}\leq N^{\alpha-i,n}$, for all $i=1, 2, \dots, N$.
\end{enumerate}
[Introduction of $N^{\alpha, n}$   allows 
 the particle $\alpha$ produce one more
 generation, namely, produce   new particle  $(\alpha, N^{\alpha, n})$.  However,
 $(\alpha, 0)$ is considered no longer alive and will not produce offspring any more.]
For any $n$, denote by $X^n=\{X^n_t, t\in[0,T]\}$ the 
empirical measure-valued process of the particle system. Then, $X^n$ is a discrete measure-valued process, given by
\begin{align}\label{napem}
X^n_t=\frac{1}{n}\sum_{\alpha\sim_n t}\delta_{x^{\alpha,n}_t},
\end{align}
where $\delta_x$ is the Dirac measure at $x\in\mathbb{R}^d$, and the sum is among all alive particles at time $t\in[0,T]$. Then, for any $\phi\in C^2_b(\R^d)$, with the notation (\ref{itg}), we have
\[
X^n_t(\phi)=\int_{\R^d}\phi(x)X^n_t(dx)=\frac{1}{n}\sum_{\alpha\sim n}\phi(x^{\alpha,n}_t).
\]

\section{Main results}\label{s.3}
Let $(\Omega, \mathcal{F}, \{F_t\}_{t\in[0,T]}, P)$ be a complete filtered probability space that satisfies the usual conditions. Suppose that $W$ is a $d$-dimensional space-time white Gaussian random field on $[0,T]\times \R^d$, and $V$ is a one-dimensional Gaussian  random field on $[0,T]\times \R^d$ independent of $W$, that is time white and spatially colored with correlation $\kappa$ defined in (\ref{crlt}):
\begin{align*}
\E\big(V(t,x)V(s,y)\big)=(t\wedge s)\kappa(x,y),
\end{align*}
for all $s,t\in[0,T]$ and $x,y\in\R^d$. Assume that $\{W(t,x), x\in \R^d\}$, $\{V(t,x), x\in\R^d\}$ are $\mathcal{F}_t$-measurable for all $t\in [0,T]$, and $\{W(t,x)-W(s,x), x\in \R^d\}$, $\{V(t,x)-V(s,x), x\in \R^d\}$ are independent of $\mathcal{F}_s$ for all $0\leq s< t\leq T$ .

Denote by $A^*$ the adjoint of $A$, where $A$ is the generator defined in (\ref{fpgt}). Consider the following SPDE:
\begin{align}\label{dqmvp}
u_t(x)=&\mu(x)+\int_0^t A^* u_s(x)ds-\sum_{i,j=1}^d\int_0^t\int_{\mathbb{R}^d}\frac{\partial}{\partial x_i} \left[h^{ij}(y-x)u_s(x)\right]W^j(ds,dy)\nonumber\\
&+\int_0^t u_s(x)\frac{V(ds,dx)}{dx}.
\end{align}

\begin{definition}\label{def}
Let $u=\{u_t(x),t\in[0,T],x\in \R^d\}$ be a random field. Then,
\begin{enumerate}[(i)]
\item $u$ is said to be a strong solution to the SPDE (\ref{dqmvp}), if $u$ is jointly measurable on $[0,T]\times \R^d\times \Omega$, adapted to $\{\mathcal{F}_t\}_{t\in[0,T]}$ and for any $\phi\in C_b^2(\R^d)$, the following equation holds  for almost every $t\in[0,T]$:
\begin{align}
\int_{\R^d}\phi(x)u_t(x)dx=&\int_{\R^d}\phi(x)\mu(x)dx+\int_0^t\int_{\R^d} A \phi(x)u_s(x)dxds\nonumber\\
&+\int_0^t\int_{\R^d}\Big[\int_{\R^d}\nabla \phi(x)^*h(y-x)u_s(x)dx\Big]W(ds,dy)\nonumber\\
&+\int_0^t \int_{\R^d}\phi(x)u_s(x)V(ds,dx),
\end{align}
almost surely, where the last two stochastic integrals are Walsh's integral (see e.g. Walsh \cite{springer-86-walsh}).
\item $u$ is said to be a weak solution to the SPDE (\ref{dqmvp}), if there exists a filtered probability space, on which $W$ and $V$ are independent random fields that satisfy the above properties, such that $u$ is a strong solution with this probability space.
\end{enumerate}
\end{definition}

In Section 4, we prove the following two theorems.
\begin{theorem}\label{unique}
The SPDE (\ref{dqmvp}) has a unique strong solution in the sense of Definition \ref{def}.
\end{theorem}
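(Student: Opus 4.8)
The plan is to recast \eqref{dqmvp} in its mild form, to derive the \emph{a priori} second–moment bounds and pathwise uniqueness from the (deterministic) parabolic equation satisfied by the covariance of a solution, and to obtain existence by a regularization/compactness scheme together with Yamada–Watanabe. Since $A^{\ast}$ coincides with $A=\tfrac12\sum_{i,j}(\rho^{ij}(0)+\delta_{ij})\partial_{ij}$, which has constant coefficients and symmetric principal matrix $\Sigma:=\rho(0)+I\succeq I$, it generates the Gaussian convolution semigroup $T_{t}f=G_{t}\ast f$, where $G_{t}$ is the density of $\mathcal N(0,t\Sigma)$; $G_t$ is smooth and $\int_{\R^{d}}|\partial_{i}G_{t}(x)|\,dx\le C t^{-1/2}$. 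Testing Definition~\ref{def}(i) against $\phi=G_{t-\cdot}(x-\cdot)$, integrating by parts in $x$ and using a stochastic Fubini theorem, one checks that $u$ is a strong solution if and only if, for a.e.\ $(t,x)$,
\begin{align}\label{mild}
u_{t}(x)={}&T_{t}\mu(x)-\sum_{i,j=1}^{d}\int_{0}^{t}\!\!\int_{\R^{d}}\Big[\int_{\R^{d}}(\partial_{i}G_{t-s})(x-z)h^{ij}(y-z)u_{s}(z)\,dz\Big]W^{j}(ds,dy)\nonumber\\
&+\int_{0}^{t}\!\!\int_{\R^{d}}G_{t-s}(x-z)u_{s}(z)\,V(ds,dz).
\end{align}

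\noindent\emph{Covariance equation, a priori bounds, uniqueness.} Applying It\^o's formula to $\langle u_t,\phi\rangle\langle u_t,\psi\rangle$ for $\phi,\psi\in C_b^2\cap H_2^2$, using the identity $\sum_j\int h^{ij}(y-x)h^{i'j}(y-x')\,dy=\rho^{ii'}(x-x')$ and $\E[V(ds,dx)V(ds,dx')]=\kappa(x,x')\,ds\,dx\,dx'$, and taking expectations (so the martingale part drops), one finds that $\Phi_t(x,x'):=\E[u_t(x)u_t(x')]$ is a weak solution of
\begin{align}\label{covpde}
\partial_t\Phi = \big(A_x+A_{x'}\big)\Phi + \sum_{i,i'=1}^d \partial_{x_i}\partial_{x'_{i'}}\!\big[\rho^{ii'}(x-x')\,\Phi\big] + \kappa(x,x')\,\Phi ,\qquad \Phi_0 = \mu\otimes\mu .
\end{align}
The same computation for two solutions $u^{(1)},u^{(2)}$ driven by the \emph{same} $(W,V)$ shows that $\E[u^{(1)}_t\otimes u^{(1)}_t]$, $\E[u^{(2)}_t\otimes u^{(2)}_t]$ and the cross covariances $\E[u^{(k)}_t\otimes u^{(\ell)}_t]$ all solve \eqref{covpde} with the same datum $\mu\otimes\mu$, whence $\E[(u^{(1)}_t-u^{(2)}_t)(x)(u^{(1)}_t-u^{(2)}_t)(x')]\equiv 0$ and $u^{(1)}=u^{(2)}$. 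The point that makes this work is that \eqref{covpde} is \emph{uniformly parabolic} on $\R^d\times\R^d$: its second–order symbol at $(\xi,\xi')$ is $\tfrac12\xi^{\top}\Sigma\xi+\tfrac12{\xi'}^{\top}\Sigma\xi'+\xi^{\top}\rho(x-x')\xi'$, and the Cauchy–Schwarz bound $|\xi^{\top}\rho(a)\xi'|\le\sqrt{\xi^{\top}\rho(0)\xi}\,\sqrt{{\xi'}^{\top}\rho(0)\xi'}$ forces it to be $\ge\tfrac12(|\xi|^2+|\xi'|^2)$; moreover its coefficients are bounded and smooth, since $\widehat{\rho^{ii'}}=\sum_j\widehat{h^{ij}}\,\overline{\widehat{h^{i'j}}}$ and $h\in H_2^3$ give $(1+|\cdot|^2)^3\widehat{\rho^{ii'}}\in L^1$, hence $\rho\in C^6_b$, while $\kappa$ is bounded by [H1](ii). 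Standard linear parabolic theory then gives a unique bounded solution of \eqref{covpde} with the bounded datum $\mu\otimes\mu$, so $\sup_{t\le T,\,x}\E|u_t(x)|^2<\infty$. (The integrability needed to start the It\^o computation is not an extra hypothesis: the very definition of a strong solution forces the Walsh integrals in Definition~\ref{def}(i) to be meaningful, which yields $\int_0^T\E\|u_s\|_{L^1}^2\,ds<\infty$, enough to bootstrap; note $\int\!\int\phi\phi'\kappa u_su_s'=\E[\langle u_s,\phi\xi\rangle^2]\ge0$ is also $\le\|\kappa\|_\infty\|\phi\|_\infty^2\|u_s\|_{L^1}^2$.)

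\noindent\emph{Existence and the main obstacle.} For existence I would regularize the singular term, e.g.\ replace $\partial_i[h^{ij}(y-\cdot)u_s]$ in \eqref{mild} by $[h^{ij}(y-\cdot)u_s]\ast(\partial_i\varphi_\varepsilon)$; the regularized SPDE has Lipschitz coefficients, hence a unique adapted strong solution $u^\varepsilon$, whose covariance solves a uniformly parabolic equation of the type \eqref{covpde} with bounds uniform in $\varepsilon$. Extracting a limit point of $\{u^\varepsilon\}$ and passing to the limit in the weak formulation tested against $\phi\in C^2_b$ produces a solution of \eqref{dqmvp}, and pathwise uniqueness upgrades this weak existence to the asserted strong solution. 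The genuine difficulty throughout is the first–order term driven by the space–time white noise $W$: the heat kernel smooths it only by $O((t-s)^{-1/2})$ per derivative, so a naive Picard iteration for \eqref{mild} in $L^2(\Omega;L^2(\R^d))$ does \emph{not} close — pairing two copies of $\partial_iG_{t-s}$ produces the non-integrable time kernel $(t-s)^{-1}$. What saves the day is exactly the identity $\sum_j\int h^{ij}(y-x)h^{i'j}(y-x')\,dy=\rho^{ii'}(x-x')$ combined with $\rho^{ii'}(0)$ being the coefficient already present in $A^{\ast}$: this is the algebraic reason the $W$–martingale's quadratic variation is absorbed into the second–order operator, so that \eqref{covpde} is a bona fide non-degenerate parabolic equation rather than a critically singular one. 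I expect verifying this parabolicity and the well-posedness of \eqref{covpde} in a class large enough to contain $\Phi_t$ but small enough to force uniqueness — and justifying the It\^o and stochastic-Fubini manipulations — to be the main technical labor.
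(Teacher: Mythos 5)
Your uniqueness argument is sound and is, at bottom, the paper's argument in different clothing. The paper proves pathwise uniqueness by moment duality: the two-point function $\E[X_t^{\otimes 2}(f)]$ is computed through a dual process $Y^{(2)}$ whose expectation solves the \emph{backward} PDE \eqref{dualpde} with $n=2$, and uniqueness of that PDE (Friedman's theory, after checking uniform parabolicity of $A^{(2)}=\tfrac12(\Delta+B^{(2)})$ exactly as in your symbol computation) kills the difference of two solutions. You instead write the \emph{forward} (Fokker--Planck) equation \eqref{covpde} for the covariance and invoke uniqueness for it. These are adjoint formulations of the same mechanism, and your observation that the $W$-quadratic variation is absorbed into a non-degenerate second-order operator via $\sum_j\int h^{ij}(y-x)h^{i'j}(y-x')dy=\rho^{ii'}(x-x')$ is precisely the content of the paper's Lemma \ref{lmfif}(i). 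Be aware, though, that $\Phi_t$ is a priori only a weak (distributional) solution of \eqref{covpde} — you cannot assume it is bounded or classical — so ``standard parabolic theory gives a unique bounded solution'' does not directly apply to the class containing $\Phi_t$. The standard fix is to test the forward equation against a classical solution $v_t$ of the backward equation with terminal datum $f$, which yields $\E[u_t^{\otimes 2}(f)]=\langle\mu^{\otimes 2},v_t\rangle$; but that \emph{is} the duality identity \eqref{dualid}, so the ``main technical labor'' you defer is exactly the step the paper carries out. A second minor point: Definition \ref{def} only forces the Walsh integrals to exist, which is an almost-sure condition, so the It\^{o}/expectation manipulation needs a localization before you may drop the martingale terms.

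The existence half is where your route genuinely diverges and where the real gap sits. The paper does not construct a solution by regularizing the SPDE: it gets a weak solution from the particle system (tightness and the martingale problem, Proposition \ref{propmp}) together with the equivalence of the martingale problem and the SPDE (Proposition \ref{propspde}, proved via an infinite-dimensional martingale representation theorem that manufactures the pair $(W,V)$ from the quadratic variation), and then upgrades to a strong solution by Yamada--Watanabe. Your regularization/compactness scheme is plausible but is only a program: you would need uniform-in-$\varepsilon$ bounds strong enough for tightness in a space where the quadratic $W$-term $\int_0^t\int[\int\nabla\phi^*h(y-\cdot)u^\varepsilon_s]W(ds,dy)$ passes to the limit, and mere second-moment bounds from the covariance equation do not obviously suffice for that identification. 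As written, the proposal proves uniqueness modulo the forward--backward duality step and asserts, but does not establish, existence.
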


Let $X^n=\{X^n_t,0\leq t\leq T\}$ be defined in (\ref{napem}). In order to show the convergence of $X^n$ in $D([0,T];M_F(\R^d))$, we make use of the following hypothesis on the initial measures $X^n_0$:
\begin{enumerate}[{\textbf{[H2]}}]
\item \begin{enumerate}[(i)]\label{h2}
\item $\displaystyle \sup_{n\geq 1}|X_0^{(n)}(1)|<\infty$.
\item $X_0^n\Rightarrow X_0$ in $M_F(\mathbb{R}^d)$ as $n\to\infty$.
\item $X_0$ has a bounded density $\mu$.
\end{enumerate}
\end{enumerate}

\begin{theorem}\label{tmpd}
Let $X^n$ be defined in (\ref{napem}). Then, under hypotheses \textbf{[H1]} and \textbf{[H2]}, we have the following results:
\begin{enumerate}[(i)]
  \item $X^n \Rightarrow X$ in $D([0,T], M_F(\mathbb{R}^d))$ as $n\to\infty$.

\item For almost every $t\in[0,T]$, $X_t$ has a density $u_t(x)$ almost surely.

\item $u=\{u_t(x), t\in[0,T], x\in\R^d\}$ is a weak solution to the SPDE (\ref{dqmvp}) in the sense of Definition \ref{def}.
\end{enumerate}
\end{theorem}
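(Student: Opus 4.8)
The plan is the classical scheme for scaling limits of branching particle systems, carried out in six steps: (1) obtain a semimartingale (discrete martingale-problem) decomposition of $X^n_\cdot(\phi)$; (2) prove tightness of $\{X^n\}$ in $D([0,T];M_F(\R^d))$; (3) identify every subsequential limit through a martingale problem; (4) show the limiting measure is absolutely continuous in space; (5) read off the weak formulation of \eqref{dqmvp} for its density; (6) use Theorem \ref{unique} to upgrade subsequential convergence to convergence of the whole sequence. For step (1), I would apply It\^o's formula to \eqref{pmern} along the life of each particle and collect the jumps caused by branching at the times $i/n$, obtaining, for $\phi\in C_b^2(\R^d)$,
\[
X^n_t(\phi)=X^n_0(\phi)+\int_0^t X^n_{s}(A\phi)\,ds+M^{n,B}_t(\phi)+M^{n,W}_t(\phi)+M^{n,V}_t(\phi),
\]
where the three martingales come, respectively, from the individual Brownian motions $B^\alpha$, from the common environment $W$, and from the branching. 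The first has predictable quadratic variation $\frac1n\int_0^t X^n_s(|\nabla\phi|^2)\,ds\to0$. Using the Mytnik--Sturm offspring probabilities one computes $\E(N^{\alpha,n}\mid\cdots)=1+n^{-1/2}\xi^n_{|\alpha|}(x^{\alpha,n})$ and $\mathrm{Var}(N^{\alpha,n}\mid\cdots)=n^{-1/2}|\xi^n_{|\alpha|}(x^{\alpha,n})|-n^{-1}\xi^n_{|\alpha|}(x^{\alpha,n})^2$; the symmetry in \textbf{[H1]}(i) makes $M^{n,V}(\phi)$ a genuine martingale, the part of its fluctuation coming from the offspring randomness given $\xi^n$ is $O(n^{-1/2})$ and negligible (only the $\xi^n$-layer survives), and $\langle M^{n,V}(\phi)\rangle_t$ converges to $\int_0^t\!\int\!\int\phi(x)\phi(y)\kappa(x,y)X_s(dx)X_s(dy)\,ds$, while $\langle M^{n,W}(\phi)\rangle_t$ converges to $\int_0^t\sum_{j=1}^d\int_{\R^d}\big(\sum_{i=1}^d\int_{\R^d}\partial_i\phi(x)h^{ij}(y-x)X_s(dx)\big)^2\,dy\,ds$ and the cross-variations vanish.

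For step (2), $X^n_t(1)$ is a nonnegative martingale whose quadratic variation involves $\kappa_n$, uniformly bounded by $\sup_x\E|\xi(x)|^2<\infty$ (\textbf{[H1]}(ii)); together with $\sup_n X^n_0(1)<\infty$ (\textbf{[H2]}(i)) this gives $\sup_n\E[\sup_{t\le T}X^n_t(1)^2]<\infty$, and, combined with the standard one-particle estimate preventing mass from escaping to infinity, the compact-containment condition in $M_F(\R^d)$. The Aldous--Rebolledo criterion applied to the decomposition above yields tightness of $\{X^n_\cdot(\phi)\}$ in $D([0,T];\R)$ for $\phi$ in a countable separating subset of $C_b^2(\R^d)$, so by Jakubowski's theorem $\{X^n\}$ is tight. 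For step (3), let $X^{n_k}\Rightarrow X$ along a subsequence; a martingale central limit theorem (Lindeberg from the $L^p$-bound in \textbf{[H1]}(ii); the limiting covariance pinned down using the decay of $\kappa$ in \textbf{[H1]}(iii) together with compact containment) shows that $X$ satisfies, for all $\phi\in C_b^2(\R^d)$,
\[
X_t(\phi)=X_0(\phi)+\int_0^t X_s(A\phi)\,ds+N_t(\phi),
\]
with $N(\phi)$ a continuous martingale whose quadratic variation is the sum of the two surviving limits above. By the standard representation of orthogonal martingale measures, on a possibly enlarged probability space there exist independent noises $W$ (space--time white) and $V$ (time-white, spatially $\kappa$-correlated) with $N_t(\phi)=\int_0^t\!\int_{\R^d}\big[\int_{\R^d}\nabla\phi(x)^*h(y-x)X_s(dx)\big]W(ds,dy)+\int_0^t\!\int_{\R^d}\phi(x)X_s(dx)\,V(ds,dx)$.

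For step (4), I would take $\phi=p_\epsilon(x-\cdot)$, the heat kernel, in the martingale problem and compute second moments of $(p_\epsilon*X_t)(x)$: the branching contribution is governed by $\int\!\int\kappa(z,w)(p_\epsilon*X_s)(z)(p_\epsilon*X_s)(w)\,dz\,dw$, which is bounded by a constant times $X_s(1)^2$ \emph{uniformly in $\epsilon$} because $|\kappa|$ is bounded --- this is exactly where the linear term $u_s(x)\,V(ds,dx)$ matters, the analogous estimate for a $\sqrt{u_s}$-type term diverging for $d\ge2$. One then gets $\sup_{\epsilon\in(0,1]}\E\int_0^T\!\int_{\R^d}(p_\epsilon*X_t)(x)^2\,dx\,dt<\infty$, shows $\{p_\epsilon*X_t\}_\epsilon$ is Cauchy in $L^2(\Omega\times[0,T]\times\R^d)$, and identifies its limit $u_t(x)$ as the density of $X_t$ for a.e.\ $t$, jointly measurable and adapted; this proves (i)--(ii). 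For step (5), substituting $X_s(dx)=u_s(x)\,dx$ into the martingale problem reproduces verbatim the weak formulation in Definition \ref{def}(i), so $u$ is a weak solution of \eqref{dqmvp}, proving (iii). For step (6), since \eqref{dqmvp} is linear, the pathwise uniqueness of Theorem \ref{unique} and a Yamada--Watanabe argument give uniqueness in law of its weak solutions; as $u\mapsto u_t(x)\,dx$ is injective, all subsequential limits of $X^n$ share the same law, and tightness then gives $X^n\Rightarrow X$. The main obstacle is step (4): in dimension $d\ge2$ one must establish a second-moment bound on the mollified measures that is uniform in the mollification parameter --- which fails for the Dawson--Watanabe mechanism and is available here only thanks to the Mytnik--Sturm branching (a bounded correlation $\kappa$ replacing the square-root nonlinearity). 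The second most delicate point is step (3): verifying the martingale central limit hypotheses for the two-layer branching fluctuation and realizing the limit martingale as Walsh integrals against $W$ and $V$, where the full strength of \textbf{[H1]} is used.
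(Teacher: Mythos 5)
Your overall architecture---the semimartingale decomposition of $X^n(\phi)$, tightness via Aldous/Mitoma, identification of subsequential limits through the martingale problem \eqref{mprf}--\eqref{qvmp}, representation of the limit martingale by independent noises $W$ and $V$, and the Yamada--Watanabe upgrade from subsequential to full convergence---is the same as the paper's. The substantive gap is in your step (4), the absolute continuity. You claim the uniform bound $\sup_{\epsilon}\E\int_0^T\int_{\R^d}(p_\epsilon*X_t)(x)^2\,dx\,dt<\infty$ follows because the branching contribution to $\langle M(\phi)\rangle$ is controlled by $\|\kappa\|_\infty X_s(1)^2$. This does not close as stated. First, with $\phi=p_\epsilon(x-\cdot)$ the quadratic variation \eqref{qvmp} also has the $W$-driven piece $\int\!\int\nabla\phi(x)^*\rho(x-y)\nabla\phi(y)X_s(dx)X_s(dy)$, which involves $\nabla p_\epsilon$ and is not uniformly bounded in $\epsilon$, and the drift term $X_s(Ap_\epsilon(x-\cdot))$ blows up as $\epsilon\to0$; so no Gronwall argument on $\E[X_t(p_\epsilon(x-\cdot))^2]$ is available term by term. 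These contributions must be recombined into the two-particle generator $A^{(2)}$ (which is \emph{not} $A\otimes I+I\otimes A$, because the particles are correlated through the common environment $W$), and the uniform-in-$\epsilon$ control comes from the Gaussian upper bound \eqref{eubfs} on the fundamental solution $\Gamma$ of the two-particle equation \eqref{dualpde}; this is exactly what the moment duality (Lemma \ref{lmfif}) and Lemma \ref{lcsprt} supply, using $h\in H_2^3$ to make $A^{(2)}$ uniformly parabolic with smooth coefficients. Second, even with that machinery the bound is finite in $d\ge2$ only because the initial measure has a bounded density (\textbf{[H2]}(iii)): one is led to $\int_0^T\int_{\R^d}\int_{\R^{2d}}\Gamma(t,y;0,(x,x))X_0^{\otimes2}(dy)\,dx\,dt$, which for $X_0=\delta_0$ behaves like $\int_0^T t^{-d/2}dt=\infty$. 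Your sketch never invokes \textbf{[H2]}(iii), and without it the claimed estimate is false for $d\ge2$; boundedness of $\kappa$ is necessary but far from sufficient.

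A secondary, fixable point of logic: in your step (3) you represent the branching part of the limit martingale as $\int_0^t\int\phi(x)X_s(dx)\,V(ds,dx)$ before the density exists. A Walsh integral against the spatially $\kappa$-correlated noise $V$ requires an integrand that is a function of $(s,x)$, namely $\phi(x)u_s(x)$, so the martingale-representation step must come after the absolute-continuity step (as in the paper, where Proposition \ref{propspde} relies on Proposition \ref{extdst}).
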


The last main result in this paper is the following theorem
concerning the H\"older continuity of the solution to the SPDE \eqref{dqmvp}. 
 
\begin{theorem}\label{tjhc}
Let $u=\{u_t(x),t\in[0,T],x\in\R^d\}$ be the strong solution to the SPDE (\ref{dqmvp}) in the sense of Definition \ref{def}. Then, for any $\beta_1,\beta_2\in (0,1)$, $p>1$, $x,y\in\mathbb{R}^d$, and $0< s<t\leq T$, there exists a constant $C$ that depends on $T$, $d$, $h$, $p$, $\beta_1$, and $\beta_2$, such that
\begin{align*}
\left\|u_t(x)-u_s(y)\right\|_{2p}\leq Cs^{-\frac{1}{2}}\big(|x-y|^{\beta_1}+|t-s|^{\frac{1}{2}\beta_2}\big).
\end{align*}
Hence by  Kolmogorov's criteria, $u_t(x)$ is almost surely jointly H\"{o}lder continuous, with exponent $\beta_1\in(0,1)$ in space and $\beta_2\in(0,\frac{1}{2})$ in time.
\end{theorem}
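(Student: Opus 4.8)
The plan is to establish the moment bound $\|u_t(x)-u_s(y)\|_{2p} \le C s^{-1/2}(|x-y|^{\beta_1} + |t-s|^{\beta_2/2})$ via a convolution (mild/Feynman–Kac) representation of the solution, and then appeal to Kolmogorov's continuity criterion. First I would use the representation derived in Section 6: conditionally on the environment $W$, the solution can be written as
\begin{align*}
u_t(x) = \int_{\R^d} p^W_{0,t}(y,x)\mu(y)\,dy + \int_0^t\!\!\int_{\R^d} \big(\text{drift-type kernel}\big)\,W(ds,dy) + \int_0^t\!\!\int_{\R^d} p^W_{s,t}(y,x)\, u_s(y)\, V(ds,dy),
\end{align*}
where $p^W_{s,t}(y,x)$ is the transition density (conditional on $W$) of the one-particle motion \eqref{pmern}, for which Section 5 provides Gaussian-type upper bounds of the form $p^W_{s,t}(y,x) \le C (t-s)^{-d/2}\exp(-c|x-y|^2/(t-s))$ together with analogous bounds on its spatial gradient, $|\nabla_x p^W_{s,t}(y,x)| \le C(t-s)^{-(d+1)/2}\exp(-c|x-y|^2/(t-s))$, uniformly in $W$.

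The core of the argument is then to estimate the increments of each of the three terms in the representation. For the deterministic term, boundedness of $\mu$ plus standard heat-kernel increment estimates ($L^1$-difference of the kernel in $x$ and in $t$) give Hölder moduli $|x-y|^{\beta_1}$ and $|t-s|^{\beta_2/2}$ with no negative power of $s$. For the two stochastic terms I would apply Burkholder–Davis–Gundy (in the form valid for Walsh integrals) to pass to $L^p$ of the quadratic-variation integrals, then bound the resulting space-time integrals. The $W$-integral has a kernel involving $\nabla_x[h^{ij}(y-x)p^W(\cdots)]$ — here one splits the derivative onto $h$ (bounded in $H^3_2$) and onto $p^W$; the extra $(t-s)^{-1/2}$ singularity from $\nabla p^W$ is integrable in time near the endpoint. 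The $V$-integral carries the factor $u_s(y)$; one uses a uniform moment bound $\sup_{s,y}\|u_s(y)\|_{2p}<\infty$ (which should follow from Theorem \ref{unique}, or be proved first by a Gronwall/fixed-point argument on $\sup_y\E|u_s(y)|^{2p}$), together with the spatial correlation $\kappa$ of $V$, which is bounded. The spatial increment is handled by replacing $p^W_{s',t}(y,x)$ by its difference $p^W_{s',t}(y,x)-p^W_{s',t}(y,x')$ and using the gradient bound; the time increment $t'>t$ is split into the piece $\int_0^t$ with kernel difference $p^W_{s',t'}-p^W_{s',t}$ and the piece $\int_t^{t'}$ with kernel $p^W_{s',t'}$, each estimated by the standard heat-kernel increment lemmas. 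Tracking the time-integrability of $(t-s')^{-d/2-\cdots}$ against $(t-s')^{0}$ near $s'=0$ is what produces the single overall factor $s^{-1/2}$.

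The main obstacle I anticipate is the $V$-integral term, for two reasons: it is the term where $u_s$ itself appears inside the integrand, so the estimate is genuinely recursive and one must either invoke the a priori uniform $L^{2p}$ bound on $u$ or run a Gronwall argument carefully enough that the $s^{-1/2}$ prefactor does not blow up upon iteration; and in dimension $d>1$ the spatially colored noise $V$ (correlation $\kappa$, only bounded and vanishing at infinity, not a nice convolution kernel) means the quadratic variation is a genuine double space integral $\int\!\!\int p^W_{s',t}(y,x)p^W_{s',t}(y',x)\kappa(y,y')\,dy\,dy'$, which one bounds crudely by $\|\kappa\|_\infty \big(\int p^W_{s',t}(y,x)\,dy\big)^2 \le \|\kappa\|_\infty$ — workable, but one must check this crude bound still yields integrable time singularities for the increments. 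A secondary technical point is that the Gaussian bounds on $p^W$ from Section 5 hold only conditionally on $W$ with $W$-independent constants, so all the BDG estimates must be carried out conditionally and only then have the outer expectation over $W$ taken; since the bounds are uniform in $W$ this causes no real difficulty. Once the moment bound is in hand, Kolmogorov's criterion applied on $[\delta,T]\times K$ for every $\delta>0$ and compact $K$, followed by taking $\delta\downarrow 0$, gives joint Hölder continuity on $(0,T]\times\R^d$ with the stated exponents $\beta_1\in(0,1)$ in space and $\beta_2\in(0,1/2)$ in time.
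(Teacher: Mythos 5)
Your overall architecture --- write $u$ through a convolution representation with the $W$-conditional transition density $p^W$, estimate the kernel, apply Burkholder--Davis--Gundy, invoke the uniform moment bound $\sup_{s,y}\|u_s(y)\|_{2p}<\infty$ together with the independence of $p^W(r,\cdot\,;t,x)$ from $u_r(\cdot)$, and finish with Kolmogorov's criterion --- is indeed the paper's strategy (the representation is (\ref{crd}), the moment bound is Lemma \ref{eumcr}, and the splitting of the time increment into the pieces $\int_0^s$ and $\int_s^t$ matches Proposition \ref{phcs}). However, there is a genuine gap at the core of your argument: you assume pointwise Gaussian upper bounds on $p^W_{s,t}(y,x)$ and on $\nabla_x p^W_{s,t}(y,x)$ holding uniformly in $W$ (or conditionally on $W$ with deterministic constants). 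No such bounds are available, and they are not what Section 5 provides. The kernel $p^W$ is a genuinely random object; the paper only controls its $L^{2p}(\Omega)$-moments, $\|p^W(r,x;t,y)\|_{2p}\le C(t-r)^{-d/2}\exp(-k|x-y|^2/(6pd(t-r)))$ (Proposition \ref{mectpd}), and --- crucially for the H\"older estimate --- the $L^{2p}$-norms of its spatial and temporal increments integrated in $z$ (Lemma \ref{lmdtpirl}). Those increment estimates cannot be dispatched as ``standard heat-kernel increment lemmas'': there is no parametrix or PDE theory for the random kernel $p^W$, and the paper obtains them from the Riesz-transform density formulas of Malliavin calculus (Theorems \ref{bedf} and \ref{tdpdift}) combined with the estimates on the weights $H_{(i)}(\xi_t,1)$, $H_{(i,j)}(\xi_t,1)$ and on increments of the Malliavin derivatives of $\xi_t$ and $\sigma_t$ (Lemmas \ref{elgamma}--\ref{edh12}). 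Your proposal treats precisely this, the technical heart of the proof, as a known input.

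A secondary structural error: your representation has three terms, including a separate $W$-integral with a ``drift-type kernel.'' Once you condition on $W$ and take the kernel to be the $W$-conditional transition density of the one-particle motion, the $W$-noise is entirely absorbed into $p^W$, and the correct representation (\ref{crd}) has only two terms, the initial-data term and the $V$-integral. Retaining an additional $W$-integral (whose integrand again involves $u_s$) double-counts the environment noise and would wrongly introduce a second recursive stochastic term into the Gronwall argument; an explicit $W$-integral only appears if one works with the deterministic semigroup generated by $A^*$, which is not the object the Gaussian-type estimates of Section 5 concern.
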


\section{Proof of Theorems \ref{unique} and \ref{tmpd}}
We prove Theorems \ref{unique} and \ref{tmpd} in the following steps:
\begin{enumerate}[(i)]
  \item In Section 4.1, we show that  $\{X^n\}_{n\geq 1}$ is a tight sequence in $D([0,T];M_F(\R^d))$, and the limit of any convergent subsequence in law solves a martingale problem.

  \item In Section 4.2, we show that any solution to the martingale problem has a density almost surely.

  \item In Section 4.3, we show the equivalence between  martingale problem (see e.g. (\ref{mprf}) - (\ref{qvmp}) below) and the SPDE  (\ref{dqmvp}). Finally, we prove Theorems  \ref{unique} and \ref{tmpd}.
\end{enumerate}

\subsection{Tightness and martingale problem}
Recall the empirical measure-valued process $X^n=\{X^n_t, t\in[0,T]\}$ given by (\ref{napem}). Let $\phi\in C_b^2(\mathbb{R}^d)$, then similar to the identity  (49) of Sturm  \cite{ejp-03-sturm}, we can decompose $X^n_t$ as follows:
\begin{align}\label{dfx}
X_t^n(\phi)=X_0^n(\phi)+Z^n_t(\phi)+M^{b,n}_{t}(\phi)+B^n_t(\phi)+U^n_t(\phi),
\end{align}
where
\begin{align*}
Z^n_t(\phi)=\int_0^t X_u^n(A\phi)du,
\end{align*}
\begin{align*}
M_{t}^{b,n}(\phi)=M_{t_n}^{b,n}(\phi)=\frac{1}{n}\sum_{s_n<t_n}\sum_{\alpha\sim_n s_n}\phi \big(x^{\alpha, n}_{s_n+\frac{1}{n}}\big)(N^{\alpha,n}-1),
\end{align*}
\begin{align*}
B^n_t(\phi)=\frac{1}{n}\Big(\sum_{s_n<t_n}\sum_{\alpha\sim_n s_n}\int_{s_n}^{s_n+\frac{1}{n}}\nabla\phi(x^{\alpha,n}_u)^* dB^{\alpha}_u+\sum_{\alpha\sim_n t}\int_{t_n}^t\nabla\phi(x^{\alpha,n}_u)^*dB^{\alpha}_u\Big),
\end{align*}
and
\begin{align*}
U_t^n(\phi)=&\frac{1}{n}\Big(\sum_{s_n<t_n}\sum_{\alpha\sim_n s_n}\int_{s_n}^{s_n+\frac{1}{n}}\int_{\mathbb{R}^d}\nabla\phi(x^{\alpha,n}_u)^* h(y-x^{\alpha,n}_u)W(du,dy)\\
&\qquad+\sum_{\alpha\sim_n t}\int_{t_n}^t\int_{\mathbb{R}^d}\nabla\phi(x^{\alpha,n}_u)^* h(y-x^{\alpha,n}_u)W(du,dy)\Big)\\
=&\int_0^t\int_{\mathbb{R}^d}\Big(\int_{\mathbb{R}^d}\nabla\phi(x)^* h(y-x)X_u(dx)\Big)W(du,dy).
\end{align*}
As in Sturm \cite{ejp-03-sturm}, consider the natural filtration, generated by the process $X^n$
\[
\mathcal{F}^n_t=\sigma\left(\{x^{\alpha,n},N^{\alpha,n}\big||\alpha|<\nt\}\cup\{x^{\alpha,n}_s, s\leq t, |\alpha|=\nt\}\right),
\]
and a discrete filtration at branching times
\[
\widetilde{\mathcal{F}}^n_{t_n}=\sigma\big(\mathcal{F}^n_{t_n}\cup\{x^{\alpha,n}\big||\alpha|=nt_n\}\big)=\mathcal{F}^n_{(t_n+n^{-1})^-}.
\]
Then, $B^n_t(\phi)$ and $U^n_t(\phi)$ are continuous $\mathcal{F}^n_t$-martingales, while $M^{b,n}_{t}(\phi)$ is a discrete $\widetilde{\mathcal{F}}^n_{t_n}$-martingale.

\begin{lemma}\label{ublxmu}
Assume hypotheses \textbf{[H1]}, \textbf{[H2]} (i) and (ii). Let $p>2$ be given in  hypothesis \textbf{[H1]}. Then, for all $\phi\in C_b^2(\R^d)$, 
\begin{enumerate}[(i)]
\item $\displaystyle\E\Big(\sup_{0\leq t\leq T}|X^n_t(\phi)|^p\Big)$, $\displaystyle\E\Big(\sup_{0\leq t\leq T}|M^{b,n}_{t}(\phi)|^p\Big)$, and $\displaystyle\E\Big(\sup_{0\leq t\leq T}|U^{n}_{t}(\phi)|^p\Big)$ are bounded uniformly in $n\geq 1$.
\item $\displaystyle\E\Big(\sup_{0\leq t\leq T}|B^n_t(\phi)|^p\Big)\to 0$, as $n\to\infty$.
\end{enumerate}
\end{lemma}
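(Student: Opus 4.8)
The plan is to run the standard programme for such particle systems: plug the decomposition \eqref{dfx} into Burkholder-type inequalities, reduce all four estimates to one uniform moment bound on the total mass $X^n_t(\1)$, and prove that bound by a self-referential Gronwall argument. First I would isolate the mass: taking $\phi\equiv\1$ in \eqref{dfx} one has $A\1=0$ and $\nabla\1=0$, so $Z^n_t(\1)=B^n_t(\1)=U^n_t(\1)=0$ and $X^n_t(\1)=X^n_0(\1)+M^{b,n}_t(\1)$, where $M^{b,n}(\1)$ is the discrete $\widetilde{\mathcal F}^n_{t_n}$-martingale with increments $d_{s_n}=\tfrac1n\sum_{\alpha\sim_n s_n}(N^{\alpha,n}-1)$. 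Since $\sup_n X^n_0(\1)<\infty$ by \textbf{[H2]}(i) and $|X^n_t(\phi)|\le\|\phi\|_\infty X^n_t(\1)$, the estimate for $X^n_t(\phi)$ is immediate once $\sup_n\E[\sup_{t\le T}X^n_t(\1)^p]<\infty$ is established, and the ones for $M^{b,n}_t(\phi)$, $U^n_t(\phi)$ and $B^n_t(\phi)$ will follow from bounds on their brackets.

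The key computation is the conditional variance of a single branching step. From the conditional law of $N^{\alpha,n}$ given $\xi^n_i$ and the positions one reads off $\E[N^{\alpha,n}-1\mid\xi^n_i]=n^{-1/2}\xi^n_i(x^{\alpha,n})$ and $\E[(N^{\alpha,n}-1)^2\mid\xi^n_i]=n^{-1/2}|\xi^n_i(x^{\alpha,n})|$, while $\E[\xi^n_i(x^{\alpha,n})\xi^n_i(x^{\beta,n})]=\kappa_n(x^{\alpha,n},x^{\beta,n})$ for $\alpha\neq\beta$. The symmetry of $\xi$ makes $\E[d_{s_n}\mid\widetilde{\mathcal F}^n_{s_n}]=0$, so $M^{b,n}(\phi)$ is a martingale; using $\#\{\alpha\sim_n s_n\}=nX^n_{s_n}(\1)$, the uniform bounds $\sup_n\E|\xi^n|\le\sup_x\E|\xi(x)|<\infty$ and $\sup_n\|\kappa_n\|_\infty\le\sup_x\E[\xi(x)^2]<\infty$ from \textbf{[H1]}(ii), and summing over the at most $nT$ branching times, one gets for $\phi\in C_b^2(\R^d)$
\[
\langle M^{b,n}(\phi)\rangle_t\ \le\ C\|\phi\|_\infty^2\Big(n^{-1/2}\sup_{u\le T}X^n_u(\1)+\int_0^t X^n_u(\1)^2\,du\Big).
\]
Because $p\ge2$, I would then apply the discrete Burkholder inequality together with the comparison of the optional bracket $\sum_k d_k^2$ with its predictable compensator to obtain
\[
\E\big[\sup_{t\le T}|M^{b,n}_t(\1)|^p\big]\ \le\ C_p\,\E\big[\langle M^{b,n}(\1)\rangle_T^{p/2}\big]+C_p\,\E\big[\max_{s_n<T}|d_{s_n}|^p\big],
\]
estimate the jump term by a Rosenthal inequality for the conditionally independent $\{-1,0,1\}$-valued summands (here the truncation $|\xi^n|\le\sqrt n$ and the $p$-th moment in \textbf{[H1]}(ii) are used), so that after summing $\le nT$ terms it is $\le C\epsilon_n\big(\E[\sup_{u\le T}X^n_u(\1)^{p/2}]+\E[\sup_{u\le T}X^n_u(\1)^p]\big)$ with $\epsilon_n\to0$ (this is where $p>2$ matters), and bound the bracket term via Jensen, $\big(\int_0^tX^n_u(\1)^2du\big)^{p/2}\le T^{p/2-1}\int_0^tX^n_u(\1)^p\,du$. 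Writing $g(s)=\E[\sup_{t\le s}X^n_t(\1)^p]$ and combining with $X^n_t(\1)=X^n_0(\1)+M^{b,n}_t(\1)$ and \textbf{[H2]}(i), this yields $g(s)\le C+C\int_0^sg(r)\,dr+C\epsilon_n\big(g(s)^{1/2}+g(s)\big)$. Since for fixed $n$ the number of particles on $[0,T]$ is bounded by the deterministic quantity $2^{\lfloor nT\rfloor}$ times the initial number, $g(T)<\infty$ for each $n$; absorbing the $\epsilon_n$-term for $n$ large by Young's inequality and invoking Gronwall then gives $\sup_n g(T)<\infty$.

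With the mass bound in hand the remaining estimates are quick. The bound for $M^{b,n}_t(\phi)$ comes from the same Burkholder estimate with $\phi$ in place of $\1$; for the continuous martingale $U^n_t(\phi)$, Cauchy--Schwarz against the finite measure $X^n_u$ and $\int_{\R^d}h^{ij}(y-x)^2\,dy=\|h^{ij}\|_2^2$ give $\langle U^n(\phi)\rangle_T\le C\|\nabla\phi\|_\infty^2\|h\|_2^2\int_0^TX^n_u(\1)^2\,du$, and BDG finishes part (i); for the continuous martingale $B^n_t(\phi)$, each of the $nX^n_u(\1)$ particles alive at time $u$ contributes $\int|\nabla\phi(x^{\alpha,n}_u)|^2du\le\|\nabla\phi\|_\infty^2/n$ over its length-$1/n$ lifetime, so $\langle B^n(\phi)\rangle_T\le\tfrac{C}{n}\|\nabla\phi\|_\infty^2\int_0^TX^n_u(\1)\,du$ and BDG gives $\E[\sup_{t\le T}|B^n_t(\phi)|^p]\le Cn^{-p/2}\E[\sup_{u\le T}X^n_u(\1)^{p/2}]\to0$, which is part (ii).

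The main obstacle is the $L^p$ bound on the mass: the bracket of $M^{b,n}$ is quadratic in the very quantity $X^n_\cdot(\1)$ one is estimating, so the Gronwall argument must be run self-referentially in $L^p$, and one must simultaneously keep precise control of the $n^{-1/2}$-scaled discrete branching noise --- in particular the off-diagonal correlations produced by the spatially coloured field $\xi$, the maximal jump, and the truncation $\xi^n$ --- which is exactly where the hypothesis $p>2$ of \textbf{[H1]} is needed (through the $p\ge2$ form of Burkholder's inequality and the Rosenthal bound for the increments).
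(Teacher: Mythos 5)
Your proposal is correct and follows the same overall strategy as the paper: reduce all four bounds to the single estimate $\sup_n\E\big[\sup_{t\le T}X^n_t(\1)^p\big]<\infty$, then control $M^{b,n}(\phi)$, $U^n(\phi)$ and $B^n(\phi)$ through their brackets and Burkholder--Davis--Gundy. The one genuine difference is that the paper does not prove the total-mass bound at all: it observes that $U^n_t(\1)\equiv 0$ (and $Z^n_t(\1)=B^n_t(\1)=0$), so that the mass process coincides with the one in Sturm's model, and simply imports her Lemma 3.1. You instead open that black box and reprove the bound via the identity $X^n_t(\1)=X^n_0(\1)+M^{b,n}_t(\1)$, the conditional moment computations for $N^{\alpha,n}$, a discrete Burkholder/Rosenthal estimate, and a self-referential Gronwall argument; this is essentially Sturm's own argument and your version is sound, including the two points where care is genuinely needed --- the a priori finiteness of $g(T)$ for fixed $n$ (via the deterministic bound $X^n_t(\1)\le X^n_0(\1)2^{\lfloor nT\rfloor}$), which legitimizes absorbing the $\epsilon_n g(s)$ term, and the role of $p>2$ in making the jump/diagonal contributions vanish. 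One small point to make explicit if you write this up: given $\xi^n_i$ and the positions, the summands $N^{\alpha,n}-1$ are conditionally independent but \emph{not} conditionally centered, so before invoking Rosenthal you must subtract the conditional mean $n^{-1/2}\xi^n_i(x^{\alpha,n}_{\cdot})$ and treat the resulting term $n^{-3/2}\sum_\alpha\xi^n_i(x^{\alpha,n}_{\cdot})$ separately using $\sup_x\E|\xi(x)|^p<\infty$; likewise, in the Gronwall step the bracket bound should be stated with $\sup_{u\le s}$ rather than $\sup_{u\le T}$ so that the inequality closes at each intermediate time. Your treatments of $U^n$ and $B^n$ coincide with the paper's (and your version of part (ii) is actually carried out for general $p$, whereas the paper's displayed computation is only for $p=2$).
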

\begin{proof}
By the same argument as that for  Lemma 3.1 of Sturm 
\cite{ejp-03-sturm}, we can show that
\[
\E\Big(\sup_{0\leq t\leq T}|X^n_t(\phi)|^p\Big)+\E\Big(\sup_{0\leq t\leq T}|M^{b,n}_{t}(\phi)|^p\Big)\leq C \E\Big(\sup_{0\leq t\leq T}|X^{n}_{t}(1)|^p\Big),
\]
where the constant $C>0$ does not depend on $n$. Notice that $U^n_t(1)\equiv 0$, therefore $X^n_t(1)$ here is not different from the variable in Sturm's model. Thus we can simply refer to her result: $\displaystyle\E\Big(\sup_{0\leq t\leq T}|X^{n}_{t}(1)|^p\Big)$ is bounded uniformly in $n$. Therefore, $\displaystyle\E\Big(\sup_{0\leq t\leq T}|X^n_t(\phi)|^p\Big)$ and $\displaystyle\E\Big(\sup_{0\leq t\leq T}|M^{b,n}_t(\phi)|^p\Big)$ are also uniformly bounded in $n$.

For $U^{n}_t(\phi)$, by using the stochastic Fubini theorem, we have
\begin{align}\label{qvwnr}
\langle U^n(\phi)\rangle_t=&\Big\langle  \int_0^{\cdot}\int_{\mathbb{R}^d}\Big(\int_{\mathbb{R}^d}\nabla\phi(x)^*h(y-x)X^n_u(dx)\Big)W(du,dy)\Big\rangle_t\nonumber\\
=&\sum_{j=1}^d \int_0^t\int_{\mathbb{R}^d}\Big(\sum_{i=1}^d\int_{\mathbb{R}^d}\partial_i\phi(x)h^{ij}(y-x)X^n_u(dx)\Big)^2dydu\nonumber\\
=&\int_0^t\int_{\mathbb{R}^d\times\mathbb{R}^d}\nabla \phi(x)^*\rho(x-z)\nabla\phi(z)X^n_u(dx)X^n_u(dz)du\nonumber\\
\leq &\|\rho\|_{\infty}\|\phi\|_{1,\infty}^2\int_0^T \left|X_u^n(1)\right|^2 du.
\end{align}
Thus by (\ref{qvwnr}), Burkholder-Davis-Gundy's and Jensen's inequalities,  we have
\begin{align}\label{ubpu}
\E\Big(\sup_{0\leq t\leq T}\left|U^n_t(\phi)\right|^p\Big)\leq &c_p \E\big(\left\langle U^n(\phi)\right\rangle_T^\frac{p}{2}\big)\leq c_p\|\rho\|_{\infty}^{\frac{p}{2}}\|\phi\|_{1,\infty}^pT^{\frac{p}{2}-1}\E\Big(\int_0^T|X_u^n(1)|^p du\Big)\nonumber\\
\leq &c_p\|\rho\|_{\infty}^{\frac{p}{2}}\|\phi\|_{1,\infty}^pT^{\frac{p}{2}}\E\Big(\sup_{0\leq t\leq T}\left|X_t^n(1)\right|^p\Big),
\end{align}
that is also uniformly bounded in $n$.

Note that $\{B^{\alpha}\}$ are independent Brownian motions. Then, by Burkholder-Davis-Gundy's inequality, we have
\begin{align*}
&\E\Big(\sup_{0\leq t\leq T}|B^n_t(\phi)|^2\Big)\leq\frac{c_2}{n^2}\Big[\sum_{s_n<T_n}\sum_{\alpha\sim_n s_n}\E\Big(\int_{s_n}^{s_n+\frac{1}{n}}|\nabla\phi(x^{\alpha,n}_u) |^2du\Big)\\
&\hspace{35mm}+\sum_{\alpha\sim_n t}\E\Big(\int_{T_n}^T|\nabla\phi(x^{\alpha,n}_u)|^2du\Big)\Big]\\
&\hspace{10mm}=\frac{c_2}{n}\E\Big(\int_0^T\int_{\R^d}|\nabla\phi(x)|^2X_u(dx)du\Big)\leq \frac{c_2}{n}\|\phi\|_{1,\infty}^2T\E\Big(\sup_{0\leq t\leq T}|X^{n}_{t}(1)|^p\Big)\to 0,
\end{align*}
because $\displaystyle\E\Big(\sup_{0\leq t\leq T}|X^{n}_{t}(1)|^p\Big)$ is uniformly bounded in $n$.
\end{proof}
As a consequence of Lemma \ref{ublxmu}, the collection 
\[
\Big\{\sup_{0\leq t\leq T}|X_t^n(\phi)|^2, \sup_{0\leq t\leq T}|M_t^{b,n}(\phi)|^2,\sup_{0\leq t\leq T}|U_t^n(\phi)|^2\Big\}_{n\geq 1}
\]
is uniformly integrable.

\begin{definition}
Let $\{X^{\alpha}\}$ be a collection of real-valued stochastic processes. A family of stochastic processes 
 $\{X^{\alpha}\}$ is said to be C-tight, if it is tight, and the limit of any subsequence is continuous.
\end{definition}

\begin{lemma}\label{ctwqv}
For all $\phi\in C^2_b(\mathbb{R}^d)$, $M^{b,n}(\phi)$, $Z^{n}(\phi)$, and $U^n(\phi)$ are C-tight sequences in $D([0, T],\mathbb{R})$. As a consequence, $X^{n}(\phi)$ is C-tight in $D([0, T],\mathbb{R})$.
\end{lemma}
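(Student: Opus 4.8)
The plan is to establish the C-tightness of the three summands $Z^{n}(\phi)$, $U^{n}(\phi)$ and $M^{b,n}(\phi)$ one at a time and then to read off the C-tightness of $X^{n}(\phi)$ from the decomposition \eqref{dfx}. I will use the classical criterion that a sequence $\{Y^{n}\}$ of $D([0,T],\R)$-valued processes is C-tight iff $\{Y^{n}_{0}\}$ is tight in $\R$ and
\[
\lim_{\delta\downarrow0}\ \limsup_{n\to\infty}\ P\Big(\sup_{|t-s|\le\delta,\ 0\le s,t\le T}|Y^{n}_{t}-Y^{n}_{s}|>\varepsilon\Big)=0\qquad\text{for every }\varepsilon>0,
\]
noting that this property is stable under finite sums, because the modulus of continuity of a sum is dominated by the sum of the moduli. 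Throughout I use that $\sup_{n}\E\big(\sup_{u\le T}|X^{n}_{u}(1)|^{p}\big)<\infty$ (Lemma \ref{ublxmu}(i) with $\phi\equiv1$, since $U^{n}(1)\equiv0$), hence the same uniform bound for all moments of order $\le p$. Much of this runs parallel to Sturm \cite{ejp-03-sturm}; the genuinely new term is $U^{n}(\phi)$, coming from the random environment.

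For $Z^{n}(\phi)=\int_{0}^{\cdot}X^{n}_{u}(A\phi)\,du$ the increments satisfy $|Z^{n}_{t}(\phi)-Z^{n}_{s}(\phi)|\le\|A\phi\|_{\infty}|t-s|\,\sup_{u\le T}|X^{n}_{u}(1)|$, so the uniform-modulus condition follows at once from the uniform moment bound and Chebyshev's inequality, with (Lipschitz) continuous limits. For the continuous martingale $U^{n}(\phi)$ I would start from the identity for $\langle U^{n}(\phi)\rangle$ in \eqref{qvwnr}, which gives $\langle U^{n}(\phi)\rangle_{t}-\langle U^{n}(\phi)\rangle_{s}\le\|\rho\|_{\infty}\|\phi\|_{1,\infty}^{2}\int_{s}^{t}|X^{n}_{u}(1)|^{2}\,du$; then the Burkholder--Davis--Gundy and Hölder inequalities yield $\E\big(|U^{n}_{t}(\phi)-U^{n}_{s}(\phi)|^{p}\big)\le C\,|t-s|^{p/2}$ uniformly in $n$, and since $p>2$ this is precisely Kolmogorov's tightness criterion, with continuous limits.

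The substantial step is $M^{b,n}(\phi)$, a purely discontinuous $\widetilde{\mathcal F}^{n}_{t_{n}}$-martingale which is constant on each $[k/n,(k+1)/n)$. I would verify the two hypotheses of Rebolledo's criterion for C-tightness of martingales: (a) the jumps vanish uniformly, and (b) the predictable quadratic variation is C-tight. For (b), since the symmetry \textbf{[H1]}(i) makes $\E(N^{\alpha,n}-1\mid\widetilde{\mathcal F}^{n}_{s_{n}})=0$, expanding the square in the increment gives
\[
\langle M^{b,n}(\phi)\rangle_{t}=\frac1{n^{2}}\sum_{s_{n}<t_{n}}\Big(\sum_{\alpha\sim_{n}s_{n}}\phi^{2}(x^{\alpha,n})\,\E\big((N^{\alpha,n}-1)^{2}\mid\widetilde{\mathcal F}^{n}_{s_{n}}\big)+\sum_{\alpha\ne\beta}\phi(x^{\alpha,n})\phi(x^{\beta,n})\,\tfrac1n\kappa_{n}(x^{\alpha,n},x^{\beta,n})\Big),
\]
where $\E((N^{\alpha,n}-1)^{2}\mid\widetilde{\mathcal F}^{n}_{s_{n}})\le Cn^{-1/2}$ and $|\kappa_{n}|\le\sup_{x}\E(|\xi(x)|^{2})<\infty$ by \textbf{[H1]}(ii). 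Bounding the number of branching times in $[s,t)$ by $n|t-s|+1$ and $\#\{\alpha\sim_{n}s_{n}\}$ by $nX^{n}_{s_{n}}(1)$, one obtains $\langle M^{b,n}(\phi)\rangle_{t}-\langle M^{b,n}(\phi)\rangle_{s}\le C\|\phi\|_{\infty}^{2}\big(|t-s|+n^{-1}\big)\big(\sup_{u\le T}|X^{n}_{u}(1)|^{2}+n^{-1/2}\sup_{u\le T}|X^{n}_{u}(1)|\big)$, which gives the C-tightness of $\{\langle M^{b,n}(\phi)\rangle\}$. For (a), the jump at $(k+1)/n$ is $n^{-1}\sum_{\alpha\sim_{n}k/n}\phi(x^{\alpha,n})(N^{\alpha,n}-1)$; splitting $N^{\alpha,n}-1$ into its conditional mean $n^{-1/2}\xi^{n}_{k}(x^{\alpha,n})$ and a conditionally centred bounded remainder, applying Rosenthal's inequality to the (conditionally independent) remainder part and Minkowski's inequality together with $\sup_{x}\E(|\xi^{n}(x)|^{p})<\infty$ to the mean part, one gets $\E\big(|\Delta M^{b,n}_{(k+1)/n}(\phi)|^{p}\big)\le Cn^{-p/2}$ uniformly in $k$; summing over the $\le nT+1$ branching times gives $\E\big(\sup_{t\le T}|\Delta M^{b,n}_{t}(\phi)|^{p}\big)\le C\,n^{1-p/2}\to0$ because $p>2$. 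Rebolledo's criterion then yields the C-tightness of $\{M^{b,n}(\phi)\}$.

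Finally, from \eqref{dfx}, $X^{n}(\phi)=X^{n}_{0}(\phi)+Z^{n}(\phi)+M^{b,n}(\phi)+B^{n}(\phi)+U^{n}(\phi)$: the constant-in-time process $X^{n}_{0}(\phi)$ is C-tight because $X^{n}_{0}(\phi)\Rightarrow X_{0}(\phi)$ by \textbf{[H2]}, and $B^{n}(\phi)$ is C-tight with zero limit because $\E(\sup_{t\le T}|B^{n}_{t}(\phi)|^{p})\to0$ by Lemma \ref{ublxmu}(ii); combined with the three pieces above and the stability of C-tightness under finite sums, $X^{n}(\phi)$ is C-tight in $D([0,T],\R)$. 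I expect the main obstacle to be the treatment of $M^{b,n}(\phi)$: correctly identifying the predictable bracket (where \textbf{[H1]}(i) is used to kill the conditional means, so the cross terms collapse to $n^{-1}\kappa_{n}$ and are controlled via \textbf{[H1]}(ii)), and proving the uniform decay of the jumps, which is the one place where the strict inequality $p>2$ in \textbf{[H1]}(ii) is genuinely needed. Everything else is routine given the uniform moment estimates of Lemma \ref{ublxmu}.
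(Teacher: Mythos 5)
Your proof is correct, but it takes a more self-contained route than the paper does. For $M^{b,n}(\phi)$ and $Z^{n}(\phi)$ the paper simply observes that these terms coincide with those in Sturm's model and cites her Lemma 3.6 for their C-tightness, whereas you reprove this from scratch via Rebolledo's criterion, computing the predictable bracket explicitly (correctly identifying the diagonal contribution of order $n^{-1/2}$ and the off-diagonal collapse to $n^{-1}\kappa_n$ through the conditional independence of the $N^{\alpha,n}$) and controlling the jumps by a conditional Rosenthal argument; this has the merit of making visible exactly where \textbf{[H1]}(i), \textbf{[H1]}(ii) and the strict inequality $p>2$ are used, at the cost of redoing work the paper outsources. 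For $U^{n}(\phi)$ the paper verifies Aldous's two conditions (compact containment via Chebyshev and Lemma \ref{ublxmu}, the oscillation condition via the bound \eqref{ubpu} applied after the stopping time) and then invokes Proposition VI.3.26 of Jacod--Shiryaev for continuity of the limits, while you derive the uniform moment bound $\E|U^n_t(\phi)-U^n_s(\phi)|^{p}\le C|t-s|^{p/2}$ from \eqref{qvwnr} and BDG and conclude by Kolmogorov's criterion; since $p>2$ this is a legitimate and arguably cleaner alternative that yields slightly stronger quantitative information. The final assembly via the decomposition \eqref{dfx} and the stability of C-tightness under finite sums matches the paper. One cosmetic point: in your displayed bracket formula the particle positions should be evaluated at the branching times $x^{\alpha,n}_{s_n+\frac1n}$ (as in the definition of $M^{b,n}$), and the conditioning $\sigma$-algebra $\widetilde{\mathcal F}^n_{s_n}$ does contain these positions by construction, so the argument goes through as you intend.
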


\begin{proof}
Note that the branching martingale $M^{b,n}(\phi)$ and drift term $Z^n(\phi)$ are the same as in Sturm \cite{ejp-03-sturm}. Thus by the proof of Lemma 3.6  in \cite{ejp-03-sturm}, we deduce the C-tightness of $M^{b,n}(\phi)$ and $Z^{n}(\phi)$.

We prove the tightness of $U^n_t(\phi)$ by checking Aldous's conditions (see Theorem 4.5.4 of Dawson \cite{springer-92-dawson}). By Chebyshev's inequality, for any fixed $t\in [0, T]$, and $N>0$, we have
\begin{align*}
\mathbb{P}\left(|U^n_t(\phi)|>N\right)\leq \frac{1}{N^p}\E\big(\left|U^n_t(\phi)\right|^p\big)\leq \frac{1}{N^p}\E\Big(\sup_{0\leq t\leq T}\left|U^n_t(\phi)\right|^p\Big)\to 0,
\end{align*}
uniformly in $n$ as $N\to\infty$ by Lemma \ref{ublxmu} (i). 

On the other hand, let $\{\tau_n\}_{n\geq 1}$ be any collection of stopping times bounded by $T$, and let $\{\delta_n\}_{n\geq 1}$ be any positive sequence that decreases to $0$. Then, due to (\ref{ubpu}) and the strong Markov property of It\^{o}'s diffusion $U^n(\phi)$, we have
\begin{align*}
\mathbb{P}\left(\big|U^n_{\tau_n+\delta_n}(\phi)-U^n_{\tau_n}(\phi)\big|>\epsilon\right)=&\mathbb{P}\left(\left|U^n_{\delta_n}(\phi)-U^n_0(\phi)\right|>\epsilon\right)\leq  \frac{1}{\epsilon^p}\E\big(\left|U^n_{\delta_n}(\phi)\right|^p\big)\\
\leq &\frac{\delta_n^{\frac{p}{2}}}{\epsilon^p}c_p\|\rho\|_{\infty}^{\frac{p}{2}}\|\phi\|_{1,\infty}^p\E\Big(\sup_{0\leq t\leq T}\left|X_t^n(1)\right|^p\Big)\to 0,
\end{align*}
as $n\to 0$. Thus both of Aldous's conditions are satisfied, and then it follows that $U^n_t(\phi)$ is tight in $D([0, T],\mathbb{R})$.

Notice that for any $n\geq 1$, $U^n_t(\phi)$ is a continuous martingale, then by Proposition VI.3.26 of Jacod and Shiryaev \cite{springer-13-jacod-shiryaev}, every limit of a tight sequence of continuous processes is also continuous.

Recall the decomposition formula (\ref{dfx}):
\begin{align*}
X_t^n(\phi)=X_0^n(\phi)+Z^n_t(\phi)+M^{b,n}_{t}(\phi)+B^n_t(\phi)+U^n_t(\phi).
\end{align*}
Notice that the first term converges weakly by assumption. The second, third, and last terms are C-tight as proved just now. The fourth term tends to $0$ in $L^2(\Omega)$ uniformly in $t\in[0,T]$ by Lemma  \ref{ublxmu},   (ii). As a consequence, $X^n(\phi)$ is C-tight in $D([0,T],\R)$.
\end{proof}

Let $\mathscr{S}=\mathscr{S}(\R^d)$ be the Schwartz space on $\R^d$, and let $\mathscr{S}'$ be the Schwartz dual space. Then, we have the following lemma:
\begin{lemma}\label{tight}
Assume  hypotheses \textbf{[H1]} and \textbf{[H2]} (i), (ii).  Then,
\begin{enumerate}[(i)]
\item $\{X^n\}_{n\geq 1}$ is a tight sequence in $D([0,T]; M_F(\R^d))$.
\item $\{B^n\}_{n\geq 1}$, $\{M^{b,n}\}_{n\geq 1}$, and $\{U^n\}_{n\geq 1}$ are C-tight in $D([0,T]; \mathscr{S}')$.
\end{enumerate}
\end{lemma}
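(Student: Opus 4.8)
The plan is to deduce both statements from the scalar ($\R$-valued) tightness already obtained in Lemma~\ref{ctwqv}, via the standard ``projective'' tightness theorems, so that the only genuinely new work is a uniform-in-$n$ compact containment estimate in $M_F(\R^d)$.

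Part (ii) is the quicker one. By Mitoma's theorem a sequence of $\mathscr{S}'$-valued c\`{a}dl\`{a}g processes is tight in $D([0,T];\mathscr{S}')$ provided the scalar processes obtained by pairing with each fixed $\phi\in\mathscr{S}(\R^d)$ are tight in $D([0,T];\R)$. Since $\mathscr{S}(\R^d)\subset C_b^2(\R^d)$ and each of $B^n_t,M^{b,n}_t,U^n_t$ is visibly an $\mathscr{S}'$-valued process (the first two are pairings of $\phi$ with finite signed measures, the third a Walsh stochastic integral that is linear and continuous in $\phi$), Lemma~\ref{ctwqv} supplies exactly this, together with continuity of the limits ($B^n(\phi)\to0$ uniformly in $t$ in $L^2$, while $M^{b,n}(\phi)$ and $U^n(\phi)$ have continuous limits by Proposition~VI.3.26 of \cite{springer-13-jacod-shiryaev}). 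Hence $\{B^n\},\{M^{b,n}\},\{U^n\}$ are C-tight in $D([0,T];\mathscr{S}')$.

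For part (i) I would use the tightness criterion for $M_F(\R^d)$-valued c\`{a}dl\`{a}g processes (Jakubowski; Roelly and Coppoletta; see also \cite{springer-92-dawson}): $\{X^n\}$ is tight in $D([0,T];M_F(\R^d))$ once (a) for every $\phi$ in a countable subset of $C_b^2(\R^d)$ separating the points of $M_F(\R^d)$ the processes $\{X^n(\phi)\}_{n\geq 1}$ are tight in $D([0,T];\R)$, which is precisely Lemma~\ref{ctwqv}, and (b) the compact containment condition holds. By Prohorov's theorem a subset of $M_F(\R^d)$ is relatively compact iff the total masses are bounded and the measures are uniformly tight, so (b) amounts to (b1) $\sup_n P(\sup_{t\leq T}X^n_t(1)>K)\to0$ as $K\to\infty$, immediate from Lemma~\ref{ublxmu}(i) with $\phi\equiv1$ (note $U^n_t(1)\equiv0$) and Markov's inequality, and (b2) $\sup_n P(\sup_{t\leq T}X^n_t(\{|x|>R\})>\eta)\to0$ as $R\to\infty$, for every $\eta>0$. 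To obtain (b2) I would apply the decomposition (\ref{dfx}) to spatial cut-offs $\psi_R\in C_b^2(\R^d)$ with $\1_{\{|x|>R\}}\leq\psi_R\leq\1_{\{|x|>R/2\}}$ and $\|\psi_R\|_{2,\infty}$ bounded uniformly in $R$: the term $X^n_0(\psi_R)$ goes to $0$ uniformly in $n$ by the tightness of $\{X^n_0\}$ from \textbf{[H2]}(ii); the contributions of $B^n(\psi_R)$ and $U^n(\psi_R)$ are controlled by the estimates in Lemma~\ref{ublxmu} (with $\langle U^n(\psi_R)\rangle_T\leq\|\rho\|_\infty\|\nabla\psi_R\|_\infty^2\int_0^T|X^n_u(1)|^2\,du$ and $\|\nabla\psi_R\|_\infty\to0$); the branching martingale obeys $\langle M^{b,n}(\psi_R)\rangle_T\lesssim\sup_{u\leq T}X^n_u(\psi_R)$; and the drift $\int_0^t X^n_u(A\psi_R)\,du$ is handled using that $A$ is purely second order, so that $A\psi_R$ is supported in the annulus $\{R/2\leq|x|\leq R\}$ and dominated by $X^n_u(\psi_{R/2})$, which in expectation is $O(R^{-2})$ uniformly in $n$. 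Feeding these into a Gronwall/bootstrap argument over the dyadic scales $R,R/2,R/4,\dots$ (started from the a priori bound (b1)) yields $\sup_n E[\sup_{t\leq T}X^n_t(\psi_R)]\to0$ as $R\to\infty$, and (b2) follows. Since $Z^n$ and $M^{b,n}$ are exactly the drift and branching martingale of Sturm~\cite{ejp-03-sturm}, an equally valid route is to run her tightness proof unchanged and insert the single extra term $U^n$, for which (\ref{ubpu}) and Lemma~\ref{ctwqv} provide everything required.

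I expect the main obstacle to be precisely step (b2): the cut-offs $\psi_R$ satisfy $\|\psi_R\|_\infty=1$ (only their derivatives decay), so the branching-martingale contribution to $X^n(\psi_R)$ does not vanish on its own and one is pushed into a self-improving estimate across spatial scales — controlling the escape of mass to infinity uniformly in $n$ is the one place where real estimates are needed, everything else being a routine assembly of Lemmas~\ref{ublxmu} and \ref{ctwqv}, Mitoma's theorem, and the abstract $M_F$-tightness criterion.
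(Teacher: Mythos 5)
Your part (ii) matches the paper exactly: Mitoma's theorem applied to the scalar C-tightness of Lemma \ref{ctwqv}, using $\mathscr{S}\subset C_b^2(\R^d)$. For part (i), however, you take a genuinely different route. The paper never verifies compact containment in $M_F(\R^d)$: it passes to the one-point compactification $\widehat{\R}^d=\R^d\cup\{\partial\}$, where Theorem 4.6.1 of Dawson \cite{springer-92-dawson} reduces tightness in $D([0,T];M_F(\widehat{\R}^d))$ to the one-dimensional tightness of Lemma \ref{ctwqv} (compact containment is automatic there, since sets of measures on a compact space with bounded total mass are relatively compact), and then argues, as in Lemma 3.9 of Sturm \cite{ejp-03-sturm}, that every limit point is continuous and puts no mass at $\partial$, which upgrades the conclusion to tightness in $D([0,T];M_F(\R^d))$. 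So the paper only needs ``no escape of mass to infinity'' for the \emph{limit points}, whereas your Jakubowski-type criterion demands it uniformly in $n$ (your (b2)). Your route is legitimate and standard, but it is strictly more demanding, and (b2) is exactly where your sketch has a real soft spot.

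Concretely, the bound $\langle M^{b,n}(\psi_R)\rangle_T\lesssim\sup_{u\leq T}X^n_u(\psi_R)$ is not correct: the bracket of the branching martingale is \emph{quadratic} in the empirical measure, $\langle M^{b,n}(\psi_R)\rangle_T\leq \|\kappa\|_\infty T\,\sup_{u}X^n_u(\psi_R)\,\sup_{u}X^n_u(1)$ up to $O(1/n)$ corrections, so after taking expectations you must control $\int_0^T\E\big[X^n_s(\psi_R)X^n_s(1)\big]\,ds$. A Cauchy--Schwarz/Gr\"onwall loop on $f_R:=\sup_n\E\sup_t|X^n_t(\psi_R)|^2$ then only yields an inequality of the form $f_R\leq\epsilon(R)+C\sqrt{f_R}$, which does not force $f_R\to0$; the dyadic bootstrap as described does not visibly close. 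The clean repairs are either to invoke hypothesis \textbf{[H1]}(iii) (which you never use, and which appears to be tailored for this step): on the support of $\psi_R\otimes\psi_R$ one has $|x|+|y|\geq R$, hence $|\kappa(x,y)|\leq\delta(R)\to0$ and $\E\langle M^{b,n}(\psi_R)\rangle_T\leq\delta(R)\int_0^T\E|X^n_s(1)|^2\,ds\to0$ uniformly in $n$; or to truncate $X^n_s(1)$ at a level $K$ and use the uniform integrability of $\sup_t|X^n_t(1)|^2$ provided by the $p>2$ moment bound. With either fix no multiscale argument is needed at all, since the drift is already harmless: $A\psi_R$ is purely second order, supported in the annulus, and of size $O(R^{-2})$, so $\E\sup_t|Z^n_t(\psi_R)|\leq CR^{-2}$ uniformly in $n$. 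After that repair your proof goes through; the paper's compactification trick simply avoids having to make it.
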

\begin{proof}
Let $\widehat{\R}^d=\R^d\cup \{\partial\}$ be the one point compactification of $\R^d$. Then, by Theorem 4.6.1 of Dawson \cite{springer-92-dawson} and Lemma \ref{ctwqv}, $\{X^n\}_{n\geq 1}$ is a tight sequence in $D([0,T]; M_F(\widehat{\R}^d))$.

On the other hand, by the same argument as in Lemma 3.9 of Sturm \cite{ejp-03-sturm}, we can show that any limit of a weakly convergent subsequence $X^{n_k}$ in $D([0,T]; M_F(\widehat{\R}^d))$ belongs to $C([0,T]; M_F(\R^d))$, the space of continuous $M_F(\R^d)$-valued functions on $[0,T]$. Therefore, $\{X^n\}_{n\geq 1}$ is a tight sequence in $D([0,T]; M_F(\R^d))$.

To show the 
 property (ii), notice that $\mathscr{S}\subset C_b^2(\R^d)$. Then, by Theorem 4.1 of Mitoma \cite{ap-83-mitoma}, $\{B^n\}_{n\geq 1}$, $\{M^{b,n}\}_{n\geq 1}$, and $\{U^n\}_{n\geq 1}$ are C-tight in $D([0,T]; \mathscr{S}')$. 
\end{proof}

\begin{proposition}\label{propmp}
Assume hypotheses \textbf{[H1]}, \textbf{[H2]} (i) and (ii).  Let $X$ be the limit of a weakly convergent subsequence $\{X^{n_k}\}_{k\geq 1}$ in $D([0,T]; M_F(\R^d))$. Then, $X$ is a solution to the following martingale problem: for any $\phi \in C^2_b(\mathbb{R}^d)$, the process $M(\phi)=\{M_t(\phi):0\leq t\leq T\}$, given by
\begin{align}\label{mprf}
M_t(\phi):=&X_t(\phi)-X_0(\phi)-\int_0^tX_s(A\phi)ds,
\end{align}
is a continuous and  square integrable $\mathcal{F}^X_t$-adapted martingale with quadratic variation:
\begin{align}\label{qvmp}
\langle M(\phi)\rangle_t=&\int_0^t\int_{\mathbb{R}^d\times\mathbb{R}^d}\nabla \phi(x)^*\rho(x-y)\nabla\phi(y)X_s(dx)X_s(dy)ds\nonumber\\
&+\int_0^t\int_{\mathbb{R}^d\times\mathbb{R}^d}\kappa(x,y)\phi(x)\phi(y)X_s(dx)X_s(dy)ds.
\end{align}
\end{proposition}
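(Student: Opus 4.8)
The plan is the classical ``identification of the limit'': pass to the limit in the semimartingale decomposition \eqref{dfx} of $X^n(\phi)$, the only new ingredient beyond Sturm \cite{ejp-03-sturm} being the environmental martingale $U^n(\phi)$ and its (orthogonal) contribution, through $\rho$, to the quadratic variation \eqref{qvmp}.

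I would first pass to the convergent subsequence, relabel it $\{X^n\}$, and use the Skorohod representation theorem to realize it so that $X^n\to X$ almost surely in $D([0,T];M_F(\R^d))$; by Lemma \ref{tight} (and its proof) $X$ has continuous paths and $X^n_t(\1)\to X_t(\1)$, so this convergence is in fact uniform on $[0,T]$ and $\sup_n\sup_{t\le T}X^n_t(\1)<\infty$ a.s. Fix $\phi\in C_b^2(\R^d)$ and set $\widetilde N^n_t(\phi):=M^{b,n}_t(\phi)+B^n_t(\phi)+U^n_t(\phi)$, which by construction is a c\`adl\`ag square-integrable $\mathcal F^n_t$-martingale (the martingale part of \eqref{dfx}). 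Since $\widetilde N^n_t(\phi)=X^n_t(\phi)-X^n_0(\phi)-\int_0^t X^n_u(A\phi)\,du$ and $A\phi\in C_b(\R^d)$, the right-hand side converges a.s., for each fixed $t$, to $M_t(\phi)=X_t(\phi)-X_0(\phi)-\int_0^t X_u(A\phi)\,du$ (using \textbf{[H2]}(ii) for the initial term, the uniform convergence $X^n_\cdot(\phi)\to X_\cdot(\phi)$, and dominated convergence, licensed by the uniform mass bound, for the time integral). As $X$ is continuous, so is $M(\phi)$, and it is $\mathcal F^X_t$-adapted.

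I would then pass to the limit, against test functionals $g(X^n_{s_1},\dots,X^n_{s_m})$ with $m\ge1$, $0\le s_1<\dots<s_m\le s<t\le T$ and $g\in C_b((M_F(\R^d))^m)$, in the two prelimit identities expressing that $\widetilde N^n(\phi)$ and $\widetilde N^n(\phi)^2-\langle\widetilde N^n(\phi)\rangle$ are $\mathcal F^n_t$-martingales. The family $\{\sup_{t\le T}|\widetilde N^n_t(\phi)|^p\}_n$ is bounded (Lemma \ref{ublxmu}(i) for $M^{b,n}$ and $U^n$; a Burkholder--Davis--Gundy bound on $\langle B^n(\phi)\rangle_T\le C n^{-1}\sup_{u\le T}X^n_u(\1)$ for $B^n$), hence $\{\sup_{t\le T}|\widetilde N^n_t(\phi)|^2\}_n$ is uniformly integrable, which together with the a.s.\ convergences $\widetilde N^n_\cdot(\phi)\to M_\cdot(\phi)$ and $X^n_{s_i}\to X_{s_i}$ (all $s_i$ being continuity points of the continuous $X$) yields $\E[(M_t(\phi)-M_s(\phi))g(X_{s_1},\dots,X_{s_m})]=0$, i.e.\ \eqref{mprf}. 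For the bracket I would use that $U^n(\phi)$ (driven by $W$) and $B^n(\phi)$ (driven by the independent $\{B^\alpha\}$) are continuous while $M^{b,n}(\phi)$ is purely discontinuous, so these martingales are mutually orthogonal and $\langle\widetilde N^n(\phi)\rangle=\langle M^{b,n}(\phi)\rangle+\langle B^n(\phi)\rangle+\langle U^n(\phi)\rangle$. By \eqref{qvwnr}, $\langle U^n(\phi)\rangle_t=\int_0^t F(X^n_u)\,du$ with $F(\nu)=\int_{\R^d\times\R^d}\nabla\phi(x)^*\rho(x-y)\nabla\phi(y)\,\nu(dx)\,\nu(dy)$, a bounded continuous functional on $M_F(\R^d)$ (as $\rho$ is bounded and continuous), so $\langle U^n(\phi)\rangle_t$ converges a.s.\ to the first term of \eqref{qvmp}; likewise $\langle B^n(\phi)\rangle_t\to0$ a.s.; and, exactly as in Sturm \cite{ejp-03-sturm}, computing the conditional second moments of the branching increments (via the offspring law, the independence of the $\xi^n_i$, and $\kappa_n\to\kappa$) shows that $\langle M^{b,n}(\phi)\rangle_t$ equals, modulo a term that vanishes in $L^1$, a Riemann sum over the branching grid of $u\mapsto\int_{\R^d\times\R^d}\kappa_n(x,y)\phi(x)\phi(y)X^n_u(dx)X^n_u(dy)$, hence converges to the second term of \eqref{qvmp}. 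Writing $\Gamma_t(\phi)$ for the right-hand side of \eqref{qvmp} and using that $\{\sup_{t\le T}\widetilde N^n_t(\phi)^2\}_n$ and $\{\langle\widetilde N^n(\phi)\rangle_T\}_n$ are uniformly integrable (bounded in $L^{p/2}$, $p/2>1$, by Lemma \ref{ublxmu}), the same passage to the limit gives that $M_t(\phi)^2-\Gamma_t(\phi)$ is an $\mathcal F^X_t$-martingale; since $M(\phi)$ is continuous this identifies $\langle M(\phi)\rangle=\Gamma(\phi)$, which is \eqref{qvmp}.

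The step I expect to be the main obstacle is the convergence of the branching bracket, $\langle M^{b,n}(\phi)\rangle_t\to\int_0^t\int_{\R^d\times\R^d}\kappa(x,y)\phi(x)\phi(y)X_s(dx)X_s(dy)\,ds$. Unlike $\langle U^n(\phi)\rangle_t$, which is an explicit continuous functional of the path of $X^n$ and hence converges immediately, the branching bracket is an additive functional over the discrete branching grid whose integrand involves the truncated correlation $\kappa_n$ at the particle positions just before branching; one has to control, uniformly enough to match the merely weak convergence of $X^n$, the replacement of $\kappa_n$ by $\kappa$, of the pre-branching empirical measure by $X^n_u$, and of the discrete sum by the time integral, and this is exactly where hypotheses \textbf{[H1]} (symmetry, the $p$-th moment bound with $p>2$, and the decay of $\kappa$) and the uniform estimates of Lemma \ref{ublxmu} are needed. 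This part runs parallel to Sturm \cite{ejp-03-sturm}; the only genuinely new point is the orthogonality of $U^n(\phi)$ to $M^{b,n}(\phi)$ and the resulting $\rho$-term in \eqref{qvmp}.
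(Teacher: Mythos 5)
Your proposal is correct and follows essentially the same route as the paper: the decomposition \eqref{dfx}, the uniform integrability supplied by Lemma \ref{ublxmu}, the mutual orthogonality of $M^{b,n}(\phi)$, $B^{n}(\phi)$ and $U^{n}(\phi)$, the explicit bracket \eqref{qvwnr} for the $W$-term, and the appeal to Sturm's computation for the branching bracket are exactly the ingredients the paper uses. The only (harmless) implementation differences are that the paper applies Skorohod jointly to the quadruple $(X^{n_k},M^{b,n_k},B^{n_k},U^{n_k})$ (after Mitoma's theorem gives C-tightness of the martingale parts in $D([0,T];\mathscr{S}')$) and identifies the limiting bracket via Perkins' convergence theorem for quadratic variations, whereas you apply Skorohod only to $X^{n}$ and recover the martingale property and the bracket by testing $M_t(\phi)$ and $M_t(\phi)^2-\Gamma_t(\phi)$ against functionals $g(X_{s_1},\dots,X_{s_m})$.
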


\begin{proof}

Let $\{X^{n_k}\}_{k\geq 1}$  be a weakly convergent subsequence in $D([0,T]; M_F(\R^d))$. By taking further subsequences, we can assume, in view of Lemma \ref{tight} (ii), that  $\{B^{n_k}\}_{k\geq 1}$, $\{M^{b, n_k}\}_{k\geq 1}$, and $\{U^{n_k}\}_{k\geq 1}$ are weakly convergent subsequences in $D([0,T]; \mathscr{S}')$.

Then, by Skorohod's representation theorem, there exists a probability space $(\widetilde{\Omega}, \widetilde{\mathcal{F}}, \widetilde{\mathbb{P}})$, on which $(\widetilde{X}^{n_k}, \widetilde{M}^{b,n_k}, \widetilde{B}^{n_k}, \widetilde{U}^{n_k})$  has the same joint distribution as \break $(X^{n_k},  M^{b,n_k}, B^{n_k}, U^{n_k})$ for all $k\geq 1$, and converge a.s. to $(\widetilde{X},  \widetilde{M}^b, \widetilde{B}, \widetilde{U})$ in the product space $D([0,T],M_F(\widehat{\R}^d))\times D([0,T],\mathscr{S}')^3$.

Then, for any $\phi\in \mathscr{S}'$, $(\widetilde{X}^{n_k}(\phi), \widetilde{M}^{b,n_k}(\phi), \widetilde{B}^{n_k}(\phi), \widetilde{U}^{n_k}(\phi))$ converges a.s. in $D([0,T],\R)$. Since $\displaystyle\Big\{\sup_{0\leq t\leq T}|X_t^n(\phi)|^2, \sup_{0\leq t\leq T}|M_t^{b,n}(\phi)|^2, \sup_{0\leq t\leq T}|U_t^n(\phi)|^2\Big\}_{n\geq 1}$ is uniformly integrable, the convergence is also in $L^2([0,T]\times \Omega)$.

For any $t\in[0,T]$, let 
\[
\widetilde{M}_t^{n_k}(\phi):=\widetilde{X}^{n_k}_t(\phi)-\widetilde{X}^{n_k}_0(\phi)-\int_0^t\widetilde{X}^{n_k}_s(A\phi)ds=\widetilde{M}^{b,n_k}_t(\phi)+\widetilde{B}^{n_k}_t(\phi)+\widetilde{U}^{n_k}_t(\phi).
\]
Then, it converges to a continuous and  square integrable martingale $\widetilde{M}(\phi)=\widetilde{M}^{b}(\phi)+\widetilde{U}(\phi)$ in $L^2(\widetilde{\Omega})$. It suffices to compute its quadratic variation.

Notice that $W$ and $\{B^{\alpha}\}$ are independent, then $U^{n}$ and $B^{n}$ are orthogonal. As a consequence, $\widetilde{U}^{n_k}$ and $\widetilde{B}^{n_k}$ are also orthogonal. On the other hand, $\widetilde{M}^{b,n}(\phi)$ is a pure jump martingale, while $\widetilde{U}^{n_k}(\phi)$ and $\widetilde{B}^{n_k}(\phi)$ are continuous martingales. Due to Theorem 43 on page 353 of Dellacherie and Meyer \cite{northholland-82-dellacherie-meyer}, $\widetilde{M}^{b,n}(\phi)$,  $\widetilde{B}^{n_k}(\phi)$ and $\widetilde{U}^{n_k}(\phi)$ are mutually orthogonal. By the same argument as in Lemma \ref{ublxmu}, we can show that $\langle\widetilde{M}^{b,n_k}(\phi)+\widetilde{B}^{b,n_k}(\phi)+\widetilde{U}^{n_k}(\phi)\rangle_t=\langle\widetilde{M}^{b,n_k}(\phi)\rangle_t+\langle\widetilde{B}^{b,n_k}(\phi)\rangle_t+\langle\widetilde{U}^{n_k}(\phi)\rangle_t$ are uniformly integrable. Then, by Theorem II.4.5 of Perkins \cite{springer-02-perkins}, we have
\begin{align*}
\langle\widetilde{M}^{b,n_k}(\phi)+\widetilde{B}^{b,n_k}(\phi)+\widetilde{U}^{n_k}(\phi)\rangle_t&=\langle\widetilde{M}^{b,n_k}(\phi)\rangle_t+\langle\widetilde{B}^{b,n_k}(\phi)\rangle_t+\langle\widetilde{U}^{n_k}(\phi)\rangle_t \\
&\to\langle\widetilde{M}^{b}(\phi)\rangle_t+\langle\widetilde{U}(\phi)\rangle_t= \langle \widetilde{M}(\phi)\rangle_t
\end{align*}
as $k\to\infty$ in $D([0,T],\R)$ in probability. 

On the other hand, by the same argument of Lemma 3.8 of Sturm \cite{ejp-03-sturm}, we have
\begin{align*}
\langle \widetilde{M}^b(\phi)\rangle_t=\lim_{k\to\infty}\langle \widetilde{M}^{b,n_k}(\phi)\rangle_t=\int_0^t\int_{\mathbb{R}^d\times\mathbb{R}^d}\kappa(x,y)\phi(x)\phi(y)\widetilde{X}_s(dx)\widetilde{X}_s(dy)ds,\ a.s.
\end{align*}
For $\langle \widetilde{U}(\phi)\rangle_t$, by (\ref{qvwnr}), since $\widetilde{X}^{n_k}(\phi)\to \widetilde{X}(\phi)$ in $L^2([0,T]\times \Omega)$, it follows that
\begin{align*}
\lim_{k\to\infty}\langle\widetilde{U}^{n_k}(\phi)\rangle_t=&\lim_{k\to\infty}\int_0^t\int_{\mathbb{R}^d\times\mathbb{R}^d}\nabla \phi(x)^*\rho(x-z)\nabla\phi(z)\widetilde{X}^n_u(dx)\widetilde{X}^n_u(dz)du\\
=&\int_0^t\int_{\mathbb{R}^d\times\mathbb{R}^d}\nabla \phi(x)^*\rho(x-z)\nabla\phi(z)\widetilde{X}_u(dx)\widetilde{X}_u(dz)du.
\end{align*}
As a consequence, $\widetilde{M}=\{\widetilde{M_t},t\in[0,T]\}$, where
\[
\widetilde{M}_t(\phi)=\widetilde{X}_t(\phi)-\widetilde{X}_0(\phi)-\int_0^t\widetilde{X}_s(A\phi)ds=\widetilde{M}^b_t(\phi)+\widetilde{B}_t(\phi)+\widetilde{U}_t(\phi),
\]
is a continuously square integrate martingale with the quadratic variation given by  the expression (\ref{qvmp}) .

Finally, by the same argument as in Theorem II.4.2 of Perkins \cite{springer-02-perkins}, we can show $\widetilde{M}(\phi)$ is an  $\mathcal{F}^{\widetilde{X}}$-adapted martingale.
\end{proof}

\subsection{Absolute continuity} Let $X_t$ be a solution to the martingale problem (\ref{mprf}) - (\ref{qvmp}).  In this section, we show that for almost every $t\in[0,T]$, as an $M_F(\R^d)$-valued random variable, $X_t$ has a density almost surely. 

 For any $n \geq 1$, $f\in C_b^2(\R^{nd})$, and $\mu\in M_F(\R^d)$, we define
\[
\mu^{\otimes n}(f):=\int_{\R^{d}}\cdots\int_{\R^d}f(x_1,\dots,x_n)\mu(dx_1)\cdots \mu(dx_n).
\]
We derive the moment formula $\E(X^{\otimes n}_t(f))$ of the process $X$. In the one dimensional case, Skoulakis and Adler \cite{aap-01-skoulakis-adler} obtained the formula by computing the limit of particle approximations. An alternative approach by Xiong \cite{ap-04-xiong} consists in differentiating a conditional stochastic log-Laplace equation. In the present paper we use the techniques of moment duality to derive the moment formula. It can be also formulated by computing the limit of particle approximations.

For any integers $n\geq 2$ and $k\leq n$, we make use of the notation $x_k=(x_k^1,\dots, x_k^d)\in \R^d$ and $x=(x_1,\dots, x_n)\in\R^{nd}$. Let $\Phi_{ij}^{(n)}: C_b^2(\R^{nd}) \to C_b^2(\R^{nd})$, and $F^{(n)}, G^{(n)}: C_b^{2}(\R^{nd})\times M_F(\R^d)\to \R$ be given by
\[
(\Phi_{ij}^{(n)}f)(x_1,\dots,x_n):=\kappa(x_i,x_j)f(x_1,\dots,x_n),\quad i,j\in\{1,2,\dots, n\},
\]
\[
F^{(n)}(f, \mu):=\mu^{\otimes n}(f),
\]
and
\begin{align*}
G^{(n)}(f, \mu):=\mu^{\otimes n} (A^{(n)}f)+\frac{1}{2}\sum_{\mbox{\tiny$\begin{array}{c}
1\le i,j \le n\\
i\neq j\end{array}$}}\mu^{\otimes n} (\Phi^{(n)}_{ij}f),
\end{align*}
where $\kappa\in C_b^2(\R^{2d})$ is the correlation of the random field $\xi$ given by (\ref{crlt}), and $A^{(n)}$ is the generator of the $n$-particle-motion described by (\ref{pmern}). More precisely,
\begin{align*}
A^{(n)}f(x_1,\dots, x_n)=\frac{1}{2}(\Delta +B^{(n)})f(x_1,\dots, x_n),
\end{align*}
where $\Delta$ is the Laplace operator in $\R^{nd}$ and 
\begin{align*}
B^{(n)}f(x_1,\dots, x_n)=\sum_{k_1,k_2=1}^n\sum_{i,j=1}^d\rho^{ij}(x_{k_1}-x_{k_2})\frac{\partial^2 f}{\partial x_{k_1}^{i}\partial x_{k_2}^{j}}(x_1,\dots,x_n).
\end{align*}
\begin{lemma}\label{mxf}
Let $X_t$ be a solution to the martingale problem (\ref{mprf}) - (\ref{qvmp}). Then, for any $n\geq 1$ and $n\geq 2$, $f\in C_b^2(\R^{nd})$, the following process
\[
F^{(n)}(f, X_t)-\int_0^t G^{(n)}(f, X_s)ds
\]
is a martingale.
\end{lemma}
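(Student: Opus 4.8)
The plan is to apply the martingale problem \eqref{mprf}--\eqref{qvmp} repeatedly and use an It\^{o}/integration-by-parts argument for products of the real-valued martingales $X_t(\phi)$. The natural strategy is induction on $n$. For $n=1$, the statement is exactly the definition of the martingale problem: $F^{(1)}(\phi,X_t)-\int_0^t G^{(1)}(\phi,X_s)\,ds = M_t(\phi)$ is a martingale, since for $n=1$ there is no off-diagonal $\kappa$-term and $A^{(1)}=A$. For the inductive step, the idea is to view $F^{(n)}(f,X_t)$ as (a limit of linear combinations of) products $\prod_{k=1}^n X_t(\phi_k)$ when $f(x_1,\dots,x_n)=\phi_1(x_1)\cdots\phi_n(x_n)$ is of product form, and then extend to general $f\in C_b^2(\R^{nd})$ by an approximation argument.

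The key computational step is to apply the product rule to $\prod_{k=1}^n X_t(\phi_k)$. Writing $X_t(\phi_k)=X_0(\phi_k)+\int_0^t X_s(A\phi_k)\,ds+M_t(\phi_k)$, the It\^{o} formula for the product of semimartingales produces three types of terms: (i) the drift contributions $\sum_k \big(\prod_{l\neq k}X_s(\phi_l)\big)X_s(A\phi_k)\,ds$, which assemble into $X_s^{\otimes n}\big(\sum_k \frac12(\Delta+B^{(1)}_k)f\big)$ — but this only accounts for the $k_1=k_2$ part of $B^{(n)}$; (ii) the mutual quadratic-covariation terms $\sum_{i\neq j}\big(\prod_{l\neq i,j}X_s(\phi_l)\big)\,d\langle M(\phi_i),M(\phi_j)\rangle_s$, and here one must polarize the quadratic variation formula \eqref{qvmp} to get
\[
d\langle M(\phi_i),M(\phi_j)\rangle_s=\int_{\R^d\times\R^d}\nabla\phi_i(x)^*\rho(x-y)\nabla\phi_j(y)\,X_s(dx)X_s(dy)\,ds+\int_{\R^d\times\R^d}\kappa(x,y)\phi_i(x)\phi_j(y)\,X_s(dx)X_s(dy)\,ds;
\]
the $\rho$-part supplies exactly the missing $k_1\neq k_2$ cross terms of $B^{(n)}$, and the $\kappa$-part supplies $\frac12\sum_{i\neq j}X_s^{\otimes n}(\Phi^{(n)}_{ij}f)$; (iii) a sum of stochastic integrals against the $M(\phi_k)$, which is a local martingale. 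Collecting (i) and (ii) shows the bounded-variation part is precisely $\int_0^t G^{(n)}(f,X_s)\,ds$, so the difference $F^{(n)}(f,X_t)-\int_0^t G^{(n)}(f,X_s)\,ds$ is a local martingale; the moment bounds from Lemma \ref{ublxmu} (or the analogous bounds for the limiting $X$) upgrade it to a true martingale.

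The main obstacle, and the part requiring genuine care, is the justification of the product-rule computation itself: strictly speaking $X_t(\phi)$ for $\phi\in C_b^2$ is a martingale with the stated quadratic variation, but to run It\^{o}'s formula on the $n$-fold product one needs enough integrability of the products $\prod X_s(\phi_k)$ and of the covariation densities — this is where the uniform $L^p$ bounds and the finiteness of $X_s(1)$ are used, together with a localization-then-uniform-integrability argument to remove the localizing stopping times. A secondary technical point is the passage from product-form $f$ to general $f\in C_b^2(\R^{nd})$: one approximates $f$ and its first and second derivatives uniformly on compacts by linear combinations of products (e.g.\ via a partition of unity and polynomial/Fourier approximation), checks that $A^{(n)}f$ and $\Phi^{(n)}_{ij}f$ converge correspondingly, and uses dominated convergence against the finite measures $X_s^{\otimes n}$ together with the moment bounds to pass to the limit in both $F^{(n)}$ and $\int_0^t G^{(n)}\,ds$. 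Once these approximation and integrability issues are handled, the identification of the drift is a purely algebraic matching of the $k_1=k_2$ and $k_1\neq k_2$ parts of $B^{(n)}$ with the $\rho$-covariations, and of the diagonal-free $\kappa$-sum with the $\Phi^{(n)}_{ij}$ terms.
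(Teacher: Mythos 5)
Your proposal is correct and is essentially the argument the paper relies on: the paper's printed proof simply cites Lemma 1.3.2 of Xiong \cite{ws-13-xiong}, whose proof (and the sketch the authors themselves drafted) is exactly your computation — apply It\^{o}'s product rule to $\prod_k X_t(\phi_k)$, match the diagonal part of $B^{(n)}$ with the single-particle generators and the off-diagonal part with the polarized $\rho$- and $\kappa$-covariations from \eqref{qvmp}, and extend from product-form $f$ to general $f\in C_b^2(\R^{nd})$ by a Stone--Weierstrass/density argument. No gaps worth flagging.
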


\begin{proof}
See Lemma 1.3.2 of Xiong \cite{ws-13-xiong}.
\end{proof}

Let $\{T_t^{(n)}\}_{t\geq 0}$ be the semigroup generated by $A^{(n)}$, that is, $T^{(n)}_t: C_b^2 (\R^{nd})\to C_b^2 (\R^{nd})$, given by
\[
T_t^{(n)}f(x_1,\dots, x_n)=\int_{\R^{nd}}p(t, (x_1,\dots, x_n), (y_1,\dots, y_n))f(y_1,\dots, y_n)dy_1\dots dy_n,
\]
where $p$ is the transition density of $n$-particle-motion. 

Let $\{S_k^{(n)}\}_{k\geq 1}$ be i.i.d. uniformly distributed random variables taking values in the set $\{\Phi_{ij}, 1\leq i,j\leq n, i\neq j\}$. Let $\{\tau_k\}_{k\geq 1}$ be i.i.d exponential random variables independent of $\{S_k^{(n)}\}_{k\geq 1}$, with rate $\lambda_n=\frac{1}{2}n(n-1)$. Let $\eta_0\equiv 0$, and $\eta_j=\sum_{i=1}^j\tau_i$ for all $j\geq 1$.  For any $f\in C_b^2(\R^{nd})$, we define a $C_b^2(\R^{nd})$-valued random process $Y^{(n)}=\{Y_t^{(n)}, 0\leq t\leq T\}$ as follows: for any $j\geq 0$ and $t\in[\eta_j, \eta_{j+1})$,
\begin{align}\label{dual}
Y^{(n)}_t:=T^{(n)}_{t-\eta_j}S^{(n)}_jT^{(n)}_{\tau_j}\cdots S^{(n)}_2T^{(n)}_{\tau_2}S^{(n)}_1T^{(n)}_{\tau_1}f.
\end{align}
Then, $Y^{(n)}$ is a Markov process with $Y^{(n)}_0=f$. It involves countable many i.i.d. jumps $S^{(n)}_k$, controlled by i.i.d. exponential clocks $\tau_k$. In between jumps, the process evolves deterministically by the continuous semigroup $T^{(n)}_t$. Notice that the exponential clock is memoryless, and the semigroup $T^{(n)}_t$ is generated by a time homogeneous Markov process. Therefore, $Y^{(n)}$ is also time homogeneous. 
\begin{lemma}\label{ingbty}
For any $f\in C_b^2(\R^{nd})$, let $Y^{(n)}_t$ be defined in (\ref{dual}). Then
\begin{align}\label{ifyny}
\E\Big(\sup_{x\in\R^{nd}}\big|Y^{(n)}_t(x)\big|\Big) \leq \|f\|_{\infty}\exp\left(\|\kappa\|_{\infty}\lambda_nt\right).
\end{align}
\end{lemma}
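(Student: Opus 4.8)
The plan is to prove the bound \eqref{ifyny} by conditioning on the realization of the exponential clocks $\{\tau_k\}_{k\geq 1}$ (equivalently, on the jump times $\{\eta_j\}_{j\geq 0}$), decomposing the expectation according to how many jumps $N_t := \max\{j\geq 0 : \eta_j \leq t\}$ have occurred by time $t$, and then using two elementary observations: first, that each semigroup $T^{(n)}_s$ is a contraction on $(C_b(\R^{nd}), \|\cdot\|_\infty)$ since it is the transition semigroup of a (conservative) Markov process, so $\|T^{(n)}_s g\|_\infty \leq \|g\|_\infty$; and second, that each jump operator $S^{(n)}_k$ is of the form $\Phi^{(n)}_{ij}$, which is multiplication by $\kappa(x_i,x_j)$, hence $\|S^{(n)}_k g\|_\infty \leq \|\kappa\|_\infty \|g\|_\infty$. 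Applying these alternately along the product defining $Y^{(n)}_t$ in \eqref{dual}, one gets the pathwise bound $\sup_{x}|Y^{(n)}_t(x)| \leq \|\kappa\|_\infty^{N_t}\,\|f\|_\infty$.

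Next I would take expectations. Since $N_t$ is exactly the number of arrivals by time $t$ of a Poisson process with rate $\lambda_n = \frac12 n(n-1)$ (because the $\tau_k$ are i.i.d.\ exponential with rate $\lambda_n$ and $\eta_j = \sum_{i=1}^j \tau_i$), we have $P(N_t = j) = e^{-\lambda_n t}(\lambda_n t)^j/j!$. Therefore
\begin{align*}
\E\Big(\sup_{x\in\R^{nd}}\big|Y^{(n)}_t(x)\big|\Big) \leq \|f\|_\infty \,\E\big(\|\kappa\|_\infty^{N_t}\big) = \|f\|_\infty \sum_{j=0}^\infty \|\kappa\|_\infty^{j}\, e^{-\lambda_n t}\frac{(\lambda_n t)^j}{j!} = \|f\|_\infty\, e^{-\lambda_n t} e^{\|\kappa\|_\infty \lambda_n t},
\end{align*}
which is bounded by $\|f\|_\infty \exp(\|\kappa\|_\infty \lambda_n t)$ whenever $\|\kappa\|_\infty \geq 1$, and in any case by $\|f\|_\infty \exp(\|\kappa\|_\infty \lambda_n t)$ after noting $e^{-\lambda_n t}\leq 1$ — actually the cleaner statement is simply $\E(\|\kappa\|_\infty^{N_t}) = \exp(\lambda_n t(\|\kappa\|_\infty - 1)) \leq \exp(\lambda_n t \|\kappa\|_\infty)$, giving \eqref{ifyny} directly.

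A couple of technical points need care. One should check that $f \in C_b^2(\R^{nd})$ guarantees each intermediate function in \eqref{dual} stays in $C_b^2$ so that the operators are well-defined on it: $T^{(n)}_s$ maps $C_b^2$ to $C_b^2$ as stated in the definition of the semigroup, and multiplication by $\kappa \in C_b^2(\R^{2d})$ preserves $C_b^2$, so this is fine, and in particular all the $\sup_x$ quantities are finite random variables. One should also confirm the measurability/integrability needed to interchange expectation and the infinite sum — this is justified by Tonelli's theorem since all terms are nonnegative. I do not expect any genuine obstacle here; the only mild subtlety is making explicit that the number of jumps in $[0,t]$ is Poisson$(\lambda_n t)$-distributed and that the pathwise submultiplicative estimate is uniform over the (random) jump times, which is exactly where the contraction property of $T^{(n)}_s$ is used to absorb the semigroup factors regardless of the interval lengths $t - \eta_j$ and $\tau_i$.
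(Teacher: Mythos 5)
Your proof is correct and follows essentially the same route as the paper: the pathwise bound via the contraction property of $T^{(n)}_s$ and the estimate $\|S^{(n)}_k g\|_\infty\leq\|\kappa\|_\infty\|g\|_\infty$, followed by summing over the number of jumps governed by the rate-$\lambda_n$ Poisson clock. The only cosmetic difference is that you use the exact Poisson probabilities $P(N_t=j)=e^{-\lambda_n t}(\lambda_n t)^j/j!$ while the paper bounds via $P(\eta_j<t)\leq(\lambda_n t)^j/j!$; your version in fact yields the marginally sharper constant $\exp(\lambda_n t(\|\kappa\|_\infty-1))$.
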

\begin{proof}
Since $T^{(n)}_t$ is the semigroup generated by a Markov process, for any $t>0$ and $f\in C_b^{2}(\R^{nd})$, $\|T^{(n)}_tf\|_{\infty}\leq \|f\|_{\infty}$. By definition of the jump operators $\{S^{(n)}_j\}_{j\geq 1}$, it is easy to see that $\|S^{(n)}_jf\|_{\infty}\leq \|\kappa\|_{\infty}\|f\|_{\infty}$. Thus we have
\begin{align}\label{ifyny1}
\E\Big(\sup_{x\in\R^{nd}}\big|Y^{(n)}_t(x)\big|\Big) \leq\|f\|_{\infty} \sum_{j=0}^{\infty}\big[\|\kappa\|_{\infty}^j\mathbb{P}(\eta_j<t)\big].
\end{align}
Notice that $\eta_j$ is the sum of i.i.d. exponential random variables. Then, we have
\begin{align}\label{ifyny2}
\mathbb{P}(\eta_j<t)=1-\exp\left(-\lambda_nt\right)\sum_{k=0}^{j-1}\frac{(\lambda_nt)^k}{k!}=\exp(\lambda_n(t'-t))\frac{(\lambda_nt)^j}{j!},
\end{align}
for some $t'\in (0,t)$. Therefore, (\ref{ifyny}) follows from (\ref{ifyny1}) and (\ref{ifyny2}).
\end{proof}
Let $H^{(n)}:C_b^2(\R^{nd})\times M_F(R^d)\to \R$ be given by
\[
H^{(n)}(f,\mu):=G^{(n)}(f,\mu)-\lambda_nF^{(n)}(f,\mu).
\]
\begin{lemma}\label{mxdual}
Let $\mu\in M_F(\R^d)$, Then, the process
\begin{align}\label{mrgdual}
F^{(n)}(Y^{(n)}_t, \mu)-\int_0^t H^{(n)}(Y^{(n)}_s, \mu)ds
\end{align}
is a martingale.
\end{lemma}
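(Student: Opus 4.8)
The plan is to combine Lemma \ref{mxf} (the martingale property for fixed test functions) with the structure of the process $Y^{(n)}$ as a piecewise-deterministic Markov process. Fix $\mu\in M_F(\R^d)$. First I would condition on the jump times $\{\eta_j\}_{j\ge 0}$ and the jump operators $\{S^{(n)}_j\}_{j\ge 1}$. On each random interval $[\eta_j,\eta_{j+1})$ the process $Y^{(n)}_t = T^{(n)}_{t-\eta_j} g_j$ is deterministic, where $g_j := S^{(n)}_j T^{(n)}_{\tau_j}\cdots S^{(n)}_1 T^{(n)}_{\tau_1} f$ is $\mathcal{F}_{\eta_j}$-measurable. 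Since $t\mapsto T^{(n)}_t g_j$ solves the Kolmogorov equation $\frac{d}{dt}T^{(n)}_t g_j = A^{(n)} T^{(n)}_t g_j$ in $C_b^2$, I can differentiate $F^{(n)}(Y^{(n)}_t,\mu) = \mu^{\otimes n}(T^{(n)}_{t-\eta_j} g_j)$ in $t$ on that interval to get $\frac{d}{dt}F^{(n)}(Y^{(n)}_t,\mu) = \mu^{\otimes n}(A^{(n)} Y^{(n)}_t)$. Adding and subtracting the off-diagonal $\Phi$-terms, this equals $G^{(n)}(Y^{(n)}_t,\mu) - \lambda_n F^{(n)}(Y^{(n)}_t,\mu) = H^{(n)}(Y^{(n)}_t,\mu)$ precisely because $\lambda_n = \frac{1}{2}n(n-1)$ is the number of ordered off-diagonal pairs, so $-\lambda_n F^{(n)}(Y^{(n)}_t,\mu)$ accounts exactly for the ``missing'' $\frac12\sum_{i\ne j}\mu^{\otimes n}(\Phi^{(n)}_{ij} Y^{(n)}_t)$ relative to $G^{(n)}$. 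Hence between jumps the integrand of the compensator is exactly the time-derivative of $F^{(n)}$, so the process in \eqref{mrgdual} is constant in $t$ on each deterministic stretch.

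Next I would handle the jumps. At time $\eta_{j+1}$ the function jumps from $T^{(n)}_{\tau_{j+1}} g_j$ to $g_{j+1} = S^{(n)}_{j+1} T^{(n)}_{\tau_{j+1}} g_j$. The key computation is that the expected jump size of $F^{(n)}(Y^{(n)}_\cdot,\mu)$, given the past, is compensated by the expected contribution of the $H^{(n)}$-integral accumulated over the preceding interval. Concretely, since $S^{(n)}_{j+1}$ is uniform over $\{\Phi^{(n)}_{ik}: i\ne k\}$ (there are $\lambda_n \cdot 2 / \ldots$ — precisely $n(n-1)$ choices, i.e. $2\lambda_n$; I should double check the normalization, but the uniform average over ordered pairs produces exactly $\frac{1}{n(n-1)}\sum_{i\ne k}\Phi^{(n)}_{ik}$) and $\tau_{j+1}$ is exponential with rate $\lambda_n$, the standard PDMP martingale identity gives that the generator of the Markov process $Y^{(n)}$ applied to the linear functional $g\mapsto F^{(n)}(g,\mu)$ equals $H^{(n)}(\cdot,\mu)$. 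Rather than redo the PDMP theory, the cleanest route is: write the expectation of \eqref{mrgdual} at time $t$ by conditioning on the number of jumps $J_t = \max\{j:\eta_j\le t\}$, use Lemma \ref{ingbty} (which gives the $L^1$-bound $\|f\|_\infty \exp(\|\kappa\|_\infty\lambda_n t)$, ensuring all sums and integrals converge absolutely and the interchange of expectation and summation is justified), and telescope the jump contributions against the between-jump integrals. The memorylessness of the exponential clocks and the i.i.d. structure of $(S^{(n)}_j,\tau_j)$ make the increments over $[s,t]$ have the claimed conditional-mean-zero property.

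Alternatively — and this is probably the shortest write-up — I would invoke Lemma \ref{mxf} directly: for a \emph{fixed} test function $g\in C_b^2(\R^{nd})$, $F^{(n)}(g,X_t) - \int_0^t G^{(n)}(g,X_s)ds$ is a martingale under the branching dynamics, and by a now-standard duality argument (the generator of $X_t$ acting on $F^{(n)}(\cdot,\mu)$ in the $\mu$-variable coincides, via the symmetry of $A^{(n)}$ and of $\Phi^{(n)}_{ij}$, with the generator of $Y^{(n)}$ acting in the function variable), the roles of $X$ and $Y^{(n)}$ can be exchanged; the extra $-\lambda_n F^{(n)}$ term is exactly the Feynman–Kac-type correction coming from the fact that $Y^{(n)}$'s jump operators $\Phi^{(n)}_{ij}$ are not probability kernels (they multiply by $\kappa$) whereas the exponential clock fires at rate $\lambda_n$. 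This is the content of Lemma 1.3.2 in Xiong \cite{ws-13-xiong}, and I would cite it while spelling out the PDMP verification above for completeness.

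The main obstacle I anticipate is not conceptual but bookkeeping: making the interchange of $\E$, the infinite sum over jump counts, and the time-integral rigorous, and getting the combinatorial constant in front of the $\Phi$-sum to match $\lambda_n$ exactly. The bound from Lemma \ref{ingbty} is tailor-made to control this — it shows $\E\sup_x|Y^{(n)}_t(x)| < \infty$ with an explicit exponential bound — so the dominated-convergence / Fubini steps go through, but I would need to be careful that $G^{(n)}(Y^{(n)}_s,\mu)$ is itself integrable in $(s,\omega)$, which requires also controlling $\|A^{(n)} Y^{(n)}_s\|_\infty$; this needs the assumption $h^{ij}\in H^3_2$ (hence $\rho\in C_b^2$ and $\kappa\in C_b^2$) so that $T^{(n)}_t$ and the $\Phi^{(n)}_{ij}$ preserve $C_b^2$ with bounded operator norm on the relevant time scales.
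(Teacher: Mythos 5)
Your overall strategy (treat $Y^{(n)}$ as a piecewise-deterministic Markov process, compute its generator on the linear functional $g\mapsto F^{(n)}(g,\mu)$, and use Lemma \ref{ingbty} to justify the integrability and Fubini steps) is the same as the paper's: the paper conditions on the events $\{\tau_1>t\}$ and $\{\eta_1\le t<\eta_2\}$, computes $\lim_{t\downarrow 0}t^{-1}\big[\E\big(\mu^{(n)}(Y^{(n)}_t)\big)-\mu^{(n)}(f)\big]=\mu^{(n)}(A^{(n)}f)+\tfrac12\sum_{i\neq j}\mu^{(n)}(\Phi^{(n)}_{ij}f-f)$ for an arbitrary finite measure $\mu^{(n)}$ on $\R^{nd}$, and then invokes Proposition 4.1.7 of Ethier--Kurtz before specializing to $\mu^{(n)}=\mu^{\otimes n}$. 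Your second paragraph describes essentially this.

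However, your first paragraph contains a step that is simply false and is inconsistent with the rest of your argument. Between jumps the deterministic flow gives $\frac{d}{dt}F^{(n)}(Y^{(n)}_t,\mu)=\mu^{\otimes n}(A^{(n)}Y^{(n)}_t)$ only; this is \emph{not} equal to $H^{(n)}(Y^{(n)}_t,\mu)=\mu^{\otimes n}(A^{(n)}Y^{(n)}_t)+\tfrac12\sum_{i\neq j}\mu^{\otimes n}(\Phi^{(n)}_{ij}Y^{(n)}_t)-\lambda_n\mu^{\otimes n}(Y^{(n)}_t)$. Your claim that ``$-\lambda_n F^{(n)}$ accounts exactly for the missing $\Phi$-sum'' amounts to asserting $\tfrac12\sum_{i\neq j}\mu^{\otimes n}(\Phi^{(n)}_{ij}g)=\lambda_n\mu^{\otimes n}(g)$, which holds only if $\kappa\equiv 1$, since $\Phi^{(n)}_{ij}$ multiplies by $\kappa(x_i,x_j)$ and the count of ordered off-diagonal pairs is $n(n-1)=2\lambda_n$, not $\lambda_n$ per term. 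Consequently the process \eqref{mrgdual} is \emph{not} constant between jumps; it carries the residual drift $-\big[\tfrac12\sum_{i\neq j}\mu^{\otimes n}(\Phi^{(n)}_{ij}Y^{(n)}_s)-\lambda_n\mu^{\otimes n}(Y^{(n)}_s)\big]$, and it is exactly this residual that is cancelled in conditional expectation by the jumps, which occur at rate $\lambda_n$ with average effect $\frac{1}{n(n-1)}\sum_{i\neq j}(\Phi^{(n)}_{ij}-\mathrm{id})$. If you delete the erroneous identification in the first paragraph and carry out the compensation bookkeeping of your second paragraph (or, more cleanly, the paper's short-time expansion plus the Ethier--Kurtz criterion), the proof goes through; as written, the telescoping argument you describe would be compensating a drift that you have incorrectly declared to vanish.
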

\begin{proof}
Let $\mu^{(n)}$ be any finite measure on $\R^{nd}$. Then, we have
\begin{align}\label{eexclc}
&\E \big(\mu^{(n)}(Y^{(n)}_t)\big)=\E\big(\mu^{(n)}(Y^{(n)}_t)\1_{\{\tau_1>t\}}\big)+\E\big(\mu^{(n)}(Y^{(n)}_t)\1_{\{\eta_1\leq t<\eta_2\}}\big)+o(t).
\end{align}
For the first term, we have
\begin{align}\label{eexclc0}
\E\big(\mu^{(n)}(Y^{(n)}_t)\1_{\{\tau_1>t\}}\big)=&\mu^{(n)}(T^{(n)}_tf) \mathbb{P}(\tau_1>t)=\mu^{(n)}(T^{(n)}_tf) \exp(-\lambda_nt)).
\end{align}
For the second term, since $\tau_1$, $\tau_2$ are independent, then for any $0\leq s\leq t$, we have
\begin{align}\label{p2iide}
\mathbb{P}(\tau_1+\tau_2>t,\tau_1\leq s)=\int_0^s\int_{t-s_1}^{\infty}\lambda_n^2\exp(-\lambda_n(s_1+s_2))ds_2ds_1=\lambda_nse^{-\lambda_nt}.
\end{align}
Note that by Lemma \ref{ingbty}, $|Y^{(n)}_{\cdot}|$ is integrable on $[0,T]\times \R^{nd}\times \Omega$ with respect to the product measure $dt \times \mu^{(n)}(dx)\times P(d\omega)$. Then, by (\ref{p2iide}), Fubini's theorem, and the mean value theorem, we have
\begin{align}\label{eexclc1}
&\E\big(\mu^{(n)}(Y^{(n)}_t)\1_{\{\eta_1\leq t<\eta_2\}}\big)\nonumber\\
=&\frac{1}{2}\sum_{\mbox{\tiny$\begin{array}{c}
1\le i,j \le n\\
i\neq j\end{array}$}}\int_0^t\int_{\R^{nd}}\big(T^{(n)}_{t-s}\Phi^{(n)}_{ij}T^{(n)}_sf\big)(x)\exp\left(-\lambda_nt\right)\mu^{(n)}(dx)ds\nonumber\\
=&\frac{t}{2}\exp\left(-\lambda_nt\right)\sum_{\mbox{\tiny$\begin{array}{c}
1\le i,j \le n\\
i\neq j\end{array}$}}\int_{\R^{nd}}\big(T^{(n)}_{t-t'}\Phi_{ij}^{(n)}T^{(n)}_{t'}f\big)(x)\mu^{(n)}(dx),
\end{align}
for some $t'\in(0,t)$. Combining (\ref{eexclc}), (\ref{eexclc0}), and (\ref{eexclc1}), we have
\[
\lim_{t\downarrow 0}\frac{\E\big( \mu^{(n)}(Y^{(n)}_t)\big)-\mu^{(n)}(f)}{t}=\mu^{(n)}(A^{(n)}f)+\frac{1}{2}\sum_{\mbox{\tiny$\begin{array}{c}
1\le i,j \le n\\
i\neq j\end{array}$}}\mu^{(n)}\big(\Phi^{(n)}_{ij}f-f\big).
\]
By Proposition 4.1.7 of Ethier and Kurtz \cite{wiley-86-ethier-kurtz}, the following process:
\begin{align}\label{mtgyg}
\mu^{(n)}(Y^{(n)}_t)-\int_0^t \Big[\mu^{(n)}(A^{(n)} Y_s^{(n)})+\frac{1}{2}\sum_{\mbox{\tiny$\begin{array}{c}
1\le i,j \le n\\
i\neq j\end{array}$}}\mu^{(n)}(\Phi^{(n)}_{ij}Y_s^{(n)}-Y_s^{(n)})\Big]ds,
\end{align}
is a martingale. Then, the lemma follows by choosing $\mu^{(n)}=\mu^{\otimes n}$.
\end{proof}

By Lemma \ref{mxf}, \ref{mxdual} and Corollary 3.2 of Dawson and Kurtz \cite{springer-82-dawson-kurtz}, we have the following moment identity:
\begin{align}\label{mmtidt}
\E \big(X_t^{\otimes n}(f)\big)=\E\bigg[X^{\otimes n}_0(Y^{(n)}_t)\exp\Big(\int_0^t\lambda_nds\Big)\bigg]=\exp\Big(\frac{1}{2}n(n-1)t\Big)\E\big(X^{\otimes n}_0(Y^{(n)}_t)\big).
\end{align}
\begin{lemma}\label{lmfif}
Fix $f\in C_b^2(\R^{nd})$.
\begin{enumerate}[(i)] 
\item The following PDE
\begin{align}\label{dualpde}
\partial_tv_t(x)=A^{(n)}v_t(x)+\frac{1}{2}\sum_{\mbox{\tiny$\begin{array}{c}
1\le i,j \le n\\
i\neq j\end{array}$}}\kappa(x_i,x_j)v(t,x),
\end{align}
with initial value $v_0(x)=f(x)$, has a unique solution.
\item Let $X=\{X_t,t\in[0,T]\}$ be a solution to the martingale problem (\ref{mprf}) - (\ref{qvmp}). Then,
 \begin{align}\label{dualid}
\E \big(X_t^{\otimes n}(f)\big)=X_0^{\otimes n}(v_t).
\end{align}
\end{enumerate}
\end{lemma}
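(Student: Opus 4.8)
The plan is to handle the two parts together, with the semigroup $\{T^{(n)}_t\}_{t\ge 0}$ generated by $A^{(n)}$ as the common tool. Write $\kappa^{(n)}(x_1,\dots,x_n):=\sum_{i\neq j}\kappa(x_i,x_j)$, which is bounded because $\kappa\in C_b^2(\R^{2d})$, and let $\mathcal{A}^{(n)}:=A^{(n)}+\tfrac12\kappa^{(n)}(\cdot)$ be the operator appearing on the right-hand side of \eqref{dualpde}, so that $G^{(n)}(f,\mu)=\mu^{\otimes n}(\mathcal{A}^{(n)}f)$ and $H^{(n)}(f,\mu)=\mu^{\otimes n}(\mathcal{A}^{(n)}f-\lambda_n f)$.

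For part (i), recast \eqref{dualpde} in its mild form
\[
v_t=T^{(n)}_tf+\tfrac12\int_0^tT^{(n)}_{t-s}\big(\kappa^{(n)}v_s\big)\,ds,
\]
and solve it by a Banach fixed-point argument in $C([0,T];C_b(\R^{nd}))$: since $\|T^{(n)}_t\|\le 1$ and $\|\kappa^{(n)}\|_\infty<\infty$, the map on the right is a contraction for the norm $\sup_{t\le T}e^{-\lambda t}\|\cdot\|_\infty$ with $\lambda$ large (or one iterates over short subintervals). Uniqueness for bounded mild solutions — equivalently, for bounded solutions of the integrated equation $v_t=f+\int_0^t\mathcal{A}^{(n)}v_s\,ds$ — follows from Gronwall's lemma applied to $\|v_t-v'_t\|_\infty$. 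Because $\rho$ (hence the coefficients of $A^{(n)}$) and $\kappa$ are smooth, standard parabolic regularity upgrades this mild solution to a classical $C^{1,2}$ one; this regularity is what will let us feed $v_{t-s}$ into Lemma \ref{mxf} in part (ii).

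For part (ii), the key point is that the deterministic function $w_t(x):=\E\big(Y^{(n)}_t(x)\big)$ — finite for every $x$ by Lemma \ref{ingbty} — equals $e^{-\lambda_nt}v_t(x)$. To see this, take expectations in the martingale \eqref{mtgyg} from the proof of Lemma \ref{mxdual}, which is valid for an arbitrary finite measure $\mu^{(n)}$ on $\R^{nd}$; interchanging $\E$ with $\mu^{(n)}$ and with the differential operator $A^{(n)}$ (legitimate by the uniform bound of Lemma \ref{ingbty} on $\sup_x|Y^{(n)}_t(x)|$, together with analogous bounds on its $x$-derivatives obtained by differentiating the representation \eqref{dual}) yields
\[
\mu^{(n)}(w_t)=\mu^{(n)}(f)+\int_0^t\mu^{(n)}\big(\mathcal{A}^{(n)}w_s-\lambda_nw_s\big)\,ds ,
\]
and choosing $\mu^{(n)}=\delta_x$ gives $w_t(x)=f(x)+\int_0^t\big(\mathcal{A}^{(n)}w_s(x)-\lambda_nw_s(x)\big)\,ds$. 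Hence $\widetilde v_t:=e^{\lambda_nt}w_t$ satisfies $\widetilde v_t=f+\int_0^t\mathcal{A}^{(n)}\widetilde v_s\,ds$ with $\widetilde v_0=f$, and the uniqueness from part (i) forces $\widetilde v_t=v_t$. Plugging $w_t=e^{-\lambda_nt}v_t$ into the moment identity \eqref{mmtidt} and interchanging $\E$ with the finite measure $X_0^{\otimes n}$ (again by Fubini and Lemma \ref{ingbty}) gives
\[
\E\big(X_t^{\otimes n}(f)\big)=e^{\lambda_nt}\,\E\big(X_0^{\otimes n}(Y^{(n)}_t)\big)=e^{\lambda_nt}X_0^{\otimes n}(w_t)=X_0^{\otimes n}(v_t),
\]
which is \eqref{dualid}. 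Alternatively, one can avoid $Y^{(n)}$ altogether: Lemma \ref{mxf} gives $\tfrac{d}{ds}\E\big(X_s^{\otimes n}(g)\big)=\E\big(X_s^{\otimes n}(\mathcal{A}^{(n)}g)\big)$ for fixed $g\in C_b^2$, and since $\partial_sv_{t-s}=-\mathcal{A}^{(n)}v_{t-s}$, the map $s\mapsto\E\big(X_s^{\otimes n}(v_{t-s})\big)$ has zero derivative on $[0,t]$, so $\E\big(X_0^{\otimes n}(v_t)\big)=\E\big(X_t^{\otimes n}(v_0)\big)=\E\big(X_t^{\otimes n}(f)\big)$.

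The fixed-point and Gronwall steps in (i) are routine. The one delicate point is the passage, in (ii), from the probabilistic martingale identities of Lemmas \ref{mxf} and \ref{mxdual} to a genuine pointwise-in-$x$, differentiable-in-$t$ PDE statement for $w_t$ — that is, justifying the interchanges $\E\,\mu^{(n)}(\cdot)=\mu^{(n)}\,\E(\cdot)$ and $\E\,A^{(n)}(\cdot)=A^{(n)}\,\E(\cdot)$ and the time-integrability needed to differentiate — and, dovetailing with this, choosing the solution concept in (i) weak enough that $w_t$ (and, in the alternative route, $v_{t-s}$) qualifies while keeping uniqueness intact. Bounded mild (equivalently, integrated) solutions serve both purposes, and Lemma \ref{ingbty} supplies precisely the domination that makes the interchanges valid.
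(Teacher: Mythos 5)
Your part (ii) is essentially the paper's argument: you set $w_t(x)=\E(Y^{(n)}_t(x))$, pass the expectation through $\mu^{(n)}$ and $A^{(n)}$ using the domination from Lemma \ref{ingbty}, identify $w_t$ as the unique solution of the $\lambda_n$-shifted equation, multiply by $e^{\lambda_n t}$, and conclude from the moment duality (\ref{mmtidt}) — this is exactly what the paper does (its $\widetilde v_t$ is your $w_t$), and your ``alternative route'' via $\frac{d}{ds}\E(X_s^{\otimes n}(v_{t-s}))=0$ is a clean standard duality shortcut, though it quietly needs an extension of Lemma \ref{mxf} to time-dependent test functions. Where you genuinely diverge is part (i): the paper verifies that $A^{(n)}=\frac12(\Delta+B^{(n)})$ is uniformly parabolic with coefficients having three bounded derivatives and then invokes Friedman's classical existence/uniqueness theory for the fundamental solution $\Gamma$, whereas you treat the bounded zeroth-order term $\frac12\kappa^{(n)}$ as a perturbation of the Markov semigroup $T^{(n)}_t$ and run a fixed-point/Gronwall argument for bounded mild solutions. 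Your route is more elementary and, as you rightly emphasize, the mild/integrated solution concept is all that part (ii) actually requires, so your choice of solution class is arguably better matched to the use made of uniqueness; the one soft spot is the throwaway claim that ``standard parabolic regularity'' upgrades the mild solution to a classical one, which is precisely the nontrivial input the paper gets from Friedman. What the paper's heavier machinery buys — and what your approach does not deliver — is the fundamental solution $\Gamma$ together with its Gaussian upper bound (\ref{eubfs}), which is needed later in Lemma \ref{lcsprt} for the absolute-continuity argument; so for the lemma in isolation your proof is adequate, but it could not simply replace the paper's without losing that byproduct.
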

\begin{proof}
Firstly, we claim that the operator $A^{(n)}=\frac{1}{2}(\Delta+B^{(n)})$ is uniformly parabolic in the sense of Friedman (see Section 1.1 of \cite{courier-13-friedman}). Because $\Delta$ is uniformly parabolic, then it suffices to analyse the properties of $B^{(n)}$. For any $k=1,\dots, n$, $i=1,\dots,  d$, and $\xi_k^i\in\R$, let $\xi_k=(\xi_k^1,\dots,\xi_k^d)$. Then, we have
\begin{align*}
\sum_{k_1,k_2=1}^n\sum_{i,j=1}^d\rho^{ij}(x_{k_1}-x_{k_2})\xi_{k_1}^{i}\xi_{k_2}^{j}=\int_{\R^d}\bigg|\sum_{k=1}^nh^*(z-x_k)\xi_k\bigg|^2dz\geq 0.
\end{align*}
Thus $B^{(n)}$ is parabolic. On the other hand, by Jensen's inequality, we have
\[
\sum_{k_1,k_2=1}^n\sum_{i,j=1}^d\rho^{ij}(x_{k_1}-x_{k_2})\xi_{k_1}^{i}\xi_{k_2}^{j}=\int_{\R^d}\bigg|\sum_{k=1}^nh^*(z-x_k)\xi_k\bigg|^2dz\leq n\|\rho\|_{\infty}\sum_{k=1}^n|\xi_{k}|^2.
\]
It follows that $A^{(n)}=\frac{1}{2}(\Delta+B^{(n)})$ is uniformly parabolic.

Since $h\in H_2^3(\R^d;\R^d\otimes \R^d)$, $\rho(x-y)=\int_{\R^d}h(z-x)h^*(z-y)dz$ has bounded derivatives up to order three, then by Theorem 1.12 and 1.16 of Friedman \cite{courier-13-friedman}, the PDE (\ref{dualpde}) has a unique solution. 

In order to show (ii), let 
\[
\widetilde{v}_t(x)=\E \big(Y^{(n)}_t(x)\big),
\]
where $Y^{(n)}$ is defined by (\ref{dual}). By the same argument as we did in the proof of Lemma \ref{ingbty}, we can show that for any $t\in [0,T]$ and $x\in\R^{nd}$
\[
\E \Big(\sup_{x\in\R^d}\big|A^{(n)} Y_t^{(n)}(x)\big|\Big)< \infty.
\]
Then, by the dominated convergence theorem, we have 
\[
\E\big( A^{(n)} Y_t^{(n)}(x)\big)=A^{(n)} \E (Y_t^{(n)}(x)).
\]
Let $\mu^{(n)}$ be any finite measure on $\R^{nd}$. Recall that the process defined by  (\ref{mtgyg}) is a martingale, then the following equality follows from Fubini's theorem:
\begin{align*}
\mu^{(n)}(\widetilde{v}_t)=&\E\big(\mu^{(n)}(Y^{(n)}_t)\big)=\mu^{(n)}(f)+\int_0^t \big\langle \mu^{(n)}, \E \big(A^{(n)} Y_s^{(n)}\big)\big\rangle ds\\
&\hspace{25mm}+\frac{1}{2}\sum_{\mbox{\tiny$\begin{array}{c}
1\le i,j \le n\\
i\neq j\end{array}$}}\int_0^t\big\langle\mu^{(n)}, [k(\cdot_i,\cdot_j)-1]\E (Y_s^{(n)})\big\rangle ds\\
=&\mu^{(n)}(f)+\int_0^t \big\langle \mu^{(n)}, A^{(n)} \widetilde{v}_s\big\rangle ds+\frac{1}{2}\sum_{\mbox{\tiny$\begin{array}{c}
1\le i,j \le n\\
i\neq j\end{array}$}}\int_0^t\big\langle \mu^{(n)}, [k(\cdot_i,\cdot_j)-1]\widetilde{v}_s\big\rangle ds.
\end{align*}
In other words,
\[
\bigg\langle \mu^{(n)}, \widetilde{v}_t-f-\int_0^t \Big[ A^{(n)} \widetilde{v}_s -\frac{1}{2}\sum_{\mbox{\tiny$\begin{array}{c}
1\le i,j \le n\\
i\neq j\end{array}$}} (k(\cdot_i,\cdot_j)-1)\widetilde{v}_s \Big]ds\bigg\rangle=0,
\]
for all $\mu^{(n)}\in M_F(\R^{nd})$. It follows that $\widetilde{v}=\{\widetilde{v}_t(x),t\in[0,T],x\in\R^d\}$ solves the following PDE 
\begin{align}\label{dualst}
\partial_t\widetilde{v}_t(x)=A^{(n)}\widetilde{v}_t(x)+\frac{1}{2}\sum_{\mbox{\tiny$\begin{array}{c}
1\le i,j \le n\\
i\neq j\end{array}$}}[\kappa(x_i,x_j)-1]\widetilde{v}_t(x),
\end{align}
with the initial value $\widetilde{v}_0(x)=f(x)$. This solution is unique by the same argument as in part (i). Observe that 
\begin{align}\label{pdead}
v_t(x)=\widetilde{v}_t(x)\exp\Big(\frac{1}{2}n(n-1)t\Big).
\end{align}
Therefore, (\ref{dualid}) follows from (\ref{pdead}) and the moment duality (\ref{mmtidt}).
\end{proof}

In Lemma \ref{lmfif}, we derived the moment formula for $\E \big(X_t^{(n)}(f)\big)$ in the case when $n\geq 2$. If $n=1$, the dual process only involves a deterministic evolution driven by the semigroup of one particle motion, which makes things much simpler. We write the formula below and skip the proof. Let $p(t,x,y)$ be the transition density of the one particle motion, then for any $\phi\in C_b^2(\R^d)$,
\[
\E (X_t(\phi))=X_0(T^{(1)}_t\phi)=\int_{\R^d}\int_{\R^d}p(t,x,y)\phi(y)dyX_0(dx).
\]

The existence of the density of $X_t$ will be derived following Wang's idea (see Theorem 2.1 of \cite{ptrf-97-wang}). For any $\epsilon>0$, $x\in \R^d$, let $p_{\epsilon}$ be the heat kernel on $\R^d$, that is
\[
p_{\epsilon}(x)=(2\pi \epsilon)^{-\frac{d}{2}}\exp\Big(-\frac{|x|^2}{2\epsilon}\Big).
\]
\begin{lemma}\label{lcsprt}
Let $X=\{X_t,t\in[0,T]\}$ be a solution to the martingale problem (\ref{mprf}) - (\ref{qvmp}). Assume that the initial measure $X_0\in M_F(\R^d)$ has a bounded density $\mu$. Then,
\begin{align}\label{bdprt}
\int_0^T\int_{\R^d}\E \big(\big|X_t(p_{\epsilon}(x-\cdot))\big|^2\big)dxdt< \infty,
\end{align}
and
\begin{align}\label{csprt}
\lim_{\epsilon_1,\epsilon_2\downarrow 0}\int_0^T\int_{\R^d}\E\big(\big|X_t(p_{\epsilon_1}(x-\cdot))-X_t(p_{\epsilon_2}(x-\cdot))\big|^2\big)dxdt=0.
\end{align}
\end{lemma}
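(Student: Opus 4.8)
The plan is to reduce both (\ref{bdprt}) and (\ref{csprt}) to the second moment formula of Lemma \ref{lmfif} and to Gaussian heat kernel estimates for the two–particle dual equation. Fix $\epsilon,\epsilon'>0$ and $x\in\R^d$; since $(z_1,z_2)\mapsto p_\epsilon(x-z_1)p_{\epsilon'}(x-z_2)$ belongs to $C_b^2(\R^{2d})$, Lemma \ref{lmfif}(ii) with $n=2$ (whose zeroth order coefficient is the bounded function $\kappa(x_1,x_2)$) gives
\begin{align*}
\E\big(X_t(p_\epsilon(x-\cdot))\,X_t(p_{\epsilon'}(x-\cdot))\big)=X_0^{\otimes 2}\big(v^{x,\epsilon,\epsilon'}_t\big),
\end{align*}
where $v^{x,\epsilon,\epsilon'}$ solves (\ref{dualpde}) with initial value $p_\epsilon(x-\cdot_1)p_{\epsilon'}(x-\cdot_2)$. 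Since $A^{(2)}$ is uniformly parabolic with coefficients having bounded derivatives up to order three (established in the proof of Lemma \ref{lmfif}), Friedman's parametrix method \cite{courier-13-friedman} yields the fundamental solution $q(t,\cdot,\cdot)$ of $\partial_t-A^{(2)}-\kappa(x_1,x_2)$ on $\R^{2d}$, jointly continuous for $t>0$, with $v^{x,\epsilon,\epsilon'}_t(y_1,y_2)=\int_{\R^{2d}}q(t,(y_1,y_2),(z_1,z_2))p_\epsilon(x-z_1)p_{\epsilon'}(x-z_2)\,dz$; moreover, comparing with the transition density $p$ of $A^{(2)}$ and using its Gaussian upper bound, $0\le q(t,a,b)\le e^{\|\kappa\|_\infty T}p(t,a,b)\le C_1t^{-d}e^{-|a-b|^2/(C_1t)}$ for $a,b\in\R^{2d}$.

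The crucial point is a bound on the above that is \emph{uniform} in $t\in(0,T]$ and in $\epsilon,\epsilon'$. Using the comparison above, the heat kernel identity $\int_{\R^d}p_\epsilon(x-z_1)p_{\epsilon'}(x-z_2)\,dx=p_{\epsilon+\epsilon'}(z_1-z_2)$, Tonelli's theorem, $X_0(dy)=\mu(y)\,dy$ with $\mu$ bounded, $\int_{\R^d}e^{-|y-z|^2/(C_1t)}\,dy=(\pi C_1t)^{d/2}$ and $\int_{\R^d}p_{\epsilon+\epsilon'}(z_1-z_2)\,dz_1=1$, one obtains
\begin{align*}
\int_{\R^d}\int_{\R^{2d}}\big|v^{x,\epsilon,\epsilon'}_t(y_1,y_2)\big|\,X_0(dy_1)X_0(dy_2)\,dx\le e^{\|\kappa\|_\infty T}C_1(\pi C_1)^d\,\|\mu\|_\infty\,X_0(\R^d)=:C_2 .
\end{align*}
The mechanism is that integrating out one Gaussian factor in the $y$–variables produces a factor $t^{d/2}$, which together with the second one exactly absorbs the singular prefactor $t^{-d}$. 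Taking $\epsilon'=\epsilon$ and integrating in $t$ already yields (\ref{bdprt}), and this bound also justifies every application of Fubini's theorem below.

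For (\ref{csprt}) I would expand the square and use linearity of (\ref{dualpde}). Let $R^\delta_t$ be the solution of (\ref{dualpde}) with $n=2$ and initial value $p_\delta(\cdot_1-\cdot_2)\in C_b^2(\R^{2d})$ (unique by Lemma \ref{lmfif}(i)), so $R^\delta_t(y_1,y_2)=\int_{\R^{2d}}q(t,(y_1,y_2),(z_1,z_2))p_\delta(z_1-z_2)\,dz$; integrating the representation of $v^{x,\epsilon,\epsilon'}$ over $x$ and using the heat kernel identity shows $\int_{\R^d}v^{x,\epsilon,\epsilon'}_t\,dx=R^{\epsilon+\epsilon'}_t$, whence, after Fubini,
\begin{align*}
\int_{\R^d}\E\big(|X_t(p_{\epsilon_1}(x-\cdot))-X_t(p_{\epsilon_2}(x-\cdot))|^2\big)\,dx=X_0^{\otimes 2}\big(R^{2\epsilon_1}_t-2R^{\epsilon_1+\epsilon_2}_t+R^{2\epsilon_2}_t\big).
\end{align*}
Setting $\Gamma_t(z_1,z_2):=\int_{\R^{2d}}q(t,(y_1,y_2),(z_1,z_2))\mu(y_1)\mu(y_2)\,dy_1dy_2$, the Gaussian bound shows $\Gamma_t$ is bounded, integrable and continuous, so $X_0^{\otimes 2}(R^\delta_t)=\int_{\R^{2d}}\Gamma_t(z_1,z_2)p_\delta(z_1-z_2)\,dz=\int_{\R^d}\gamma_t(w)p_\delta(w)\,dw$ with $\gamma_t(w):=\int_{\R^d}\Gamma_t(z+w,z)\,dz$ bounded and continuous. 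As $(p_\delta)_{\delta>0}$ is an approximate identity, $X_0^{\otimes 2}(R^\delta_t)\to\gamma_t(0)$ as $\delta\downarrow 0$, hence the displayed quantity tends to $\gamma_t(0)-2\gamma_t(0)+\gamma_t(0)=0$ for every fixed $t\in(0,T]$. Since it is nonnegative and, by $|a-b|^2\le 2|a|^2+2|b|^2$ and the uniform bound behind (\ref{bdprt}), at most $4C_2$, dominated convergence in $t\in[0,T]$ yields (\ref{csprt}).

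The main obstacle is assembling the correct parabolic input: the Aronson--Friedman Gaussian upper bound for the $2d$–dimensional non‑divergence‑form operator $A^{(2)}$, the construction and joint continuity (for $t>0$) of the Feynman--Kac kernel $q$ attached to the bounded zeroth order term $\kappa(x_1,x_2)$, and checking that the several interchanges of integration are legitimate --- all of which are controlled by the single uniform estimate $C_2$. Once these are in place, what remains is Gaussian bookkeeping and a routine approximate‑identity argument.
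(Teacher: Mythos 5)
Your proposal is correct and follows essentially the same route as the paper: the $n=2$ moment duality of Lemma \ref{lmfif}, Friedman's Gaussian upper bound for the fundamental solution of the two-particle dual equation, the identity $\int_{\R^d}p_{\epsilon_1}(x-z_1)p_{\epsilon_2}(x-z_2)\,dx=p_{\epsilon_1+\epsilon_2}(z_1-z_2)$, the boundedness of $\mu$ to absorb the $t^{-d}$ singularity, and an approximate-identity argument on a continuous integrated kernel followed by polarization. The only cosmetic difference is that you run the approximate-identity limit at each fixed $t$ and finish with dominated convergence in $t$, whereas the paper integrates in $t$ first and applies the same limit once to the time-integrated kernel $\mathcal{N}$.
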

\begin{proof}
Let $\Gamma (t,(y_1,y_2); r, (z_1,z_2))$ be the fundamental solution to the PDE (\ref{dualpde}) when $n=2$. We write $y=(y_1,y_2)$ and $ z=(z_1,z_2)\in\R^{2d}$. Then, for $f\in C_b^2(\R^{2d})$,
\[
v(t,y)=\int_{\R^{2d}}\Gamma(t,y;0,z)f(z)dz,
\]
is the unique solution to the PDE (\ref{dualpde}) with initial condition $v_0=f$. Thus by Lemma \ref{lmfif}, we have
\begin{align}\label{bdps}
&\E \big[X_t(p_{\epsilon_1}(x-\cdot))X_t(p_{\epsilon_2}(x-\cdot))\big]\nonumber\\
&\qquad=\int_{\R^{2d}}\int_{\R^{2d}}\Gamma (t,y;0,z) p_{\epsilon_1}(x-z_1)p_{\epsilon_2}(x-z_2)dz X_0^{\otimes 2}(dy).
\end{align}
By the inequality (6.12) of Friedman \cite{courier-13-friedman} on page 24, we know that there exists $C_{\Gamma},\lambda>0$, such that for any $0\leq r<t\leq T$,
\begin{align}\label{eubfs}
|\Gamma(t,y;r;z)|\leq C_{\Gamma} p_{\frac{t-r}{\lambda}}(y_1-z_1) p_{\frac{t-r}{\lambda}}(y_2-z_2).
\end{align}
Therefore, by the semigroup property of heat kernels and Fubini's theorem, we have
\begin{align}\label{410est1}
&\int_0^T\int_{\R^d} \E\big[X_t(p_{\epsilon_1}(x-\cdot))X_t(p_{\epsilon_2}(x-\cdot))\big]dxdt\nonumber\\
&\qquad=\int_0^T\int_{\R^{2d}}\int_{\R^{2d}}\Gamma (t,y;0,z) p_{\epsilon_1+\epsilon_2}(z_1-z_2)dz X_0^{\otimes 2}(dy)dt
\end{align}
From (\ref{eubfs}), (\ref{410est1}) and the fact that $X_0\in M_F(\R^d)$ has a bounded density $\mu$, it follows that (\ref{bdprt}) is true.

Let $\mathcal{M}$ be the function on $\R^{2d}$, given by
\[
\mathcal{M}(z)= \int_0^T\int_{\R^{2d}}\Gamma(t,y;0,z)X_0^{\otimes 2}(dy)dt.
\]
Notice that fix $0\leq r<t\leq T$, $\Gamma(t,y;r,x)$ is uniformly continuous in the spatial argument (see (6.13) of Friedman \cite{courier-13-friedman} on page 24). As a consequence $\mathcal{M}$ is continuous.  Therefore, by (\ref{eubfs}) and the continuity of $\mathcal{M}$, the function $\mathcal{N}$ on $\R^d$ given by
\[
\mathcal{N}(x):=\int_{\R^d}\mathcal{M}(z_1, z_1-x)dz_1, 
\]
is integrable  and continuous everywhere. It follows that
\begin{align}\label{410est2}
&\lim_{\epsilon_1,\epsilon_2\to 0}\int_0^t\int_{\R^d}\E \big[X_t(p_{\epsilon_1}(x-\cdot))X_t(p_{\epsilon_2}(x-\cdot))\big]dxdt\nonumber\\
&\qquad=\lim_{\epsilon_1,\epsilon_2\to 0}\int_{\R^{2d}}\mathcal{M}(z)p_{\epsilon_1+\epsilon_2}(z_1-z_2)dz\nonumber\\
&\qquad=\lim_{\epsilon_1,\epsilon_2\to 0}\int_{\R^d}\mathcal{N}(y)p_{\epsilon_1+\epsilon_2}(y)dy\nonumber\\
&\qquad=\mathcal{N}(0)= \int_0^T\int_{\R^d}\int_{\R^{2d}}\Gamma(t,y;0,(x,x))X_0^{\otimes 2}(dy)dxdt
\end{align}
Therefore, (\ref{csprt}) is a consequence of (\ref{410est2}).
\end{proof}

\begin{proposition}\label{extdst}
Let $X=\{X_t,t\in[0,T]\}$ be a solution to the martingale problem (\ref{mprf}) - (\ref{qvmp}). Assume that the initial measure $X_0\in M_F(\R^d)$ has a bounded density $\mu$.  Then, for almost every $t\in (0,T]$, $X_t$ is absolutely continuous with respect to the Lebesgue measure almost surely.
\end{proposition}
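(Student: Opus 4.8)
The plan is to follow Wang's absolute continuity argument (see Theorem 2.1 of \cite{ptrf-97-wang}), with Lemma \ref{lcsprt} supplying all the analytic input. Introduce the random field $U^{\epsilon}_t(x):=X_t(p_{\epsilon}(x-\cdot))$, i.e.\ the convolution of the finite measure $X_t$ with the heat kernel; it is jointly measurable in $(t,x,\omega)$ since $X$ is c\`adl\`ag in $M_F(\R^d)$. By \eqref{bdprt} each $U^{\epsilon}$ lies in $L^2([0,T]\times\R^d\times\Omega)$, and by \eqref{csprt} the family $\{U^{\epsilon}\}_{\epsilon>0}$ is Cauchy in this Hilbert space as $\epsilon\downarrow 0$; let $u=\{u_t(x)\}$ denote its limit, so in particular $u\in L^2([0,T]\times\R^d\times\Omega)$. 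Fix a sequence $\epsilon_k\downarrow 0$; since $\int_0^T\|U^{\epsilon_k}_t-u_t\|_{L^2(\R^d\times\Omega)}^2\,dt=\|U^{\epsilon_k}-u\|_{L^2([0,T]\times\R^d\times\Omega)}^2\to 0$, after passing to a subsequence (still written $\epsilon_k$) we may assume $\|U^{\epsilon_k}_t-u_t\|_{L^2(\R^d\times\Omega)}\to 0$ for every $t$ outside a fixed Lebesgue-null set $E\subset(0,T]$.

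Next I would fix $t\notin E$ and an arbitrary $\phi\in C_c^{\infty}(\R^d)$ and compute $\lim_k\int_{\R^d}\phi(x)U^{\epsilon_k}_t(x)\,dx$ in two ways. On one hand, Cauchy--Schwarz in the $x$ variable gives $\E\big|\int_{\R^d}\phi(x)\big(U^{\epsilon_k}_t(x)-u_t(x)\big)\,dx\big|^2\le\|\phi\|_{L^2(\R^d)}^2\,\|U^{\epsilon_k}_t-u_t\|_{L^2(\R^d\times\Omega)}^2\to 0$, so $\int_{\R^d}\phi\,U^{\epsilon_k}_t\,dx\to\int_{\R^d}\phi\,u_t\,dx$ in $L^2(\Omega)$. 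On the other hand, by Fubini's theorem and the symmetry of $p_{\epsilon_k}$, $\int_{\R^d}\phi(x)U^{\epsilon_k}_t(x)\,dx=X_t(\phi*p_{\epsilon_k})$; since $\phi*p_{\epsilon_k}\to\phi$ uniformly with $\|\phi*p_{\epsilon_k}\|_{\infty}\le\|\phi\|_{\infty}$, dominated convergence against the a.s.\ finite measure $X_t$ yields $X_t(\phi*p_{\epsilon_k})\to X_t(\phi)$ almost surely, and because $\E(X_t(1)^2)<\infty$ — which follows from the moment identity \eqref{mmtidt} with $n=2$, $f\equiv 1$, together with the bound of Lemma \ref{ingbty} — the domination $|X_t(\phi*p_{\epsilon_k})|\le\|\phi\|_{\infty}X_t(1)$ upgrades this to convergence in $L^2(\Omega)$. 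Comparing the two limits gives $X_t(\phi)=\int_{\R^d}\phi(x)u_t(x)\,dx$ almost surely, for every fixed $\phi\in C_c^{\infty}(\R^d)$ and every $t\notin E$.

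Finally I would remove the dependence of the exceptional set on $\phi$. Choose a countable family $\{\phi_m\}\subset C_c^{\infty}(\R^d)$ such that, for every ball $B_R$, the functions in $\{\phi_m\}$ supported in $B_R$ are dense in $C_0(B_R)$ for the uniform norm. For each $t\notin E$ the previous step produces, for every $m$, a $P$-null set off which $X_t(\phi_m)=\int_{\R^d}\phi_m\,u_t\,dx$; taking the (countable) union over $m$, there is a single $P$-null set off which all these equalities hold simultaneously. Since $u_t\in L^2(\R^d)$ almost surely, $u_t(x)\,dx$ is a finite measure on every ball, so for each $R$ the density of $\{\phi_m\}$ forces $X_t$ and $u_t(x)\,dx$ to agree on $B_R$, hence everywhere; thus $X_t(dx)=u_t(x)\,dx$ almost surely, which is the claim. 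The genuinely substantive estimates are already absorbed into Lemma \ref{lcsprt}, so I expect the only delicate points to be measure-theoretic bookkeeping: juggling the ``for a.e.\ $t$'' and ``almost surely'' quantifiers (handled by fixing the subsequence once and for all and by the countability of $\{\phi_m\}$), and the fact that $L^2$-in-space convergence only tests against square-integrable functions, which is why one works with $C_c^{\infty}$ rather than $C_b$ and must separately control $\E(X_t(1)^2)$.
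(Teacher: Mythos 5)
Your proof is correct and takes essentially the same route as the paper: Lemma \ref{lcsprt} supplies the Cauchy property of $\{X_t(p_{\epsilon}(x-\cdot))\}$ in $L^2(\Omega\times\R^d\times[0,T])$, and the identification of the $L^2$-limit as the density of $X_t$ is exactly the step the paper delegates to Theorem 2.1 of Wang \cite{ptrf-97-wang}, which you have simply written out in detail (subsequence extraction, testing against a countable dense family of $C_c^{\infty}$ functions, and the $L^2$ bound on $X_t(1)$ from the moment identity \eqref{mmtidt}).
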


\begin{proof}
As proved in Lemma \ref{lcsprt}, for any $x\in \R^d$ and $\epsilon_n\downarrow 0$, the sequence $\{X_t(p_{\epsilon_n}^x)\}_{n\geq 1}$ is Cauchy in $L^2(\Omega \times \R^d \times [0,T])$. Then, it converges to some square integrable random field. By the same argument as in Theorem 2.1 of Wang \cite{ptrf-97-wang}, we can show that the limit random field is the density of $X_t$ almost surely.
\end{proof}

\textbf {Remark:} The assumption in Proposition \ref{extdst}, that the initial measure has a bounded density, cannot be removed. Actually, if we choose $X_0=\delta_0$, the Dirac delta mass at $0$, then $\int_0^T\int_{\R^d}\Gamma (t,0;0,(x,x)) dxdt$ behaves like $\int_0^Tt^{-\frac{d}{2}}dt$, which is finite only if $d=1$. This is another difference from the one dimensional situation, in which case $X_0(1)<\infty$ is enough to prove the existence of the density (see Theorem 2.1 Wang \cite{ptrf-97-wang}).

\subsection{Proof of Theorems \ref{unique} and \ref{tmpd}}
The proof of Theorems \ref{unique} and \ref{tmpd} is based on the equivalence of the martingale problem (\ref{mprf}) - (\ref{qvmp}) and the SPDE (\ref{dqmvp}). 

The equivalence between  martingale problems  and SDEs in finite dimensions was observed in the 1970s (see Stroock and Varachan \cite{psbs-72-stroock-varadhan}). An alternative  proof given by Kurtz \cite{sa-11-kurtz} consists of  the ``Markov mapping theorem''. In a recent paper  \cite{arxiv-18-biswas-etheridge-klimek}    Biswas et al.     generalized this result to the infinite dimensional cases with one noise   following Kurtz's idea. Here in the present paper, we establish a similar result with two noises by using the martingale representation theorem.

\begin{proposition}\label{propspde}
Let $\mu\in C_b(\R^d)\cap L^1(\R^d)$ be a nonnegative function on $\R^d$. Then, $u=\{u_t, t\in[0,T]\}$ is the density of a solution of the martingale problem (\ref{mprf}) - (\ref{qvmp}) with initial density $\mu$, if and only if $u$ is a weak solution to the SPDE (\ref{dqmvp}).
\end{proposition}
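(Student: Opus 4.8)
The plan is to establish the equivalence in two directions, using the martingale representation theorem to pass from the martingale problem to the SPDE and Itô's formula (for the product of measures against test functions) to pass back. First I would treat the easy direction: suppose $u$ is a weak solution to \eqref{dqmvp} on some filtered probability space carrying independent noises $W$ and $V$. For $\phi\in C_b^2(\R^d)$, the definition of weak solution gives the decomposition $X_t(\phi)=\mu(\phi)+\int_0^t X_s(A\phi)\,ds+N_t^W(\phi)+N_t^V(\phi)$, where $X_t(dx):=u_t(x)\,dx$ and $N^W(\phi)$, $N^V(\phi)$ are the two Walsh stochastic integrals, which are continuous square-integrable martingales. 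Hence $M_t(\phi):=N_t^W(\phi)+N_t^V(\phi)$ is exactly the martingale in \eqref{mprf}, and its quadratic variation is computed directly from the covariance structure of $W$ and $V$: the $W$-part contributes $\int_0^t\int_{\R^d\times\R^d}\nabla\phi(x)^*\rho(x-y)\nabla\phi(y)u_s(x)u_s(y)\,dx\,dy\,ds$ (using $\rho(x-y)=\int h(z-x)h^*(z-y)\,dz$ and the Walsh isometry, cf. the computation \eqref{qvwnr}), while the $V$-part contributes $\int_0^t\int_{\R^d\times\R^d}\kappa(x,y)\phi(x)\phi(y)u_s(x)u_s(y)\,dx\,dy\,ds$ since $V$ has spatial covariance $\kappa$ and is white in time. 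Summing gives precisely \eqref{qvmp}, so $u$ is the density of a solution to the martingale problem; the initial density is $\mu$ by construction.

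For the converse, let $X$ solve the martingale problem \eqref{mprf}--\eqref{qvmp} with initial density $\mu$; by Proposition \ref{extdst}, $X_t$ has a density $u_t$ for a.e.\ $t$. The goal is to produce, possibly after enlarging the probability space, independent noises $W$ and $V$ with the prescribed covariances such that the two Walsh integrals in Definition \ref{def} reconstruct $M_t(\phi)$ for every $\phi$. The strategy is the standard one for representing a family of martingales indexed by test functions as stochastic integrals against a worthy martingale measure (in the sense of Walsh): one shows that $(\phi,A)\mapsto M_t(\phi)$ extends to an orthogonal (or worthy) martingale measure whose covariance functional, read off from \eqref{qvmp}, splits as the sum of the "$\rho$-part" and the "$\kappa$-part". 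Because these two quadratic-variation contributions involve structurally different bilinear forms — one built from the matrix kernel $h$ acting through gradients, the other from the scalar kernel $\kappa$ acting through point evaluations — the martingale measure decomposes (after possibly adjoining an independent auxiliary white noise to handle any degeneracy) into two orthogonal pieces. Applying the martingale representation theorem to each piece yields a $d$-dimensional space-time white noise $W$ reproducing the $\rho$-part and a time-white, spatially $\kappa$-colored noise $V$ reproducing the $\kappa$-part, with $W\perp V$. Substituting back shows $u$ satisfies the identity in Definition \ref{def}(i) on the enlarged space, i.e.\ $u$ is a weak solution to \eqref{dqmvp}.

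I expect the main obstacle to be the converse direction, specifically the clean extraction of the two independent driving noises from the single martingale family $\{M(\phi)\}$. One must verify that the covariance functional in \eqref{qvmp} genuinely defines a worthy martingale measure (dominating measure, symmetry, positive-definiteness of the combined bilinear form viewed as a kernel on pairs $(x,\phi)$), and then justify the orthogonal splitting into the $W$- and $V$-components; the $W$-component must moreover be shown to arise from a genuine $\R^d$-valued space-time white noise acting through the specific divergence-form structure $-\sum_{i,j}\partial_i[h^{ij}(y-x)u_s(x)]$, which requires identifying the correct "square root" of the spatial covariance operator and may need an auxiliary independent white noise if that operator is not invertible on the relevant space. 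The regularity hypotheses $h\in H_2^3$ and $\mu\in C_b\cap L^1$ enter to ensure all the stochastic integrals are well-defined and the manipulations (stochastic Fubini, the integration-by-parts turning $\int\nabla\phi^*h\,u$ into $-\int\phi\,\nabla\cdot(hu)$) are legitimate. The reverse direction and the quadratic-variation bookkeeping are comparatively routine given \eqref{qvwnr} and the argument already used in Proposition \ref{propmp}.
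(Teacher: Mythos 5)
Your forward direction matches the paper's (the paper dispatches it in one sentence via It\^o's formula; your Walsh-isometry computation of the quadratic variation is the content behind that sentence). For the converse, your overall strategy --- identify the quadratic variation functional \eqref{qvmp} with an inner product and invoke a martingale representation theorem, enlarging the probability space if necessary --- is also the paper's strategy, which uses the Kallianpur--Xiong representation theorem for Hilbert-space-valued martingales rather than Walsh's worthy-martingale-measure framework.

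However, there is a gap in the order of operations in your converse. You propose to \emph{first} split the martingale family $\{M(\phi)\}$ into two orthogonal pieces (a ``$\rho$-part'' and a ``$\kappa$-part'') and \emph{then} apply the representation theorem to each piece. The justification you give for the splitting --- that the two contributions to $\langle M(\phi)\rangle_t$ ``involve structurally different bilinear forms'' --- does not yield an orthogonal decomposition of $M$ itself: an additive decomposition of a quadratic variation into two bilinear forms does not canonically produce two orthogonal martingales with those quadratic variations; constructing such a decomposition is itself an application of the representation theorem, so your argument as stated is circular. The paper avoids this by applying the representation theorem \emph{once}, on the product Hilbert space $\mathfrak{H}=\mathfrak{H}_1\times\mathfrak{H}_2$ with $\mathfrak{H}_1=L^2(\R^d;\R^d)$ and $\mathfrak{H}_2$ the completion of $\mathscr{S}$ under the $\kappa$-inner product, taking as integrand the pair $\Psi_t(\phi)=\bigl(\int_{\R^d}\nabla\phi(y)^*h(\cdot-y)u_t(y)\,dy,\ \phi(\cdot)u_t(\cdot)\bigr)$; one checks $\langle M(\phi),M(\varphi)\rangle_t=\int_0^t\langle\Psi_s(\phi),\Psi_s(\varphi)\rangle_{\mathfrak{H}}\,ds$ and obtains a single $\mathfrak{H}$-cylindrical Brownian motion $\mathfrak{B}$. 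The two independent noises then fall out automatically as the restrictions of $\mathfrak{B}$ to the two orthogonal factors $\mathfrak{H}_1\times\{0\}$ and $\{0\}\times\mathfrak{H}_2$; independence is inherited from the orthogonality of the factors, and the ``square root'' of the $\rho$-part that you were worried about is exactly the map $\phi\mapsto\int\nabla\phi(y)^*h(\cdot-y)u_t(y)\,dy$ into $L^2(\R^d;\R^d)$, with no invertibility issue to handle. If you reorganize your argument this way --- decompose the Hilbert space first, represent once, then read off $W$ and $V$ --- the proof closes.
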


\begin{proof}
If $u$ is a weak solution to (\ref{dqmvp}), then, as a consequence of It\^{o}'s formula, $u$ is the density of a measure-valued process that solves the martingale problem  (\ref{mprf}) - (\ref{qvmp}). It suffices to show the converse statement.

Let $X=\{X_t, t\in[0,T]\}$ be a solution to the martingale problem (\ref{mprf}) - (\ref{qvmp}) with initial density $\mu$. Then, by Proposition \ref{extdst}, for almost every $t\in[0,T]$, $X_t$ has a density almost surely. We denote by $u_t$ the density of $X_t$.

Consider $M=\{M_t, t\in[0,T]\}$ defined by (\ref{mprf}) as an $\mathscr{S}'$-martingale (see Definition 2.1.2 of Kallianpur and Xiong \cite{ims-95-kallianpur-xiong}). Then, by Theorem 3.1.4 of \cite{ims-95-kallianpur-xiong}, there exists a Hilbert space $\mathcal{H}^*\supset L^2(\R^d)$, such that $M$ is an $\mathcal{H}^*$-valued martingale. Denote by $\mathcal{H}$ the dual space of $\mathcal{H}^*$. 

Let $\mathfrak{H}_1=L^2(\R^d;\R^d)$, and let $\mathfrak{H}_2$ be the completion of $\mathscr{S}$ with the inner product
\[
\langle \phi,\varphi\rangle_{\mathfrak{H}_1}:=\int_{\R^d\times R^d}\kappa(x,y)\phi(x)\varphi(y)dxdy.
\]
Consider the product space $\mathfrak{H}=\mathfrak{H}_1\times \mathfrak{H}_2$. Then, $\mathfrak{H}$ is a Hilbert space equipped with the inner product
\[
\big\langle (\phi_1,\phi_2),(\varphi_1,\varphi_2)\big\rangle_{\mathfrak{H}}:=\langle \phi_1, \varphi_1\rangle_{\mathfrak{H}_1}+\langle \phi_2,\varphi_2\rangle_{\mathfrak{H}_2}.
\]
For any $t\in [0,T]$, let $\Psi_t:\mathcal{H}\to \mathfrak{H}$ be given by $\Psi_t(\phi)(x,y)=\big(\Psi^1_t(\phi)(x),\Psi^2_t(\phi)(y)\Big)$, where
\[
\Psi^1_t(\phi)(x):=\int_{\R^d}\nabla \phi(y)^*h(x-y)u_t(y)dy,
\]
and
\[
\Psi^2_t(\phi)(x):=\phi(x)u_t(x).
\]
Then, for any $\phi,\varphi\in \mathcal{H}$, we have
\begin{align*}
\langle M(\phi), M(\varphi)\rangle_t=&\int_0^t\nabla \phi(x)^*\rho(x-y)\nabla \varphi(y)X_s(dx)X_s(dy)ds\\
&+\int_0^t\int_{\mathbb{R}^d\times\mathbb{R}^d}\kappa(x,y)\phi(x)\phi(y)X_s(dx)X_s(dy)ds\\
=&\int_0^t \langle \Phi_s (\phi), \Phi_s (\varphi)\rangle_{\mathfrak{H}} ds,
\end{align*}
Therefore, by the martingale representation theorem (see e.g. Theorem 3.3.5 of Kallianpur and Xiong \cite{ims-95-kallianpur-xiong}), there exists a $\mathfrak{H}$-cylindrical Brownian motion $\mathfrak{B}=\{\mathfrak{B}_t, 0\leq t\leq T\}$, such that
\begin{align*}
M_t(\phi)=&\int_0^t\big\langle\Psi_s(\phi), d\mathfrak{B}_s\big\rangle_{\mathfrak{H}}.
\end{align*}
Let $\mathfrak{B}^1=\{\mathfrak{B}^1_t(\phi),t\in [0,T],\phi\in \mathfrak{H}_1\}$ and $\mathfrak{B}^2=\{\mathfrak{B}^2_t(\varphi),t\in [0,T],\varphi\in\mathfrak{H}_2\}$  be given  by
\[
\mathfrak{B}^1_t(\phi)=\mathfrak{B}_t(\phi,0)\ \mathrm{and}\ \mathfrak{B}^2_t(\varphi)=\mathfrak{B}_t(0, \varphi).
\]
Then, $\mathfrak{B}^1$ and $\mathfrak{B}^2$ are $\mathfrak{H}^1$- and $\mathfrak{H}^2$-cylindrical Brownian motion respectively, and they are independent. As a consequence, we have
\begin{align}\label{ihs}
M_t(\phi)=\int_0^t\Big\langle\int_{\R^d}\nabla\phi(z)^*h(\cdot-z)X_s(dz), d\mathfrak{B}^1_s\Big\rangle_{\mathfrak{H}_1}
+\int_0^t\big\langle \phi(z)u_s(z), d\mathfrak{B}^2_s\big\rangle_{\mathfrak{H}_2}.
\end{align}
Let $\{e_j\}_{j\geq 1}$ be a complete orthonormal 
 basis of $\mathfrak{H}_2$. Then, by Theorem 3.2.5 of \cite{ims-95-kallianpur-xiong}, $V=\{V_t,t\in[0,T]\}$, defined by
\[
V_t:=\sum_{j=1}^{\infty}\mathfrak{B}^2_t(e_j)e_j,
\]
is a $\mathscr{S}'$-valued Wiener process with covariance
\[
\E\big[V_s(\phi)V_t(\varphi)\big]=s\wedge t\int_{\R^d\times \R^d}\kappa(x,y)\phi(x)\varphi(y)dxdy,
\]
for any $\phi,\varphi\in \mathscr{S}$. Therefore, by (\ref{ihs}) and the equivalence of stochastic integrals with Hilbert space valued Brownian motion and Walsh's integrals (see e.g. Proposition 2.6 of Dalang and Quer-Sardanyons \cite{em-11-dalang-sardanyons} for spatial homogeneous noises), $u$ is a weak solution to the SPDE (\ref{dqmvp}).
\end{proof}

\begin{proof}[Proof of Theorem \ref{unique}]
By Propositions \ref{propmp} and \ref{propspde}, the SPDE (\ref{dqmvp}) has a weak solution, that can be obtained by the branching particle approximation. Therefore, by Yamada-Watanabe argument (see Yamada and Watanabe \cite{jmku-71-yamada-watanabe} and Kurtz \cite{ejp-07-kurtz}), it suffices to show the pathwise uniqueness of the equation.  Assume that $u$, $\widetilde{u}$ be two strong solutions to the SPDE (\ref{dqmvp}). Let $d=u-\widetilde{u}$. Then, $d=\{d_t(x), t\in[0,T], x\in \R^d\}$ is a solution to (\ref{dqmvp}), with initial condition $\mu\equiv 0$. Thus $d$ is also the density of a solution to the martingale problem (\ref{mprf}) - (\ref{qvmp}), with initial measure $X_0\equiv 0$. By the moment duality (\ref{mmtidt}), for any $\phi\in C_b^2(\R^d)$, we have
\begin{align*}
\E \langle d_t, \phi\rangle^2=\exp(t)\E \big(X_0(Y^{(2)}_t)\big)\equiv 0,
\end{align*}
where $Y^{(2)}$ is the dual process defined by (\ref{dual}) in the case when $n=2$. If follows that $u=\widetilde{u}$ almost surely.
\end{proof}

\begin{proof}[Proof of Theorem \ref{tmpd}]
As a consequence of Yamada-Watanabe's argument, the weak solution to the SPDE (\ref{dqmvp}) is unique in distribution. Assume hypotheses \textbf{[H1]} and \textbf{[H2]}. It follows that  every weakly convergent subsequence of $\{X^n\}_{n\geq 1}$ converges to the same limit in $D([0,T]; M_F)$ in law. The limit has a density almost surely, that is a weak solution the SPDE (\ref{dqmvp}).
\end{proof}

{\bf Remark:} Assume that the initial measure has a bounded density. According to Theorem \ref{unique}, \ref{tmpd},  Proposition \ref{extdst} and \ref{ihs}, the martingale problem (\ref{mprf}) - (\ref{qvmp}) has a unique solution in distribution. If we allow the solution to the SPDE (\ref{dqmvp}) to be a distribution-valued process, the existence and uniqueness of (\ref{dqmvp}) are still true in the case when the initial value is only finite. This implies the uniqueness of the martingale problem (\ref{mprf}) - (\ref{qvmp}) can be generalized to the situation when the initial measure is only finite.

\section{Moment estimates for one-particle motion}

In this section, we focus on the one-particle motion without branching. By using the techniques of  Malliavin calculus, we will obtain moment estimates for the transition probability density of the particle motion conditional on the environment $W$. A brief introduction and several theorems on Malliavin calculus are stated in Appendix A. For a detailed account on this topic, we refer the readers to the book of Nualart \cite{springer-06-nualart}.

Fix a  time interval $[0, T]$. Let $B=\{B_t,0\leq t\leq T\}$ be a standard $d$-dimensional Brownian motion and let $W$ be a $d$-dimensional space-time white Gaussian random field on $[0,T]\times \mathbb{R}^d$ that is  independent of $B$. Assume that $h\in H_2^3(\R^d;\R^d\otimes \R^d)$. For any $0\leq r<t\leq T$, we denote by $\xi_t=\xi_t^{r,x}$, the path of one-particle motion, with initial position $\xi_r=x$. It satisfies the SDE:
\begin{align}\label{sde}
\xi_t=x+B_t-B_r+\int_r^t\int_{\mathbb{R}^d}h(y-\xi_u)W(du, dy).
\end{align}

We will apply the Malliavin calculus on $\xi_t$ with respect to the Brownian motion $B$. Let $H=L^2([0,T];\R^d)$ be the associated Hilbert space. By the Picard iteration scheme (see e.g. Theorem 2.2.1 of Nualart \cite{springer-06-nualart}), we can prove that for any $t\in(r, T]$, $\xi_t\in\cap_{p\geq 1}\mathbb{D}^{3,p}(\R^d)$. Particularly, $D\xi_t$ satisfies the following system of SDEs: 
\begin{align}\label{sdexi1}
D^{(k)}_{\theta}\xi_t^i=\delta_{ik}-\sum_{j_1,j_2=1}^d\int_{\theta}^t\int_{\mathbb{R}^d} \partial_{j_1} h^{ij_2}(y-\xi_s) D^{(k)}_{\theta}\xi_s^{j_1} W^{j_2}(ds, dy),\quad 1\leq i,k\leq d,
\end{align}
for any $\theta\in [r,t]$, and $D^{(k)}_{\theta}\xi_t^i=0$ for all $\theta>t$. 

In order to simplify the expressions, we rewrite the stochastic integrals in (\ref{sdexi1}) as integrals with respect to martingales. To this end, let $M=\{M_t, r\leq t\leq T\}$ be the $d\times d$ matrix-valued process given by
\begin{align*}
M_t=\sum_{k=1}^d\int_r^t\int_{\mathbb{R}^d}g_k(s,y) W^k(ds, dy),
\end{align*}
where $g_k:\Omega\times [r,T]\times \R^d\to \R^d\otimes \R^d$ is given by
\[
g_k^{ij}(t,y)=\partial_i h^{jk}(y-\xi_t),\quad 1\leq i,j,k\leq d.
\]
Notice that $M_t$ is the sum of stochastic integrals, so it is a matrix-valued martingale. The quadratic covariations of $\{M^{ij}\}_{i,j=1}^d$ are bounded and deterministic:
\begin{align}\label{rfqvm}
&\left\langle M^{i_1 j_1}, M^{i_2 j_2}\right\rangle_t=\sum_{k=1}^d\int_r^t\int_{\mathbb{R}^d}\partial_{i_1} h^{j_1k}(y-\xi_s)\partial_{i_2} h^{j_2k}(y-\xi_s)dyds\\
&\qquad=(t-r)\sum_{k=1}^d\int_{\mathbb{R}^d}\partial_{i_1} h^{j_1k}(y)\partial_{i_2} h^{j_2k}(y)dy:=Q^{i_1,j_1}_{i_2,j_2}(t-r)\leq \|h\|_{3,2}(t-r).\nonumber
\end{align}
Now the equation (\ref{sdexi1}) can be rewritten as follows:
\begin{align}\label{sdexi1m}
D^{(k)}_{\theta}\xi_t^i=\delta_{ik}-\sum_{j=1}^d\int_{\theta}^t\int_{\mathbb{R}^d} D^{(k)}_{\theta}\xi_s^{j} dM^{ji}_s,\quad 1\leq i,k\leq d.
\end{align}

\begin{lemma}
For any $0\leq r<t\leq T$, $x\in\R^d$, let $\gamma_t=\gamma_{\xi_t}$ be the Malliavin matrix of $\xi_t=\xi_t^{r,x}$, then $\gamma_t$ is nondegenerate almost surely.
\end{lemma}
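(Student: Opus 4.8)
The plan is to represent the Malliavin derivative of $\xi_t$ through the first variation process attached to \eqref{sdexi1m}, and thereby reduce the nondegeneracy of $\gamma_t=\gamma_{\xi_t}$ to the invertibility of a single matrix-valued stochastic exponential. Recall that, with $H=L^2([0,T];\R^d)$, the Malliavin matrix has entries $\gamma_t^{ij}=\sum_{k=1}^d\int_r^t D^{(k)}_\theta\xi_t^i\,D^{(k)}_\theta\xi_t^j\,d\theta$, where $D^{(k)}_\theta\xi_t^i$ vanishes for $\theta\notin[r,t]$. First I would read \eqref{sdexi1m} as a \emph{linear} matrix equation: let $Y=\{Y_t,\ r\le t\le T\}$ be the $d\times d$ matrix-valued process solving
\begin{align*}
Y_t=I_d-\int_r^t dM_s^*\,Y_s,\qquad Y_r=I_d,
\end{align*}
that is, $Y_t^{ik}=\delta_{ik}-\sum_{j=1}^d\int_r^t Y_s^{jk}\,dM_s^{ji}$, which is precisely \eqref{sdexi1m} with the lower endpoint moved from $\theta$ back to $r$. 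Comparing the equation satisfied in $t$ by the matrix $(D^{(k)}_\theta\xi_t^i)_{i,k}$ with that satisfied by $Y_tY_\theta^{-1}$, and invoking pathwise uniqueness for linear equations driven by the continuous martingale $M$ (whose bracket is deterministic and linear in $t$, see \eqref{rfqvm}), one obtains, on the event that $Y_\theta$ is invertible, the flow identity $(D^{(k)}_\theta\xi_t^i)_{i,k}=Y_tY_\theta^{-1}$.

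The main obstacle is to show that $Y_t$ is almost surely invertible for every $t\in[r,T]$. Since the quadratic covariations \eqref{rfqvm} are deterministic, I would produce the inverse explicitly, as the solution $R=\{R_t,\ r\le t\le T\}$ of the companion linear equation
\begin{align*}
R_t=I_d+\int_r^t R_s\,dM_s^*+\int_r^t R_s\,d\Lambda_s,\qquad R_r=I_d,
\end{align*}
where $\Lambda_t$ is a deterministic, absolutely continuous matrix, linear in $t-r$, whose entries are read off from the constants in \eqref{rfqvm} and are chosen exactly so that the It\^{o} correction term in $d(R_tY_t)$ cancels. The It\^{o} product rule then gives $d(R_tY_t)=0$, hence $R_tY_t=I_d$ for all $t$, almost surely; since $Y_t$ and $R_t$ are square matrices, $R_t$ is in fact a two-sided inverse of $Y_t$, so $Y_t$ is invertible with $Y_t^{-1}=R_t$. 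A routine Burkholder-Davis-Gundy and Gronwall estimate, using again that $\langle M\rangle$ is deterministic and linear in $t$, gives $\sup_{r\le t\le T}\E\big(\|Y_t\|_2^p+\|R_t\|_2^p\big)<\infty$ for every $p\ge1$; in particular $\theta\mapsto Y_\theta^{-1}$ is square integrable on $[r,t]$, so the matrix $C_t$ below is almost surely well defined. The only delicate point is the purely algebraic bookkeeping of the correction $\Lambda_t$, which is possible precisely because the brackets $\langle M^{i_1j_1},M^{i_2j_2}\rangle_t$ carry no randomness.

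Finally I would assemble $\gamma_t$. Substituting the flow identity into the expression for $\gamma_t$ gives
\begin{align*}
\gamma_t=Y_t\,C_t\,Y_t^*,\qquad C_t:=\int_r^t Y_\theta^{-1}\big(Y_\theta^{-1}\big)^*\,d\theta.
\end{align*}
The matrix $C_t$ is symmetric and nonnegative, and for $v\in\R^d$ we have $v^*C_tv=\int_r^t\big|(Y_\theta^{-1})^*v\big|^2\,d\theta$; since $Y_\theta$ is invertible for every $\theta\in(r,t]$ on a full-measure event, this integral is strictly positive whenever $v\ne0$, so $C_t$ is almost surely positive definite for every $t\in(r,T]$. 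Because $Y_t$ is almost surely invertible, $\gamma_t=Y_tC_tY_t^*$ is then almost surely positive definite, in particular $\det\gamma_t>0$ almost surely, which is the asserted nondegeneracy. (The quantitative sharpening of this positivity — moment bounds for $\gamma_t^{-1}$, equivalently lower bounds for $\det\gamma_t$ — is what the Gaussian-type estimates of this section will actually require, and it calls for the further work carried out below.)
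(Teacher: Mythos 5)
Your argument is correct, and it reaches the conclusion by a genuinely different route from the paper. The paper works pointwise in $\theta$: it sets $\lambda_\theta(t)^{ij}=\sum_k D^{(k)}_\theta\xi^i_t D^{(k)}_\theta\xi^j_t$, derives the matrix SDE \eqref{lambda}, inverts each $\lambda_\theta(t)$ by exhibiting the companion process $\beta_\theta(t)$ of \eqref{ivlambda} and checking $d(\lambda_\theta\beta_\theta)=0$, and then concludes via Stroock's matrix Jensen inequality $\|\gamma_t^{-1}\|_2\le (t-r)^{-2}\|\int_r^t\beta_\theta(t)\,d\theta\|_2$ (inequality \eqref{hsniigamma}). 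You instead exploit the flow structure of the linear equation \eqref{sdexi1m}: writing $D_\theta\xi_t=Y_tY_\theta^{-1}$ for the first-variation process $Y$ yields the factorization $\gamma_t=Y_tC_tY_t^*$ with $C_t=\int_r^tY_\theta^{-1}(Y_\theta^{-1})^*d\theta$, from which positive definiteness is immediate. The two proofs share their only nontrivial ingredient — inverting a linear matrix SDE driven by $M$ by solving a companion equation with a deterministic drift correction read off from the deterministic brackets \eqref{rfqvm}, and applying the It\^o product rule; indeed your $Y_t$ is $D_r\xi_t$ and the paper's $\beta_\theta(t)$ equals $(Y_t^*)^{-1}Y_\theta^*Y_\theta Y_t^{-1}$, so the objects are in bijection. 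What differs is the payoff: the paper's route delivers, in the same breath, the quantitative bound \eqref{hsniigamma} on $\|\gamma_t^{-1}\|_2$ that drives the moment estimate \eqref{eigamma} in the next lemma, whereas your factorization gives the qualitative nondegeneracy more transparently but, as you acknowledge, still requires a Jensen-type step (applied now to $C_t$) or an equivalent estimate to produce moments of $\sigma_t=\gamma_t^{-1}$. Since those moments are what the rest of Section 5 actually consumes, if you adopt your version you should add the corresponding bound, e.g. $\|C_t^{-1}\|_2\le (t-r)^{-2}\|\int_r^tY_\theta^*Y_\theta\,d\theta\|_2$, so that the proof of \eqref{eigamma} can proceed unchanged.
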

\begin{proof}
We prove the lemma following Stroock's idea (see Chapter 8 of Stroock \cite{lnm-83-stroock}).  Let $\lambda_{\theta}(t)$ be the $d\times d$ symmetric random matrix given by
\begin{align*}
\lambda_{\theta}^{ij}(t)=\sum_{k=1}^d D_{\theta}^{(k)} \xi_t^i D_{\theta}^{(k)} \xi_t^j.
\end{align*}
Then, the Malliavin matrix of $\xi_t$ is the integral of $\lambda_{\theta}(t)$:
\[
\gamma_t=\int_r^t\lambda_{\theta}(t)d\theta.
\]
By (\ref{sdexi1}), (\ref{rfqvm}) and It\^{o}'s formula, we have
\begin{align*}
D^{(k)}_{\theta}\xi_t^iD^{(k)}_{\theta}\xi_t^j=&\delta_{ik}\delta_{kj}-\sum_{k_1=1}^d\int_{\theta}^t D^{(k)}_{\theta}\xi_s^i D^{(k)}_{\theta}\xi_s^{k_1}dM_s^{k_1j}-\sum_{k_2=1}^d\int_{\theta}^tD^{(k)}_{\theta}\xi_s^j D^{(k)}_{\theta}\xi_s^{k_2} dM_s^{k_2i}\nonumber\\
&+\sum_{k_1,k_2=1}^dQ^{k_1,j}_{k_2, i}\int_{\theta}^t D_{\theta}^{(k)}\xi_s^{k_1} D_{\theta}^{(k)}\xi_s^{k_2}ds.
\end{align*}
Therefore,
\begin{align}\label{lambda}
\lambda_{\theta}(t)=&I-\int_{\theta}^t \lambda_{\theta}(s)  dM_s-\int_{\theta}^t dM_s^*\cdot\lambda_{\theta}(s)\nonumber\\
&+\sum_{k=1}^d \int_{\theta}^t\int_{\mathbb{R}^d} g_k^*(s,y)\lambda_{\theta}(s) g_k(s,y) dy d s.
\end{align}
For any $\theta\in [r,t]$, we claim that $\lambda_{\theta}(t)$ is invertible almost surely, and its inverse $\beta_{\theta}(t)$ satisfies the following SDE:
\begin{align}\label{ivlambda}
\beta_{\theta}(t)=&I+\int_{\theta}^t \beta_{\theta}(s) dM_s^*+\int_{\theta}^t d M_s\cdot \beta_{\theta}(s)\\
&+\sum_{k=1}^d\int_{\theta}^t \int_{\mathbb{R}^n} \left(g_k(s,y)^2\beta_{\theta}(s)+g_k(s,y)\beta_{\theta}(s) g_k^*(s,y)+\beta_{\theta}(s)g_k^*(s,y)^2\right)dyds\nonumber.
\end{align}
Indeed, by It\^{o}'s formula, we have
\begin{align}\label{sdeplambda}
d[\lambda_{\theta}(t) \beta_{\theta}(t)]&=-d M^*_t\cdot [\lambda_{\theta}(t) \beta_{\theta}(t)]+[\lambda_{\theta}(t) \beta_{\theta}(t)] d M^*_t\\
+\sum_{k=1}^d &\Big(\int_{\mathbb{R}^d}\big([\lambda_{\theta}(t) \beta_{\theta}(t)] g^*_k(t,y)^2-g_k^*(t,y)[\lambda_{\theta}(t) \beta_{\theta}(t)] g_k^*(t,y)\big)dy\Big) dt\nonumber.
\end{align}
Note that $\lambda_{\theta}(t)\beta_{\theta}(t)\equiv I$ solves the SDE (\ref{sdeplambda}) with initial value $\lambda_{\theta}(\theta)\beta_{\theta}(\theta)=I$. Therefore, the strong uniqueness of the linear SDE (\ref{sdeplambda}) implies that $\lambda^{-1}_{\theta}(t)=\beta_{\theta}(t)$ almost surely.

Denote by $\|\cdot\|_2$ the Hilbert-Schmidt norm of matrices. By Jensen's inequality (see Lemma 8.14 of Stroock \cite{lnm-83-stroock}),   the following inequality holds almost surely
\begin{align}\label{hsniigamma}
\left\|\gamma^{-1}_t\right\|_2=\bigg\|\Big(\int_r^t \lambda_{\theta}(t)d\theta\Big)^{-1}\bigg\|_2\leq \frac{1}{(t-r)^2}\Big\|\int_r^t \beta_{\theta}(t)d \theta\Big\|_2.
\end{align}
It is easy to show that $\displaystyle\sup_{\theta\in[r,t]}\big\|\|\beta_{\theta}(t)\|_2\big\|_{2p}<\infty$ for all $p\geq 1$. Therefore, the right-hand side of (\ref{hsniigamma}) is finite a.s., and thus $\gamma_t$ is nondegenerate almost surely.
\end{proof}

We denote by $\sigma_t=\gamma^{-1}_t$ the inverse of the Malliavin matrix of $\xi_t$. In the following lemma, we obtain some moment estimates for the derivatives of $\xi_t$ and $\sigma_t$. Before estimates, we introduce the following generalized Cauchy-Schwarz's inequality.

\begin{lemma}\label{ttpcsi}
Let $n_1,n_2$ be nonnegative integers, $u_1\in L^{2p}(\Omega;( H^{\otimes n_1}))$, and $u_2\in L^{2p}(\Omega, ( H^{\otimes n_2}))$, then $u_1\otimes u_2 \in L^p(\Omega; (H^{\otimes (n_1+n_2)}))$, and
\begin{align}\label{tpcsi}
\big\|\|u_1\otimes u_2\|_{ H^{\otimes (n_1+n_2)}}\big\|_{p} \leq \big\|\|u_1\|_{H^{\otimes n_1}}\big\|_{2p}\big\|\|u_2\|_{H^{\otimes n_2}}\big\|_{2p}.
\end{align}
\end{lemma}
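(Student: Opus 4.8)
The plan is to reduce the statement to the elementary fact that the Hilbert-space tensor product is a cross norm: for $a\in H^{\otimes n_1}$ and $b\in H^{\otimes n_2}$,
\[
\|a\otimes b\|_{H^{\otimes(n_1+n_2)}}=\|a\|_{H^{\otimes n_1}}\,\|b\|_{H^{\otimes n_2}}.
\]
This follows from the defining property of the inner product on a tensor product of Hilbert spaces, namely $\langle a_1\otimes b_1, a_2\otimes b_2\rangle=\langle a_1,a_2\rangle\langle b_1,b_2\rangle$ extended by bilinearity; putting $a_1=a_2=a$ and $b_1=b_2=b$ gives $\|a\otimes b\|^2=\|a\|^2\|b\|^2$. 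The degenerate cases $n_1=0$ or $n_2=0$ are covered by the convention $H^{\otimes 0}=\R$, for which the identity is trivial.

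First I would observe that the bilinear map $(a,b)\mapsto a\otimes b$ from $H^{\otimes n_1}\times H^{\otimes n_2}$ into $H^{\otimes(n_1+n_2)}$ is continuous (indeed it has norm $1$ by the displayed identity), so $\omega\mapsto u_1(\omega)\otimes u_2(\omega)$ is strongly measurable and $u_1\otimes u_2$ is a well-defined $H^{\otimes(n_1+n_2)}$-valued random variable. Applying the cross-norm identity pointwise in $\omega$,
\[
\|u_1\otimes u_2\|_{H^{\otimes(n_1+n_2)}}=\|u_1\|_{H^{\otimes n_1}}\,\|u_2\|_{H^{\otimes n_2}}\qquad\text{a.s.}
\]

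Next I would raise both sides to the power $p$, take expectations, and apply the Cauchy--Schwarz inequality (that is, H\"older's inequality with conjugate exponents $2$ and $2$) to the product of the nonnegative random variables $\|u_1\|_{H^{\otimes n_1}}^{p}$ and $\|u_2\|_{H^{\otimes n_2}}^{p}$:
\[
\E\big[\|u_1\otimes u_2\|_{H^{\otimes(n_1+n_2)}}^{p}\big]=\E\big[\|u_1\|_{H^{\otimes n_1}}^{p}\|u_2\|_{H^{\otimes n_2}}^{p}\big]\le\big(\E\big[\|u_1\|_{H^{\otimes n_1}}^{2p}\big]\big)^{1/2}\big(\E\big[\|u_2\|_{H^{\otimes n_2}}^{2p}\big]\big)^{1/2}.
\]
Taking $p$-th roots gives the inequality (\ref{tpcsi}), and the finiteness of the right-hand side shows $u_1\otimes u_2\in L^p(\Omega;H^{\otimes(n_1+n_2)})$. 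There is essentially no obstacle in this argument; the only point deserving a word of care is the measurability of the $H^{\otimes(n_1+n_2)}$-valued map $\omega\mapsto u_1(\omega)\otimes u_2(\omega)$, which is handled by the continuity of the tensor map noted above, together with the degenerate cases $n_i=0$ treated via the convention $H^{\otimes 0}=\R$.
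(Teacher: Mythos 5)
Your proof is correct and follows essentially the same route as the paper's: the paper simply invokes the classical Cauchy--Schwarz inequality, which amounts to exactly your two steps (the cross-norm identity $\|u_1\otimes u_2\|_{H^{\otimes(n_1+n_2)}}=\|u_1\|_{H^{\otimes n_1}}\|u_2\|_{H^{\otimes n_2}}$ pointwise, followed by Cauchy--Schwarz on the expectation of the product). The added remark on measurability is a harmless refinement.
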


\begin{proof}
The lemma can be obtained by the classical Cauchy-Schwarz inequality.
\end{proof}

\begin{lemma}\label{elgamma}
For any $p\geq 1$ and $0 \leq r<t\leq T$, there exists a constant $C>0$, that depends on $T$, $d$, $\|h\|_{3,2}$, $p$, such that
\begin{align}
\max_{1\leq i\leq d}\left\|\|D \xi_t^i\|_{H}\right\|_{2p}\leq &C(t-r)^{\frac{1}{2}}. \label{medxih}\\
\max_{1\leq i,j\leq d}\left\|\sigma_t^{ij}\right\|_{2p}\leq &C(t-r)^{-1},\label{eigamma}\\
 \max_{1\leq i,j\leq d}\left\|\|D\sigma_t^{ij}\|_{H}\right\|_{2p}\leq &C,\label{edgamma}\\
 \max_{1\leq i\leq d}\left\|\|D^2\xi_t^i\|_{H^{\otimes 2}}\right\|_{2p}\leq &C(t-r)^{\frac{3}{2}}.\label{eddxi}\\
\max_{1\leq i,j\leq d}\left\|\|D^2\sigma_t^{ij}\|_{H^{\otimes 2}}\right\|_{2p}\leq &C(t-r)^{\frac{1}{2}},\label{ed2gamma}\\
\max_{1\leq i\leq d}\left\|\|D^3\xi_t^i\|_{H^{\otimes 3}}\right\|_{2p}\leq &C(t-r)^2.\label{edddxi}
\end{align}
\end{lemma}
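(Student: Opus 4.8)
The plan is to establish the six bounds in the logical order \eqref{medxih}, \eqref{eddxi}, \eqref{edddxi} (the Malliavin derivatives of $\xi_t$), then \eqref{eigamma} (the inverse Malliavin matrix $\sigma_t$), and finally \eqref{edgamma}, \eqref{ed2gamma} (the derivatives of $\sigma_t$), the last two being algebraic consequences of the first five. The common tool throughout is: write a linear SDE driven by the martingale $M$ for the quantity of interest, take $2p$-th moments, apply the Burkholder--Davis--Gundy inequality, and close with Gronwall's lemma; the powers of $t-r$ are produced by the lengths of the various time integrals and must be tracked carefully.

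\textbf{Derivatives of $\xi_t$.} Differentiating \eqref{sdexi1m} once, twice, and three times in the Brownian directions gives, for each fixed choice of the time parameters, a linear SDE for $D^{(k)}_\theta\xi^i_t$, for $D^{(l)}_\sigma D^{(k)}_\theta\xi^i_t$, and for $D^{(m)}_\tau D^{(l)}_\sigma D^{(k)}_\theta\xi^i_t$: in each case the drift is linear in the highest-order derivative with bounded coefficient $\partial h$, plus an inhomogeneous source built from products of lower-order derivatives of $\xi$ and from $\partial^q h$ with $2\le q\le 3$. Since $h\in H^3_2(\R^d;\R^d\otimes\R^d)$, one has $\int_{\R^d}|\partial^q h(y-\xi_s)|^2\,dy=\|\partial^q h\|_2^2<\infty$ for $q\le 3$, so after Burkholder--Davis--Gundy each source term contributes a factor equal to a power of the integration length; a Gronwall argument then gives, uniformly in the time parameters, $\E|D^{(k)}_\theta\xi^i_t|^{2p}\le C$, $\E|D^{(l)}_\sigma D^{(k)}_\theta\xi^i_t|^{2p}\le C(t-r)^{p}$, and $\E|D^{(m)}_\tau D^{(l)}_\sigma D^{(k)}_\theta\xi^i_t|^{2p}\le C(t-r)^{p}$. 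Integrating these pointwise bounds over the time parameters in $[r,t]$ and using Minkowski's integral inequality (together with $\|D^n\xi^i_t\|_{H^{\otimes n}}^2=\sum\int\!\cdots\!\int|D^{(\cdot)}\!\cdots D^{(\cdot)}\xi^i_t|^2$) yields \eqref{medxih}, \eqref{eddxi}, and \eqref{edddxi}.

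\textbf{The matrix $\sigma_t$ and its derivatives.} Applying the same Burkholder--Davis--Gundy plus Gronwall scheme to the SDE \eqref{ivlambda} for $\beta_\theta(t)=\lambda_\theta(t)^{-1}$ (whose coefficients are bounded since $h\in H^3_2$) gives $\sup_{\theta\in[r,t]}\big\|\,\|\beta_\theta(t)\|_2\,\big\|_{2p}<\infty$; inserting this into \eqref{hsniigamma} and using Minkowski's integral inequality gives \eqref{eigamma}. Next, since $\gamma_t^{kl}=\langle D\xi^k_t,D\xi^l_t\rangle_H=\int_r^t\sum_m D^{(m)}_\theta\xi^k_t D^{(m)}_\theta\xi^l_t\,d\theta$, the Leibniz rule expresses $D\gamma_t^{kl}$ as a contraction of the type $D^2\xi\otimes D\xi$, and $D^2\gamma_t^{kl}$ as contractions of the type $D^3\xi\otimes D\xi$ and $D^2\xi\otimes D^2\xi$; Lemma \ref{ttpcsi} together with Step 1 then gives $\big\|\,\|D\gamma_t^{kl}\|_H\,\big\|_{2p}\le C(t-r)^{2}$ and $\big\|\,\|D^2\gamma_t^{kl}\|_{H^{\otimes2}}\,\big\|_{2p}\le C(t-r)^{5/2}$. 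Finally, differentiating $\gamma_t\sigma_t=I$ to write $D\sigma_t=-\sigma_t(D\gamma_t)\sigma_t$ and $D^2\sigma_t=-\sigma_t(D^2\gamma_t)\sigma_t+2\sigma_t(D\gamma_t)\sigma_t(D\gamma_t)\sigma_t$ (valid since $\sigma_t\in\cap_{q\ge1}L^q(\Omega)$ by \eqref{eigamma}), Hölder's inequality together with \eqref{eigamma} and the bounds just obtained for $D\gamma_t$, $D^2\gamma_t$ gives $\big\|\,\|D\sigma_t^{ij}\|_H\,\big\|_{2p}\le C(t-r)^{-2}(t-r)^{2}=C$ and $\big\|\,\|D^2\sigma_t^{ij}\|_{H^{\otimes2}}\,\big\|_{2p}\le C(t-r)^{-2}(t-r)^{5/2}+C(t-r)^{-3}(t-r)^{4}\le C(t-r)^{1/2}$, where bounded powers of $T$ have been absorbed into $C$; these are \eqref{edgamma} and \eqref{ed2gamma}.

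The routine part is the Burkholder--Davis--Gundy/Gronwall machinery and the elementary Cauchy--Schwarz estimate of Lemma \ref{ttpcsi}. The main obstacle is the bookkeeping: one must keep exact track of the power of $t-r$ produced by every source term in the iterated variational SDEs for $\xi_t$, of the corresponding powers in the Leibniz expansion of $D^k\gamma_t$, and of how these cancel against the $(t-r)^{-1}$ blow-up of $\sigma_t$ in the algebraic formulas for $D^k\sigma_t$ --- it is precisely these exponents that yield the sharp scaling and, downstream, the factor $s^{-1/2}$ in Theorem \ref{tjhc}.
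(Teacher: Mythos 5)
Your proposal is correct: all six exponents check out, and for the bounds on $D\xi_t$, $D^2\xi_t$, $D^3\xi_t$, for $\sigma_t$ via $\beta_\theta(t)$ and \eqref{hsniigamma}, and for the final algebra $D\sigma_t=-\sigma_t(D\gamma_t)\sigma_t$ (and its second-order analogue), you follow essentially the same route as the paper. The one genuine divergence is in how you bound $D\gamma_t$ and $D^2\gamma_t$. The paper integrates \eqref{lambda} over $\theta$ to get the SDE \eqref{gmsde} for $\gamma_t$, then differentiates that SDE once and twice (equations \eqref{dgme} and \eqref{rfdgamma2}) and runs two further Burkholder--Davis--Gundy/Gr\"onwall arguments to reach $\bigl\|\|D\gamma_t^{ij}\|_H\bigr\|_{2p}\le C(t-r)^2$ and $\bigl\|\|D^2\gamma_t^{ij}\|_{H^{\otimes2}}\bigr\|_{2p}\le C(t-r)^{5/2}$; these are the longest computations in the proof. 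You instead read the same bounds off the identity $\gamma_t^{kl}=\langle D\xi_t^k,D\xi_t^l\rangle_H$ by the Leibniz rule, Cauchy--Schwarz for the partial contractions, and Lemma \ref{ttpcsi}, using only the already-established estimates on $D\xi$, $D^2\xi$, $D^3\xi$ (note your route needs the $D^3\xi$ bound before $D^2\gamma$, which your ordering supplies; the paper proves it last since its SDE argument for $D^2\gamma$ does not use it). Your route is shorter and avoids two Gr\"onwall iterations; what the paper's SDE representation buys is reuse downstream, since \eqref{dgme} is the starting point for the increment estimate $\|D\gamma_t-D\gamma_s\|$ in Lemma \ref{medxigm}. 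One cosmetic remark: the terms you call ``drift'' in the variational equations for $D^n\xi$ are in fact stochastic integrals against $W$ (i.e.\ against $M$), as your invocation of Burkholder--Davis--Gundy implicitly acknowledges.
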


\begin{proof}
{\bf (i)} By (\ref{rfqvm}), (\ref{sdexi1m}), Jensen's, 
Burkholder-Davis-Gundy's, and Minkowski's inequalities, we have
\begin{align}\label{lambdai}
\sum_{i,k=1}^d\big\|D^{(k)}_{\theta}\xi_t^i\big\|_{2p}^2\leq &\sum_{i,k=1}^d\bigg(\delta_{ik}+\sum_{j=1}^d\Big\|\int_{\theta}^t\int_{\mathbb{R}^d}  D^{(k)}_{\theta}\xi_s^j d M^{ji}_s\Big\|_{2p}\bigg)^2\nonumber\\
\leq &(d+1)\sum_{i,k=1}^d\bigg(\delta_{ik}+\sum_{j=1}^d\Big\|\int_{\theta}^t\int_{\mathbb{R}^d}  D^{(k)}_{\theta}\xi_s^j d M^{ji}_s\Big\|_{2p}^2\bigg)\nonumber\\
\leq &d(d+1)+(d+1)c_p\sum_{i,j,k=1}^d Q_{ji}^{ji}\Big\|\int_{\theta}^t \big| D^{(k)}_{\theta}\xi_s^j\big|^2 d s\Big\|_{p}\nonumber\\
\leq &d(d+1)+2c_p d(d+1) \|h\|_{3,2}^2\sum_{j,k=1}^d \int_{\theta}^t \big\| D^{(k)}_{\theta}\xi_s^j\big\|_{2p}^2 ds.
\end{align}
Thus by Gr\"{o}nwall's lemma, we have
\begin{align}\label{elambda}
\sum_{i,j=1}^d \big\|D^{(k)}_{\theta}\xi_t^j\big\|_{2p}^2\leq d(d+1)\exp\left(2c_pd(d+1)\|h\|_{3,2}^2T\right):= C.
\end{align}
Therefore,  by (\ref{elambda}) and Minkowski's inequality, we have
\begin{align*}
\left\|\|D \xi_t^i\|_{H}\right\|_{2p}^2=\bigg\|\sum_{k=1}^d\int_r^t |D^{(k)}_{\theta}\xi_t^{i}|^2d\theta\bigg\|_{p}\leq \sum_{k=1}^d\int_r^t \big\|D^{(k)}_{\theta}\xi_t^{i}\big\|^2_{2p}d\theta\leq C(t-r).
\end{align*}

{\bf (ii)} In order to prove (\ref{eigamma}), we rewrite the SDE (\ref{ivlambda}) in the following way:
\begin{align}\label{ivlembdai}
\beta^{ij}_{\theta}(t)=&\delta_{ij}+\sum_{k_1=1}^d\int_{\theta}^t\beta^{ik_1}_{\theta}(s) dM^{jk_1}_s+\sum_{k_2=1}^d\int_{\theta}^t\beta^{k_2j}_{\theta}(s) dM^{ik_2}_s\nonumber\\
&+\sum_{k_1,k_2=1}^d\Big(Q^{i,k_1}_{k_1,k_2}\int_{\theta}^t\beta^{k_2j}_{\theta}(s)ds\Big)+\sum_{k_1,k_2=1}^d\Big(Q^{i,k_1}_{j,k_2}\int_{\theta}^t\beta^{k_1k_2}_{\theta}(s)ds\Big)\nonumber\\
&+\sum_{k_1,k_2=1}^d\Big(Q^{k_2,k_1}_{j,k_2}\int_{\theta}^t\beta^{ik_1}_{\theta}(s)ds\Big).
\end{align}
Similarly as we did in step {\bf (i)}, by Burkholder-Davis-Gundy's, and  Minkowski's inequalities, we can show that the martingale terms satisfies the following inequality
\begin{align}\label{tdiffusioni}
\Big\|\int_{\theta}^t\beta^{ik_1}_{\theta}(s)d M_s^{jk_1}\Big\|_{2p}^2\leq 2c_p \|h\|_{3,2}^2  \int_{\theta}^t\big\|\beta^{ik_1}_{\theta}(s)\big\|_{2p}^2ds.
\end{align}
For the drift terms, by Minkowski's and Jensen's inequality, we have
\begin{align}\label{tdrifti}
\Big\|\int_{\theta}^t \beta^{k_1k_2}_{\theta}(s)ds\Big\|_{2p}^2\leq  (t-\theta) \int_{\theta}^t  \left\|\beta^{k_1k_2}_{\theta}(s)\right\|_{2p}^2 ds.
\end{align}
Then, by (\ref{ivlembdai}) - (\ref{tdrifti}), and Gr\"{o}nwall's lemma, we have
\begin{align*}
\sum_{i,j=1}^d \left\|\beta^{ij}_{\theta}(t)\right\|_{2p}^2\leq C.
\end{align*}
Thus by Minkowski's and Jensen's inequalities, we have
\begin{align}\label{ebbeta}
\bigg\|\Big\|\int_r^t \beta_{\theta}(t)d \theta\Big\|_2\bigg\|_{2p}\leq c_d\sum_{i,j=1}^d\int_r^t\left\|\beta^{ij}_{\theta}(t)\right\|_{2p} d\theta \leq C(t-r).
\end{align}
Therefore, (\ref{eigamma}) follows from (\ref{hsniigamma}), (\ref{ebbeta}), Minkowski's and Jensen's inequalities.

{\bf (iii)} By integrating equation (\ref{lambda}) on both sides with respect to $\theta$, and applying the stochastic Fubini theorem, we have
\begin{align}\label{gmsde}
\gamma_t=\int_r^t \lambda_{\theta}(t)d\theta=&I(t-r)-\int_r^t \gamma_s dM_s-\int_r^t d M^*_s\cdot \gamma_s\\
&+\sum_{m=1}^d\int_r^t \int_{\mathbb{R}^d} g_m^*(y, s)\gamma_s g_m(y, s) dy ds.\nonumber
\end{align}
Taking the Malliavin derivative on both sides of (\ref{gmsde}), we have the following SDE:
\begin{align}\label{dgme}
 D^{(k)}_{\theta}\gamma_t^{ij}=&- \sum_{k_1=1}^d\int_{\theta}^tD^{(k)}_{\theta}\gamma_s^{ik_1} d M_s^{k_1j}-\sum_{k_1=1}^d\int_{\theta}^t \gamma_s^{ik_1} d \left(D^{(k)}_{\theta} M^{k_1j}_s\right)\nonumber\\
&-\sum_{k_2=1}^d\int_{\theta}^tD^{(k)}_{\theta}\gamma^{k_2j}_s d M^{k_2i}_s-\sum_{k_2=1}^d\int_{\theta}^t\gamma^{k_2j}_s d\left(D^{(k)}_{\theta} M^{k_2i}_s\right)\nonumber\\
&+\sum_{k_1,k_2=1}^d\left(Q^{k_1,i}_{k_2,j}\int_{\theta}^t D^{(k)}_{\theta}\gamma^{k_1k_2}_s ds\right),
\end{align}
where
\begin{align}\label{drdm}
D^{(k)}_{\theta} M^{ij}_s=-\sum_{i_1,i_2=1}^d\int_{\theta}^s \int_{\mathbb{R}^d} \partial_{i,i_2}h^{ji_1}\left(y-\xi_r\right)D^{(k)}_{\theta}\xi^{i_2}_r W^{i_1}(dr, dy).
\end{align}
For the first and the third term, by similar arguments as in (\ref{lambdai}), we can show that
\begin{align}\label{diffusiondgamma1}
\Big\|\int_{\theta}^tD^{(k)}_{\theta}\gamma^{ik_1}_s d M_s^{k_1j}\Big\|_{2p}^2\leq c_{d,p} \|h\|_{3,2}^2  \int_{\theta}^t \big\|D^{(k)}_{\theta}\gamma^{ik_1}_s\big\|_{2p}^2ds.
\end{align}
To estimate the second and the fourth term, notice that by (\ref{medxih}), we have
\begin{align}\label{egamma}
\max_{1\leq i,j\leq d}\left\|\gamma_t^{ij}\right\|_{2p}=&\max_{1\leq i,j\leq d}\left\|\langle D\xi_t^i, D\xi_t^j\rangle_H\right\|_{2p}\nonumber\\
\leq &\max_{1\leq i\leq d}\left\|\| D\xi_t^i\|_H\right\|_{4p} \max_{1\leq j\leq d}\left\|\|D\xi_t^j\|_H\right\|_{4p}\leq C(t-r).
\end{align}
Therefore, by (\ref{elambda}), (\ref{drdm}), (\ref{egamma}), Jensen's, Burkholder-Davis-Gundy's, Minkowski's, and Cauchy-Schwarz's inequalities, we have
\begin{align}\label{diffusionddm1}
\Big\|\int_{\theta}^t\gamma_s^{ik_1} d \Big(D^{(k)}_{\theta} M^{k_1j}_s\Big)\Big\|_{2p}^2\leq &c_{d,p}\|h\|_{3,2}^2\sum_{k_2=1}^d\int_{\theta}^t\big\|\gamma_s^{ik_1}\|_{4p}^2\|D^{(k)}_{\theta}\xi^{k_2}_s \big\|_{4p}^2ds\nonumber\\
\leq& C(t-r)^3.
\end{align}
For the last term, by Minkowski's and Jensen's inequalities, we have
\begin{align}\label{driftdgamma1}
\Big\|\int_{\theta}^t D^{(k)}_{\theta}\gamma^{k_1k_2}_sds\Big\|_{2p}^2\leq (t-\theta) \int_{\theta}^t  \big\|D^{(k)}_{\theta}\gamma^{k_1k_2}_s\big\|_{2p}^2 ds\leq T \int_{\theta}^t  \big\|D^{(k)}_{\theta}\gamma^{k_1k_2}_s\big\|_{2p}^2 ds.
\end{align}
Combining (\ref{dgme}) - (\ref{driftdgamma1}), we obtain the following inequality
\begin{align}
\sum_{i,j=1}^d \big\|D^{(k)}_{\theta} \gamma_t^{ij}\big\|_{2p}^2\leq c_1 \int_{\theta}^t\sum_{i,j=1}^d \big\|D^{(k)}_{\theta} \gamma_s^{ij}\big\|_{2p}^2 ds+c_2(t-r)^3,
\end{align}
where $c_1,c_2$ depends on $T$, $d$, $\|h\|_{3,2}^2$, and $p$. Thus by Gr\"{o}nwall's lemma, we have
\begin{align}\label{idgamma}
\sum_{i,j=1}^d\big\|D^{(k)}_{\theta} \gamma_t^{ij}\big\|_{2p}^2\leq C(t-r)^3.
\end{align}
It follows that
\begin{align}\label{idgammah}
\big\|\|D\gamma_t^{ij}\|_H\big\|_{2p}\leq C(t-r)^2
\end{align}
Notice that $\gamma_t\sigma_t=I$, a.s., as a consequence, $D\left(\gamma_t\sigma_t\right)=DI\equiv 0$. That implies
\begin{align}\label{digmf}
D\sigma^{ij}_t=-\sum_{i_1,i_2=1}^d\sigma^{ii_1}_tD\gamma^{i_1i_2}_t\sigma^{i_2j}_t.
\end{align}
Then, (\ref{edgamma})  follows from (\ref{tpcsi}), (\ref{eigamma}), (\ref{idgammah}) and (\ref{digmf}).

{\bf (iv)} Fix $0\leq r<t\leq T$. For any $\theta_1,\theta_2\in [r,t]$, let $\theta=\theta_1\vee \theta_2$. Taking the Malliavin derivative on both sides of (\ref{sdexi1m}), we have the following SDE:
\begin{align}\label{d2xisde}
D^{(k_1,k_2)}_{\theta_1,\theta_2}\xi_t^i=&-\sum_{j_1=1}^d\int_{\theta}^t D^{(k_1,k_2)}_{\theta_1,\theta_2}\xi_s^{j_1} dM^{j_1i}_s\nonumber\\
&+\sum_{j_1,j_2,j_3=1}^d\int_{\theta}^t\int_{\mathbb{R}^d}\partial_{j_2,j_3} h^{ij_1}(y-\xi_s) D^{(k_1)}_{\theta_1}\xi_s^{j_2} D^{(k_2)}_{\theta_2}\xi_s^{j_3}W^{j_1}(ds, dy).
\end{align}
Similarly as in (\ref{lambdai}), we can show the following inequalities
\begin{align}\label{ed2xi1}
\Big\|\int_{\theta}^t D^{(k_1,k_2)}_{\theta_1,\theta_2}\xi_s^{j_1} dM^{j_1i}_s\Big\|_{2p}^2\leq c_{d,p}\|h\|_{3,2}^2\int_{\theta}^t \big\|D^{(k_1,k_2)}_{\theta_1,\theta_2}\xi_s^{j_1}\big\|_{2p}^2ds,
\end{align}
and
\begin{align}\label{ed2xi2}
&\Big\|\int_{\theta}^t\int_{\mathbb{R}^d}\partial_{j_2,j_3} h^{ij_1}(y-\xi_s) D^{(k_1)}_{\theta_1}\xi_s^{j_2} D^{(k_2)}_{\theta_2}\xi_s^{j_3}W^{j_1}(ds, dy)\Big\|_{2p}^2\nonumber\\
&\qquad \leq c_p\|h\|_{3,2}^2\int_{\theta}^t\big\|D^{(k_1)}_{\theta_1}\xi_s^{j_2}\big\|_{4p}^2\big\| D^{(k_2)}_{\theta_2}\xi_s^{j_3}\big\|_{4p}^2ds\leq  C (t-r).
\end{align}
Thus combining (\ref{d2xisde}) - (\ref{ed2xi2}), we have
\begin{align*}
\sum_{i=1}^d\big\|D^{(k_1,k_2)}_{\theta_1,\theta_2}\xi_t^i\big\|_{2p}^2\leq &c_1\sum_{i=1}^d\int_{\theta}^t \big\|D^{(k_1,k_2)}_{\theta_1,\theta_2}\xi_s^i\big\|_{2p}^2ds+c_2(t-r).
\end{align*}
Then, it follows from Gr\"{o}nwall's lemma that
\begin{align}\label{mdxi2}
\sum_{i=1}^d\big\|D^{(k_1,k_2)}_{\theta_1,\theta_2}\xi_t^i\big\|_{2p}^2\leq C(t-r).
\end{align}
The inequality (\ref{eddxi}) is a consequence of (\ref{mdxi2}), Jensen's and Minkowski's inequalities.

{\bf (v)} For any  $\theta_1,\theta_2\in [r,t]$ and $\theta=\theta_1\vee\theta_2$, by taking the Malliavin derivative on both sides of (\ref{dgme}), we have
\begin{align}\label{rfdgamma2}
&D^{(k_1,k_2)}_{\theta_1,\theta_2}\gamma_t^{ij}=- \sum_{i_1=1}^d\Big(\int_{\theta}^tD^{(k_1,k_2)}_{\theta_1,\theta_2}\gamma_s^{ii_1} d M_s^{i_1j}+\int_{\theta}^tD^{(k_1)}_{\theta_2}\gamma_s^{ii_1} d \left(D^{(k_2)}_{\theta_1}M_s^{i_1j}\right)\Big)\nonumber\\
&\quad -\sum_{i_1=1}^d\Big(\int_{\theta}^t D^{(k_2)}_{\theta_2}\gamma_s^{ii_1} d \left(D^{(k_1)}_{\theta_1} M^{i_1j}_s\right)+\int_{\theta}^t \gamma_s^{ii_1} d \left(D^{(k_1,k_2)}_{\theta_1,\theta_2} M^{i_1j}_s\right)\Big)\nonumber\\
&\quad -\sum_{i_2=1}^d\Big(\int_{\theta}^tD^{(k_1,k_2)}_{\theta_1,\theta_2}\gamma^{i_2j}_s d M^{i_2i}_s+\int_{\theta}^tD^{(k_1)}_{\theta}\gamma^{i_2j}_s d \left(D^{(k_2)}M^{i_2i}_s\right)\Big)\nonumber\\
&\quad -\sum_{i_2=1}^d\Big(\int_{\theta}^tD^{(k_2)}_{\theta_1}\gamma^{i_2j}_s d\left(D^{(k_1)}_{\theta_2} M^{i_2i}_s\right)+\int_{\theta}^t\gamma^{i_2j}_s d\left(D^{(k_1,k_2)}_{\theta_1,\theta_2} M^{i_2i}_s\right)\Big)\nonumber\\
&\quad +\sum_{i_1,i_2=1}^d\Big(Q^{i_1,i}_{i_2,j}\int_{\theta}^t D^{(k_1,k_2)}_{\theta_1,\theta_2}\gamma^{i_1i_2}_s ds\Big),
\end{align}
where
\begin{align*}
D^{(k_1,k_2)}_{\theta_1,\theta_2} M^{ij}_s=&-\sum_{j_1,j_2,j_3=1}^d\int_{\theta}^s \int_{\mathbb{R}^d} \partial_{i,j_2,j_3}h^{jj_1}\left(y-\xi_r\right)D^{(k_1)}_{\theta_1}\xi^{j_2}_rD^{(k_2)}_{\theta_2}\xi^{j_3}_rW^{j_1}(dr, dy)\\
&+\sum_{j_1,j_2=1}^d \int_{\theta}^s \int_{\mathbb{R}^d} \partial_{i,j_2}h^{jj_1}\left(y-\xi_r\right)D^{(k_1,k_2)}_{\theta_1,\theta_2}\xi^{j_2}_rW^{j_1}(dr, dy).
\end{align*}
By (\ref{elambda}), (\ref{egamma}), (\ref{idgamma}), (\ref{mdxi2}), Burkholder-Davis-Gundy's, Minkowski's and H\"{o}lder's inequalities, we have the following inequalities
\begin{align}\label{ddgdm}
\Big\|\int_{\theta}^tD^{(k_1,k_2)}_{\theta_1,\theta_2}\gamma_s^{ii_1} d M_s^{i_1j}\Big\|_{2p}^2\leq c_{d,p}\|h\|_{3,2}^2\int_{\theta}^t\big\|D^{(k_1,k_2)}_{\theta_1,\theta_2}\gamma_s^{ii_1}\big\|_{2p}^2ds,
\end{align}
\begin{align}\label{dgmddm}
&\Big\|\int_{\theta}^tD^{(k_1)}_{\theta_2}\gamma_s^{ii_1} d \left(D^{(k_2)}_{\theta_1}M_s^{i_1j}\right)\Big\|_{2p}^2\leq c_{d,p}\|h\|_{3,2}^2\sum_{i_2=1}^d \int_{\theta}^t\left\|D^{(k_1)}_{\theta_2}\gamma_s^{ii_1}D_{\theta_2}^{(k_2)}\xi_s^{i_2}\right\|_{2p}^2ds\nonumber\\
\leq &c_{d,p}\|h\|_{3,2}^2\sum_{i_2=1}^d \int_{\theta}^t\left\|D^{(k_1)}_{\theta_2}\gamma_s^{ii_1}\right\|_{4p}^2\left\|D_{\theta_2}^{(k_2)}\xi_s^{i_2}\right\|_{4p}^2ds\leq C(t-r)^4,
\end{align}
and
\begin{align*}
&\Big\|\int_{\theta}^t\gamma_t^{ii_1}d\left(D^{(k_1,k_2)}_{\theta_1,\theta_2}M_s^{i_1j}\right)\Big\|_{2p}^2\\
\leq&c_d\bigg(\sum_{j_1,j_2,j_3=1}^d\Big\|\int_{\theta}^t \int_{\mathbb{R}^d} \gamma_s^{ii_1}\partial_{i_1,j_2,j_3}h^{jj_1}\left(y-\xi_r\right)D^{(k_1)}_{\theta_1}\xi^{j_2}_sD^{(k_2)}_{\theta_2}\xi^{j_3}_sW^{j_1}(ds, dy)\Big\|_{2p}^2 \nonumber\\
&+\sum_{j_1,j_2=1}^d \Big\|\int_{\theta}^t \int_{\mathbb{R}^d} \gamma_s^{ii_1}\partial_{i_1,j_2}h^{jj_1}\left(y-\xi_s\right)D^{(k_1,k_2)}_{\theta_1,\theta_2}\xi^{j_2}_sW^{j_1}(ds, dy)\Big\|_{2p}^2\bigg):=c_d\left(I_1+I_2\right).\nonumber
\end{align*}
We estimate $I_1$, $I_2$ as follows:
\begin{align*}
I_1\leq& d\|h\|_{3,2}^2\sum_{j_2,j_3=1}^d\int_{\theta}^t\left\|\gamma_s^{ii_1}\right\|_{6p}^2\big\|D^{(k_1)}_{\theta_1}\xi^{j_2}_s\big\|_{6p}^2\big\|D^{(k_2)}_{\theta_2}\xi^{j_3}_s\big\|_{6p}^2 ds\leq C(t-r)^3,
\end{align*}
and
\begin{align*}
I_2\leq d\|h\|_{3,2}^2\sum_{j_2=1}^d\int_{\theta}^t\left\|\gamma_s^{ii_1}\right\|_{4p}^2\big\|D^{(k_1,k_2)}_{\theta_1,\theta_2}\xi^{j_2}_s\big\|_{4p}^2ds\leq C(t-r)^4\leq CT(t-r)^3.
\end{align*}
Thus we have
\begin{align}\label{gmd3m0}
\Big\|\int_{\theta}^t\gamma_t^{ii_1}d\left(D^{(k_1,k_2)}_{\theta_1,\theta_2}M_s^{i_1j}\right)\Big\|_{2p}^2\leq C(t-r)^3.
\end{align}
Therefore, combine (\ref{rfdgamma2}) - (\ref{gmd3m0}), we have
\begin{align*}
\sum_{i,j=1}^d\left\|D^{(k_1,k_2)}_{\theta_1,\theta_2}\gamma_t^{ij}\right\|_{2p}^2\leq c_1(t-r)^3+ c_2\sum_{i,j=1}^d\int_\theta^t\left\|D^{(k_1,k_2)}_{\theta_1,\theta_2}\gamma_s^{ij}\right\|_{2p}^2ds,
\end{align*}
By Gr\"{o}nwall's lemma, we have
\begin{align}\label{ddgm0}
\sum_{i,j=1}^d\big\|D^{(k_1,k_2)}_{\theta_1,\theta_2}\gamma_t^{ij}\big\|_{2p}^2\leq C(t-r)^3,
\end{align}
which implies
\begin{align*}
\left\|\|D^2\gamma_t^{ij}\|_{H^{\otimes 2}}\right\|_{2p}\leq C(t-r)^\frac{5}{2}.
\end{align*}
By taking the second Malliavin derivative of $\gamma_t\sigma_t\equiv I$, we have
\begin{align}\label{ddigm}
D^2\sigma^{ij}_t=&-\sum_{i_1,i_2=1}^d\sigma^{ii_1}_t\big(D^2\gamma^{i_1i_2}_t\sigma^{i_2j}_t+D\gamma^{i_1i_2}_t\otimes D\sigma^{i_2j}_t+D\sigma^{i_2j}_t\otimes D\gamma^{i_1i_2}_t\big).
\end{align}
Then, (\ref{ed2gamma}) can be deduced by  (\ref{tpcsi}), (\ref{eigamma}), (\ref{edgamma}), (\ref{idgammah}) and (\ref{ddigm}).

{\bf (vi)} For any $\theta_1,\theta_2,\theta_3\in [r,t]$, let $\theta=\theta_1\vee \theta_2\vee \theta_3$.  Taking the Malliavin derivative on both sides of (\ref{d2xisde}), we have
\begin{align}\label{d3xisde}
D^{(k_1,k_2,k_3)}_{\theta_1,\theta_2,\theta_3}&\xi_t^i=\sum_{j_1,j_2,j_3=1}^d\int_{\theta}^t\int_{\mathbb{R}^d} \partial_{j_2,j_3} h^{ij_1}(y-\xi_s) D^{(k_1,k_2)}_{\theta_1,\theta_2}\xi_s^{j_2}D^{(k_3)}_{\theta_3}\xi_s^{j_3} W^{j_1}(ds, dy)\nonumber\\
&-\sum_{j_1,j_2=1}^d\int_{\theta}^t\int_{\mathbb{R}^d} \partial_{j_2} h^{ij_1}(y-\xi_s) D^{(k_1,k_2,k_3)}_{\theta_1,\theta_2,\theta_3}\xi_s^{j_2} W^{j_1}(ds, dy)\nonumber\\
&-\sum_{j_1,j_2,j_3,j_4=1}^d\int_{\theta}^t\int_{\mathbb{R}^d}\partial_{j_2,j_3,j_4} h^{ij_1}(y-\xi_s) D^{(k_1)}_{\theta_1}\xi_s^{j_2} D^{(k_2)}_{\theta_2}\xi_s^{j_3}D^{(k_3)}_{\theta_3}\xi_s^{j_4}W^{j_1}(ds, dy)\nonumber\\
&+\sum_{j_1,j_2,j_3=1}^d\int_{\theta}^t\int_{\mathbb{R}^d}\partial_{j_2,j_3} h^{ij_1}(y-\xi_s) D^{(k_1,k_3)}_{\theta_1, \theta_3}\xi_s^{j_2} D^{(k_2)}_{\theta_2}\xi_s^{j_3}W^{j_1}(ds, dy)\nonumber\\
&+\sum_{j_1,j_2,j_3=1}^d\int_{\theta}^t\int_{\mathbb{R}^d}\partial_{j_2,j_3} h^{ij_1}(y-\xi_s) D^{(k_1)}_{\theta_1}\xi_s^{j_2} D^{(k_2,k_3)}_{\theta_2,\theta_3}\xi_s^{j_3}W^{j_1}(ds, dy).
\end{align}
By (\ref{elambda}), (\ref{mdxi2}), Burkholder-Davis-Gundy's, Minkowski's, and H\"{o}lder's inequalities, we have the following inequalities:
\begin{align}\label{d3xi1}
&\Big\|\int_{\theta}^t\int_{\mathbb{R}^d} \partial_{j_2,j_3} h^{ij_1}(y-\xi_s) D^{(k_1,k_2)}_{\theta_1,\theta_2}\xi_s^{j_2}D^{(k_3)}_{\theta_3}\xi_s^{j_3} W^{k_1}(ds, dy)\Big\|_{2p}^2\nonumber\\
\leq &c_p\|h\|_{3,2}^2\int_{\theta}^t \big\|D^{(k_1, k_2)}_{\theta_1,\theta_2}\xi_s^{j_2}\big\|_{4p}^2\big\|D^{(k_3)}_{\theta_3}\xi_s^{j_3}\big\|_{4p}^2ds\leq C(t-r)^2,
\end{align}
\begin{align}\label{d2xi2}
&\Big\|\int_{\theta}^t\int_{\mathbb{R}^d} \partial_{j_2} h^{ij_1}(y-\xi_s) D^{(k_1,k_2,k_3)}_{\theta_1,\theta_2,\theta_3}\xi_s^{j_2} W^{j_1}(ds, dy)\Big\|_{2p}^2\nonumber\\
\leq&c_p\|h\|_{3,2}^2\int_{\theta}^t\big\|D^{(k_1,k_2,k_3)}_{\theta_1,\theta_2,\theta_3}\xi_s^{j_2}\big\|_{2p}^2ds,
\end{align}
and
\begin{align}\label{d3xi3}
&\Big\|\int_{\theta}^t\int_{\mathbb{R}^d}\partial_{j_2,j_3,j_4} h^{ij_1}(y-\xi_s) D^{(k_1)}_{\theta_1}\xi_s^{j_2} D^{(k_2)}_{\theta_2}\xi_s^{j_3}D^{(k_3)}_{\theta_3}\xi_s^{j_3}W^{j_1}(ds, dy)\Big\|_{2p}^2\nonumber\\
\leq&c_p\|h\|_{3,2}^2\int_{\theta}^t\big\|D^{(k_1)}_{\theta_1}\xi_s^{j_2}\big\|_{6p}^2\big\|D^{(k_2)}_{\theta_2}\xi_s^{j_3}\big\|_{6p}^2\big\|D^{(k_3)}_{\theta_3}\xi_s^{j_3}\big\|_{6p}^2ds\leq C(t-r).
\end{align}
Thus combining (\ref{d3xisde}) - (\ref{d3xi3}), by Jensen's inequality, we have
\begin{align*}
\sum_{i=1}^d\big\|D^{(k_1,k_2,k_3)}_{\theta_1,\theta_2,\theta_3}\xi_t^i\big\|_{2p}^2\leq &c_1\sum_{i=1}^d\int_{\theta}^t \big\|D^{(k_1,k_2,k_3)}_{\theta_1,\theta_2,\theta_3}\xi_t^i\big\|_{2p}^2ds+c_2(t-r).
\end{align*}
Then, the following inequality follows from Gr\"{o}nwall's lemma
\begin{align}\label{d3xi0}
\sum_{i=1}^d\big\|D^{(k_1,k_2,k_3)}_{\theta_1,\theta_2,\theta_3}\xi_t^i\big\|_{2p}^2\leq C(t-r).
\end{align}
Therefore, (\ref{edddxi}) is a consequence of (\ref{d3xi0}).
\end{proof}

In the next lemma, we derive estimates for the moments of increments of the derivatives of $\xi_t$ and $\sigma_t$.
\begin{lemma}\label{medxigm}
For any $p\geq 1$, $0 \leq  r< s<t\leq T$, and $1\leq i,j\leq d$, there exists a constant $C>0$ depends on $T$, $d$, $p$, and $\|h\|_{3,2}$, such that
\begin{align}
\max_{1\leq i\leq d}\left\|\|D\xi_t^i-D\xi_s^i\|_{H}\right\|_{2p}\leq &C(t-s)^{\frac{1}{2}},\label{ddxi}\\
\max_{1\leq i,j\leq d}\left\|\sigma_t^{ij}-\sigma_s^{ij}\right\|_{2p}\leq &C(t-r)^{-\frac{1}{2}}(s-r)^{-1}(t-s)^{\frac{1}{2}},\label{dgamma}\\
\max_{1\leq i,j\leq d}\left\|\|D\sigma_t^{ij}-D\sigma_s^{ij}\|_{H}\right\|_{2p}\leq &C(t-r)^{-\frac{1}{2}}(t-s)^{\frac{1}{2}},\label{ddgamma}\\
\max_{1\leq i\leq d}\left\|\|D\xi_t^i-D^2\xi_s^i\|_{H^{\otimes 2}}\right\|_{2p}\leq &C(t-r)(t-s)^{\frac{1}{2}}.\label{ddxi2}
\end{align}
\end{lemma}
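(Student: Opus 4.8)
The plan is to deduce each of the four increment bounds from the a priori estimates of Lemma~\ref{elgamma} by the same scheme used to prove that lemma: start from the defining linear SDE of the quantity in question, apply the Burkholder--Davis--Gundy and Minkowski inequalities, and invoke Gr\"{o}nwall's lemma or the a priori bounds, this time retaining the factor produced by the time increment $t-s$. First I would prove \eqref{ddxi}. Since $\left\|\|D\xi_t^i-D\xi_s^i\|_H\right\|_{2p}^2\le\sum_k\int_r^t\|D^{(k)}_\theta\xi_t^i-D^{(k)}_\theta\xi_s^i\|_{2p}^2\,d\theta$, I split the $\theta$-integral at $s$: on $(s,t]$ one has $D^{(k)}_\theta\xi_s^i=0$, so the integrand equals $\|D^{(k)}_\theta\xi_t^i\|_{2p}^2\le C$ by \eqref{elambda}, contributing $O(t-s)$; on $[r,s]$, subtracting \eqref{sdexi1m} at the times $t$ and $s$ gives $D^{(k)}_\theta\xi_t^i-D^{(k)}_\theta\xi_s^i=-\sum_j\int_s^t D^{(k)}_\theta\xi_u^j\,dM^{ji}_u$, which Burkholder--Davis--Gundy together with \eqref{rfqvm} and \eqref{elambda} bounds by $C(t-s)$ uniformly in $\theta$, contributing $O((s-r)(t-s))$. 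Estimate \eqref{ddxi2} for $D^2\xi$ is obtained analogously from \eqref{d2xisde}: split $[r,t]^2$ into $[r,s]^2$ and its complement; on the complement $D^{(k_1,k_2)}_{\theta_1,\theta_2}\xi_s^i=0$, so \eqref{mdxi2} times the measure $O((t-r)(t-s))$ of that region gives $O((t-r)^2(t-s))$, while on $[r,s]^2$ one subtracts \eqref{d2xisde} at $t$ and $s$ and estimates the two resulting stochastic integrals over $[s,t]$ by Burkholder--Davis--Gundy using \eqref{mdxi2} and \eqref{elambda}, again producing $O((t-r)^2(t-s))$ after integration. Taking square roots gives the exponents in \eqref{ddxi} and \eqref{ddxi2}.

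Next I would treat the two bounds for $\sigma_t=\gamma_t^{-1}$. For \eqref{dgamma} I would use the identity $\sigma_t-\sigma_s=-\sigma_t(\gamma_t-\gamma_s)\sigma_s$: H\"{o}lder's inequality with three factors in $L^{6p}$ together with \eqref{eigamma} controls $\sigma_t^{ii_1}$ by $C(t-r)^{-1}$ and $\sigma_s^{i_2j}$ by $C(s-r)^{-1}$, while the increment of $\gamma$ is estimated from $\gamma_t^{ij}=\langle D\xi_t^i,D\xi_t^j\rangle_H$ via $\gamma_t^{ij}-\gamma_s^{ij}=\langle D\xi_t^i-D\xi_s^i,D\xi_t^j\rangle_H+\langle D\xi_s^i,D\xi_t^j-D\xi_s^j\rangle_H$ and Cauchy--Schwarz, \eqref{ddxi}, \eqref{medxih}, yielding $\|\gamma_t^{ij}-\gamma_s^{ij}\|_{6p}\le C(t-r)^{1/2}(t-s)^{1/2}$; the product of the three bounds is \eqref{dgamma}.

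For \eqref{ddgamma} I would differentiate $D\sigma^{ij}_t=-\sum_{i_1,i_2}\sigma^{ii_1}_t\,D\gamma^{i_1i_2}_t\,\sigma^{i_2j}_t$ and telescope the difference as
\begin{align*}
D\sigma_t^{ij}-D\sigma_s^{ij}
&=-\sum_{i_1,i_2}\sigma_t^{ii_1}\bigl(D\gamma_t^{i_1i_2}-D\gamma_s^{i_1i_2}\bigr)\sigma_t^{i_2j}+\sum_{i_1,i_2}\sigma_s^{ii_1}D\gamma_s^{i_1i_2}\bigl(\sigma_s^{i_2j}-\sigma_t^{i_2j}\bigr)\\
&\quad+\sum_{i_1,i_2}\bigl(\sigma_s^{ii_1}-\sigma_t^{ii_1}\bigr)D\gamma_s^{i_1i_2}\sigma_t^{i_2j},
\end{align*}
arranged so that the strongly singular factor $\sigma_s=O((s-r)^{-1})$ appears only next to $D\gamma_s=O((s-r)^{2})$ (by \eqref{idgammah}) and to $\sigma_t=O((t-r)^{-1})$ (by \eqref{eigamma}), which compensate its $(s-r)^{-1}$ blow-up. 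Estimating the three terms by H\"{o}lder's inequality with \eqref{eigamma}, \eqref{idgammah}, the already proved \eqref{dgamma}, and the increment bound $\left\|\|D\gamma_t^{ij}-D\gamma_s^{ij}\|_H\right\|_{2p}\le C(t-r)^{3/2}(t-s)^{1/2}$, and absorbing the leftover positive powers of $s-r$ via $s-r\le t-r$, each term is $O((t-r)^{-1/2}(t-s)^{1/2})$, which is \eqref{ddgamma}. The only auxiliary fact needed here is that sharp increment bound for $D\gamma$, which I would prove exactly as \eqref{idgammah} was: subtracting the linear SDE \eqref{dgme} at $t$ and $s$, for $\theta\le s$ the increment $D^{(k)}_\theta\gamma_t^{ij}-D^{(k)}_\theta\gamma_s^{ij}$ is a sum of stochastic and Lebesgue integrals over $[s,t]$ whose integrands are controlled by \eqref{idgamma}, \eqref{egamma} and \eqref{elambda}, so Burkholder--Davis--Gundy gives $\|D^{(k)}_\theta\gamma_t^{ij}-D^{(k)}_\theta\gamma_s^{ij}\|_{2p}^2\le C(t-r)^2(t-s)$ for $\theta\le s$, whereas for $\theta\in(s,t]$ the $s$-term vanishes and \eqref{idgamma} gives $\|D^{(k)}_\theta\gamma_t^{ij}\|_{2p}^2\le C(t-r)^3$; integrating in $\theta$ over $[r,s]$ (of length $\le t-r$) and over $(s,t]$ (of length $t-s$) and adding yields $C(t-r)^3(t-s)$.

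The main obstacle is precisely the bookkeeping for \eqref{ddgamma}: since $\sigma$ is singular both as $s\downarrow r$ and as $t\downarrow r$, a careless decomposition of $D\sigma_t-D\sigma_s$ produces an uncontrollable factor $(s-r)^{-1}$. The remedy is the particular telescoping above, in which every occurrence of the strongly singular $\sigma_s$ is paired with factors whose $(s-r)$-behavior cancels or dominates it, combined with the observation that integrating over the $\theta$-variable upgrades the pointwise-in-$\theta$ bound $O((t-r)(t-s)^{1/2})$ on $D\gamma_t^{ij}-D\gamma_s^{ij}$ to the $H$-norm bound $O((t-r)^{3/2}(t-s)^{1/2})$ that is needed to kill the $(t-r)^{-1}$ coming from the two $\sigma_t$ factors in the first term.
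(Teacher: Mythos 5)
Your proposal is correct and follows essentially the same route as the paper: the same SDE-differencing plus Burkholder--Davis--Gundy scheme for \eqref{ddxi} and \eqref{ddxi2}, the identity $\sigma_t-\sigma_s=\sigma_t(\gamma_s-\gamma_t)\sigma_s$ for \eqref{dgamma}, and for \eqref{ddgamma} exactly the paper's telescoping of $\sigma_t D\gamma_t\sigma_t-\sigma_s D\gamma_s\sigma_s$ together with the auxiliary bound $\left\|\|D\gamma_t^{ij}-D\gamma_s^{ij}\|_H\right\|_{2p}\leq C(t-r)^{3/2}(t-s)^{1/2}$ obtained from the SDE \eqref{dgme}. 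The only genuine deviation is your estimate of $\gamma_t^{ij}-\gamma_s^{ij}$: you exploit the bilinear form $\gamma_t^{ij}=\langle D\xi_t^i,D\xi_t^j\rangle_H$ together with Cauchy--Schwarz, \eqref{ddxi} and \eqref{medxih}, whereas the paper differences the integrated SDE \eqref{gmsde} for $\gamma$; both yield the same bound $C(t-r)^{1/2}(t-s)^{1/2}$, and your version is marginally more economical since it recycles the already-proved increment of $D\xi$ instead of invoking a separate equation.
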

\begin{proof}
{\bf (i)} By (\ref{sdexi1m}), we have
\begin{align*}
D^{(k)}_{\theta}\xi_t^i-D^{(k)}_{\theta}\xi_s^i=\delta_{ik}\1_{[s,t]}(\theta)-\sum_{j=1}^d\int_{\theta\vee s}^tD^{(k)}_{\theta}\xi_u^j dM^{ji}_u.
\end{align*}
Thus by (\ref{elambda}), Burkholder-Davis-Gundy's, Jensen's, and Minkowski's inequalities, we have
\begin{align*}
\big\|D^{(k)}_{\theta}\xi_t^i-D^{(k)}_{\theta}\xi_s^i\big\|_{2p}^2\leq C\left[\delta_{ik}\1_{[s,t]}(\theta)+ (t-s)\right].
\end{align*}
Thus we can show (\ref{ddxi}) by Minkowski's inequality:
\begin{align*}
\left\|\|D\xi^i_t-D\xi^i_s\|_{H}\right\|_{2p}^2\leq &\sum_{k=1}^d\int_r^t\big\|D^{(k)}_{\theta}\xi_t^i-D^{(k)}_{\theta}\xi_s^i\big\|_{2p}^2 d\theta\nonumber\\
\leq&\sum_{k=1}^dC\Big(\int_s^t\delta_{ik}d\theta+\int_r^t(t-s) d\theta\Big)\leq C(t-s).
\end{align*}

{\bf (ii)} Note that $\sigma_t-\sigma_s=\sigma_t\left(\gamma_s-\gamma_t\right)\sigma_s$. Then, by (\ref{eigamma}) and H\"{o}lder's inequality, it suffices to estimate the moment of $\gamma_t-\gamma_s$. By (\ref{gmsde}), we have
\begin{align*}
\gamma_t^{ij}-\gamma_s^{ij}=&\delta_{ij}(t-s)-\sum_{k_1=1}^d\int_s^t\gamma_u^{ik_1}dM_u^{k_1j}-\sum_{k_2=1}^d\int_s^t\gamma_u^{jk_2}dM_u^{k_2i}\\
&+\sum_{k_1,k_2=1}^dQ^{i,k_1}_{k_2,j}\int_s^t \gamma_u^{k_1k_2}du.
\end{align*}
Then, by (\ref{egamma}), Minkowski's, Jensen's, and Burkholder-Davis-Gundy's inequalities, for all $1\leq i,j\leq d$, we have
\begin{align}\label{dgamma1}
\big\|\gamma_t^{ij}-\gamma_s^{ij}\big\|_{2p}^2\leq &C\left((t-s)^2+(t-r)^2(t-s)+(t-r)^2(t-s)^2\right)\nonumber\\
\leq &C(1+T)^2(t-r)(t-s).
\end{align}
Then, (\ref{dgamma}) is a consequence of (\ref{eigamma}) and (\ref{dgamma1}).

{\bf (iii)} By (\ref{dgme}), we have the following equation:
\begin{align*}
D^{(k)}_{\theta}\gamma_t^{ij}-D^{(k)}_{\theta}\gamma_s^{ij}=&- \sum_{k_1=1}^d\int_{\theta\vee s}^tD^{(k)}_{\theta}\gamma_u^{ik_1} d M_u^{k_1j}-\sum_{k_1=1}^d\int_{\theta \vee s}^t \gamma_u^{ik_1} d \left(D^{(k)}_{\theta} M^{k_2j}_u\right)\nonumber\\
&-\sum_{k_2=1}^d\int_{\theta \vee s}^tD^{(k)}_{\theta}\gamma^{k_2j}_u d M^{k_2i}_u-\sum_{k_2=1}^d\int_{\theta\vee s}^t\gamma^{k_2j}_u d\left(D^{(k)}_{\theta} M^{k_2i}_u\right)\\
&+\sum_{k_1,k_2=1}^d\Big(Q^{k_1,i}_{k_2,j}\int_{\theta \vee s}^t D^{(k)}_{\theta}\gamma^{k_1k_2}_u du\Big).
\end{align*}
Then, by (\ref{elambda}), (\ref{egamma}), and (\ref{idgamma}), Burkholder-Davis-Gundy's, Jensen's, Minkowski's, and Cauchy-Schwarz's inequalities, we have
\begin{align*}
&\big\|D^{(k)}_{\theta}\gamma_t^{ij}-D^{(k)}_{\theta}\gamma_s^{ij}\big\|_{2p}^2\leq c_{d,p} \|h\|_{3,2}^2 \bigg[\sum_{k_1=1}^d\int_{\theta\vee s}^t \big\|D^{(k)}_{\theta}\gamma^{ik_1}_u\big\|_{2p}^2du\nonumber\\
&\hspace{20mm}+\sum_{k_2=1}^d\int_{\theta\vee s}^t\big\|\gamma_u^{ik_1}\big\|_{4p}^2\big\|D^{(k)}_{\theta}\xi^{k_2}_u \big\|_{4p}^2du+(t-s)\int_{\theta\vee s}^t  \big\|D^{(k)}_{\theta}\gamma^{k_1k_2}_u\big\|_{2p}^2 du\bigg]\\
&\hspace{38mm} \leq C(t-r)^2(t-s).
\end{align*}
This implies
\begin{align}\label{ddgamma0}
\left\|\|D\gamma_t^{ij}-D\gamma_s^{ij}\|_{H}\right\|_{2p}\leq C(t-r)^{\frac{3}{2}}(t-s)^{\frac{1}{2}}.
\end{align}
By (\ref{digmf}), we have
\begin{align*}
D\sigma^{ij}_t-D\sigma^{ij}_s=&\sum_{i_1,i_2=1}^d\left(\sigma^{ii_1}_tD\gamma^{i_1i_2}_t\sigma^{i_2j}_t-\sigma^{ii_1}_sD\gamma^{i_1i_2}_s\sigma^{i_2j}_s\right)\\
=&\sum_{i_1,i_2=1}^d\sigma^{ii_1}_t\left(D\gamma^{i_1i_2}_t-D\gamma^{i_1i_2}_s\right)\sigma^{i_2j}_t+\sum_{i_1,i_2=1}^d\left(\sigma^{ii_1}_t-\sigma^{ii_1}_s\right)D\gamma^{i_1i_2}_s\sigma^{i_2j}_t\\
&+\sum_{i_1,i_2=1}^d\sigma^{ii_1}_sD\gamma^{i_1i_2}_s\left(\sigma^{i_2j}_t-\sigma^{i_2j}_s\right).
\end{align*}
Thus (\ref{ddgamma}) follows from (\ref{tpcsi}), (\ref{eigamma}), (\ref{idgammah}), (\ref{dgamma}), and (\ref{ddgamma0}).

{\bf (iv)} Let $\theta=\theta_1\vee \theta_2$, by (\ref{d2xisde}), we have the following equation:
\begin{align*}
D^{(k_1,k_2)}_{\theta_1,\theta_2}\xi_t^i-&D^{(k_1,k_2)}_{\theta_1,\theta_2}\xi_s^i=-\sum_{j_1,j_2=1}^d\int_{\theta\vee s}^t\int_{\mathbb{R}^d} \partial_{j_2} h^{ij_1}(y-\xi_u) D^{(k_1,k_2)}_{\theta_1,\theta_2}\xi_u^{j_2} W^{j_1}(du, dy)\nonumber\\
&+\sum_{j_1,j_2,j_3=1}^d\int_{\theta\vee s}^t\int_{\mathbb{R}^d}\partial_{j_2,j_3} h^{ij_1}(y-\xi_u) D^{(k_1)}_{\theta_1}\xi_u^{j_2} D^{(k_2)}_{\theta_2}\xi_u^{j_3}W^{j_1}(du, dy).
\end{align*}
As a consequence, by (\ref{elambda}), (\ref{mdxi2}), Burkholder-Davis-Gundy's, Minkowski's, and Cauchy-Schwarz's inequalities, we have
\begin{align}\label{ddxi2bi}
\big\|D^{(i,j)}_{\theta_1,\theta_2}\xi_t^k-D^{(i,j)}_{\theta_1,\theta_2}\xi_s^k\big\|_{2p}^2\leq &c_p\bigg[\sum_{j_1=1}^d\|h\|_{3,2}^2\int_{\theta\vee s}^t\big\|D^{(i,j)}_{\theta_1,\theta_2}\xi_u^{j_1}\big\|_{2p}^2du\nonumber\\
&\quad +\sum_{j_1,j_2}^d\|h\|_{3,2}^2\int_{\theta\vee s}^t\big\|D^{(i)}_{\theta_1}\xi_u^{j_1}\big\|_{4p}^2\big\| D^{(j)}_{\theta_2}\xi_u^{j_2}\big\|_{4p}^2du\bigg]\nonumber\\
\leq &C(t-s)
\end{align}
Therefore, we obtain (\ref{ddxi2}) by integrating (\ref{ddxi2bi}) and Minkowski's  inequality.
\end{proof}

In the next lemma, we establish moment estimates for the functionals $H_{(i)}(\xi_t,1)$ and $H_{(i,j)}(\xi_t,1)$ introduced in (\ref{hfphif}) and (\ref{hhfphif}). Notice that these functionals are well defined, because $\xi_t\in\cap_{p\geq 1}\D^{3,p}(\R^d)$ and $\sigma^{ij}_t\in\cap_{p\geq 2}\D^{2,p}$ for all $1\leq i,j\leq d$.
\begin{lemma}\label{ehn}
Suppose that $h\in H_2^3(\R^d;\R^d\otimes \R^d)$, then  the following inequalities are satisfied:
\begin{align}
\max_{1\leq i\leq d}\left\|H_{(i)}(\xi_t,1)\right\|_{2p}\leq C(t-r)^{-\frac{1}{2}},\label{ehxi}\\
\max_{1\leq i,j\leq d}\left\|H_{(i,j)}(\xi_t,1)\right\|_{2p}\leq C(t-r)^{-1}.\label{ehxi2}
\end{align}
\end{lemma}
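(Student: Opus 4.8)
The plan is to represent $H_{(i)}(\xi_t,1)$ and $H_{(i,j)}(\xi_t,1)$ through the divergence operator $\delta$ and then combine Meyer's $L^q$ inequalities for $\delta$ with the moment bounds of Lemma~\ref{elgamma} and the generalized Cauchy--Schwarz inequality of Lemma~\ref{ttpcsi}. Recall from (\ref{hfphif}) and (\ref{hhfphif}) that $H_{(i)}(\xi_t,1)=\sum_{j=1}^d\delta\big(\sigma_t^{ij}D\xi_t^{j}\big)$ and $H_{(i,j)}(\xi_t,1)=H_{(j)}\big(\xi_t,H_{(i)}(\xi_t,1)\big)=\sum_{k=1}^d\delta\big(H_{(i)}(\xi_t,1)\,\sigma_t^{jk}D\xi_t^{k}\big)$, where $\sigma_t=\gamma_t^{-1}$. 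The guiding heuristic is a scaling count: by Lemma~\ref{elgamma} the quantities $D\xi_t,D^2\xi_t,D^3\xi_t$ are of order $(t-r)^{1/2},(t-r)^{3/2},(t-r)^{2}$, and $\sigma_t,D\sigma_t,D^2\sigma_t$ are of order $(t-r)^{-1},1,(t-r)^{1/2}$, while $\delta$ essentially preserves this order; hence $H_{(i)}(\xi_t,1)\sim\sigma_t D\xi_t\sim(t-r)^{-1/2}$ and $H_{(i,j)}(\xi_t,1)\sim\sigma_t D\xi_t\,H_{(i)}(\xi_t,1)\sim(t-r)^{-1}$, which are exactly (\ref{ehxi}) and (\ref{ehxi2}).

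To make this rigorous for (\ref{ehxi}), I would use that $\delta:\mathbb{D}^{1,q}(H)\to L^{q}(\Omega)$ is bounded for every $q>1$, which gives
\begin{align*}
\big\|H_{(i)}(\xi_t,1)\big\|_{2p}\le C\sum_{j=1}^{d}\Big(\big\|\sigma_t^{ij}D\xi_t^{j}\big\|_{L^{2p}(\Omega;H)}+\big\|D\big(\sigma_t^{ij}D\xi_t^{j}\big)\big\|_{L^{2p}(\Omega;H^{\otimes 2})}\Big).
\end{align*}
The first term is at most $\|\sigma_t^{ij}\|_{4p}\,\big\|\,\|D\xi_t^{j}\|_H\big\|_{4p}\le C(t-r)^{-1/2}$ by (\ref{eigamma}) and (\ref{medxih}). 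For the second term, Leibniz's rule gives $D(\sigma_t^{ij}D\xi_t^{j})=D\sigma_t^{ij}\otimes D\xi_t^{j}+\sigma_t^{ij}D^2\xi_t^{j}$, and by Lemma~\ref{ttpcsi}, H\"older's inequality, (\ref{eigamma}), (\ref{edgamma}) and (\ref{eddxi}) the two pieces are of order $(t-r)^{1/2}$ and $(t-r)^{-1}(t-r)^{3/2}=(t-r)^{1/2}$. Since $t-r\le T$, (\ref{ehxi}) follows.

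For (\ref{ehxi2}) I would again apply the boundedness of $\delta$ on $\mathbb{D}^{1,2p}(H)$ to the representation $H_{(i,j)}(\xi_t,1)=\sum_k\delta\big(H_{(i)}(\xi_t,1)\sigma_t^{jk}D\xi_t^{k}\big)$, reducing the problem to estimating $\big\|H_{(i)}(\xi_t,1)\sigma_t^{jk}D\xi_t^{k}\big\|_{L^{2p}(\Omega;H)}$ and the $H^{\otimes2}$-norm of its Malliavin derivative. The former is handled by H\"older's inequality together with (\ref{ehxi}) (used with exponent $6p$, which is legitimate since (\ref{ehxi}) holds for every $p$), (\ref{eigamma}) and (\ref{medxih}), giving order $(t-r)^{-1/2}(t-r)^{-1}(t-r)^{1/2}=(t-r)^{-1}$. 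For the derivative, Leibniz's rule produces the three terms $DH_{(i)}(\xi_t,1)\otimes\sigma_t^{jk}D\xi_t^{k}$, $H_{(i)}(\xi_t,1)\,D\sigma_t^{jk}\otimes D\xi_t^{k}$ and $H_{(i)}(\xi_t,1)\,\sigma_t^{jk}D^2\xi_t^{k}$; the last two are bounded directly by (\ref{ehxi}), (\ref{eigamma}), (\ref{edgamma}), (\ref{medxih}) and (\ref{eddxi}) and are of order at most $1\le C(t-r)^{-1}$, while the first needs a bound on $\big\|DH_{(i)}(\xi_t,1)\big\|_{L^{2p}(\Omega;H)}$. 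For this I would invoke Meyer's inequality in the stronger form $\delta:\mathbb{D}^{2,2p}(H)\to\mathbb{D}^{1,2p}$ (equivalently the commutation relation $D\delta(u)=u+\delta(Du)$), so that $\big\|DH_{(i)}(\xi_t,1)\big\|_{L^{2p}(\Omega;H)}\le C\big\|\sigma_t^{i\cdot}D\xi_t^{\cdot}\big\|_{\mathbb{D}^{2,2p}(H)}$; expanding $D^2(\sigma_t^{ij}D\xi_t^{j})$ by Leibniz brings in $D^2\sigma_t^{ij}$ and $D^3\xi_t^{j}$ --- which is precisely why Lemma~\ref{elgamma} was carried to third order --- and using (\ref{medxih}), (\ref{eigamma}), (\ref{edgamma}), (\ref{eddxi}), (\ref{ed2gamma}), (\ref{edddxi}) and Lemma~\ref{ttpcsi} every term is of order $\le(t-r)^{-1/2}$. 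Feeding this back gives $\big\|H_{(i,j)}(\xi_t,1)\big\|_{2p}\le C(t-r)^{-1}$.

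The only genuine difficulty is the bookkeeping of the powers of $t-r$: one must organize the two nested Leibniz expansions so that, after each application of H\"older's inequality and Lemma~\ref{ttpcsi}, no term exceeds the target orders $(t-r)^{-1/2}$ and $(t-r)^{-1}$. The scaling heuristic above tells one in advance that this should work, and once each factor is isolated the remaining estimates are routine invocations of Lemma~\ref{elgamma} and Meyer's inequalities; boundedness of $t-r$ by $T$ is used repeatedly to absorb the subleading positive powers.
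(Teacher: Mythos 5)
Your proposal is correct and follows essentially the same route as the paper: the paper likewise reduces both bounds, via Meyer's inequality, to estimating $\big\|\|\sigma^{ji}_tD\xi^j_t\|_H\big\|_{2p}$, $\big\|\|D(\sigma^{ji}_tD\xi^j_t)\|_{H^{\otimes 2}}\big\|_{2p}$ and $\big\|\|D^2(\sigma^{ji}_tD\xi^j_t)\|_{H^{\otimes 3}}\big\|_{2p}$, obtaining the orders $(t-r)^{-1/2}$, $(t-r)^{1/2}$ and $(t-r)$ from Lemmas \ref{ttpcsi} and \ref{elgamma}, exactly as in your scaling count. Your explicit unfolding of the nested Leibniz expansion for $H_{(i,j)}$ is just the detailed version of what the paper compresses into its citation of Proposition 2.1.4 of Nualart, and the harmless sign/index discrepancy with (\ref{hfphif}) does not affect the norms.
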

\begin{proof}
Due to Meyer's inequality (see e.g. Proposition 1.5.4 and 2.1.4 of Nualart \cite{springer-06-nualart}), it suffices to estimate
\[
\left\|\|\sigma^{ji}_t D\xi_t^j\|_{H}\right\|_{2p},\ \left\|\|D\left(\sigma^{ji}_t  D\xi_t^j\right)\|_{H^{\otimes 2}}\right\|_{2p},\ \textrm{and}\ \left\|\|D^2\left(\sigma^{ji}_t  D\xi_t^j\right)\|_{H^{\otimes 3}}\right\|_{2p}.
\]
By (\ref{medxih}) and Lemma \ref{ttpcsi} - \ref{elgamma}, we have
\begin{align*}
\left\|\|\sigma^{ji}_t  D\xi_t^j\|_{H}\right\|_{2p}\leq \left\|\sigma^{ji}_t\right\|_{4p}\left\|\|D\xi_t^j\|_H\right\|_{4p}\leq C(t-r)^{-\frac{1}{2}},
\end{align*}
\begin{align*}
\left\|\|D\left(\sigma^{ji}_t  D\xi_t^j\right)\|_{H^{\otimes 2}}\right\|_{2p}\leq&\left\|\|D\sigma^{ji}_t \otimes D\xi_t^j\|_{H^{\otimes 2}}\right\|_{2p}+\left\|\|\sigma^{ji}_t  D^2\xi_t^j\|_{H^{\otimes 2}}\right\|_{2p}\\
 \leq &\left\|\|D\sigma^{ji}_t\|_H \right\|_{4p}\left\|\|D\xi_t^j\|_H\right\|_{4p}+\left\|\sigma^{ji}_t \right\|_{4p}\left\|\|D^2\xi_t^j\|_{H^{\otimes 2}}\right\|_{4p}\\
 \leq &C(t-r)^{\frac{1}{2}},
\end{align*}
and
\begin{align*}
\left\|\|D^2\left(\sigma^{ji}_t  D\xi_t^j\right)\|_{H^{\otimes 3}}\right\|_{2p}\leq&\left\|\|D^2\sigma^{ji}_t \otimes D\xi_t^j\|_{H^{\otimes 2}}\right\|_{2p}\\
&+\left\|\|D\sigma^{ji}_t \otimes D^2\xi_t^j\|_{H^{\otimes 2}}\right\|_{2p}+\left\|\|\sigma^{ji}_t D^3\xi_t^j\|_{H^{\otimes 2}}\right\|_{2p}\\
\leq & C (t-r)
\end{align*}
The above inequalities hold for all $1\leq i,j\leq d$. Then, (\ref{ehxi}) and (\ref{ehxi2}) follows.
\end{proof}

The next lemma provides the moment estimate for the increment of $H_{(i)}(\xi_t,1)$.
\begin{lemma}\label{edh12}
Suppose that $h\in H_2^3(\R^d;\R^d\otimes \R^d)$. Then,
\begin{align}\label{edh1}
\max_{1\leq i\leq d}\left\|H_{(i)}(\xi_t,1)-H_{(i)}(\xi_s,1)\right\|_{2p}\leq C(s-r)^{-\frac{1}{2}}(t-r)^{-\frac{1}{2}}(t-s)^{\frac{1}{2}}.
\end{align}
\end{lemma}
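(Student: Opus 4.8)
The plan is to represent the increment as a single Skorohod integral and then invoke Meyer's inequality, reducing the estimate to the increment bounds already established in Lemmas \ref{elgamma} and \ref{medxigm}. As in the proof of Lemma \ref{ehn}, the integration-by-parts functional is $H_{(i)}(\xi_t,1)=\delta\big(\sigma^{ji}_t D\xi^j_t\big)$, with summation over $j$ understood, so that
\[
H_{(i)}(\xi_t,1)-H_{(i)}(\xi_s,1)=\delta\big(\sigma^{ji}_t D\xi^j_t-\sigma^{ji}_s D\xi^j_s\big),
\]
and by Meyer's inequality (Proposition 1.5.4 and 2.1.4 of Nualart \cite{springer-06-nualart}) it suffices to bound
\[
\big\|\,\|\sigma^{ji}_t D\xi^j_t-\sigma^{ji}_s D\xi^j_s\|_{H}\big\|_{2p}
\qquad\text{and}\qquad
\big\|\,\|D\big(\sigma^{ji}_t D\xi^j_t-\sigma^{ji}_s D\xi^j_s\big)\|_{H^{\otimes 2}}\big\|_{2p}
\]
by $C(s-r)^{-\frac12}(t-r)^{-\frac12}(t-s)^{\frac12}$.

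For the $H$-norm I would use the decomposition
\[
\sigma^{ji}_t D\xi^j_t-\sigma^{ji}_s D\xi^j_s=\sigma^{ji}_t\big(D\xi^j_t-D\xi^j_s\big)+\big(\sigma^{ji}_t-\sigma^{ji}_s\big)D\xi^j_s,
\]
and estimate the two terms by H\"older's inequality on $\Omega$ together with \eqref{medxih}, \eqref{eigamma}, \eqref{ddxi} and \eqref{dgamma}: the first term contributes $C(t-r)^{-1}(t-s)^{\frac12}$ and the second $C(t-r)^{-\frac12}(s-r)^{-\frac12}(t-s)^{\frac12}$; since $0<s-r\le t-r\le T$, the first is also dominated by $C(s-r)^{-\frac12}(t-r)^{-\frac12}(t-s)^{\frac12}$.

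For the $H^{\otimes 2}$-norm I would expand by the Leibniz rule, $D\big(\sigma^{ji}_t D\xi^j_t\big)=D\sigma^{ji}_t\otimes D\xi^j_t+\sigma^{ji}_t D^2\xi^j_t$, subtract the corresponding expression at time $s$, and in each of the two resulting differences insert a telescoping term, e.g.
\[
D\sigma^{ji}_t\otimes D\xi^j_t-D\sigma^{ji}_s\otimes D\xi^j_s
=D\sigma^{ji}_t\otimes\big(D\xi^j_t-D\xi^j_s\big)+\big(D\sigma^{ji}_t-D\sigma^{ji}_s\big)\otimes D\xi^j_s,
\]
and analogously for $\sigma^{ji}_t D^2\xi^j_t-\sigma^{ji}_s D^2\xi^j_s$. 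Applying the generalized Cauchy-Schwarz inequality of Lemma \ref{ttpcsi} to each of the four products, together with the bounds \eqref{medxih}, \eqref{eigamma}, \eqref{edgamma}, \eqref{eddxi}, \eqref{ddxi}, \eqref{dgamma}, \eqref{ddgamma} and \eqref{ddxi2}, each product is controlled by either $C(t-s)^{\frac12}$ or $C(s-r)^{\frac12}(t-r)^{-\frac12}(t-s)^{\frac12}$, both of which are again at most $C(s-r)^{-\frac12}(t-r)^{-\frac12}(t-s)^{\frac12}$ by $0<s-r\le t-r\le T$. Summing the finitely many terms and maximizing over $i$ gives \eqref{edh1}.

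The computations are routine once the decompositions are fixed; the step I expect to be the main obstacle is simply the careful bookkeeping of the negative powers of $t-r$ and $s-r$. One must verify, product by product, that the sharp increment estimates of Lemma \ref{medxigm} combined with the bounds of Lemma \ref{elgamma} always leave at least a factor $(t-s)^{1/2}$ and produce no singularity worse than $(s-r)^{-1/2}(t-r)^{-1/2}$, trading any surplus positive power of $s-r$ or $t-r$ against the target negative powers via $0<s-r<t-r\le T$.
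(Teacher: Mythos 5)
Your proposal is correct and follows essentially the same route as the paper: write the increment as a single Skorohod integral, apply Meyer's inequality, and bound the $H$- and $H^{\otimes 2}$-norms of $\sigma^{ji}_t D\xi^j_t-\sigma^{ji}_s D\xi^j_s$ via the same telescoping decompositions and the estimates of Lemmas \ref{elgamma} and \ref{medxigm}. The only (harmless) discrepancy is a sign: by \eqref{hfphif} one has $H_{(i)}(\xi_t,1)=-\sum_j\delta(\sigma^{ji}_t D\xi^j_t)$, which of course does not affect the norm bounds.
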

\begin{proof}
Notice that, by definition, we have
\begin{align*}
H_{(i)}(\xi_t,1)-H_{(i)}(\xi_s,1)=&-\sum_{j=1}^d\delta\left(\sigma_t^{ji} D\xi_t^j\right)+\sum_{j=1}^d\delta\left(\sigma_s^{ji} D\xi_s^j\right)\\
=&-\sum_{j=1}^d\delta\left(\sigma_t^{ji} D\xi_t^j-\sigma_s^{ji} D\xi_s^j\right).
\end{align*}
Thus by Meyer's inequality again, it suffices to estimate
\begin{align*}
I_1:=\left\|\|\sigma_t^{ji} D\xi_t^j-\sigma_s^{ji} D\xi_s^j\|_H\right\|_{2p}\ \textrm{and}\ I_2:=\left\|\|D\left(\sigma_t^{ji} D\xi_t^j-\sigma_s^{ji} D\xi_s^j\right)\|_{H^{\otimes 2}}\right\|_{2p}.
\end{align*}
For $I_1$, we have
\begin{align*}
I_1\leq \left\|\|\left(\sigma_t^{ji}-\sigma_s^{ji}\right) D\xi_s^k\|_H\right\|_{2p}+\left\|\|\sigma_t^{ji} \left(D\xi_t^j-D\xi_s^j\right)\|_H\right\|_{2p}.
\end{align*}
Notice that by Lemmas \ref{ttpcsi} - \ref{medxigm}, we can write
\begin{align*}
&\left\|\|\left(\sigma_t^{ji}-\sigma_s^{ji}\right) D\xi_s^j\|_H\right\|_{2p}\leq \left\|\sigma_t^{ji}-\sigma_s^{ji}\right\|_{4p}\left\| \|D\xi_s^j\|_H\right\|_{4p}\\
&\hspace{20mm}\leq C(t-r)^{-\frac{1}{2}}(s-r)^{-\frac{1}{2}}(t-s)^{\frac{1}{2}}
\end{align*}
and
\begin{align*}
&\left\|\|\sigma_t^{ji} \left(D\xi_t^j-D\xi_s^j\right)\|_H\right\|_{2p}\leq \left\|\sigma_t^{ji}\right\|_{4p}\left\|\|D\xi_t^j-D\xi_s^j\|_H\right\|_{4p}\\
&\quad \leq C(t-r)^{-1}(t-s)^{\frac{1}{2}}\leq C(t-r)^{-\frac{1}{2}}(s-r)^{-\frac{1}{2}}(t-s)^{\frac{1}{2}}.
\end{align*}
Thus combining the above inequalities, we have the following estimate for $I_1$:
\begin{align}\label{edgdxi}
I_1\leq C(t-r)^{-\frac{1}{2}}(s-r)^{-\frac{1}{2}}(t-s)^{\frac{1}{2}}.
\end{align}
By Lemmas \ref{ttpcsi} - \ref{medxigm}, we have the following estimate for $I_2$:
\begin{align}\label{edgammadxi}
I_2\leq &\left\|D\sigma_t^{ji}\otimes D\xi_t^j-D\sigma_t^{ji}\otimes D\xi_s^j\right\|_{2p, H^{\otimes 2}}+\left\|\sigma_t^{ji} D^2\xi_s^j-\sigma_s^{ji}D^2\xi_s^j\right\|_{2p, H^{\otimes 2}}\nonumber\\
\leq &\left\|\|D\sigma_t^{ji}\|_H\right\|_{4p} \left\|\|\left(D\xi_t^j- D\xi_s^j\right)\|_H\right\|_{4p}+\left\|\|\left(D\sigma_t^{ji}-D\sigma_s^{ji}\right)\|_H\right\|_{4p}\left\| \|D\xi_s^j\|_H\right\|_{4p}\nonumber\\
&+\left\|\sigma_t^{ji} \right\|_{4p}\left\|\|D^2\xi_t^j- D^2\xi_s^j\|_{H^{\otimes 2}}\right\|_{4p}+\left\|\sigma_t^{ji} -\sigma_s^{ji} \right\|_{4p} \left\|\|D^2\xi_s^j\|_{H^{\otimes 2}}\right\|_{2p}\nonumber\\
\leq &C(t-s)^{\frac{1}{2}}.
\end{align}
Therefore, (\ref{edh1}) follows from (\ref{edgdxi}), (\ref{edgammadxi}) and  Meyer's inequality.
\end{proof}

The next lemma shows that $\xi$ is a $d$-dimensional Gaussian process in the whole probability space. Notice that, however, conditionally to $W$, the process $\xi$ is no longer Gaussian, because it is the solution to a nonlinear SDE. 
\begin{lemma}\label{lxigrv}
The process $\xi$ given by equation (\ref{sde})
 is a $d$-dimensional Gaussian process, with mean $x$ and covariance matrix
\begin{align}\label{cvmxi}
\Sigma_{s,t}= (t\wedge s-r) (I+\rho(0)),
\end{align}
where   $\rho(0)$ is defined in (\ref{rho}).
Moreover,  the probability density of $\xi_t$,  denoted by $p_{\xi_t}(y)$, is bounded by a Gaussian density:
\begin{align}\label{pdfid}
p_{\xi_t}(y)\leq \left(2\pi(t-r)\right)^{-\frac{d}{2}}\exp \Big(-\frac{k|x-y|^2}{t-r}\Big),
\end{align}
where 
\begin{align}\label{defk}
k=[2(d\|h\|_{2,3}^2+1)]^{-1}.
\end{align}
\end{lemma}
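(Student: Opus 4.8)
The plan is to exploit the fact that, although (\ref{sde}) is a \emph{nonlinear} SDE, the process $\xi_t-x$ is a continuous martingale with \emph{deterministic} quadratic variation with respect to the filtration generated jointly by $B$ and $W$; any such process is Gaussian. First I would note that $B_t-B_r$ is a continuous martingale and that $\int_r^t\int_{\R^d}h(y-\xi_u)\,W(du,dy)$ is a square-integrable continuous martingale, because $h^{ij}\in L^2(\R^d)$ and $\xi$ is adapted, so $\int_r^t\int_{\R^d}|h^{ij}(y-\xi_u)|^2\,dy\,du=(t-r)\|h^{ij}\|_2^2<\infty$. Since $B$ and $W$ are independent and, by the substitution $z=y-\xi_u$ together with the definition (\ref{rho}),
\begin{align*}
\sum_{k=1}^d\int_r^t\int_{\R^d}h^{ik}(y-\xi_u)h^{jk}(y-\xi_u)\,dy\,du=(t-r)\sum_{k=1}^d\int_{\R^d}h^{ik}(z)h^{jk}(z)\,dz=(t-r)\rho^{ij}(0),
\end{align*}
we obtain $\langle\xi^i,\xi^j\rangle_t=(t-r)(\delta_{ij}+\rho^{ij}(0))$ for all $1\le i,j\le d$; this is deterministic.

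To pass from deterministic quadratic variation to Gaussianity I would fix $0\le s\le t\le T$ and $\lambda\in\R^d$, set $c=\lambda^*(I+\rho(0))\lambda$, and apply It\^o's formula to the complex process $Y_u=\exp\big(i\langle\lambda,\xi_u-\xi_s\rangle+\tfrac12(u-s)c\big)$, $u\in[s,t]$. The drift terms cancel exactly because $d\langle\xi^i-\xi^i_s,\xi^j-\xi^j_s\rangle_u=(\delta_{ij}+\rho^{ij}(0))\,du$, so $Y$ is a local martingale; since $|Y_u|=\exp(\tfrac12(u-s)c)$ is bounded on $[s,t]$, it is a true martingale with $Y_s=1$, whence $\E\big[e^{i\langle\lambda,\xi_t-\xi_s\rangle}\mid\mathcal{F}_s\big]=e^{-\frac12(t-s)c}$. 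This identity shows at once that $\xi_t-\xi_s$ is independent of $\mathcal{F}_s$ and is $\mathcal{N}\big(0,(t-s)(I+\rho(0))\big)$. Hence $\xi$ is a Gaussian process with independent increments, $\xi_r=x$, and for $s\le t$, $\mathrm{Cov}(\xi_s,\xi_t)=\mathrm{Var}(\xi_s)=(s-r)(I+\rho(0))$, which is (\ref{cvmxi}).

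For the density bound (\ref{pdfid}) I would write out the Gaussian density
\begin{align*}
p_{\xi_t}(y)=\big((2\pi(t-r))^{d}\det(I+\rho(0))\big)^{-1/2}\exp\Big(-\tfrac{1}{2(t-r)}(y-x)^*(I+\rho(0))^{-1}(y-x)\Big),
\end{align*}
and control the two factors separately. The matrix $\rho(0)$ is symmetric and positive semidefinite because $\zeta^*\rho(0)\zeta=\sum_{k=1}^d\int_{\R^d}\big(\sum_{i=1}^d\zeta_i h^{ik}(z)\big)^2\,dz\ge 0$ for every $\zeta\in\R^d$; therefore all eigenvalues of $I+\rho(0)$ are at least $1$, giving $\det(I+\rho(0))\ge 1$, while its largest eigenvalue is at most $1+\lambda_{\max}(\rho(0))\le 1+\|\rho\|_\infty\le 1+\|h\|_2^2\le 1+d\|h\|_{3,2}^2$, so the smallest eigenvalue of $(I+\rho(0))^{-1}$ is at least $(1+d\|h\|_{3,2}^2)^{-1}=2k$. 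Consequently $\det(I+\rho(0))^{-1/2}\le 1$ and $(y-x)^*(I+\rho(0))^{-1}(y-x)\ge 2k|y-x|^2$, and (\ref{pdfid}) follows after inserting these into the display. I expect the Gaussianity step (establishing that $\xi$ itself, not merely its $W$-conditional law, is Gaussian) together with the eigenvalue bookkeeping for $I+\rho(0)$ to be the only points that require care; everything else is a direct computation.
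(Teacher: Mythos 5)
Your proposal is correct and follows essentially the same route as the paper: both proofs rest on the observation that $\xi$ is a continuous martingale with deterministic quadratic covariation $(t-r)(I+\rho(0))$, and both obtain the density bound from the same eigenvalue estimates $\det(I+\rho(0))\ge 1$ and $\lambda_{\max}(I+\rho(0))\le 1+d\|h\|_{3,2}^2=(2k)^{-1}$. The only cosmetic difference is that the paper diagonalizes via $M^*(I+\rho(0))M=I$ and invokes L\'evy's characterization for $M\xi$, whereas you inline that step by running the exponential-martingale/characteristic-function computation directly, which in addition makes the independence of increments explicit.
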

\begin{proof}
Since $B$ is a $d$-dimensional Brownian motion  and $W$ is a $d$-dimensional space-time white Gaussian random field independent of $B$, then $\xi=\{\xi_t,r\leq t\leq T\}$ is a square integrable $d$-dimensional martingale. The quadratic covariation of $\xi$ is given by
\begin{align}
\langle \xi^i, \xi^j \rangle_t=&\delta_{ij}(t-r)+\sum_{k=1}^d \int_r^t\int_{\mathbb{R}^d}h^{ik}(\xi_s-y)h^{jk}(\xi_s-y)dyds\nonumber\\
=&\left(\delta_{ij}+ \rho^{ij}(0)\right)(t-r).
\end{align}
Note that $\rho(0)$ is a symmetric nonnegative definite matrix. As a consequence, $I+\rho(0)$ is strictly positive definite, and thus nondegenerate. Therefore, we can find a nondegenerate matrix $M$, such that $M^* (I+\rho(0)) M=I$. Let $\eta=M \xi$, then $\eta=\{\eta_t, t\in[0,T]\}$ is a martingale with quadratic covariation
\begin{align*}
\langle\eta^i,\eta^j\rangle_t=(t-r) \sum_{k_1,k_2=1}^d M^{ik_1} M^{jk_2}\langle \xi^{k_1}, \xi^{k_2} \rangle_t=\delta_{ij}(t-r).
\end{align*}
 By Levy's martingale characterization, $\eta$ is a $d$-dimensional Brownian motion. Then, $\xi=M^{-1}\eta$ is a Gaussian process, with covariance matrix (\ref{cvmxi}).

Since for any $t>r$, $\Sigma_t:=\Sigma_{t,t}=(t-r)(I+\rho(0))$ is symmetric and positive definite, the probability density of the Gaussian random vector $\xi_t$ is given by
\begin{align}\label{pdrxi}
p_{\xi_t}(y)=\frac{1}{\sqrt{(2\pi)^d|\Sigma_t|}}\exp\Big(-\frac{1}{2}(y-x)^*\Sigma_t^{-1}(y-x)\Big).
\end{align}
Recall that $\rho(0)$ is symmetric and nonnegative definite. Then it has eigenvalues $\lambda_1\geq \lambda_2\geq  \cdots \ge \lambda_d\geq 0$. Let $\lambda$ be the diagonal matrix with diagonal elements $\lambda_1,\dots, \lambda_d$. There is an orthogonal  matrix $U$, such that $\rho(0)=U^* \lambda U$. Let $k$ be defined in (\ref{defk}). It follows that
\[
\lambda_1+1\leq \sum_{i,j=1}^d |\rho^{ij}(0)|+1\leq \|\rho\|_{\infty}+1\leq d\|h\|_{3,2}^2+1=\frac{1}{2k}.
\]
Thus for any nonzero $x\in\mathbb{R}^d$, we have
\begin{align*}
\frac{1}{2}x^*\Sigma_t^{-1}x-\frac{k}{t-r}x^*x=&\frac{1}{2}x^*\Big(\Sigma_t^{-1}-\frac{2k}{t-r} I\Big)x\\
=&\frac{1}{2(t-r)}x^*U^*\left( \left(I+\lambda\right)^{-1} -2k I\right)Ux\geq 0,
\end{align*}
because $\left(I+\lambda\right)^{-1} -2k I$ is a nonnegative diagonal matrix. Thus for any $x,y\in\mathbb{R}^d$, $t>r$, we have
\begin{align}\label{expi}
\exp\Big(-\frac{1}{2}(y-x)^*\Sigma_t^{-1}(y-x)\Big)\leq \exp \Big(-\frac{k|x-y|^2}{t-r}\Big),
\end{align}
 On the other hand, we have
\begin{align}\label{dcvi}
|\Sigma_t|=\left|U^*\left(I+\lambda\right)U(t-r)\right|\geq (t-r)^d.
\end{align}
Therefore, we obtain (\ref{pdfid}) by plugging (\ref{expi}) - (\ref{dcvi}) into (\ref{pdrxi}).
\end{proof}

Denote by $\mathbb{P}^W$, $\E^W$, and $\|\cdot\|_p^W$ the probability, expectation and $L^p$-norm conditional on $W$. The following two propositions are estimates for the conditional distribution of $\xi$.
\begin{proposition}\label{cmijt}
For any $0\leq r<t\leq T$, $c>0$, choose $\rho\in(0, c\sqrt{t-r}]$. Then, for any $p_1,p_2\geq 1$ and $y\in\R^d$, there exist $C>0$, depending on $p_1$, $p_2$, $c$, $\|h\|_{2}$, and $d$, such that
\begin{align}\label{cmiji}
\left\|\mathbb{P}^W(|\xi_t-y|\leq \rho)^{\frac{1}{p_1}}\right\|_{p_2}\leq C\exp\Big(-\frac{k |x-y|^2}{p(t-r)}\Big),
\end{align}
where $k$ is defined in (\ref{defk}) and $p=p_1\vee p_2$.
\end{proposition}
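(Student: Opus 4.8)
The plan is to deduce the estimate from the \emph{unconditional} Gaussian bound of Lemma~\ref{lxigrv}, after two elementary reductions, so that no Malliavin calculus is actually needed for this particular inequality. First I would dispose of a trivial regime: if $|x-y|\le 2c\sqrt{t-r}$, then $k|x-y|^2/(p(t-r))\le 4kc^2$, the right-hand side is bounded below by $e^{-4kc^2}$, and the inequality holds with $C=e^{4kc^2}$ because $\mathbb P^W(|\xi_t-y|\le\rho)^{1/p_1}\le 1$. So I may assume $|x-y|>2c\sqrt{t-r}\ge 2\rho$. Then $\{|\xi_t-y|\le\rho\}\subseteq\{|\xi_t-x|\ge R\}$ with $R:=|x-y|-\rho>\tfrac12|x-y|>0$, hence $\mathbb P^W(|\xi_t-y|\le\rho)\le \mathbb P^W(|\xi_t-x|\ge R)$, and it suffices to control the latter.

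The second reduction moves from the conditional probability to the conditional-free one. Write $Z=\mathbb P^W(|\xi_t-x|\ge R)\in[0,1]$. If $p_2\ge p_1$ then $Z^{p_2/p_1}\le Z$, so $\E[Z^{p_2/p_1}]\le\E[Z]=\mathbb P(|\xi_t-x|\ge R)$; if $p_2<p_1$ then by concavity of $s\mapsto s^{p_2/p_1}$, $\E[Z^{p_2/p_1}]\le(\E Z)^{p_2/p_1}=\mathbb P(|\xi_t-x|\ge R)^{p_2/p_1}$. In both cases, with $p=p_1\vee p_2$, this yields $\big\|\mathbb P^W(|\xi_t-y|\le\rho)^{1/p_1}\big\|_{p_2}\le \mathbb P(|\xi_t-x|\ge R)^{1/p}$. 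Finally I would integrate the density bound of Lemma~\ref{lxigrv} over the exterior of the ball: after the substitution $z=x+\sqrt{t-r}\,v$ one gets $\mathbb P(|\xi_t-x|\ge R)\le C_d\,Q(R/\sqrt{t-r})\,\exp\!\big(-\bar k R^2/(t-r)\big)$ for a polynomial $Q$ of degree $\le d$, and then converts $R=|x-y|-\rho$ back to $|x-y|$ using $\rho\le c\sqrt{t-r}$, so that $R^2\ge|x-y|^2-2c\sqrt{t-r}\,|x-y|$ and $R/\sqrt{t-r}\ge\tfrac12|x-y|/\sqrt{t-r}$.

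The one point that needs care is the constant in the exponent. The passage $R^2\leadsto|x-y|^2$ produces a cross term $\exp(2\bar kc|x-y|/\sqrt{t-r})$, and the tail integral carries the polynomial prefactor $Q(|x-y|/\sqrt{t-r})$; neither is absorbed by $\exp(-k|x-y|^2/(p(t-r)))$ on its own. The remedy is to run the argument with the \emph{sharper} Gaussian constant $\bar k=[2(d\|h\|_2^2+1)]^{-1}$, which is legitimate because the covariance of $\xi_t$ is $(t-r)(I+\rho(0))$ with $\|\rho(0)\|\le\|\rho\|_\infty\le\|h\|_2^2$ (the bound on $\|\rho\|_\infty$ established in Section~\ref{s.2}), whence $\bar k\ge k$, and $\bar k>k$ as soon as $h\not\equiv0$ (when $h\equiv0$, $\xi_t$ is exactly Gaussian, $k=\tfrac12$ is optimal, and the same computation goes through for $p>1$, which is all that is used downstream). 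The strictly positive gap $\bar k-k$ then multiplies $|x-y|^2/(p(t-r))$ and absorbs both the polynomial factor $Q$ and the linear cross term into a constant $C$ depending only on $d,p_1,p_2,c,\|h\|_2$. I expect this constant‑bookkeeping to be the only real obstacle; the rest is the routine combination of the ball‑exclusion argument, Jensen's inequality, and the explicit Gaussian tail.

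For completeness I would note the alternative, Malliavin‑flavored route — useful if one needed the natural small‑ball gain $\rho^{d}(t-r)^{-d/2}$, which is not required here. One conditions on $W$ and applies the integration‑by‑parts formula to $\E^W[\chi_\rho(\xi_t)]$, where $\chi_\rho$ is a smooth bump with $\1_{B(y,\rho)}\le\chi_\rho\le\1_{B(y,2\rho)}$ written coordinatewise as $\partial_{1\cdots d}$ of a primitive of size $O(\rho^d)$ whose support, for the directions chosen toward $x$, forces $|\xi_t-x|^2\ge(1-\epsilon)|x-y|^2-C(t-r)$; one then estimates the weight $H_{(1,\dots,d)}(\xi_t,1)$ by Lemmas~\ref{ttpcsi} and~\ref{ehn} and concludes exactly as above, with this support event playing the role of $\{|\xi_t-x|\ge R\}$.
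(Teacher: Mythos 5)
Your proposal shares the paper's skeleton -- the split according to whether $|x-y|\lesssim\sqrt{t-r}$, the inclusion $\{|\xi_t-y|\le\rho\}\subset\{|\xi_t-x|\ge\tfrac{1}{2}|x-y|\}$ in the nontrivial regime, Jensen's inequality to reduce $\big\|\,\|\cdot\|^W_{p_1}\big\|_{p_2}$ to the unconditional $L^{p}$-norm with $p=p_1\vee p_2$, and the Gaussian density bound of Lemma \ref{lxigrv} -- but it diverges at the final estimate, and the divergence costs you something. The paper does \emph{not} discard the small ball: it bounds the probability of the intersection $\{|\xi_t-x|\ge\tfrac{1}{2}|x-y|\}\cap\{|\xi_t-y|<\rho\}$ by $V_d\rho^d\sup_{|z-x|\ge|x-y|/2}p_{\xi_t}(z)$, and the volume factor $\rho^d\le c^d(t-r)^{d/2}$ exactly cancels the $(t-r)^{-d/2}$ in the density bound, leaving a pure exponential with no prefactor and no tail integration. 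Your route -- integrating the Gaussian density over $\{|z-x|\ge R\}$ -- produces the polynomial prefactor $Q(|x-y|/\sqrt{t-r})$ and the cross term you identify, and your proposed fix (exploit a strict gap $\bar k>k$ between the sharp Gaussian rate and the paper's $k$) does work when $h\not\equiv 0$, but (a) it makes $C$ depend on $1/(\bar k-k)$, i.e., on the derivative norms entering $\|h\|_{3,2}$ rather than just $\|h\|_{2}$ as claimed in the statement, and (b) as you concede, it genuinely breaks down for $h\equiv 0$ with $p_1=p_2=1$, a case the proposition formally includes, since there is then no gap with which to absorb the polynomial. Keeping the intersection event and using the small-ball volume renders the entire absorption issue moot, so that is the step I would change; the rest of your argument (the two regimes, the Jensen reduction in both directions depending on the order of $p_1,p_2$) is correct and matches the paper.
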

\begin{proof}
Let $p=p_1\vee p_2$. Then, by Jensen's inequality, we have
\begin{align*}
\left\|\mathbb{P}^W(|\xi_t-y|\leq \rho)^{\frac{1}{p_1}}\right\|_{p_2}=\left\|\left\|\1_{\{\xi_t-y|\leq \rho\}}\right\|^W_{p_1}\right\|_{p_2}\leq \left\|\1_{\{\xi_t-y|\leq \rho\}}\right\|_p,
\end{align*}
We consider two different cases.

{\bf (i)} Suppose that $2\rho\leq |x-y|$. If $|\xi_t-y|\leq\rho\leq c\sqrt{t-r}$, then
\begin{align*}
|\xi_t-x|\geq |x-y|-|\xi_t-y|\geq |x-y|-\rho\geq \frac{|x-y|}{2},
\end{align*}
and equivalently $\{|\xi_t-y|<\rho\}\subset\{|\xi_t-x|\geq \frac{|x-y|}{2}\}$. Then, by Lemma \ref{lxigrv}, we have
\begin{align}\label{emd2}
\left\|\mathbb{P}^W(|\xi_t-y|\leq \rho)^{\frac{1}{p_1}}\right\|_{p_2}= &\left\|\1_{\{|\xi_t-x|\geq \frac{|x-y|}{2}\}\cap\{|\xi_t-y|<\rho\}}\right\|_p\leq C\bigg[V_d\rho^d\sup_{|z-x|\geq\frac{|x-y|}{2}}p_{\xi_t}(z)\bigg]^{\frac{1}{p}}\nonumber\\
\leq&C\left[V_d c^d(2\pi)^{-\frac{d}{2}}\exp\Big(-\frac{k|x-y|^2}{ t-r}\Big) \right]^{\frac{1}{p}}
\end{align}
where $V_d=\frac{\pi^{\frac{d}{2}}}{\Gamma(1+\frac{d}{2})}$ is the volume of the unit sphere in $\R^d$.

{\bf (ii)} On the other hand, suppose that $2\rho>|x-y|$. Then $|x-y|\leq 2\rho\leq 2c\sqrt{t-r}$. Thus by Lemma \ref{lxigrv} again, we have
\begin{align}\label{emd3}
\left\|\mathbb{P}^W(|\xi_t-y|\leq \rho)^{\frac{1}{p_1}}\right\|_{p_2}\leq& C\big(V_d \rho^d(2\pi (t-r))^{-\frac{d}{2}} \big)^{\frac{1}{p}}\nonumber\\
\leq &C\big(V_dc^d(2\pi)^{-\frac{d}{2}}\big)^{\frac{1}{p}}\exp\Big(\frac{4kc^2}{p}-\frac{4kc^2}{p}\Big)\nonumber\\
\leq &C\big(V_dc^d(2\pi)^{-\frac{d}{2}}\big)^{\frac{1}{p}}e^{\frac{4kc^2}{p}}\exp\Big(-\frac{k|x-y|^2}{p(t-r)}\Big).
\end{align}
Therefore, (\ref{cmiji}) follows from (\ref{emd2}) - (\ref{emd3}).
\end{proof}

Denote by $p^W(r,x;t,y)$ the transition probability density of $\xi$ conditional on $W$. In other words, $p^W(r,x;t,y)$ is the conditional probability density of $\xi_t=\xi_t^{r,x}$.
\begin{proposition}\label{mectpd}
For any $0\leq r<t\leq T$, $p\geq 1$, and $y\in\R^d$, there exist $C>0$, depending on $T$, $d$, $\|h\|_{3,2}$, $p$, and $q$, such that
\begin{align}\label{edn}
\left\|p^W(r,x;t,y)\right\|_{2p}\leq C\exp\Big(-\frac{k|x-y|^2}{6pd(t-r)}\Big) (t-r)^{-\frac{d}{2}},
\end{align}
where $k$ is defined in (\ref{defk}).
\end{proposition}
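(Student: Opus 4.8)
\emph{Plan of proof.} The idea is to represent the conditional density through a \emph{localized} Malliavin--Thalmaier formula and to extract the Gaussian factor from the smallness of a single small--ball probability. Fix $\rho=c\sqrt{t-r}$ with $c$ a small constant to be chosen, pick $\phi_\rho\in C_c^\infty(\R^d)$ with $\phi_\rho\equiv1$ on $\{|z|\le\rho/2\}$ and $\operatorname{supp}\phi_\rho\subset\{|z|\le\rho\}$, and let $Q_d$ be the fundamental solution of the Laplacian on $\R^d$. Since $\Delta(\phi_\rho Q_d)=\delta_0+g_\rho$, where $g_\rho$ is smooth, supported in the annulus $\{\rho/2\le|z|\le\rho\}$ and bounded by $C\rho^{-d}$ (for $d=2$ one first replaces $Q_2$ by $Q_2-\tfrac1{2\pi}\log\rho$; $d=1$ is simpler), and since, conditionally on $W$, the vector $\xi_t$ is nondegenerate -- hence has a density -- and lies in $\mathbb{D}^{3,p}$ for every $p$, so that the Malliavin integration by parts with respect to $B$ is available conditionally on $W$ (recall $W$ is independent of $B$), one gets
\[
p^W(r,x;t,y)=\sum_{i=1}^d\E^W\!\left[\partial_i(\phi_\rho Q_d)(\xi_t-y)\,H_{(i)}(\xi_t,1)\right]-\E^W\!\left[g_\rho(\xi_t-y)\right].
\]
The point of the cutoff is that the kernel $\partial_i(\phi_\rho Q_d)$ is supported in $\{|z|\le\rho\}$ and dominated by $C|z|^{1-d}\1_{\{|z|\le\rho\}}$, so only the event $\{|\xi_t-y|\le c\sqrt{t-r}\}$ enters the first term, and when $|x-y|$ is large this event sits inside $\{|\xi_t-x|\ge|x-y|/2\}$, which is where the exponential decay is hidden.

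For the error term I would use $\bigl|\E^W[g_\rho(\xi_t-y)]\bigr|\le C\rho^{-d}\,\mathbb{P}^W(|\xi_t-y|\le\rho)$, take the $L^{2p}(\Omega)$--norm in $W$, and apply Proposition~\ref{cmijt} with $p_1=1$, $p_2=2p$ (legitimate since $\rho\le c\sqrt{t-r}$); as $\rho^{-d}=c^{-d}(t-r)^{-d/2}$ this already has the desired form $C(t-r)^{-d/2}\exp\!\bigl(-k|x-y|^2/(2p(t-r))\bigr)$.

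For the main term I would bound it by $C\sum_i\E^W\bigl[|\xi_t-y|^{1-d}\1_{\{|\xi_t-y|\le\rho\}}|H_{(i)}(\xi_t,1)|\bigr]$, cut $\{|\xi_t-y|\le\rho\}$ into the dyadic shells $A_j=\{2^{-j-1}\rho<|\xi_t-y|\le2^{-j}\rho\}$, $j\ge0$, and on each $A_j$ combine $|\xi_t-y|^{1-d}\le(2^{-j-1}\rho)^{1-d}$ with H\"older's inequality (exponents chosen close to the endpoints), Lemma~\ref{ehn} (which gives $\|H_{(i)}(\xi_t,1)\|_q\le C(t-r)^{-1/2}$ for every $q$), and Proposition~\ref{cmijt} in the sharpened form
\[
\bigl\|\mathbb{P}^W(|\xi_t-y|\le\rho')^{1/p_1}\bigr\|_{p_2}\le C\,(\rho')^{d/p}(t-r)^{-d/(2p)}\exp\!\Bigl(-\tfrac{k|x-y|^2}{\bar c\,p(t-r)}\Bigr),\qquad p=p_1\vee p_2,
\]
valid for $\rho'\le c\sqrt{t-r}$ with $\bar c\ge1$ numerical, which keeps the volume factor and follows from Lemma~\ref{lxigrv} by conditional Jensen. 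The resulting bound on the $j$--th shell carries a factor $2^{\,j(1-d+d/p)}$ with negative exponent (once $p>d/(d-1)$, in particular for $d\ge2$ and the exponents at hand), so the series over $j$ converges, and after inserting $\rho=c\sqrt{t-r}$ the powers of $(t-r)$ combine to $(t-r)^{-d/2}$. Collecting the two contributions and optimizing the H\"older exponents yields $\bigl\|p^W(r,x;t,y)\bigr\|_{2p}\le C(t-r)^{-d/2}\exp\!\bigl(-k|x-y|^2/(6pd(t-r))\bigr)$.

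The step I expect to be the crux is the first one: one must \emph{localize} the Malliavin--Thalmaier kernel. With the globally defined $Q_d$ the singular term is not supported near $y$, and the region $\{\xi_t\approx x,\ y\text{ far}\}$ -- which has $O(1)$ probability but, by the Gaussian bound of Lemma~\ref{lxigrv}, should contribute only exponentially small mass -- can only be estimated by the polynomial $C|x-y|^{1-d}(t-r)^{-1/2}$, and no fixed number of further integrations by parts (we control only $H_{(i)}$ and $H_{(i,j)}$) upgrades this to a Gaussian rate, since a crude absolute--value bound destroys the cancellation responsible for it. Truncating $Q_d$ at the scale $\rho\sim\sqrt{t-r}$ excises that region from the singular term at the cost of the harmless error $g_\rho$, after which the whole estimate reduces to the (routine) dyadic/H\"older bookkeeping, with the scale $\rho\sim\sqrt{t-r}$ exactly matched to Lemma~\ref{ehn} and Proposition~\ref{cmijt}.
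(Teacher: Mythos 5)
Your overall architecture — a localized Malliavin--Thalmaier representation, a small-ball probability supplying the Gaussian factor via Proposition \ref{cmijt}, and Lemma \ref{ehn} supplying the powers of $t-r$ — is the same as the paper's, and your treatment of the localization and of the error term $g_\rho$ is sound. (The paper localizes differently but equivalently: it keeps the global kernel $Q_d$ and instead puts the cutoff \emph{inside} the weight, using $H_{(i)}(\xi_t,\phi_\rho^x(\xi_t))$, which vanishes off $B(x,2\rho)$ by the local property of $\delta$; see (\ref{dfrk}) and Theorem \ref{tdpdift}. So your "crux" — that the kernel itself must be truncated — is not the only route.)

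The genuine gap is in the dyadic treatment of the singular kernel. On the shell $A_j$ the kernel bound $(2^{-j-1}\rho)^{1-d}=2^{(j+1)(d-1)}\rho^{1-d}$ \emph{grows} like $2^{j(d-1)}$, while the probability factor, after you pass from $\mathbb{P}^W(A_j)^{1/p_1}$ to its unconditional $L^{p_2}$-norm, only yields the volume gain $(2^{-j}\rho)^{d/(p_1\vee p_2)}$; since $p_2$ must be at least of order $4p$ to accommodate the outer $\|\cdot\|_{2p}$ via H\"older, the decay is at best $2^{-jd/(4p)}$, and the net factor $2^{j(d-1-d/(4p))}$ diverges for every $d\ge 2$. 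Your stated criterion $p>d/(d-1)$ is in fact the condition under which the kernel growth \emph{dominates}, i.e., the inequality is inverted. The circularity you are running into (the shell probability is really $\approx(2^{-j}\rho)^d\|p^W_{\xi_t}\|_\infty$, which is the quantity being estimated) is exactly what the paper's Lemma \ref{blerk} resolves: it proves $\big\||\xi_t-y|^{-(d-1)}\big\|_q^W\le C\max_i\big(\|H_{(i)}(\xi_t,1)\|^W_{p_1}\big)^{d-1}$ for $p_1>d$, $q=p_1/(p_1-1)$, by a self-improving bound on $\|p^W_{\xi_t}\|_\infty$ together with an optimization over the radius. Replacing your shell sum by this lemma (applied conditionally on $W$, then combined with Jensen, Lemma \ref{ehn} and Proposition \ref{cmijt} exactly as you propose for the remaining factors, with $\rho\sim\sqrt{t-r}$) closes the argument and reproduces the paper's proof.
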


\begin{proof}
In order to show this lemma, we apply a density formula based on the Riesz transformation (see Theorem \ref{bedf}), and use the estimate stated in Theorem \ref{tdpdift}. Choose $p_1\in(d,3pd]$, let $p_2=2p_1$, and $p_3=\frac{p_1p_2}{p_2-p_1}=p_2$. Then, by (\ref{de}) 
and H\"{o}lder's inequality, we have
\begin{align}\label{mcdxi1}
\left\|p^W_{\xi_t}(y)\right\|_{2p}\leq C\max_{1\leq i\leq d}\Big\{&\big\|\mathbb{P}^W\left(|\xi_t-y|<2\rho\right)^{\frac{1}{p_2}}\big\|_{6p} \Big\|\big|\|H_{(i)}(\xi_t, 1)\|^W_{p_1}\big|^{d-1}\Big\|_{6p}\nonumber\\
&\times\Big[\frac{1}{\rho}+\left\|\|H_{(i)}(\xi_t, 1)\|^W_{p_2}\right\|_{6p}\Big]\Big\},
\end{align}
By Jensen's inequality, we have for any $1\leq i\leq d$
\begin{align}
\Big\|\big|\left\|H_{(i)}(\xi_t, 1)\right\|^W_{p_1}\big|^{d-1}\Big\|_{6p}\leq\left\|H_{(i)}(\xi_t,1)\right\|_{6p\vee p_1}^{d-1}\leq \left\|H_{(i)}(\xi_t,1)\right\|_{6pd}^{d-1},
\end{align}
and
\begin{align}\label{mchkdq}
\left\|\left\|H_{(i)}(\xi_t, 1)\right\|^W_{p_2}\right\|_{6p}\leq\left\|H_{(i)}(\xi_t,1)\right\|_{6pd}.
\end{align}
Let $\rho=\frac{\sqrt{t-r}}{4}$. (\ref{edn}) is a consequence of (\ref{mcdxi1}) - (\ref{mchkdq}), Lemma \ref{ehn}, and Proposition \ref{cmijt}.
\end{proof}

\section{A conditional convolution representation}
In this section, we follow the idea of Li et. al. (see Section 3 of \cite{ptrf-12-li-wang-xiong-zhou}) to obtain a conditional convolution formulation of the SPDE (\ref{dqmvp}). Consider the following SPDE:
\begin{align}\label{crd}
u_t(x)=\int_{\mathbb{R}^d}\mu(z)p^W(0,z;t,x)dz+\int_0^t\int_{\mathbb{R}^d}p^W(r,z;t,x) u_r(z)V(dr,dz),
\end{align}
where $W$ and $V$ are the same random fields as in (\ref{dqmvp}), $p^W$ is the transition density of $\xi_t$ given by (\ref{sde}) conditional on $W$.
\begin{definition}\label{strsol}
A random field $u=\{u_t(x),t\in[0,T],x\in\R^d\}$ that is jointly measurable and adapted to the filtration generated by $W$ and $V$, is said to be a strong solution to the SPDE (\ref{crd}), if the stochastic integral in (\ref{crd}) is defined as Walsh's integral and the equality holds almost surely for almost every $t\in[0,T]$ and $x\in\R^d$.
\end{definition}

\begin{lemma}\label{eumcr}
Assume that $\kappa$ and $\mu$ are bounded. Then the SPDE (\ref{crd}) has a unique strong solution (in the sense of Definition \ref{strsol}), denoted by $u=\{u_t(x), 0\leq t\leq T,x\in\R^d\}$, such that 
\begin{align}\label{unibd}
\sup_{0\leq t\leq T}\sup_{x\in\mathbb{R}^d}\|u_t(x)\|_{2p}<\infty,
\end{align}
for any $p\geq 1$.
\end{lemma}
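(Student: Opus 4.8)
The plan is to build the solution by Picard iteration and to read off the moment bound (\ref{unibd}) from the same estimates. Put $u_t^{(0)}(x)=\int_{\R^d}\mu(z)p^W(0,z;t,x)\,dz$ and, recursively,
\[
u_t^{(n+1)}(x)=u_t^{(0)}(x)+\int_0^t\int_{\R^d}p^W(r,z;t,x)u_r^{(n)}(z)\,V(dr,dz).
\]
A point that must be addressed at the outset is that $p^W(r,z;t,x)$ is a functional of $W$ restricted to $[r,t]$, hence is \emph{not} adapted to the filtration generated by $(W,V)$; the stochastic integral is therefore understood as a Walsh integral against $V$ along the enlarged filtration $\mathcal{G}_s:=\sigma(W)\vee\sigma\{V(u,\cdot):u\le s\}$, equivalently as a conditional (given $W$) Walsh integral. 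This is legitimate because $V$ is independent of $W$, so $V$ remains a martingale measure for $\mathcal{G}$, while $p^W(r,z;t,x)$ is $\mathcal{G}_0$-measurable and $u_r^{(n)}(z)$ is $\mathcal{G}_r$-measurable by induction.

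The base case is immediate from Proposition \ref{mectpd}: since $\|p^W(0,z;t,x)\|_{2p}\le C\exp\big(-k|z-x|^2/(6pd\,t)\big)t^{-d/2}$ and the Gaussian integrates (in $z$) to a constant multiple of $t^{d/2}$, Minkowski's inequality gives $\sup_{t,x}\|u_t^{(0)}(x)\|_{2p}\le C\|\mu\|_\infty$. For the recursion, write $\Delta_t^{(n)}(x):=u_t^{(n)}(x)-u_t^{(n-1)}(x)$, so $\Delta_t^{(n+1)}(x)=\int_0^t\int_{\R^d}p^W(r,z;t,x)\Delta_r^{(n)}(z)\,V(dr,dz)$. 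Applying conditional Burkholder--Davis--Gundy for Walsh integrals, the bound $|\kappa|\le\|\kappa\|_\infty$, and Minkowski's inequality, and using that $p^W$ is frozen under the conditioning on $W$ (so that no Cauchy--Schwarz splitting of $p^W$ against $\Delta^{(n)}$ is needed and no integrability is lost), one obtains
\[
\big\|\Delta_t^{(n+1)}(x)\big\|_{2p}^W\le C\Big(\int_0^t\Big(\int_{\R^d}p^W(r,z;t,x)\,\big\|\Delta_r^{(n)}(z)\big\|_{2p}^W\,dz\Big)^2\,dr\Big)^{1/2}.
\]

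This inequality is then iterated. Using $\big(\int p^W(r,z;t,x)g(z)\,dz\big)^2\le\Psi(r,t,x)\int p^W(r,z;t,x)g(z)^2\,dz$ with $\Psi(r,t,x):=\int p^W(r,z;t,x)\,dz$, raising to the power $p$ with Jensen's inequality, and unwinding $k$ steps, one is left with an iterated space--time integral of $\prod_{j=1}^{k}\Psi(r_j,r_{j-1},z_{j-1})p^W(r_j,z_j;r_{j-1},z_{j-1})$ against the $L^{2p}$-moment at the bottom. Taking expectations, the factors attached to the \emph{disjoint} time intervals $[r_j,r_{j-1}]$ are mutually independent (independence of the increments of $W$), so the expectation factorizes; each factor is controlled by $\|\Psi\|_q\le C_q$ and $\|p^W(r_j,z_j;r_{j-1},z_{j-1})\|_q\le C_q\exp(-c|z_j-z_{j-1}|^2/(r_{j-1}-r_j))(r_{j-1}-r_j)^{-d/2}$ from Proposition \ref{mectpd}, the successive $z_j$-integrals each produce a constant, and the $r$-simplex contributes $t^k/k!$. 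Summing the resulting series, whose general term is of order $(C_pT)^k/k!$, gives $\sup_{t,x}\|\Delta_t^{(n+1)}(x)\|_{2p}\le\varepsilon_n$ with $\sum_n\varepsilon_n<\infty$; the identical computation applied to $u^{(n+1)}$ in place of $\Delta^{(n+1)}$ (with $u^{(0)}$ as source term) yields the uniform bound (\ref{unibd}) for the iterates, with constants independent of $n$.

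Hence $(u^{(n)})_{n\ge1}$ is Cauchy, for every $p\ge1$, in the Banach space $\{f:\sup_{t,x}\|f_t(x)\|_{2p}<\infty\}$; its limit $u$ is jointly measurable, adapted to the filtration generated by $(W,V)$, solves (\ref{crd}) in the sense of Definition \ref{strsol}, and satisfies (\ref{unibd}). Uniqueness follows from the same estimate: if $u,\widetilde u$ both solve (\ref{crd}), then $d:=u-\widetilde u$ solves (\ref{crd}) with $\mu\equiv 0$, and the iterated-kernel inequality forces $\sup_{t,x}\|d_t(x)\|_{2p}=0$. I expect the main difficulty to lie precisely in the interaction between the conditioning and the moment estimates: the integrand $p^W(r,z;t,x)u_r^{(n)}(z)$ entangles the ``future'' of $W$ (through $p^W$) with the iterate, so a crude separation would cost integrability at every step and destroy the $n$-uniformity; keeping the estimates conditional on $W$ and exploiting the temporal independence structure of the white noise, together with the fact that the Gaussian bound of Proposition \ref{mectpd} integrates to a constant, is what makes the scheme close.
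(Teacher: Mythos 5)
Your overall strategy is the paper's: Picard iteration, Burkholder--Davis--Gundy and Minkowski for the Walsh integral against $V$, the Gaussian moment bound of Proposition \ref{mectpd} to make each spatial integral of $\|p^W\|_{2p}$ a constant, and a factorial series to close the scheme; your remark that the integral must be read on the enlarged filtration $\sigma(W)\vee\mathcal{F}^V_s$ (since $p^W(r,z;t,x)$ anticipates $W$) is a fair point that the paper leaves implicit. The genuine divergence is at the decoupling step, and there you take a much harder road than necessary. You assert that separating $p^W(r,z;t,x)$ from $\Delta^{(n)}_r(z)$ inside the $L^{2p}$-norm would ``cost integrability at every step,'' and to avoid this you keep everything conditional on all of $W$, iterate the conditional inequality $k$ times, and only then take expectations, factorizing an iterated kernel over disjoint time intervals. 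But the separation is in fact \emph{exact and lossless}: by the Markov property, $p^W(r,z;t,x)$ is a functional of the increments of $W$ on $(r,t]$ only, while $\Delta^{(n)}_r(z)$ is a functional of $(W,V)$ up to time $r$; these are independent, so $\|p^W(r,z;t,x)\,\Delta^{(n)}_r(z)\|_{2p}=\|p^W(r,z;t,x)\|_{2p}\,\|\Delta^{(n)}_r(z)\|_{2p}$ with no H\"older or Cauchy--Schwarz loss. With this one observation the whole argument collapses to a scalar recursion $d_n^*(t)\le C\int_0^t d_{n-1}^*(r)\,dr$ for $d_n^*(t)=\sup_x\|\Delta^{(n)}_t(x)\|_{2p}^2$, and the uniform bound, convergence and uniqueness all follow by Gr\"onwall; this is exactly what the paper does.

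By contrast, your deferred-expectation route has a real unaddressed wrinkle: after $k$ iterations you must raise the iterated integral to the power $p$ before integrating out $W$, and the Jensen step you invoke normalizes by the total mass $\int\prod_j\Psi(r_j,r_{j-1},z_{j-1})p^W(r_j,z_j;r_{j-1},z_{j-1})\,dz\,dr$, which is itself a random functional of $W$ entangled with the factors you want to keep; separating it by H\"older and still preserving both the interval-wise independence and the $t^k/k!$ gain requires care that your sketch does not supply. (Note also that $\Psi(r,t,x)=\int_{\R^d}p^W(r,z;t,x)\,dz$ integrates over the \emph{initial} point, so it is not identically $1$ and must itself be controlled via Proposition \ref{mectpd}, as you correctly anticipate.) I believe your scheme can be pushed through, but the independence identity above renders all of it unnecessary, and you should be aware that the ``crude separation'' you were avoiding is actually the clean and optimal one.
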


\begin{proof}
We prove the lemma by the Picard iteration. Let $u_0(t,x)\equiv \mu(x)$ and
\begin{align*}
u_{n}(t,x)=\int_{\mathbb{R}^d}\mu(z)p^W(0,z;t,x)dz+\int_0^t\int_{\mathbb{R}^d}p^W(r,z;t,x) u_{n-1}(r,z)V(dr,dz),
\end{align*}
for all $n\geq 1$ and $0\leq t\leq T$. Let $d_n$ be the difference of $u_n$, that is
\begin{align*}
d_n(t,x):=u_{n+1}(t,x)-u_n(t,x)=\int_0^t\int_{\mathbb{R}^d}p^W(r,z;t,x) \left(d_{n-1}(r,z)\right)V(dr,dz).
\end{align*}
We write
 \[
d^*_n(t):=\sup_{x\in\mathbb{R}^d}\left\|d_n(t,x)\right\|_{2p}^2.
\]
Then, by Burkholder-Davis-Gundy's and Minkowski's inequalities, we have
\begin{align}\label{medu}
d_n^*(t)\leq &c_p \|\kappa\|_{\infty}\sup_{x\in\mathbb{R}^d}\int_0^t\Big(\int_{\mathbb{R}^d}\left\|p^W(r,z;t,x)d_{n-1}(r,z)\right\|_{2p}dz\Big)^2dr.
\end{align}
By the Markov property, $p^{W}(r,z;t,x)$ depends on $\{W(s,z)-W(r,z), s\in(r,t], z\in\R^d\}$. On the other hand, $d_{n-1}(r,z)$ depends on $V$ and $\{W(s,z),s\in[0,r],z\in\R^d\}$. Thus, $p^{W}(r,z;t,x)$ and $d_{n-1}(r,z)$ are independent. That implies
\begin{align}\label{epp2d2}
\E\big(|p^W(r,z;t,x)d_{n-1}(r,z)|^{2p}\big)=\E\big(|p^W(r,z;t,x)|^{2p}\big)\E\big(|d_{n-1}(r,z)|^{2p}\big)
\end{align}
Then, by (\ref{medu}), (\ref{epp2d2}) and Proposition \ref{mectpd}, we have
\begin{align}\label{medu0}
d^*_n(t)\leq &c_p \|\kappa\|_{\infty}\int_0^td^*_{n-1}(r)\sup_{x\in\mathbb{R}^d}\Big(\int_{\mathbb{R}^d}\left\|p^W(r,z;t,x)\right\|_{2p}dz\Big)^2dr\leq C\int_0^td^*_{n-1}(r)dr,
\end{align}
where $C>0$ depends on $T$, $d$, $h$, $p$, and $\|\kappa\|_{\infty}$. Thus by iteration, we have
\begin{align}\label{sdspdef}
d^*_n(t)\leq C^n \int_0^t\int_0^{r_n}\cdots\int_0^{r_2}d_0^*(r_1)  dr_1\cdots dr_n,
\end{align}
To estimate $d_0^*$, we observe that
\begin{align}\label{duigdd}
d_0^*(t)=&\sup_{x\in\mathbb{R}^d}\Big\|\int_{\mathbb{R}^d}\left(\mu(z)-\mu(x)\right)p^W(0,z;t,x)dz+\int_0^t\int_{\mathbb{R}^d}p^W(r,z;t,x) \mu(z)V(dr,dz)\Big\|_{2p}^2\nonumber\\
\leq &\|\mu\|_{\infty}^2\sup_{x\in\mathbb{R}^d}\bigg(2\Big\|\int_{\mathbb{R}^d}p^W(0,z;t,x)dz\Big\|_{2p}+\Big\|\int_0^t\int_{\mathbb{R}^d}p^W(r,z;t,x) V(dr,dz)\Big\|_{2p}\bigg)^2\nonumber\\
:=&\|\mu\|_{\infty}^2\sup_{x\in\mathbb{R}^d}\left(2J_1+J_2\right)^2.
\end{align}
For $J_1$, by Minkowski's inequality and Proposition \ref{mectpd}, we have
\begin{align}\label{duigd1}
J_1\leq \sup_{x\in\mathbb{R}^d}\int_{\mathbb{R}^d}\left\|p^W(0,z;t,x)\right\|_{2p}dz\leq C.
\end{align}
For $J_2$, we apply Burkholder-Davis-Gundy's, Minkowski's inequalities and Proposition \ref{mectpd}. Then,
\begin{align}\label{duigd2}
J_2\leq&c_p\Big\|\int_0^t\int_{\mathbb{R}^d\times \mathbb{R}^d}\kappa(y,z) p^W(r,y;t,x) p^W(r,z;t,x) dydzdr\Big\|_p^{\frac{1}{2}}\\
\leq&c_p\|\kappa\|_{\infty}^{\frac{1}{2}}\bigg[\int_0^t\Big(\int_{\mathbb{R}^d}\left\|p^W(r,y;t,x) \right\|_{2p}dy\Big)^2dr\bigg]^{\frac{1}{2}}\leq  C
\end{align}
Combining (\ref{duigdd}) - (\ref{duigd2}), it follows that $d^*_0(t)<C$. As a consequence, we have
\begin{align}\label{sdspde0}
d^*_n(t)\leq &C \int_0^t\int_0^{r_n}\dots\int_0^{r_2} 1dr_1\dots dr_n=C\frac{t^n}{n!},
\end{align}
that is summable in $n$. Therefore, for any fixed $t\in[0,T]$ and $x\in\R^d$, $\{u_n(t,x)\}_{n\geq 0}$ is convergent in $L^{2p}(\Omega)$. Denote by $u_t(x)$ the limit of this sequence.

We claim that $u=\{u_t(x),t\in[0,T], x\in\R^d\}$ is a strong solution to (\ref{crd}). It suffices to show that as $n\to \infty$,
\begin{align}\label{l2cvg2}
\int_0^t\int_{\mathbb{R}^d}p^W(r,z;t,x) u_n(r,z)V(dr,dz)\to \int_0^t\int_{\mathbb{R}^d}p^W(r,z;t,x) u(r,z)V(dr,dz)
\end{align}
in $L^{2p}(\Omega)$ for almost every $t\in[0,T]$ and $x\in\R^d$. Actually, by Burkholder-Davis-Gundy's and Minkowski's inequalities, and the fact that $\{p^W(r,z;t,x),x,z\in\R^d\}$ and $\{u_n(r,z)-u(r,z),z\in\R^d\}$ are independent, we have
\begin{align*}
&\Big\|\int_0^t\int_{\mathbb{R}^d}p^W(r,z;t,x) \left(u_n(r,z)-u(r,z)\right)V(dr,dz)\Big\|_{2p}^2\\
\leq& \|\kappa\|_{\infty}\sup_{r\in[0,T]}\sup_{z\in\mathbb{R}^d}\left\|u_n(r,z)-u(r,z)\right\|_{2p}^2\int_0^t\Big(\int_{\mathbb{R}^d}\left\|p^W(r,z;t,x)\right\|_{2p}dz\Big)^2dr.
\end{align*}
The integral on the right-hand side of above inequality is finite, and because of (\ref{sdspde0}), as $n\to \infty$,
\begin{align*}
\sup_{r\in[0,T]}\sup_{z\in\mathbb{R}^d}\left\|u_n(r,z)-u(r,z)\right\|_{2p}^2\leq \sum_{k=n}^{\infty}d_k^*(T)\to 0,
\end{align*}
This implies that (\ref{l2cvg2}) is true.

In order to show the uniqueness, we assume that $v=\{v_t(x), t\in [0,T], x\in\R^d\}$ is another strong solution to (\ref{crd}). Let $d_t(x)=u_t(x)-v_t(x)$ for any $t\in[0,T]$ and $x\in\R^d$. Then,
\[
d_t(x)=\int_0^t\int_{\R^d}p^W(r,z;t,x)d_r(z)V(dr,dz).
\]
By Burkholder-Davis-Gundy's and Minkowski's inequalities and the fact that the families $\{d_r(x), x\in\R^d\}$ and $\{p^W(r,z;t,x),x,z\in\R^d\}$ are independent, we have
\begin{align}\label{unq}
\sup_{x\in\R^d}\|d_t(x)\|_{2p}^2\leq &\int_0^t\sup_{x\in\R^d}\|d_r(x)\|_{2p}^2\Big(\int_{\R^d}\left\|p^W(r,z;t,x)\right\|_{2p}dz\Big)^2dr\nonumber\\
\leq &C\int_0^t\sup_{x\in\R^d}\|d_r(x)\|_{2p}^2dr.
\end{align}
Due to Gr\"{o}nwall's lemma and the fact that $d_0\equiv 0$, (\ref{unq}) implies  $d(t,x)\equiv 0$, a.s. It follows that the solution to (\ref{crd}) is unique.

For the uniformly boundedness, let $u^*(t)=\sup_{x\in\mathbb{R}^d}\left\|u_t(x)\right\|_{2p}^2$.  We can show that
\begin{align*}
u^*(t)\leq &2\|\mu\|_{\infty}^2\Big(\sup_{x\in\mathbb{R}^d}\int_{\mathbb{R}^d}\left\|p^W(0,z;t,x)\right\|_{2p}dz\Big)^2\\
&+2\|\kappa\|_{\infty}\int_0^tu^*(r)\Big(\sup_{x\in\mathbb{R}^d}\int_{\mathbb{R}^d}\left\|p^W(r,z;t,x)\right\|_{2p}dz\Big)^2dr\\
\leq& c_1+c_2\int_0^tu^*(r)dr
\end{align*}
Then, (\ref{unibd}) is a consequence of Gr\"{o}nwall's lemma.
\end{proof}

\begin{proposition}
Assume that $\kappa$ and $\mu$ are bounded. Let $u=\{u_t(x),0< t\leq T, x\in\mathbb{R}^d\}$ be the unique strong solution to (\ref{crd}) in the sense of Definition \ref{strsol}. Then, $u$ is the strong solution to  (\ref{dqmvp}) in the sense of Definition \ref{def}.
\end{proposition}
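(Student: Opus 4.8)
Equation~\eqref{crd} is the Duhamel (mild) form of~\eqref{dqmvp}: conditionally on $W$, the kernel $p^W(r,\cdot;t,\cdot)$ is the fundamental solution of the linear part $\partial_tu=A^*u-\sum_{i,j}\partial_{x_i}\big[h^{ij}(y-x)u\big]W^j(dt,dy)$, and $u_s(x)V(ds,dx)$ is an inhomogeneous forcing. So the task is the ``mild $\Rightarrow$ weak'' implication, carried out with a random propagator. The plan is to fix $\phi\in C_b^2(\R^d)$, test~\eqref{crd} against $\phi$, and apply a stochastic Fubini theorem — legitimate because $p^W(r,\cdot;t,\cdot)$ depends only on $\{W_u-W_r:r\le u\le t\}$, hence is independent of $\mathcal F_r$, and because Proposition~\ref{mectpd} together with~\eqref{unibd} supplies the required integrability — to obtain
\[
\langle u_t,\phi\rangle=\big\langle\mu,(Q^W_{0,t}\phi)\big\rangle+\int_0^t\int_{\R^d}(Q^W_{s,t}\phi)(z)\,u_s(z)\,V(ds,dz),\qquad (Q^W_{r,t}\phi)(z):=\int_{\R^d}p^W(r,z;t,x)\phi(x)\,dx=\E^W\big[\phi(\xi^{r,z}_t)\big],
\]
where $\xi^{r,z}$ is the one-particle motion~\eqref{sde}.

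\textbf{The forward equation for $Q^W$.} The core step is to show that $t\mapsto Q^W_{r,t}\phi$ solves the forward linear SPDE. Applying It\^o's formula to $s\mapsto\phi(\xi^{r,z}_s)$ on $[r,t]$, using $d\langle\xi^i,\xi^j\rangle_s=(\delta_{ij}+\rho^{ij}(0))\,ds$, and taking $\E^W$ — the $dB$-integral is a martingale for $\mathcal G_s:=\sigma(W)\vee\sigma(B_u:u\le s)$ and drops out, while the increments $W^j(ds,dy)$ are $\sigma(W)$-measurable and factor through $\E^W$ — yields, after invoking Chapman--Kolmogorov $Q^W_{r,s}Q^W_{s,t}=Q^W_{r,t}$, the identity
\begin{align*}
(Q^W_{r,t}\phi)(z)
&=\phi(z)+\int_r^t\big(Q^W_{r,s}(A\phi)\big)(z)\,ds\\
&\quad+\sum_{i,j=1}^d\int_r^t\int_{\R^d}\Big[\int_{\R^d}p^W(r,z;s,w)\,\partial_i\phi(w)\,h^{ij}(y-w)\,dw\Big]W^j(ds,dy),
\end{align*}
the $W^j$-integrand being adapted since $p^W(r,z;s,\cdot)$ depends on $W$ only up to time $s$. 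I would then substitute this expansion into the displayed formula for $\langle u_t,\phi\rangle$ (with $r=0$ in the first term; for the $V$-term one first exchanges the $ds$ and $V(ds',dz')$ integrations, and further interchanges a $V$- and a $W$-stochastic integral), use the mild representation~\eqref{crd} in reverse to recognise each convolution $\int p^W(r,z;s,w)(\,\cdot\,)\,dw$-term as rebuilding $u_s(w)$, and collect terms. The outcome is exactly
\[
\langle u_t,\phi\rangle=\langle\mu,\phi\rangle+\int_0^t\langle u_s,A\phi\rangle\,ds+\sum_{i,j}\int_0^t\!\int_{\R^d}\!\Big[\int_{\R^d}\partial_i\phi(x)h^{ij}(y-x)u_s(x)\,dx\Big]W^j(ds,dy)+\int_0^t\!\int_{\R^d}\!\phi(x)u_s(x)\,V(ds,dx),
\]
which is the weak formulation of Definition~\ref{def}(i); joint measurability and $\mathcal F_t$-adaptedness of $u$ are inherited from~\eqref{crd}. (Alternatively, one may show that the strong solution of~\eqref{dqmvp} provided by Theorem~\ref{unique} satisfies~\eqref{crd} and conclude by the uniqueness in Lemma~\ref{eumcr}; the computation is essentially the same.)

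\textbf{Main difficulty.} The delicate issues are concentrated in the $W$-integrals, where conditioning on $W$ breaks the martingale structure. One must (a) justify that $\E^W$ passes through the $W^j$-stochastic integral — true because $W^j$-increments are $\sigma(W)$-measurable, but it requires identifying the conditional $L^{2p}$-limit of the approximating Riemann sums via the Gaussian-type bound of Proposition~\ref{mectpd} and the Malliavin--Sobolev regularity $\xi_t\in\cap_{p\geq1}\D^{3,p}(\R^d)$ (Lemmas~\ref{elgamma}--\ref{ehn} also guaranteeing $z\mapsto(Q^W_{r,t}\phi)(z)\in C^2$ with moment bounds that make every Fubini-type interchange above admissible); and (b) check that the resulting integrand $z\mapsto\int p^W(r,z;s,w)\partial_i\phi(w)h^{ij}(y-w)\,dw$ is a genuine Walsh integrand for the filtration $\{\mathcal F_s\}$ — which it is, since it depends on $W$ only on $[r,s]$ — so that the rebuilt integral coincides with the Walsh integral in~\eqref{dqmvp}. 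A secondary point is the boundary behaviour $p^W(r,\cdot;t,x)\,dx\to\delta_x$ as $r\uparrow t$, needed to close the recombination; this follows from the continuity estimates of Section~5 together with~\eqref{unibd} by a routine approximation.
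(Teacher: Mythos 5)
Your proposal is correct and follows essentially the same route as the paper: both arguments rest on testing the mild representation \eqref{crd} against $\phi$, applying the stochastic Fubini theorem, and using It\^o's formula for $\phi(\xi^{s,x}_t)$ together with the fact that $\E^W$ annihilates the $dB$-martingale while passing through the $W$-integral. The only difference is organizational — you expand the conditional semigroup $Q^W_{r,t}\phi$ forward and recombine, whereas the paper substitutes \eqref{crd} into each term of the weak formulation and collapses the result via It\^o's formula — but the mathematical content is identical.
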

\begin{proof}
Let $u=\{u_t(x),t\in[0,T],x\in\R^d\}$ be the unique solution to the SPDE (\ref{crd}), and write $Z(dt, dx)=u_t(x)V(dt,dx)$ for all $t\in[0,T]$ and $x\in\R^d$. Then, it suffices to show that $u$ satisfies the following equation:
\begin{align}\label{futf0}
\langle u_t, \phi \rangle=&\langle \mu, \phi\rangle+\int_0^t \langle u_s, A\phi\rangle ds+\int_0^t \int_{\mathbb{R}^d}\langle u_s, \nabla \phi^* h(y-\cdot) \rangle W(ds, dy)\nonumber\\
&+\int_0^t\int_{\mathbb{R}^d}\phi(x)Z(ds,dx),
\end{align}
for any $\phi\in C^2_b\left(\mathbb{R}^d\right)$.

Denote by 
\[
\E^W_{s,x}(\phi(\xi_t)):=\E\big(\phi(\xi_t)|W, \xi_s=x\big)=\int_{\R^d}\phi(z)p^W(s,x;t,z)dz.
\]
As $u$ is the strong solution to (\ref{crd}), the following equations are satisfied
\begin{align*}
\langle u_t, \phi\rangle=\left\langle \mu, \E^W_{0,\cdot}(\phi(\xi_t)) \right\rangle+\int_0^t\int_{\mathbb{R}^d}\E^W_{s,z}(\phi(\xi_t))Z(ds,dz),
\end{align*}
\begin{align*}
\int_0^t \langle u_s, A\phi\rangle ds=\int_0^t\left\langle \mu, \E^W_{0,\cdot}(A\phi(\xi_s)) \right\rangle ds+\int_0^t\int_0^s\int_{\mathbb{R}^d}\E^W_{r,z}(A\phi(\xi_s))Z(dr,dz)ds,
\end{align*}
and
\begin{align*}
\int_0^t \int_{\mathbb{R}^d}&\left\langle u_s, \nabla \phi^* h(y-\cdot) \right\rangle W(ds, dy)=\int_0^t \int_{\mathbb{R}^d}\left\langle \mu, \E^W_{0,\cdot}\big(\nabla \phi(\xi_s)^*h(y-\xi_s)\big) \right\rangle W(ds, dy)\\
&\ +\int_0^t\int_{\mathbb{R}^d}\int_0^s\int_{\mathbb{R}^d}\E^W_{r,z}\big((\nabla \phi(\xi_s)^*h(y-\xi_s)\big)Z(dr,dz) W(ds, dy).
\end{align*}
Thus by stochastic Fubini's theorem, we have
\begin{align}\label{futf}
&\langle u_t, \phi \rangle-\langle \mu, \phi\rangle-\int_0^t \langle u_s, A\phi\rangle ds-\int_0^t \int_{\mathbb{R}^d}\langle u_s, \nabla \phi^* h(y-\cdot) \rangle W(ds, dy)\\
=&\bigg\langle \mu, \E^W_{0,\cdot}\Big(\phi(\xi_t)-\phi(\xi_0)-\int_0^t A\phi(\xi_s)ds -\int_0^t \int_{\mathbb{R}^d} \nabla \phi(\xi_s)^*h(y-\xi_s)W(ds,dy)\Big)\bigg\rangle\nonumber\\
&+\int_0^t\int_{\mathbb{R}^d}\E^W_{s,z}\Big(\phi(\xi_t)-\int_s^tA\phi(\xi_r)dr-\int_s^t\int_{\mathbb{R}^d}\nabla \phi(\xi_r)^*h(y-\xi_r) W(dr, dy)\Big)Z(ds,dz).\nonumber
\end{align}
Notice that by It\^{o}'s formula, we have
\begin{align}\label{fxiito}
\phi(\xi_t^{s,x})=&\phi(x)+\int_s^t A \phi(\xi^{s,x}_r)dr+\int_s^r\nabla \phi(\xi^{s,x}_r)^*dB_r\nonumber\\
&+\int_s^t \int_{\mathbb{R}^d}\nabla \phi(\xi^{s,x}_r)^* h(y-\xi^{s,x}_r)W(dr,dy).
\end{align}
Then, (\ref{futf0}) follows from (\ref{futf}) and (\ref{fxiito}).
\end{proof}

\section{Proof of Theorem \ref{tjhc}}
In this section, we prove Theorem \ref{tjhc} by showing 
the the H\"{o}lder continuity of $u_t(x)$ in spatial and time variables separately:
\begin{proposition}\label{phcs}
Suppose that $h\in H^2_3\left(\mathbb{R}^d\right)$, $\|\kappa\|_{\infty}<\infty$, and $\mu\in L^1\left(\mathbb{R}^d\right)$ is bounded. Then, for any $0< s<t\leq T$, $x,y\in\R^d$ $\beta\in(0,1)$ and $p>1$, there exists a constant $C$ depending on $T$, $d$, $\|h\|_{3,2}$, $\|\mu\|_{\infty}$, $\|\kappa\|_{\infty}$, $p$, and $\beta$, such that the following inequalities are satisfied:
\begin{align}
\left\|u_t(y)-u_t(x)\right\|_{2p}\leq &Ct^{-\frac{1}{2}}(y-x)^{\beta},\label{mpdhcsv}\\
\left\|u_t(x)-u_s(x)\right\|_{2p}\leq &Cs^{-\frac{1}{2}}(t-s)^{\frac{1}{2}\beta}.\label{mpdhctv}
\end{align}
\end{proposition}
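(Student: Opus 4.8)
The plan is to exploit the conditional convolution representation \eqref{crd}, which expresses $u_t(x)$ as a sum of a deterministic-looking term $\int_{\R^d}\mu(z)p^W(0,z;t,x)\,dz$ and a stochastic integral $\int_0^t\int_{\R^d}p^W(r,z;t,x)u_r(z)V(dr,dz)$. For each of the two increments, I would split $u_t(y)-u_t(x)$ (resp. $u_t(x)-u_s(x)$) into the corresponding increment of the $\mu$-term plus the increment of the stochastic term, take $\|\cdot\|_{2p}$, and estimate the four pieces. The workhorse throughout is Proposition \ref{mectpd}, which gives the Gaussian-type bound $\|p^W(r,z;t,x)\|_{2p}\le C(t-r)^{-d/2}\exp(-k|x-z|^2/(6pd(t-r)))$, together with the independence of $p^W(r,z;t,x)$ from $\{W(s,\cdot):s\le r\}$ and from $V$, which lets one pull the $L^{2p}$-norm inside spatial integrals after a Burkholder--Davis--Gundy estimate, exactly as in the proof of Lemma \ref{eumcr}.

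For the spatial increment \eqref{mpdhcsv}, the key point is a H\"older estimate in the forward variable for the conditional heat kernel: one needs
\[
\big\|p^W(r,z;t,y)-p^W(r,z;t,x)\big\|_{2p}\le C\,|x-y|^{\beta}(t-r)^{-\frac{d+\beta}{2}}\Big(\exp\big(-c|y-z|^2/(t-r)\big)+\exp\big(-c|x-z|^2/(t-r)\big)\Big),
\]
for any $\beta\in(0,1)$. This can be obtained by interpolation: when $|x-y|\le\sqrt{t-r}$ one uses the density formula via the Riesz transform (Theorem \ref{bedf}) applied to the difference $\mathbf 1_{\{\xi_t<y\}}-\mathbf 1_{\{\xi_t<x\}}$ and the moment estimates of Lemmas \ref{ehn}, \ref{edh12}, \ref{elgamma}, \ref{medxigm}; when $|x-y|>\sqrt{t-r}$ one simply uses the bound on each kernel separately and absorbs $|x-y|^{\beta}(t-r)^{-\beta/2}\ge 1$. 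Plugging this into \eqref{crd}, the $\mu$-term contributes $C\|\mu\|_\infty |x-y|^\beta \int_{\R^d}(\cdots)dz \le C|x-y|^\beta t^{-\beta/2}$ after the Gaussian integration in $z$ (the factor $t^{-\beta/2}$, not $t^{-1/2}$, appears here, which is $\le t^{-1/2}T^{(1-\beta)/2}$ on $[0,T]$), while the stochastic term, after BDG and the boundedness $\sup_{r,z}\|u_r(z)\|_{2p}<\infty$ from \eqref{unibd}, contributes
\[
C|x-y|^\beta\Big(\int_0^t (t-r)^{-\beta}\,dr\Big)^{1/2}\le C|x-y|^\beta t^{(1-\beta)/2},
\]
which is again $\le Ct^{-1/2}|x-y|^\beta$ on $[0,T]$. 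The $t^{-1/2}$ in the statement is thus not sharp for the spatial part; it is there only to unify with the time part.

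For the time increment \eqref{mpdhctv}, write $u_t(x)-u_s(x)$ as (i) the difference of the $\mu$-terms, (ii) $\int_0^s\int_{\R^d}\big(p^W(r,z;t,x)-p^W(r,z;s,x)\big)u_r(z)V(dr,dz)$, and (iii) $\int_s^t\int_{\R^d}p^W(r,z;t,x)u_r(z)V(dr,dz)$. Term (iii) is the easy one: BDG plus Proposition \ref{mectpd} gives $C(\int_s^t(t-r)^{-?}dr)^{1/2}$-type bounds, and using $(t-r)^{-d/2}$ integrated against the Gaussian in $z$ one gets a clean power $(t-s)^{1/2}$, which is $\le (t-s)^{\beta/2}T^{(1-\beta)/2}$. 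For (i) and (ii) one needs the temporal H\"older estimate for the kernel,
\[
\big\|p^W(r,z;t,x)-p^W(r,z;s,x)\big\|_{2p}\le C(t-s)^{\beta/2}(s-r)^{-\frac{d+\beta}{2}}\exp\big(-c|x-z|^2/(s-r)\big),
\]
again proved by interpolating between the trivial bound (both kernels estimated separately, valid when $t-s\ge s-r$) and a Malliavin-calculus estimate of the derivative in $t$ (valid when $t-s< s-r$), the latter using the chain rule $\partial_t p^W = \tfrac12(\Delta+B^{(1)})^* p^W$ in the conditional sense together with the kernel bounds; alternatively one estimates $\|\phi(\xi_t)-\phi(\xi_s)\|$ via It\^o's formula \eqref{fxiito} and the moment bounds of Lemma \ref{elgamma}. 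Inserting this into the $\mu$-term gives $C\|\mu\|_\infty(t-s)^{\beta/2}s^{-\beta/2}$, and into (ii), after BDG and \eqref{unibd}, gives $C(t-s)^{\beta/2}(\int_0^s(s-r)^{-\beta}dr)^{1/2}\le C(t-s)^{\beta/2}s^{(1-\beta)/2}$; collecting and bounding the $s$-powers crudely by $s^{-1/2}$ (valid for $s\in(0,T]$) yields \eqref{mpdhctv}.

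\textbf{Main obstacle.} The routine BDG/Gr\"onwall bookkeeping is standard; the genuine difficulty is establishing the two H\"older-in-space and H\"older-in-time estimates for the \emph{conditional} transition density $p^W$, because $\xi$ is not Gaussian given $W$ (it solves a nonlinear SDE driven by $W$), so one cannot differentiate an explicit formula. This is precisely where the Malliavin-calculus machinery of Section 5 is needed: one must differentiate the Riesz-transform density formula of Theorem \ref{bedf} and control the resulting functionals $H_{(i)}(\xi_t,1)$, $H_{(i)}(\xi_t,1)-H_{(i)}(\xi_s,1)$ and their analogues in the spatial variable, using Lemmas \ref{elgamma}--\ref{edh12}; getting the sharp exponent $\beta$ arbitrarily close to $1$ in space and $\tfrac12$ in time (rather than something smaller) is the delicate part, and it relies on the interpolation-by-cases trick together with the fact that the moment estimates in Lemma \ref{medxigm} carry the correct powers of $t-s$.
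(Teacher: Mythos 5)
Your strategy is essentially the paper's: the proof does run through the conditional convolution representation \eqref{crd}, Burkholder--Davis--Gundy, the independence of $p^W(r,z;t,\cdot)$ from $u_r(\cdot)$, and precisely the two H\"older estimates for the conditional kernel that you isolate as the main obstacle (they are the content of Lemma \ref{lmdtpirl}, proved by the interpolation-by-cases trick you describe, i.e.\ combining a linear-in-the-increment bound with the trivial bound $\int_{\R^d}\|p^W\|_{2p}\,dz\le C$). One warning on your sketch of the temporal kernel estimate: the route via ``$\partial_t p^W=\tfrac12(\Delta+B^{(1)})^*p^W$ in the conditional sense'' is not available, since conditionally on $W$ the density solves a stochastic, not a deterministic, forward equation; the paper instead writes $p^W$ through the second-order Riesz-transform formula \eqref{dfrkip} and controls the difference $Q_d(\xi_t-x)H_{(i,i)}(\xi_t,\phi^x_{\rho_1}(\xi_t))-Q_d(\xi_s-x)H_{(i,i)}(\xi_s,\phi^x_{\rho_2}(\xi_s))$ term by term using the mean value theorem for the Poisson kernel, the negative-moment bound of Lemma \ref{blerk}, and the increment estimates of Lemmas \ref{medxigm} and \ref{edh12} --- which is the alternative you correctly name but do not carry out. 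A second, smaller point: in the spatial estimate the mean-value bound produces a Gaussian centered at an intermediate point $\tau(z)$ between $x$ and $y$ that depends on $z$, so the integration in $z$ requires the case analysis the paper performs rather than a direct pointwise replacement by Gaussians centered at $x$ and $y$ (though for $|x-y|\le\sqrt{t-r}$ the two are comparable, as you implicitly use).
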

Then, Theorem \ref{tjhc} is simply a  corollary of Proposition \ref{phcs}. In order to prove Proposition \ref{phcs}, we need the following H\"{o}lder continuity results for the conditional transition density $p^W(r,z;t,x)$: 
\begin{lemma}\label{lmdtpirl}
Suppose that $h\in H_2^3(\mathbb{R}^d)$, $0\leq r<s<t\leq T$, $x,y\in\mathbb{R}^d$, and $\beta\in(0,1)$. Then, there exists $C>0$, depending on $T$, $d$, $\|h\|_{3,2}$, $p$ and $\beta$, such that the following inequalities are satisfied:
\begin{align}
\int_{\mathbb{R}^d}\left\|p^W(r,z;t,y)-p^W(r,z;t,x)\right\|_{2p}dz\leq &C(t-r)^{-\frac{1}{2}\beta}\left|y-x\right|^{\beta},\label{mdtpirl}\\
\int_{\mathbb{R}^d}\left\|p^W(r,z;t,x)-p^W(r,z;s,x)\right\|_{2p}dz\leq &C (s-r)^{-\frac{1}{2}\beta}(t-s)^{\frac{1}{2}\beta}.\label{idtmpd}
\end{align}
\end{lemma}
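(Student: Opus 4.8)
The plan is to derive both estimates from the Riesz--transform density representation of Theorem~\ref{bedf}, applied to the one--particle motion started at the running variable $z$. Fix $0\le r<s<t\le T$, write $\xi_u=\xi_u^{r,z}$, and let $K_i(w)=\partial_i Q_d(w)$, $w\in\R^d\setminus\{0\}$, be the Riesz kernel, for which $|K_i(w)|\le C|w|^{1-d}$ and, for any $\beta\in(0,1)$,
\begin{align*}
|K_i(w_1)-K_i(w_2)|\le C|w_1-w_2|^{\beta}\big(|w_1|^{-(d-1+\beta)}+|w_2|^{-(d-1+\beta)}\big).
\end{align*}
Theorem~\ref{bedf} (in the form underlying \eqref{de}) gives $p^W(r,z;t,x)=\sum_{i=1}^d\E^W\big[K_i(x-\xi_t)H_{(i)}(\xi_t,1)\big]$; I would subtract this identity at two space points, resp.\ at two times, and estimate the difference. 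Only the ``small--increment'' regimes need work: when $|x-y|\ge\sqrt{t-r}$ in \eqref{mdtpirl}, resp.\ $t-s\ge s-r$ in \eqref{idtmpd}, the triangle inequality together with $\int_{\R^d}\|p^W(r,z;t,x)\|_{2p}\,dz\le C$ (immediate from Proposition~\ref{mectpd} by integrating the Gaussian bound in $z$) already suffices, because then $1\le(t-r)^{-\beta/2}|x-y|^{\beta}$, resp.\ $1\le(s-r)^{-\beta/2}(t-s)^{\beta/2}$.

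\emph{Spatial increment.} Assume $|x-y|<\sqrt{t-r}$. Subtraction gives
\begin{align*}
p^W(r,z;t,y)-p^W(r,z;t,x)=\sum_{i=1}^d\E^W\big[\big(K_i(y-\xi_t)-K_i(x-\xi_t)\big)H_{(i)}(\xi_t,1)\big].
\end{align*}
I would bound $|K_i(y-\xi_t)-K_i(x-\xi_t)|$ by the Hölder estimate above, apply Hölder's inequality under $\E^W$ with an auxiliary exponent $q'$ slightly below $d/(d-1+\beta)$ (so that $|w|^{-(d-1+\beta)q'}$ is locally integrable), bound $\big\|\,\E^W[|y-\xi_t|^{-(d-1+\beta)q'}]^{1/q'}\big\|_{m}$ and its analogue at $x$ using the conditional density bound of Proposition~\ref{mectpd} (whose small--ball input is Proposition~\ref{cmijt}), and use $\|H_{(i)}(\xi_t,1)\|_{2p}\le C(t-r)^{-1/2}$ from Lemma~\ref{ehn}. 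This yields
\begin{align*}
\big\|p^W(r,z;t,y)-p^W(r,z;t,x)\big\|_{2p}\le C|x-y|^{\beta}(t-r)^{-\frac{d+\beta}{2}}\Big(e^{-c|z-x|^2/(t-r)}+e^{-c|z-y|^2/(t-r)}\Big),
\end{align*}
and integrating in $z$ (each Gaussian contributing a factor $(t-r)^{d/2}$) gives \eqref{mdtpirl}.

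\emph{Time increment.} Assume $t-s<s-r$, so $s-r$ and $t-r$ are comparable. Subtracting the representation at times $t$ and $s$,
\begin{align*}
p^W(r,z;t,x)-p^W(r,z;s,x)=&\sum_{i=1}^d\E^W\big[\big(K_i(x-\xi_t)-K_i(x-\xi_s)\big)H_{(i)}(\xi_t,1)\big]\\
&+\sum_{i=1}^d\E^W\big[K_i(x-\xi_s)\big(H_{(i)}(\xi_t,1)-H_{(i)}(\xi_s,1)\big)\big].
\end{align*}
For the first sum I would use the Hölder estimate for $K_i$ together with $\|\xi_t-\xi_s\|_{m}\le C(t-s)^{1/2}$ (immediate from \eqref{sde}), Proposition~\ref{mectpd}, and Lemma~\ref{ehn}; after integration in $z$ this contributes a term of order $\big((t-s)/(t-r)\big)^{\beta/2}\le(s-r)^{-\beta/2}(t-s)^{\beta/2}$. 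For the second sum I would use the increment bound $\|H_{(i)}(\xi_t,1)-H_{(i)}(\xi_s,1)\|_{2p}\le C(s-r)^{-1/2}(t-r)^{-1/2}(t-s)^{1/2}$ of Lemma~\ref{edh12} and control $\big\|\,\E^W[|x-\xi_s|^{-(d-1)q'}]^{1/q'}\big\|_{m}$ at scale $\sqrt{s-r}$; after integration in $z$ this contributes a term of order $(t-r)^{-1/2}(t-s)^{1/2}\le(s-r)^{-1/2}(t-s)^{1/2}$, which is $\le(s-r)^{-\beta/2}(t-s)^{\beta/2}$ precisely because $t-s<s-r$ and $\tfrac12-\tfrac\beta2>0$. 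Combining the two sums gives \eqref{idtmpd}.

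The main obstacle is the bookkeeping of the singular Riesz kernel and its Hölder increments under $\E^W$: the exponent $q'$ must be tuned just below $d/(d-1+\beta)$, the Gaussian decay in the starting point $z$ must be extracted carefully so that the $z$--integration converges, and the resulting powers of $t-r$ and $s-r$ must be tracked so that they collapse to exactly $(t-r)^{-\beta/2}$, resp.\ $(s-r)^{-\beta/2}(t-s)^{\beta/2}$; in the time estimate the point of invoking Lemma~\ref{edh12} is precisely to avoid second spatial derivatives of $p^W$. A secondary technical point is that if Theorem~\ref{bedf}/\eqref{de} is available only as a one--point upper bound rather than an identity, one first re-derives the ``difference'' density formulas above from the basic integration--by--parts formula; alternatively, for the spatial estimate one may instead prove $\int_{\R^d}\|\nabla_x p^W(r,z;t,x)\|_{2p}\,dz\le C(t-r)^{-1/2}$ via one further integration by parts (introducing $H_{(i,j)}(\xi_t,1)$, controlled by Lemma~\ref{ehn}) and interpolate with $\int_{\R^d}\|p^W(r,z;t,x)\|_{2p}\,dz\le C$.
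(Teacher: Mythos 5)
Your strategy is sound and lands on the same two bounds, but it takes a genuinely different route from the paper in both halves. For the spatial increment, the paper does not use a H\"older estimate of the Riesz kernel with exponent $\beta$; it applies the mean-value density formula \eqref{dmvte} (which brings in the second-order functionals $H_{(i,j)}(\xi_t,1)$ of Lemma \ref{ehn}) to get a \emph{Lipschitz-type} bound $\int_{\R^d}\|p^W(r,z;t,y)-p^W(r,z;t,x)\|_{2p}\,dz\le C[(t-r)^{-1/2}|y-x|+(t-r)^{-1}|y-x|^2]$ — the $z$-integration there is delicate because the intermediate point $\tau=cx+(1-c)y$ depends on $z$ — and then interpolates with the trivial bound $\int_{\R^d}\|p^W\|_{2p}\,dz\le C$ to reach exponent $\beta$. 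Your closing alternative (a bound on $\int\|\nabla_xp^W\|_{2p}\,dz$ plus interpolation) is essentially what the paper does; your primary route (fractional H\"older estimate of $\partial_iQ_d$ plus negative moments of order $d-1+\beta$) is different and buys you the exponent $\beta$ in one pass, at the cost of needing a version of Lemma \ref{blerk} for moments of order $(d-1+\beta)q'$ rather than $(d-1)q$ — routine, but not in the paper. For the time increment, the paper starts from the second-order representation \eqref{dfrkip} with $Q_d$ and $H_{(i,i)}$ and then partially undoes one integration by parts; your first-order decomposition into $(K_i(x-\xi_t)-K_i(x-\xi_s))H_{(i)}(\xi_t,1)$ and $K_i(x-\xi_s)(H_{(i)}(\xi_t,1)-H_{(i)}(\xi_s,1))$ is cleaner and correctly identifies Lemma \ref{edh12} as the key input for the second piece.

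The one point you flag but do not resolve, and which is more than bookkeeping, is the Gaussian decay in $z$. The unlocalized identity $p^W=\sum_i\E^W[K_i(x-\xi_t)H_{(i)}(\xi_t,1)]$ followed by H\"older's inequality produces a bound that is \emph{constant in $z$}, so the $z$-integral diverges; the decay $e^{-c|z-x|^2/(t-r)}$ only enters through the localized formula with the cutoff $\phi_\rho^x$ and the small-ball factor $\mathbb{P}^W(|\xi-x|<c\rho)^{1/p_3}$ of Proposition \ref{cmijt}. Once you localize, the time-increment decomposition acquires the extra terms $\phi_{\rho_1}^x(\xi_t)-\phi_{\rho_2}^x(\xi_s)$ and $\partial_i\phi_{\rho_1}^x(\xi_t)-\partial_i\phi_{\rho_2}^x(\xi_s)$ (the two radii $\rho_1=\sqrt{t-r}$, $\rho_2=\sqrt{s-r}$ differ), and controlling these occupies a substantial part of the paper's argument (its estimates \eqref{dphitv}--\eqref{ddphitv} and the bound on $\|\,|\tfrac{\xi_t-x}{\rho_1}-\tfrac{\xi_s-x}{\rho_2}|\,\|_{12pd}$). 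So your plan is viable, but completing it requires carrying the cutoffs through both differences, not just the kernels and the $H_{(i)}$'s.
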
 

Before the proof, let us firstly  derive a variant of the density formula (\ref{dfrk}). It will be used in the proof of (\ref{idtmpd}). Choose $\phi\in C^2_b\left(\mathbb{R}^n\right)$, such that $\1_{B(0,1)}\leq \phi\leq \1_{B(0,4)}$, and its first and second partial derivatives are all bounded by $1$. For any $x\in\R^d$ and $\rho>0$, we set $\phi^x_{\rho}:=\phi(\frac{\cdot -x}{\rho})$. Assume that $F$ satisfies all the properties in Theorem \ref{bedf}. Let $Q_n$ be the $n$-dimensional Poisson kernel (see (\ref{posnkn})). Then, the density of $F$ can be represented as follows:
\begin{align}\label{dfrkip}
p_F(x)=&\sum_{i,j_1,j_2=1}^n \E\left[\partial_{j_1} Q_n(F-x)\left\langle DF^{j_1}, DF^{j_2}\right\rangle_H \sigma^{j_2i} H_{(i)}(F, \phi_{\rho}^x(F))\right]\nonumber\\
=& \E\bigg[\Big\langle DQ_n(F-x), \sum_{i,j_2=1}^mH_{(i)}(F, \phi_{\rho}^x(F))\sigma^{j_2i} DF^{j_2} \Big\rangle_H\bigg]\nonumber\\
=&\sum_{i=1}^m \E\Big[Q_n(F-x)\sum_{j_2=1}^m\delta\left[H_{(i)}(F, \phi_{\rho}^x(F))\sigma^{j_2i}DF^{j_2}\right]\Big]\nonumber\\
=&-\sum_{i=1}^m \E\big[Q_n(F-x)H_{(i,i)}(F, \phi_{\rho}^x(F))\big].
\end{align}

\begin{proof}[Proof of Lemma \ref{lmdtpirl}]  
Let $\xi_t=\xi_t^{r,z}$ be defined in (\ref{sde}). \\
{\bf (i)} Choose $p_1\in(d,3pd]$, let $p_2=2p_1$, and $p_3=\frac{p_1p_2}{p_2-p_1}=p_2$. Then, by (\ref{dmvte}) and H\"{o}lder's inequality, for any fixed $z, x,y\in\mathbb{R}^d$ and $\rho>0$, we can show that
\begin{align*}
I(z):=&\|p^W(r,z;t,x)-p^W(r,z;t,y)\|_{2p}\\
\leq &C|y-x|\left\|\mathbb{P}^W\left(\xi_t-\tau\leq 4\rho\right)^{\frac{1}{p_2}}\right\|_{6p}\max_{1\leq i\leq d}\Big\{\Big\|\big|\|H_{(i)}(\xi_t;1)\|_{p_2}^W\big|^{d-1}\Big\|_{6p}\\
&\times\Big(\frac{1}{\rho^2}+\frac{2}{\rho}\left\|\|H_{(i)(\xi_t;1)}\|_{p_2}^{W}\right\|_{6p}+\left\|\|H_{(i,j)}(\xi_t;1)\|_{p_2}^W\right\|_{6p}\Big)\Big\},
\end{align*}
where $\tau=cx+(1-c)y$, for some $c\in(0,1)$ that depends on $z, x, y$.

Let $\rho=\frac{\sqrt{t-r}}{8}$. Similarly as proved in Proposition \ref{mectpd}, we can show that
\begin{align}\label{dmitgd}
I(z)\leq& C |y-x|(t-r)^{-\frac{d+1}{2}}\exp\Big(-\frac{k|\tau-z|^2}{(6p\vee p_2)(t-r)}\Big)\nonumber\\
\leq &C |y-x|(t-r)^{-\frac{d+1}{2}}\exp\Big(-\frac{k|\tau-z|^2}{6pd(t-r)}\Big),
\end{align}
where $k$ is defined in (\ref{defk}) and $C>0$ depends on $T$, $d$, $p$, and $\|h\|_{3,2}$.

Notice that even if we fix $x,y\in \R^d$, $\tau$ is still a function of $z$ that does not have an explicit formulation. Thus it is not easy to calculate the integral of $I$ directly. Without losing generality, assume that $x=0$, and $y=(y_1, 0,\dots, 0)$, where $y_1\geq 0$. Then $\tau=((1-c)y_1, 0, \dots, 0)$, where $c=c(z)\in(0, 1)$. Let $\widehat{k}=\frac{k}{6pd}$. For any $z=(z_1,\dots, z_d)\in \R^d$, we consider the following cases.

(a) If $z_1\leq 0$, then
\begin{align}\label{2exp1}
\exp\Big(-\frac{k|\tau-z|^2}{6pd(t-r)}\Big)\leq \exp\Big(-\frac{\widehat{k}|z|^2}{t-r}\Big).
\end{align}
(b) If $z_1\geq y_1$, then
\begin{align}\label{2exp2}
\exp\Big(-\frac{k|\tau-z|^2}{6pd(t-r}\Big)\leq \exp\Big(-\frac{\widehat{k}|y-z|^2}{t-r}\Big).
\end{align}
(c) If $0< z_1< y_1$, then
\begin{align}\label{2exp3}
\exp\Big(-\frac{k|\tau-z|^2}{6pd(t-r}\Big)\leq \exp\Big(-\frac{\widehat{k}|\tau_0-z|^2}{t-r}\Big),
\end{align}
where $\tau_0=(z_1, 0, \dots, 0)$. 

Therefore, combining (\ref{dmitgd}) - (\ref{2exp3}), we have
\begin{align}\label{imdsv}
\int_{\mathbb{R}^d}I(z)dz\leq C |y-x|(t-r)^{-\frac{d+1}{2}}\left(I_1+I_2+I_3\right),
\end{align}
where
\begin{align*}
I_1=\int_{-\infty}^0dz_1\int_{\mathbb{R}^{d-1}}\exp\Big(-\frac{\widehat{k}|z|^2}{t-r}\Big)dz_d\dots dz_2,\\
I_2=\int_{|y|}^{\infty}dz_1\int_{\mathbb{R}^{d-1}}\exp\Big(-\frac{\widehat{k}|y-z|^2}{t-r}\Big)dz_d\dots dz_2,\\
I_3=\int_0^{|y|}dz_1 \int_{\mathbb{R}^{d-1}}\exp\Big(-\frac{\widehat{k}|\tau_0-z|^2}{t-r}\Big)dz_d\dots dz_2.
\end{align*}
By a changing of variables, it is easy to show that
\begin{align}\label{imdsv12}
I_1+I_2=\int_{\mathbb{R}^d}\exp\Big(-\frac{\widehat{k}|z|^2}{t-r}\Big)dz=\widehat{k}^{-\frac{d}{2}}(t-r)^{\frac{d}{2}}.
\end{align}
For $I_3$, we compute the integral as follows:
\begin{align}\label{imdsv3}
I_3=&\int_0^{|y|}dz_1\int_{\mathbb{R}^{d-1}}\exp\Big(-\frac{\widehat{k}\left(z_2^2+\dots z_d^2\right)}{t-r}\Big)dz_d\dots dz_2\nonumber\\
=&\big(2\pi\widehat{k}^{-1}\big)^{\frac{d-1}{2}}(t-r)^{\frac{d-1}{2}}|y|.
\end{align}
Thus combining (\ref{imdsv}) - (\ref{imdsv3}), we have
\begin{align}\label{mdtpirl1}
\int_{\mathbb{R}^d}I(z)dz\leq &C\big[(t-r)^{-\frac{1}{2}}|y|+(t-r)^{-1}|y|^2\big] \nonumber\\
=&C\big[(t-r)^{-\frac{1}{2}}|y-x|+(t-r)^{-1}|y-x|^2\big].
\end{align}
It is easy to see that the inequality (\ref{mdtpirl1}) holds for all $x,y\in\R^d$.

On the other hand, by Proposition \ref{mectpd}, we have
\begin{align}\label{mdtpirl2}
\int_{\mathbb{R}^d}I(z)dz\leq\int_{\mathbb{R}^d}\|p^W(r,z;t,y)\|_{2p}+\|p^W(r, z;t,x)\|_{2p}dz\leq C.
\end{align}
Therefore by (\ref{mdtpirl1}) and (\ref{mdtpirl2}), for any $\beta_1,\beta_2\in(0,1)$, we have
\begin{align*}
\int_{\mathbb{R}^d}I(z)dz\leq C\big[(t-r)^{-\frac{1}{2}\beta_1}\left|y-x\right|^{\beta_1}+(t-r)^{-\beta_2}\left|y-x\right|^{2\beta_2}\big]
\end{align*}
Then, (\ref{mdtpirl}) follows by choosing $\beta=\beta_1=2\beta_2$.

{\bf (ii)} Let $\rho_1=\sqrt{t-r}$ and $\rho_2=\sqrt{s-r}$. By density formula (\ref{dfrkip}), we have
\begin{align}\label{dcdtv}
&\left|p^W(r,z;t,x)-p^W(r,z;s,x)\right|\nonumber\\
\leq &\sum_{i=1}^d\Big|\E^W\left\{\left[Q_d(\xi_t-x)-Q_d(\xi_s-x)\right]H_{(i,i)}(\xi_s,\phi_{\rho_2}^x(\xi_s))\right\}\Big|\nonumber\\
&+\sum_{i=1}^d\Big|\E^W\left\{Q_d(\xi_t-x)\left[H_{(i,i)}(\xi_t,\phi_{\rho_1}^x(\xi_t))-H_{(i,i)}(\xi_s,\phi_{\rho_2}^x(\xi_s))\right]\right\}\Big|\nonumber\\
=&I_1+I_2.
\end{align}

Estimation for $I_1$: Note that by  the local property of $\delta$ (see Proposition 1.3.15 of Nulart \cite{springer-06-nualart}), $H_{(i,i)}(\xi_s,\phi_{\rho_2}^x(\xi_s))$ vanishes except if $\xi_s\in B(x,4\rho_2)$. Choose $p_1\in (d, 2pd]$. Let $p_2=3p_1$ and $p_3=\frac{3p_1}{3p_1-2}$. Then, $\frac{2}{p_2}+\frac{1}{p_3}=1$. Thus, by H\"{o}lder's inequality, we have
\begin{align}\label{medcdtv1p}
\left\|I_1\right\|_{2p}\leq &d\big\|\|\1_{B(x, 4\rho_2)}(\xi_s)\|^W_{p_2}\big\|_{6p}\big\|\|Q_d(\xi_t-x)-Q_d(\xi_s-x)\|^W_{p_3}\big\|_{6p}\nonumber\\
&\times \max_{1\leq i\leq d}\big\|\|H_{(i,i)}(\xi_s,\phi_{\rho_2}^x(\xi_s))\|^W_{p_2}\big\|_{6p}.
\end{align}
By Proposition \ref{cmijt}, and the fact that $p_2=3p_1\leq 6pd$, the first factor satisfies the following inequality
\begin{align}\label{mpbxis}
\big\|\|\1_{B(x, 4\rho_2)}(\xi_s)\|^W_{p_2}\big\|_{6p}=\big\|\mathbb{P}^W(|\xi_s-x|<4\rho_2)^{\frac{1}{p_2}}\big\|_{6p}\leq C\exp\Big(-\frac{k|z-x|}{6pd(s-r)}\Big).
\end{align}
By Lemmas \ref{ehn} and \ref{ipfpd}, for all $1\leq i\leq d$, the last factor can be estimated as follows:
\begin{align}\label{h2xis}
\big\|\|H_{(i,i)}(\xi_s,\phi_{\rho_2}^x(\xi_s))\|^W_{p_2}\big\|_{6p}\leq &\frac{1}{\rho_2^2}+\frac{2}{\rho_2}\big\|\|H_{(i)}(\xi_s,1)\|^W_{p_2}\big\|_{6p}+\big\|\|H_{(i,i)}(\xi_s,1)\|^W_{p_2}\big\|_{6p}\nonumber\\
\leq & C(s-r)^{-1}.
\end{align}
We estimate the second factor by the mean value theorem. Let $\eta_1=|\xi_t-x|$ and $\eta_2=|\xi_s-x|$. Then, we can write
\begin{equation*}
Q_d(\xi_t-x)-Q_d(\xi_s-x)=\begin{cases}
A_2^{-1}\left(\log \eta_1 -\log \eta_2\right), &\text{if}\ d=2, \\
-A_d^{-1}\big[\eta_1^{-(d-2)}-\eta_2^{-(d-2)}\big], &\text{if}\ d\geq 3.
\end{cases}
\end{equation*}
Thus, by the mean value theorem, it follows that
\begin{align*}
\left|Q_d(\xi_t-x)-Q_d(\xi_s-x)\right|=\frac{c_d|\eta_1-\eta_2|}{|\zeta\eta_1+(1-\zeta)\eta_2|^{d-1}},
\end{align*}
where $c_d$ is a constant coming from the Poisson kernel, and $\zeta\in(0,1)$ is a random number that depends on $\eta_1$ and $\eta_2$. Notice that $f(x)=x^{-(d-1)}$ is a convex function on $(0,\infty)$, and $\mathbb{P}(\eta_1>0)=\mathbb{P}(\eta_2>0)=1$, then we have
\[
|\zeta\eta_1+(1-\zeta)\eta_2|^{-(d-1)}\leq |\zeta\eta_1|^{-(d-1)}+|(1-\zeta)\eta_2|^{-(d-1)},\ a.s.
\]
Let $q=\frac{p_1}{p_1-1}$, then $\frac{1}{q}+\frac{1}{p_2}=\frac{1}{p_3}$. As a consequence of H\"{o}lder's inequality, we have
\begin{align}\label{mdqts0}
&\big\|\|Q_d(\xi_t-x)-Q_d(\xi_s-x)\|^W_{p_3}\big\|_{6p}\leq c_d\bigg\|\Big\|\frac{|\eta_1-\eta_2|}{|\zeta\eta_1+(1-\zeta)\eta_2|^{d-1}}\Big\|^W_{p_3}\bigg\|_{6p}\\
&\qquad\leq C \big\|\|\eta_1-\eta_2\|_{p_2}^W\big\|_{12p}\Big\|\left\|\zeta\eta_1+(1-\zeta)\eta_2|^{-(d-1)}\right\|^W_q\Big\|_{12p}\nonumber\\
&\qquad\leq  C \|\eta_1-\eta_2\|_{12pd}\Big[\Big\|\big\|\zeta\eta_1^{-(d-1)}\big\|^W_q\Big\|_{12p}+\Big\|\big\|(1-\zeta)\eta_2^{-(d-1)}\big\|^W_q\Big\|_{12p}\Big]\nonumber\\
&\qquad\leq  C \big\||\xi_t-\xi_s|\big\|_{12pd}\Big[\Big\|\big\||\xi_t-y|^{-(d-1)}\big\|^W_q\Big\|_{12p}+\Big\|\big\||\xi_s-y|^{-(d-1)}\big\|^W_q\Big\|_{12p}\Big].\nonumber
\end{align}
The negative moments of $\xi_t-y$ can be estimated by (\ref{ehxi}), Jensen's inequality, and Lemma \ref{blerk}:
\begin{align}\label{mdqts1}
\Big\|\big\||\xi_t-x|^{-(d-1)}\big\|^W_{q}\Big\|_{12p}\leq &C\max_{1\leq i\leq d}\big\|\big|\|H_{i}(\xi_t,1)\|_{p_1}^W\big|^{d-1}\big\|_{12p}\nonumber\\
\leq &C \max_{1\leq i\leq d}\left\|H_{(i)}(\xi_t, 1)\right\|_{12pd}^{d-1}
\leq C(t-r)^{-\frac{d-1}{2}}.
\end{align} 
Then, by (\ref{mdqts0}) - (\ref{mdqts1}), we have
\begin{align}\label{mdqts}
\big\|\|Q_d(\xi_t-x)-Q_d(\xi_s-x)\|^W_{p_3}\big\|_{6p}\leq C(t-s)^{\frac{1}{2}}(s-r)^{-\frac{d-1}{2}}.
\end{align}
Thus combining (\ref{medcdtv1p}), (\ref{mpbxis}), (\ref{h2xis}) and (\ref{mdqts}), we have
\begin{align*}
\|I_1\|_{2p}\leq C\exp\Big(-\frac{k|z-x|}{6pd(s-r)}\Big)(s-r)^{-\frac{d+1}{2}}(t-s)^{\frac{1}{2}}.
\end{align*}
This implies
\begin{align}\label{medcdtv1}
\int_{\mathbb{R}^d}\|I_1\|_{2p}dz\leq C(s-r)^{-\frac{1}{2}}(t-s)^{\frac{1}{2}}.
\end{align}

Estimates for $I_2$: Recall that $\gamma_t=(\langle D\xi^i, D\xi^j\rangle_H)_{i,j=1}^d=\sigma_t^{-1}$. By computation analogue to (\ref{dfrkip}) going backward, we can show that
\begin{align}\label{medcdtv2}
&\E^W\left[Q_d(\xi_t-x)\left(H_{(i,i)}(\xi_t,\phi_{\rho_1}^x(\xi_t))-H_{(i,i)}(\xi_s,\phi_{\rho_2}^x(\xi_s))\right)\right]\nonumber\\
=& -\sum_{j_1,j_2=1}^d\E^W\left[\partial_{j_2} Q_d(\xi_t-x)\langle D\xi_t^{j_2}, D\xi_t^{j_1}\rangle_H H_{(i)}\left(\xi_t,\phi_{\rho_1}^x(\xi_t)\right)\sigma_t^{j_1i}\right]\nonumber\\
&+\sum_{j_1,j_2=1}^d\E^W\left[\partial_{j_2}Q_d(\xi_t-x)\langle D\xi_t^{j_2}, D\xi_s^{j_1}\rangle_H H_{(i)}\left(\xi_s,\phi_{\rho_2}^x(\xi^{r,z}_s)\right)\sigma_s^{j_1i}\right]\nonumber\\
=&-\E^W\left[\partial_iQ_d(\xi_t-x) \left(H_{(i)}\left(\xi_t,\phi_{\rho_1}^x(\xi_t)\right)- H_{(i)}\left(\xi_s,\phi_{\rho_2}^x(\xi_s)\right)\right)\right]\nonumber\\
&+ \sum_{j_1,j_2=1}^d\E^W\left[\partial_{j_2} Q_d(\xi_t-x)\langle D\xi_t^{j_2}-D\xi_s^{j_2}, D\xi_s^{j_1}\rangle_H H_{(i)}\left(\xi_s,\phi^x_{\rho_2}(\xi_s)\right)\sigma_s^{j_1i}\right]\nonumber\\
:=&J_1+J_2.
\end{align}
By Lemma \ref{ipfpd}, we have
\begin{align}\label{dhtv}
&\left|H_{(i)}\left(\xi_t,\phi_{\rho_1}^x(\xi_t)\right)-H_{(i)}\left(\xi_s,\phi_{\rho_2}^x(\xi_s)\right)\right|\leq \left|\partial_i\phi_{\rho_1}^x(\xi_t)-\partial_i\phi_{\rho_2}^x(\xi_s)\right|\\
&\hspace{20mm} +|\phi_{\rho_2}^x(\xi_s)|\left|H_{(i)}(\xi_t,1)-H_{(i)}(\xi_s,1)\right|+\left|H_{(i)}(\xi_t,1)\right|\left|\phi_{\rho_1}^x(\xi_t)-\phi_{\rho_2}^x(\xi_s)\right|.\nonumber
\end{align}
By the mean value theorem, for some  random numbers $c_1,c_2\in(0,1)$, we have
\begin{align}\label{dphitv}
\left|\phi_{\rho_1}^x(\xi_t)-\phi_{\rho_2}^x(\xi_s)\right|=&\big|\1_{B(x, 4\rho_1)}(\xi_t)\vee \1_{B(x, 4\rho_2)}(\xi_s)\big|\Big|\phi\Big(\frac{\xi_t-x}{\rho_1}\Big)-\phi\Big(\frac{\xi_s-x}{\rho_2}\Big)\Big|\nonumber\\
=&\big|\1_{B(x, 4\rho_1)}(\xi_t)\vee \1_{B(x, 4\rho_2)}(\xi_s)\big|\nonumber\\
&\times\Big|\nabla\phi \Big(c_1\frac{\xi_t-x}{\rho_1}+(1-c_1)\frac{\xi_s-x}{\rho_2}\Big)^*\cdot\Big(\frac{\xi_t-x}{\rho_1}-\frac{\xi_s-x}{\rho_2}\Big)\Big|\nonumber\\
\leq &\big|\1_{B(x, 4\rho_1)}(\xi_t)\vee \1_{B(x, 4\rho_2)}(\xi_s)\big|\Big|\frac{\xi_t-x}{\rho_1}-\frac{\xi_s-x}{\rho_2}\Big|,
\end{align}
and
\begin{align}\label{ddphitv}
&\left|\partial_i\phi_{\rho_1}^x(\xi_t)-\partial_i\phi_{\rho_2}^x(\xi_s)\right|=\Big|\rho_1^{-1}\partial_i\phi\Big(\frac{\xi_t-x}{\rho_1}\Big)-\rho_2^{-1}\partial_i\phi\Big(\frac{\xi_s-x}{\rho_2}\Big)\Big|\\
&\quad\leq\frac{1}{\rho_1}\Big|\nabla\partial_i\phi \Big(c_2\frac{\xi_t-x}{\rho_1}+(1-c_2)\frac{\xi_s-x}{\rho_2}\Big)^*\cdot\Big(\frac{\xi_t-x}{\rho_1}-\frac{\xi_s-x}{\rho_2}\Big)\Big|\\
&\qquad+\Big|\partial_i\phi_{\rho_2}^x(\xi_s)\Big|\Big|\frac{1}{\rho_1}-\frac{1}{\rho_2}\Big|\nonumber\\
&\quad\leq \frac{1}{\rho_1}\big(\1_{B(x, 4\rho_1)}(\xi_t)\vee \1_{B(x, 4\rho_2)}(\xi_s)\big)\Big|\frac{\xi_t-x}{\rho_1}-\frac{\xi_s-x}{\rho_2}\Big|+\1_{B(x, 4\rho_2)}(\xi_s)\Big|\frac{1}{\rho_1}-\frac{1}{\rho_2}\Big|.\nonumber
\end{align}
Choose $q\in(d,3pd]$, let $p_1=\frac{q}{q-1}$, $p_2=2q$, $p_3=4q$. Then, 
\[
\frac{1}{p_1}+\frac{2}{p_2}=\frac{1}{p_1}+\frac{1}{p_2}+\frac{2}{p_3}=1.
\]
 Then, by (\ref{dhtv}) - (\ref{ddphitv}), and H\"{o}lder's inequality, we have
\begin{align}\label{medcdtv21}
\|J_1\|_{2p}\leq&\rho_1^{-1}\big\|\|\partial_iQ_d(\xi_t-x) \|_{p_1}^W\big\|_{6p}\Big\|\big\|\1_{B(x, 4\rho_1)}(\xi_t)\vee \1_{B(x, 4\rho_2)}(\xi_s)\big\|_{p_2}^W\Big\|_{6p}\nonumber\\
&\times \bigg\|\Big\|\Big|\frac{\xi_t-x}{\rho_1}-\frac{\xi_s-x}{\rho_2}\Big|\Big\|_{p_2}^W\bigg\|_{6p}\nonumber\\
&+\big\|\|\partial_iQ_d(\xi_t-x) \|_{p_1}^W\big\|_{6p}\Big\|\big\|\1_{B(x, 4\rho_2)}(\xi_s)\big\|_{p_2}^W\Big\|_{6p}\big\|\|\rho_1^{-1}-\rho_2^{-1}\|_{p_2}\big\|_{6p}\nonumber\\
&+\big\|\|\partial_iQ_d(\xi_t-x)\|_{p_1}^W\big\|_{6p}\Big\|\big\|\1_{B(x, 4\rho_2)}(\xi_s)\big\|_{p_2}^W\Big\|_{6p}\big\|\|H_{(i)}(\xi_t,1)-H_{(i)}(\xi_s,1)\|_{p_2}^W\big\|_{6p}\nonumber\\
&+\big\|\|\partial_iQ_d(\xi_t-x) \|_{p_1}^W\big\|_{6p}\Big\|\big\|\1_{B(x, 4\rho_1)}(\xi_t)\vee \1_{B(x, 4\rho_2)}(\xi_s)\big\|_{p_2}^W\Big\|_{6p}\nonumber\\
&\times \bigg\|\Big\|\Big|\frac{\xi_t-x}{\rho_1}-\frac{\xi_s-x}{\rho_2}\Big|\Big\|_{p_3}^W\bigg\|_{12p}\quad \big\|\|H_{(i)}(\xi_t,1)\|_{p_3}^W\big\|_{12p}\nonumber\\
:=&L_1+L_2+L_3+L_4.
\end{align}
In order to estimate the moments of $\frac{\xi_t-x}{\rho_1}-\frac{\xi_s-x}{\rho_2}$, we rewrite this random vector in the following way:
\[
\frac{\xi_t-x}{\rho_1}-\frac{\xi_s-x}{\rho_2}=\frac{\xi_t-\xi_s}{\rho_1}+\left(\xi_s-z\right)\Big(\frac{1}{\rho_1}-\frac{1}{\rho_2}\Big)+(z-x)\Big(\frac{1}{\rho_1}-\frac{1}{\rho_2}\Big).
\]
It follows that
\begin{align*}
&\Big\|\Big|\frac{\xi_t-x}{\rho_1}-\frac{\xi_s-x}{\rho_2}\Big|\Big\|_{12p\vee p_3}\leq(t-r)^{-\frac{1}{2}}\big\||\xi_t-\xi_s|\big\|_{12pd}\\
&\hspace{15mm}+\frac{(t-r)^{\frac{1}{2}}-(s-r)^{\frac{1}{2}}}{(t-r)^{\frac{1}{2}}(s-r)^{\frac{1}{2}}}\big\||\xi_s-z|\big\|_{12pd}+|z-x|\frac{(t-r)^{\frac{1}{2}}-(s-r)^{\frac{1}{2}}}{(t-r)^{\frac{1}{2}}(s-r)^{\frac{1}{2}}}.
\end{align*}
According to Lemma \ref{lxigrv}, $\xi_t-\xi_s$ and $\xi_s-z$ are Gaussian random vectors with mean $0$, and covariance matrix $(t-s)(I+\rho(0))$ and $(s-r)(I+\rho(0))$ respectively. Therefore, we have
\begin{align}\label{mdgxi}
\Big\|\Big|\frac{\xi_t-x}{\rho_1}-\frac{\xi_s-x}{\rho_2}\Big|\Big\|_{12pd}\leq &c_{p,d}(t-r)^{-\frac{1}{2}}(t-s)^{\frac{1}{2}}+c_{p,d}\frac{(t-r)^{\frac{1}{2}}-(s-r)^{\frac{1}{2}}}{(t-r)^{\frac{1}{2}}(s-r)^{\frac{1}{2}}}(s-r)^{\frac{1}{2}}\nonumber\\
&+|z-x|\frac{(t-r)^{\frac{1}{2}}-(s-r)^{\frac{1}{2}}}{(t-r)^{\frac{1}{2}}(s-r)^{\frac{1}{2}}}\nonumber\\
\leq & C\big(|z-x|(s-r)^{-\frac{1}{2}}+1\big)(t-r)^{-\frac{1}{2}}(t-s)^{\frac{1}{2}}
\end{align}
Therefore, by (\ref{mdgxi}), Proposition \ref{cmijt} and Lemma \ref{blerk}, we have
\begin{align}\label{medcdtv211}
L_1+L_4\leq &C(t-r)^{-\frac{d}{2}}\Big[\exp\Big(-\frac{k|z-x|^2}{6pd(t-r)}\Big)+\exp\Big(-\frac{k|z-x|^2}{6pd(s-r)}\Big)\Big]\nonumber\\
&\times\big(1+|z-x|(s-r)^{-\frac{1}{2}}\big)(t-s)^{\frac{1}{2}},
\end{align}
and
\begin{align}\label{medcdtv212}
L_2+L_3\leq &C(t-r)^{-\frac{d}{2}}\exp\Big(-\frac{k|z-x|^2}{6pd(s-r)}\Big)(s-r)^{-\frac{1}{2}}(t-s)^{\frac{1}{2}}.
\end{align}
Plugging (\ref{medcdtv211}) and (\ref{medcdtv212}) into (\ref{medcdtv21}),  we have
\begin{align}\label{medcdtv210}
\int_{\mathbb{R}^d}\left\|J_1\right\|_{2p}dz\leq C (s-r)^{-\frac{1}{2}}(t-s)^{\frac{1}{2}}.
\end{align}
For $J_2$, notice that, by definition,
\begin{align*}
\langle D\xi_t^{j_2}-D\xi_s^{j_2}, D\xi_s^{j_1}\rangle_H=\sum_{k=1}^d\int_r^s \big(D^{(k)}_{\theta}\xi_t^{j_2}-D^{(k)}_{\theta}\xi_s^{j_2}\big)D^{(k)}_{\theta}\xi_s^{j_1}d\theta.
\end{align*}
By (\ref{sdexi1}), we have
\begin{align*}
D^{(k)}_{\theta}\xi_t^{j_2}-D^{(k)}_{\theta}\xi_s^{j_2}=\1_{[s,t]}(\theta)\delta_{j_2k}-\sum_{i=1}^d\1_{[r,t]}(\theta)\int_s^t D^{(k)}_{\theta}\xi_r^i dM^{ij_2}_r.
\end{align*}
By a argument similar to the one used in the proof of Lemma \ref{elgamma}, we can show that
\begin{align*}
\big\|\1_{[r,s]}(\theta)\big(D^{(k)}_{\theta}\xi_t^{j_2}-D^{(k)}_{\theta}\xi_s^{j_2}\big)\big\|_{2p}^2\leq C\1_{[r,s]}(\theta)(t-s).
\end{align*}
Therefore, by H\"{o}lder's and Minkowski's inequalities, we have
\begin{align}\label{mipdtv}
\left\|\langle D\xi_t^{j_2}-D\xi_s^{j_2}, D\xi_s^{j_1}\rangle_H\right\|_{2p}\leq &\sum_{k=1}^d\int_r^s \big\|\1_{[r,s]}(\theta)\big(D^{(k)}_{\theta}\xi_t^{j_2}-D^{(k)}_{\theta}\xi_s^{j_2}\big)\big\|_{4p}\big\|D^{(k)}_{\theta}\xi_s^{j_1}\big\|_{4p}d\theta\nonumber\\
\leq &C(s-r)(t-s)^{\frac{1}{2}}.
\end{align}
Choose $q\in(d,3pd]$. Let $p_1=\frac{q}{q-1}$, $p_2=2q$ and $p_3=6q$. Then $\frac{1}{p_1}+\frac{1}{p_2}+\frac{3}{p_3}=1$. Thus, by (\ref{mipdtv}), H\"{o}lder's inequality, Lemmas \ref{elgamma}, \ref{ehn}, \ref{blerk}, and Proposition \ref{cmijt},  we have
\begin{align*}
\left\|J_2\right\|_{2p}\leq &\sum_{j_1,j_2=1}^d\big\|\|\1_{B(x, 4\rho_2)}(\xi_s)\|^W_{p_2}\big\|_{6p}\big\|\|\partial_{j_2} Q_d(\xi_t-x)\|^W_{p_1}\big\|_{6p}\nonumber\\
&\times\big\|\|\langle D\xi_t^{j_2}-D\xi_s^{j_2}, D\xi_s^{j_1}\rangle_H \|^W_{p_3}\big\|_{18p}\big\|\|H_{(i)}(\xi_s,\phi_{\rho_2}^y(\xi_s))\|^W_{p_3}\big\|_{18p}\big\|\|\sigma_s^{j_1i}\|^W_{p_3}\big\|_{18p}\nonumber\\
\leq &C\exp\Big(-\frac{k|z-x|^2}{6pd(s-r)}\Big)(t-r)^{-\frac{d-1}{2}}(t-s)^{\frac{1}{2}}(s-r)^{-\frac{1}{2}}.
\end{align*}
As a consequence, we have
\begin{align}\label{medcdtv220}
\int_{\mathbb{R}^d}\left\|J_2\right\|_{2p}dz\leq C (t-s)^{\frac{1}{2}}.
\end{align}

Finally, combining (\ref{medcdtv1}), (\ref{medcdtv210}) and (\ref{medcdtv220}), we have
\begin{align}\label{idtmpd1}
\int_{\mathbb{R}^d}\left\|p^W(r,z;t,x)-p^W(r,z;s,x)\right\|_{2p}dz\leq C (s-r)^{-\frac{1}{2}}(t-s)^{\frac{1}{2}}.
\end{align}

On the other hand, by (\ref{edn}), we have
\begin{align}\label{idtmpd2}
\int_{\mathbb{R}^d}\|I_2\|_{2p}dz\leq \int_{\mathbb{R}^d}\|p^W(r,z;t,y)\|_{2p}+\|p^W(r,z;s,y)\|_{2p}\leq C.
\end{align}
Thus (\ref{idtmpd}) follows from (\ref{idtmpd1}) and (\ref{idtmpd2}).
\end{proof}

\begin{proof}[Proof of Proposition \ref{phcs}]
By the convolution representation (\ref{crd}), Burkholder-Davis-Gundy's, and Minkowski's inequalities, we have
\begin{align}\label{mpddm}
&\left\|u_t(y)-u_t(x)\right\|_{2p}\leq \Big\|\int_{\mathbb{R}^d}\mu(z)\left(p^W(0,z;t, y)-p^W(0,z;t,x)\right)dz\Big\|_{2p}\nonumber\\
&\qquad +\Big\|\int_0^t\int_{\mathbb{R}^d}u_r(z)\left(p^W(r,z;t,y)-p^W(r,z;t,x)\right)V(dz,dr)\Big\|_{2p}\nonumber\\
&\quad\leq\left\|\mu\right\|_{\infty}\int_{\mathbb{R}^d}\left\|p^W(0,z;t,y)-p^W(0,z;t,x)\right\|_{2p}dz\nonumber\\
&\qquad+\|\kappa\|_{\infty}^{\frac{1}{2}}\bigg(\int_0^t\Big(\int_{\mathbb{R}^d}\left\|u_r(z)\left(p^W(r,z;t,y)-p^W(r,z;t,x)\right)\right\|_{2p}dz\Big)^2dr\bigg)^{\frac{1}{2}}\nonumber\\
&\quad:=I_1+\|\kappa\|^{\frac{1}{2}}_{\infty}I_2.
\end{align}
Note that $I_1$ can be estimated by Lemma \ref{lmdtpirl}. For $I_2$, recall that $u(r,z)$ is independent of $p^W(r,z;t,y)$. Then, by Lemma \ref{eumcr} and \ref{lmdtpirl}, we have
\begin{align}\label{mpddm2}
I_2\leq&\bigg(\int_0^t\sup_{z\in\mathbb{R}^d}\left\|u_r(z)\right\|_{2p}^2\Big(\int_{\mathbb{R}^d}\left\|p^W(r,z;t,y)-p^W(r,z;t,x)\right\|_{2p}dz\Big)^2dr\bigg)^{\frac{1}{2}}\nonumber\\
\leq&C|y-x|^{\beta}\Big(\int_0^t(t-r)^{-\beta}dr\Big)^{\frac{1}{2}}\leq \frac{Ct^{\frac{1-\beta}{2}}}{\sqrt{1-\beta}}|y-x|^{\beta}.
\end{align}
Therefore (\ref{mpdhcsv}) follows from (\ref{mdtpirl}), (\ref{mpddm}) and (\ref{mpddm2}).

The proof of  (\ref{mpdhctv}) is quite similar. As did in (\ref{mpddm}), we can show that
\begin{align*}
&\left\|u_t(x)-u_s(x)\right\|_{2p}\leq \|\mu\|_{\infty}\int_{\mathbb{R}^d} \left\|p^W(0,z;t,x)-p^W(0,z;s,x)\right\|_{2p}dz\\
&\qquad+C\|\kappa\|_{\infty}^{\frac{1}{2}}\bigg[\int_s^t\sup_{z\in\mathbb{R}^d}\left\|u_r(z)\right\|_{2p}^2\Big(\int_{\mathbb{R}^d}\left\|p^W(r,z;t,x)\right\|_{2p}dz\Big)^2dr\bigg]^{\frac{1}{2}}\\
&\qquad+C\left\|\kappa\right\|_{\infty}^{\frac{1}{2}}\bigg[\int_0^s\sup_{z\in\mathbb{R}^d}\left\|u_r(z)\right\|_{2p}^2\Big(\int_{\mathbb{R}^d}\left\|\left(p^W(r,z;t,x)-p^W(r,z;s,x)\right)\right\|_{2p}dz\Big)^2dr\bigg]^{\frac{1}{2}}.
\end{align*}
Then, the estimate (\ref{mpdhctv}) follows from (\ref{idtmpd}), Proposition \ref{mectpd} and Lemma \ref{eumcr}.
\end{proof}

\begin{appendix} 

\section{Basic introduction on Malliavin calculus}

In this section, we present some preliminaries on the Malliavin calculus. We refer the readers to book of Nualart \cite{springer-06-nualart} for a detailed account on this topic.

Fix a time interval $[0,T]$. Let $B=\{B_t^1,\dots, B_t^d, 0\leq t\leq T\}$ be a standard $d$-dimensional Brownian motion on $[0,T]$. Denote by $\mathcal{S}$ the class of smooth random variables of the form
\begin{align}   \label{eq1}
G=g\left(B_{t_1}, \dots, B_{t_m}\right)=g\left(B_{t_1}^1,\dots,B_{t_1}^d, \dots,B_{t_m}^1,\dots,B_{t_m}^d\right),
\end{align}
where $m$ is any positive integer, $0\leq t_1<\dots<t_m\leq T$, and $g: \mathbb{R}^{md}\to \mathbb{R}$ is a smooth function that has all partial derivatives with at most polynomial growth. We make use of the notation $x=\left(x_i^k\right)_{1\le i\le m, 1\le k\le d}$ for any element $x\in  \R^{md}$.
The basic Hilbert space associated with  $B$ is  $H=L^2 \left([0,T]; \R^d\right)$.

\begin{definition}
For any $G\in\mathcal{S}$ given by (\ref{eq1}), the Malliavin derivative,  is the $H$-valued random variable $DG$  given by
\begin{align*}
D_{\theta}^{(k)} G=\sum_{i=1}^m \frac{\partial g}{\partial x_i^k}\left(B_{t_1}, \dots, B_{t_m}\right) \mathbf{1}_{[0,t_i]}(\theta), \quad 1\le k \le d,  \,\, \theta \in [0,T].
\end{align*}
\end{definition}
In the same way, for any $n\geq 1$, the iterated derivative  $D^n G$  of a random variable of the form (\ref{eq1}) is  a random variable with values in $H^{\otimes n}=L^2\left([0,T]^n; \R^{d^n}\right)$.
For each $p\ge 1$, the  iterated derivative  $D^n$ is a closable and unbounded operator  on $L^p(\Omega)$ taking values in $L^p(\Omega; H^{\otimes n})$.   For any $n\ge 1$, $p\ge 1$ and any Hilbert space $V$, we can introduce the Sobolev space $\mathbb{D}^{n,p}(V) $ of $V$-valued random variables as the closure of $\mathcal{S}$ with respect to the norm
\begin{align*}
\| G\|_{n,p,V}^2 =& \|G\|_{L^p(\Omega; V)}^2 + \sum_{k=1}^n    \|D^k G\|_{L^p(\Omega; H^{\otimes k} \otimes V)}^2\\
=&\big[\E\big(\|G\|_V^p\big)\big]^{\frac{2}{p}} + \sum_{k=1}^n   \big[\E \big(\|D^k G\|_{H^{\otimes k}\otimes V}^p\big)\big]^{\frac{2}{p}}.
\end{align*}

By definition, the divergence operator $\delta$ is the adjoint operator of $D$ in $L^2(\Omega)$. More precisely, $\delta$ is an unbounded operator on $L^2\left(\Omega; H\right)$, taking values in $L^2(\Omega)$. We denote by $\mathrm{Dom}(\delta)$ the domain of $\delta$. Then, for any $u=(u^1,\dots,u^d)\in \mathrm{Dom}(\delta)$, $\delta(u)$ is characterized by the duality relationship: for all for all $G\in \mathbb{D}^{1,2}=\D^{1,2}(\R)$.
\begin{align}
\E\left(\delta (u)G\right)= \E\left(\left\langle D G, u\right\rangle_H\right).
\end{align}

Let $F$ be an $n$-dimensional random vector, with components $F^i\in \mathbb{D}^{1,1}, 1\leq i\leq n$. We associate to $F$ an $n\times n$ random symmetric nonnegative definite matrix,  called the Malliavin matrix of $F$, denoted by $\gamma_F$. The entries of $\gamma_F$ are defined by
\begin{align}
\gamma^{ij}_F= \left\langle D F^i, D F^j\right\rangle_H=\sum_{k=1}^d\int_0^T D^{(k)}_{\theta}F^iD^{(k)}_{\theta}F^j d \theta.
\end{align}

Suppose that $F\in\cap_{p\geq 1}\mathbb{D}^{2,p}(\R^n)$, and its Malliavin matrix $\gamma_F$ is invertible. Denote by $\sigma_F$ the inverse of $\gamma_F$. Assume that $\sigma_F^{ij}\in \cap_{p\geq 1}\mathbb{D}^{1,p}$ for all $1\leq i,j\leq n$. Let $G\in\cap_{p\geq 1}\D^{1,2}$. Then $G\sigma_F^{ij}DF^k\in \mathrm{Dom}(\delta)$ for all $1\leq i,j,k\leq n$. Under the hypotheses, we define
\begin{align}\label{hfphif}
H_{(i)}(F,G)=-\sum_{j=1}^n\delta\left(G\sigma_F^{ji}DF^j\right),\quad 1\leq i\leq n.
\end{align}
If furthermore $H_{(i)}(F,G)\in\cap_{p\geq 1}\D^{1,p}$ for all $1\leq i\leq n$, then we define
\begin{align}\label{hhfphif}
H_{(i,j)}(F,G)=H_{(j)}\left(F,H_{(i)}(F, G)\right), \quad 1\leq i,j\leq n.
\end{align}

The following lemma is a Wiener functional version of Lemma 9 of Bally and Caramellino \cite{spa-11-bally-caramellino}.

\begin{lemma}\label{ipfpd}
Suppose that $F\in\cap_{p\geq 1}\mathbb{D}^{2,p}(\R^n)$, $(\gamma_F^{-1})^{ij}=\sigma_F^{ij}\in \cap_{p\geq 1}\mathbb{D}^{2,p}$ for all $1\leq i,j\leq n$, and $\phi\in C_b^1(\mathbb{R}^n)$. Then, for any $1\leq i\leq n$, we have
\begin{align}\label{ipfpd1}
H_{(i)}\left(F,\phi(F)\right)=&\partial_i\phi(F)+\phi(F)H_{(i)}(F,1).
\end{align}
Suppose that $F\in\cap_{p\geq 1}\mathbb{D}^{3,p}(\R^n)$ and $\phi\in C_b^2(\mathbb{R}^n)$. Then, for any $1\leq i,j\leq n$, we have
\begin{align}\label{ipfpd2}
H_{(i,j)}&\left(F,\phi(F)\right)=\partial_{ij}\phi(F)+\partial_i\phi(F)H_{(j)}(F,1)\nonumber\\
&\quad +\partial_{j}\phi(F) H_{(i)}(F,1)+\phi(F) H_{(i,j)}(F,1).
\end{align}
\end{lemma}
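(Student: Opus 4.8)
The plan is to reduce both identities to the defining relation $H_{(i)}(F,G)=-\sum_{j=1}^{n}\delta(G\sigma_F^{ji}DF^j)$ together with two elementary facts from Malliavin calculus: the product rule for the divergence, $\delta(Gu)=G\delta(u)-\langle DG,u\rangle_H$, valid when $G\in\mathbb{D}^{1,2}$ and both $u$ and $Gu$ lie in $\mathrm{Dom}(\delta)$ (see e.g. Nualart \cite{springer-06-nualart}); and the chain rule $D(\psi(F))=\sum_{k=1}^{n}\partial_k\psi(F)\,DF^k$ for $\psi\in C_b^1$. Before applying these I would record that under the stated hypotheses all the random variables involved have the needed regularity: since $\phi\in C_b^1$ and $F\in\cap_{p\geq1}\mathbb{D}^{2,p}(\R^n)$ we have $\phi(F),\partial_i\phi(F)\in\cap_{p\geq1}\mathbb{D}^{1,p}$, and the vector fields $\phi(F)\sigma_F^{ji}DF^j$, $\partial_i\phi(F)\sigma_F^{kj}DF^k$, etc., belong to $\mathrm{Dom}(\delta)$ by Meyer's inequalities using $\sigma_F^{ij}\in\cap_{p\geq1}\mathbb{D}^{2,p}$ and $DF^j\in\cap_{p\geq1}\mathbb{D}^{2,p}$. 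This bookkeeping is the only place the integrability assumptions are really used, and I expect it to be the most delicate (though routine) part of the argument.

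For \eqref{ipfpd1}, write $H_{(i)}(F,\phi(F))=-\sum_{j}\delta(\phi(F)\sigma_F^{ji}DF^j)$ and apply the divergence product rule to each summand:
\[
\delta(\phi(F)\sigma_F^{ji}DF^j)=\phi(F)\,\delta(\sigma_F^{ji}DF^j)-\langle D(\phi(F)),\sigma_F^{ji}DF^j\rangle_H.
\]
Summing over $j$ with the minus sign yields $\phi(F)H_{(i)}(F,1)+\sum_{j}\langle D\phi(F),\sigma_F^{ji}DF^j\rangle_H$. By the chain rule $\sum_{j}\langle D\phi(F),\sigma_F^{ji}DF^j\rangle_H=\sum_{j,k}\partial_k\phi(F)\,\sigma_F^{ji}\gamma_F^{kj}$, and since $\gamma_F\sigma_F=I$ almost surely the sum over $j$ collapses to $\delta_{ki}$, leaving $\partial_i\phi(F)$. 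This proves the first identity.

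For \eqref{ipfpd2} I would first isolate a product rule for $H_{(j)}$: for $G\in\cap_{p\geq1}\mathbb{D}^{1,p}$ and $\psi\in C_b^1$, the same computation (divergence product rule, chain rule, and $\gamma_F\sigma_F=I$) gives $H_{(j)}(F,G\psi(F))=\psi(F)H_{(j)}(F,G)+G\,\partial_j\psi(F)$. Now by definition $H_{(i,j)}(F,\phi(F))=H_{(j)}(F,H_{(i)}(F,\phi(F)))$; substituting \eqref{ipfpd1} and using linearity of $G\mapsto H_{(j)}(F,G)$ gives $H_{(j)}(F,\partial_i\phi(F))+H_{(j)}(F,\phi(F)H_{(i)}(F,1))$. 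The first term equals $\partial_{ij}\phi(F)+\partial_i\phi(F)H_{(j)}(F,1)$ by \eqref{ipfpd1} applied to $\partial_i\phi\in C_b^1$ (here $\phi\in C_b^2$ is needed), and the second equals $\phi(F)H_{(j)}(F,H_{(i)}(F,1))+H_{(i)}(F,1)\partial_j\phi(F)=\phi(F)H_{(i,j)}(F,1)+\partial_j\phi(F)H_{(i)}(F,1)$ by the product rule for $H_{(j)}$ and the definition of $H_{(i,j)}(F,1)$. Adding the two gives exactly \eqref{ipfpd2}. The main obstacle, as already noted, is not the algebra but checking at each step that the objects to which $\delta$ and the product rule are applied meet the hypotheses of those results; the assumption $F\in\cap_{p\geq1}\mathbb{D}^{3,p}(\R^n)$ in the second part is precisely what guarantees $H_{(i)}(F,1)\in\cap_{p\geq1}\mathbb{D}^{1,p}$, so that $H_{(i,j)}$ is well defined and the product rule for $H_{(j)}$ applies to it.
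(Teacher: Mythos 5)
Your proposal is correct and follows essentially the same route as the paper: the paper establishes \eqref{ipfpd1} by testing $H_{(i)}(F,\phi(F))$ against an arbitrary $G\in\mathbb{D}^{1,2}$ via the duality $\E(\delta(u)G)=\E(\langle DG,u\rangle_H)$, which is just the weak form of the product rule $\delta(\phi(F)u)=\phi(F)\delta(u)-\langle D\phi(F),u\rangle_H$ that you apply directly, and in both cases the identity collapses through the chain rule and $\gamma_F\sigma_F=I$. Your detailed derivation of \eqref{ipfpd2} via the product rule for $H_{(j)}$ is a correct and complete filling-in of what the paper dismisses with ``can be proved similarly.''
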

\begin{proof}
For any $F\in\cap_{p\geq 1}\mathbb{D}^{2,p}(\R^n)$ and $\phi\in C_b^1(\mathbb{R}^n)$, it is easy to check that $\phi(F)\in\cap_{p\geq 1}\D^{1,p}$. Then, $H_{(i)}(F,\phi (F))$ is well defined. For any $G\in\mathbb{D}^{1,2}$, by the duality of $D$ and $\delta$, we have
\begin{align}\label{edphigmdg}
\E\left(H_{(i)}\left(F,\phi(F)\right)G\right)=&-\sum_{j=1}^n\E\left(\delta\left(\phi(F)\sigma_F^{ji}DF^{j}\right)G\right)\nonumber\\
=&-\sum_{j=1}^n\E\left(\phi(F)\sigma_F^{ji}\left\langle DF^{j}, DG\right\rangle_H\right).
\end{align}
On the other hand, by the product rule for the operator $D$, we have
\begin{align*}
&\E\left(\phi(F)H_{(i)}(F,1)G\right)=-\sum_{j=1}^m\E\left(\left\langle\sigma_F^{ji}DF^{j},D\left(\phi(F)G\right)\right\rangle_H\right)\nonumber\\
&\quad=-\sum_{j=1}^m\E\left(\phi(F)\sigma_F^{ji}\left\langle DF^{j},DG\right\rangle_H\right)\nonumber-\sum_{j_1,j_2=1}^m\E\left(  G\partial_{j_2}\phi(F)\sigma_F^{j_1i}\left\langle DF^{j_1}, DF^{j_2}\right\rangle_H\right).
\end{align*}
Note that $\sigma_F$ is the inverse of $\gamma_F=\big(\langle DF^{i}, DF^{j}\rangle_H\big)_{i,j=1}^n$, then
\begin{align}\label{amie}
\sum_{j_1,j_2=1}^m\E\left(G\partial_{j_2}\phi(F)\sigma_F^{j_1i}\left\langle DF^{j_1}, DF^{j_2}\right\rangle_H\right)=\E\left(G\partial_i\phi(F)\right).
\end{align}
Then, (\ref{ipfpd1}) follows from (\ref{edphigmdg}) - (\ref{amie}). The equality (\ref{ipfpd2}) can be proved similarly.
\end{proof}

The next theorem is a density formula using the Riesz transformation. The formula was first introduced by Malliavin and Thalmaier (see Theorem 4.23 of \cite{springer-06-malliavin-thalmaier}), then further studied by Bally and Caramenillo \cite{spa-11-bally-caramellino}.

For any integer $n\geq 2$, let $Q_n$ be the $n$-dimensional Poisson kernel. That is,
\begin{equation}\label{posnkn}
Q_n(x)=
\begin{cases}
A_2^{-1}\log |x|, &n=1,\\
-A_n^{-1}|x|^{2-n}, &n>2,
\end{cases}
\end{equation}
 where $A_n$ is the area of the unit sphere in $\mathbb{R}^n$. Then, $\partial_i Q_n(x)=c_nx_i\left|x\right|^{-n}$, where $c_2=A_2^{-1}$ and $c_n=(\frac{n}{2}-1)A_n^{-1}$ for $n>2$.

The theorem below is the density formula for a class of differentiable random variables. 

\begin{theorem}\label{bedf}(Proposition 10 of  Bally and Caramenillo \cite{spa-11-bally-caramellino})
Let $F\in\cap_{p\geq 1}\mathbb{D}^{2,p}(\R^n)$. Assume that $(\gamma_F^{-1})^{ij}=\sigma_F^{ij}\in\cap_{p\geq 1}\D^{1,p}$  for all $1\leq i,j\leq n$. Then, the law of $F$ has a density $p_F$. 

More precisely, for any $x\in \R^n$ and $r>0$, let $B(x,r)$ be the sphere on $\R^n$ centered at $x$ with radius $r$. Suppose that $\phi\in C^1_b(\R^d)$, such that $\1_{B(0,1)}\leq \phi\leq \1_{B(0,2)}$, and $|\nabla \phi|\leq 1$. Define $\phi_{\rho}^x:= \phi(\frac{\cdot-x}{\rho})$ for any $\rho>0$ and $x\in\mathbb{R}^n$. Then,
\begin{align}\label{dfrk}
p_F(x)=&\sum_{i=1}^n\E\big( \partial_i Q_n (F-x)H_{(i)}(F,1)\big)\nonumber\\
=&\sum_{i=1}^n\E\big( \partial_i Q_n (F-x)H_{(i)}(F,\phi_{\rho}^x(F))\big)\nonumber\\
=&\sum_{i=1}^n\E\big( \1_{B_{(x,2\rho)}}(F)\partial_i Q_n (F-x)H_{(i)}(F,\phi_{\rho}^x(F))\big).
\end{align}
\end{theorem}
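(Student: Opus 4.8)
\medskip
\noindent The plan is to follow the Malliavin--Thalmaier / Bally--Caramellino route. Morally, $p_F(x)=\E[\delta_0(F-x)]$ and $\delta_0$ is, up to the normalization of $Q_n$, the Laplacian of $Q_n$, so $\delta_0(F-x)=\sum_i\partial_i(\partial_iQ_n)(F-x)$ and one wants to move the outer derivative onto the Gaussian structure via the integration by parts behind $H_{(i)}$. The recurring difficulty is that $\partial_iQ_n(z)=c_nz_i|z|^{-n}$ is singular at $z=0$, so $\partial_iQ_n(F-x)$ is \emph{not} in $\mathbb{D}^{1,2}$; this is the main obstacle, and it forces a detour: one first transfers the derivatives using the distributional identity convolved with a test function (no Wiener-space derivative of $Q_n$ being taken), and only later, when the remaining pieces are supported away from $x$, does one use the duality $\E[\Phi\,\delta(u)]=\E[\langle D\Phi,u\rangle_H]$ on a smoothly truncated version of $\partial_iQ_n(\cdot-x)$.

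First I would record the ingredients that make all manipulations legitimate. On the Wiener side, $F\in\cap_p\mathbb{D}^{2,p}(\mathbb{R}^n)$ and $\sigma_F^{ij}\in\cap_p\mathbb{D}^{1,p}$ together with Meyer's inequalities give $H_{(i)}(F,1)=-\sum_j\delta(\sigma_F^{ji}DF^j)\in\cap_pL^p(\Omega)$, and likewise $H_{(i)}(F,\phi_\rho^x(F))\in\cap_pL^p(\Omega)$. On the kernel side, $\partial_iQ_n$ is locally integrable on $\mathbb{R}^n$ and tends to $0$ at infinity, so for every bounded Borel set $B$ one has $\sup_{a\in\mathbb{R}^n}\int_B|\partial_iQ_n(a-y)|\,dy<\infty$, and for any $\psi\in C_c^\infty(\mathbb{R}^n)$ the convolution $(\partial_iQ_n)*\psi$ lies in $C_b^\infty(\mathbb{R}^n)$ (the derivatives falling on $\psi$). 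Taking expectations in \eqref{ipfpd1} with $G=1$ and using $\E[H_{(i)}(F,\phi(F))]=0$ yields the scalar rule $\E[\phi(F)H_{(i)}(F,1)]=-\E[\partial_i\phi(F)]$ for every $\phi\in C_b^1(\mathbb{R}^n)$.

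Next I would prove the existence of the density together with the first line of \eqref{dfrk}. For $\psi\in C_c^\infty(\mathbb{R}^n)$, the fact that $Q_n$ is a fundamental solution of the Laplacian gives $\sum_i\partial_i\left((\partial_iQ_n)*\psi\right)=-\psi$, the sign being fixed by the normalization \eqref{posnkn}, so applying the scalar rule with $\phi=(\partial_iQ_n)*\psi\in C_b^1$ and summing over $i$ gives $\E[\psi(F)]=\sum_i\E\left[\left((\partial_iQ_n)*\psi\right)(F)\,H_{(i)}(F,1)\right]$. Writing out the convolution and interchanging expectation and $dz$-integration, which is justified by the two integrability facts above with $B\supset\operatorname{supp}\psi$, this becomes $\E[\psi(F)]=\int_{\mathbb{R}^n}\psi(z)\,g(z)\,dz$ with $g(z):=\sum_i\E\left[\partial_iQ_n(F-z)\,H_{(i)}(F,1)\right]$, which is finite for a.e.\ $z$ and lies in $L^1_{\mathrm{loc}}(\mathbb{R}^n)$. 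Since this holds for all test $\psi$ and the law $\mu_F$ of $F$ is a finite measure, $\mu_F(dz)=g(z)\,dz$; hence $\mu_F$ is absolutely continuous, $g\ge0$ a.e., $g\in L^1$, and $p_F:=g$ is its density.

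Finally I would pass from the weight $1$ to $\phi_\rho^x(F)$. By linearity of $\delta$, hence of $G\mapsto H_{(i)}(F,G)$, one has $H_{(i)}(F,1)=H_{(i)}(F,\phi_\rho^x(F))+H_{(i)}(F,(1-\phi_\rho^x)(F))$, so it suffices to show, for a.e.\ $x$, that $\sum_i\E[\partial_iQ_n(F-x)\,H_{(i)}(F,(1-\phi_\rho^x)(F))]=0$. Because $1-\phi_\rho^x\equiv0$ on $B(x,\rho)$, the integrand $(1-\phi_\rho^x)(F)\sigma_F^{ji}DF^j$ vanishes a.s.\ on $\{|F-x|<\rho\}$, so by the local property of $\delta$ (Proposition 1.3.15 of Nualart \cite{springer-06-nualart}) $H_{(i)}(F,(1-\phi_\rho^x)(F))=0$ a.s.\ there. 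Introduce the smaller cutoff $\phi_{\rho/3}^x$ and set $a_i(y):=(1-\phi_{\rho/3}^x(y))\,\partial_iQ_n(y-x)$; this is in $C_b^1(\mathbb{R}^n)$ since the factor $1-\phi_{\rho/3}^x$ annihilates the singularity of $\partial_iQ_n(\cdot-x)$, while $a_i(y)=\partial_iQ_n(y-x)$ for $|y-x|\ge\rho$. Hence $\partial_iQ_n(F-x)\,H_{(i)}(F,(1-\phi_\rho^x)(F))=a_i(F)\,H_{(i)}(F,(1-\phi_\rho^x)(F))$ a.s., and now the duality formula does apply because $a_i(F)\in\mathbb{D}^{1,2}$; using $\sum_j\gamma_F^{kj}\sigma_F^{ji}=\delta_{ki}$ it collapses to $\sum_i\E[a_i(F)H_{(i)}(F,(1-\phi_\rho^x)(F))]=-\E\left[(1-\phi_\rho^x)(F)\sum_i(\partial_ia_i)(F)\right]$. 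A direct computation gives $\sum_i\partial_ia_i(y)=-\sum_i(\partial_i\phi_{\rho/3}^x)(y)\,\partial_iQ_n(y-x)+(1-\phi_{\rho/3}^x(y))(\Delta Q_n)(y-x)$; the second term vanishes identically in the classical sense, since it is $0$ for $y\ne x$ and its coefficient $1-\phi_{\rho/3}^x$ is $0$ near $x$, and the first term is supported in $\{|y-x|\le2\rho/3\}$, which is disjoint from the support $\{|y-x|\ge\rho\}$ of $1-\phi_\rho^x$. Thus the product vanishes and the sum is $0$, giving the second equality in \eqref{dfrk}. The third equality is then immediate: since $\phi_\rho^x\equiv0$ off $B(x,2\rho)$, the same local property of $\delta$ shows $H_{(i)}(F,\phi_\rho^x(F))=\1_{B(x,2\rho)}(F)\,H_{(i)}(F,\phi_\rho^x(F))$ a.s. I expect this last paragraph --- the local-property-plus-truncation device that circumvents $\partial_iQ_n(F-x)\notin\mathbb{D}^{1,2}$ --- to be the technically subtle step, the rest being routine bookkeeping of signs, normalizing constants, and Fubini.
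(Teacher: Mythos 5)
The paper itself gives no proof of Theorem \ref{bedf}: it is quoted as Proposition 10 of Bally and Caramellino \cite{spa-11-bally-caramellino}, so there is no internal argument to compare yours against. Judged on its own terms, your proof is precisely the Malliavin--Thalmaier/Bally--Caramellino argument, and its two substantive steps are sound: (a) the existence step, where you test against $\psi\in C_c^\infty(\R^n)$, apply the scalar rule $\E[\phi(F)H_{(i)}(F,1)]=-\E[\partial_i\phi(F)]$ (a correct consequence of \eqref{ipfpd1} and $\E[\delta(u)]=0$) to $\phi=(\partial_iQ_n)*\psi$, and use the uniform local integrability of $\partial_iQ_n$ to justify Fubini and identify the law of $F$ with $g(z)\,dz$; and (b) the localization step, which is exactly the right device: the local property of $\delta$ kills $H_{(i)}(F,(1-\phi_\rho^x)(F))$ on $\{|F-x|<\rho\}$, the truncated kernel $a_i=(1-\phi_{\rho/3}^x)\,\partial_iQ_n(\cdot-x)\in C_b^1(\R^n)$ legitimately replaces the singular kernel on the complementary event, the duality then yields $-\E[(1-\phi_\rho^x)(F)\sum_i\partial_ia_i(F)]$, and harmonicity of $Q_n$ away from the origin together with the disjointness of $\mathrm{supp}\,\nabla\phi_{\rho/3}^x$ and $\mathrm{supp}\,(1-\phi_\rho^x)$ makes this vanish; the third equality via locality is likewise fine.

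One correction, of bookkeeping type rather than a conceptual gap: the identity you invoke, $\sum_i\partial_i\big((\partial_iQ_n)*\psi\big)=-\psi$, is not what the normalization \eqref{posnkn} gives. With $Q_n(x)=-A_n^{-1}|x|^{2-n}$ one has $\Delta Q_n=(n-2)\delta_0$ for $n\geq 3$ (and $\Delta Q_2=\delta_0$), so the convolution identity reads $+(n-2)\psi$; running your chain with the sign convention \eqref{hfphif} for $H_{(i)}$ then produces $p_F(x)=-\frac{1}{n-2}\sum_i\E\big[\partial_iQ_n(F-x)H_{(i)}(F,1)\big]$, i.e.\ \eqref{dfrk} only up to the factor $-(n-2)^{-1}$. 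This mismatch is really inherited from the paper's transcription of the cited proposition: the plus sign in \eqref{dfrk} belongs to conventions in which $H_{(i)}$ carries no minus sign and $Q_n$ is normalized so that $\Delta Q_n=\delta_0$ (note also that the constant following \eqref{posnkn} should be $c_n=(n-2)A_n^{-1}$). It is immaterial for the rest of the paper, where only absolute values of the representation enter, but in a self-contained write-up you should either adopt the cited conventions or carry the factor explicitly. A last, minor, point: your argument gives the identities for almost every $x$; validity for every $x$, as stated, requires the continuity in $x$ of the right-hand side, which is part of what \cite{spa-11-bally-caramellino} establish.
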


The next theorem provides the estimates for the density and its increment.
\begin{theorem}\label{tdpdift}
Suppose that $F$ satisfies the conditions in Theorem \ref{bedf}. Then, for any $p_2>p_1>n$, let $p_3=\frac{p_1p_2}{p_2-p_1}$, there exists a constant $C$ that depends on $p_1$, $p_2$ and $n$, such that
\begin{align}\label{de}
p_F(x)\leq &C \mathbb{P}(|F-x|<2\rho)^{\frac{1}{p_3}}\max_{1\leq i\leq n} \Big[\left\|H_{(i)}(F, 1)\right\|_{p_1}^{n-1}\Big(\frac{1}{\rho}+\left\|H_{(i)}(F, 1)\right\|_{p_2}\Big)\Big].
\end{align}
If furthermore, $F\in\cap_{p\geq 1}\mathbb{D}^{3,p}(\R^n)$, for any $x_1,x_2\in\mathbb{R}^n$, we can find $y=cx_1+(1-c)x_2$ for some $c\in (0,1)$ that depends on $x_1$, $x_2$. Then, there exist a constant $F$ the constant $C$ that depends on $p_1$, $p_2$, and $m$, such that
\begin{align}\label{dmvte}
&\left|p_F(x_1)-p_F(x_2)\right|\leq C|x_1-x_2| \mathbb{P}(|F-y|<4\rho)^{\frac{1}{p_3}}\nonumber\\
&\quad \times\max_{1\leq i,j\leq n} \Big[\left\|H_{(i)}(F, 1)\right\|_{p_1}^{n-1}\Big(\frac{1}{\rho^2}+\frac{2}{\rho}\left\|H_{(i)}(F, 1)\right\|_{p_2}+\left\|H_{(i,j)}(F, 1)\right\|_{q_2}\Big)\Big].
\end{align}
\end{theorem}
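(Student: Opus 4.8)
\textbf{Proof proposal for Theorem \ref{tdpdift}.}

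The plan is to start from the Riesz-transform density formula \eqref{dfrk}, which gives us the two representations
\[
p_F(x)=\sum_{i=1}^n\E\big(\1_{B(x,2\rho)}(F)\,\partial_i Q_n(F-x)\,H_{(i)}(F,\phi_\rho^x(F))\big)
\]
and the analogous two-point version. The key analytic input is the pointwise bound on the Poisson kernel gradient, $|\partial_i Q_n(z)|=c_n|z|^{1-n}$, so that on the event $\{|F-x|<2\rho\}$ the localized integrand has an explicit singularity $|F-x|^{1-n}$ to control. The route is: (1) insert $\1_{B(x,2\rho)}(F)$ and split off a factor $\1_{B(x,2\rho)}(F)$ to be paired with the probability $\mathbb{P}(|F-x|<2\rho)$ via H\"older; (2) use H\"older with the triple of exponents $(p_1,p_2,p_3)$ where $\tfrac1{p_1}+\tfrac1{p_2}+\tfrac1{p_3}=1$ --- actually, since $\frac1{p_3}=\frac1{p_1}-\frac1{p_2}$ is prescribed, one checks $\frac{n-1}{p_1}+\frac1{p_2}+\frac1{p_3}\le 1$ when $p_1>n$, which is exactly why the hypothesis $p_1>n$ appears; (3) expand $H_{(i)}(F,\phi_\rho^x(F))$ by \eqref{ipfpd1} as $\partial_i\phi_\rho^x(F)+\phi_\rho^x(F)H_{(i)}(F,1)$, noting $\|\partial_i\phi_\rho^x\|_\infty\le \rho^{-1}$ and $\|\phi_\rho^x\|_\infty\le 1$, which produces the $\frac1\rho+\|H_{(i)}(F,1)\|_{p_2}$ factor; (4) absorb the singular kernel $|F-x|^{1-n}$ using the estimate $\E\big(|F-x|^{-(n-1)q}\big)^{1/q}\lesssim \max_i\|H_{(i)}(F,1)\|_{p_1}^{n-1}$ — this is the Wiener-functional analogue of the classical negative-moment bound of Bally--Caramellino (Lemma 9 of \cite{spa-11-bally-caramellino}), which is precisely why the exponent $n-1$ on $\|H_{(i)}(F,1)\|_{p_1}$ shows up.

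First I would carry out \eqref{de}. Write $\partial_i Q_n(F-x)=c_n (F^i-x^i)|F-x|^{-n}$, so $|\partial_i Q_n(F-x)|\le c_n|F-x|^{-(n-1)}$. Applying H\"older with exponents $p_3$ on $\1_{B(x,2\rho)}(F)$, $p_1$ on the quantity $|F-x|^{-(n-1)}$ (legitimate as a $p_1$-integrable random variable because $(n-1)p_1<$ the effective dimension bound, supplied by the negative-moment lemma), and $p_2$ on $H_{(i)}(F,\phi_\rho^x(F))$,
\[
p_F(x)\le C\,\mathbb{P}(|F-x|<2\rho)^{1/p_3}\max_{1\le i\le n}\Big\||F-x|^{-(n-1)}\Big\|_{p_1}\Big\|H_{(i)}(F,\phi_\rho^x(F))\Big\|_{p_2},
\]
and then the negative-moment lemma turns $\||F-x|^{-(n-1)}\|_{p_1}$ into $\lesssim\max_i\|H_{(i)}(F,1)\|_{p_1}^{n-1}$ while \eqref{ipfpd1} turns the last factor into $\lesssim \tfrac1\rho+\max_i\|H_{(i)}(F,1)\|_{p_2}$. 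Combining these yields \eqref{de}.

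For the two-point estimate \eqref{dmvte} I would use the localized formula at a common center $y=cx_1+(1-c)x_2$: subtract the two representations of $p_F(x_1)$ and $p_F(x_2)$ written with the same cutoff $\phi_\rho^y$, reducing to bounding $\E\big[\1_{B(y,4\rho)}(F)\big(\partial_iQ_n(F-x_1)H_{(i)}(F,\phi^y_\rho(F))-\partial_iQ_n(F-x_2)H_{(i)}(F,\phi^y_\rho(F))\big)\big]$, so that only the kernel difference $\partial_iQ_n(F-x_1)-\partial_iQ_n(F-x_2)$ must be estimated. By the mean value theorem this difference is bounded by $|x_1-x_2|\,\sup_{z\in[x_1,x_2]}|\nabla\partial_i Q_n(F-z)|\lesssim |x_1-x_2|\,|F-y|^{-n}$ on $B(y,4\rho)$, which after H\"older (now with a higher negative power, hence the extra H\"older factor and the appearance of $\|H_{(i,j)}(F,1)\|_{q_2}$ from a second application of \eqref{ipfpd2}) and the negative-moment lemma gives the stated bound with the factor $\tfrac1{\rho^2}+\tfrac2\rho\|H_{(i)}(F,1)\|_{p_2}+\|H_{(i,j)}(F,1)\|_{q_2}$. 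The main obstacle is verifying that the negative moments $\E\big(|F-x|^{-(n-1)p_1}\big)$ and $\E\big(|F-y|^{-np_1}\big)$ are actually finite and controlled by the $H_{(i)}(F,1)$ and $H_{(i,j)}(F,1)$ norms — this is where the constraint $p_1>n$ is essential and where one must invoke (or reprove) the Wiener-space version of Bally--Caramellino's Lemma 9; everything else is bookkeeping with H\"older's inequality, the explicit form of $Q_n$, and the identities in Lemma \ref{ipfpd}.
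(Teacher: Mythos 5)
Your strategy for \eqref{de} (localized Riesz formula, three-factor H\"older, Lemma \ref{ipfpd} for the cutoff, and the negative-moment bound of Lemma \ref{blerk}) is the paper's, but the H\"older exponent you assign to the kernel factor is wrong and the step as written fails. You place $|F-x|^{-(n-1)}$ in $L^{p_1}$ and claim the negative-moment lemma gives $\||F-x|^{-(n-1)}\|_{p_1}\lesssim\max_i\|H_{(i)}(F,1)\|_{p_1}^{n-1}$. It does not: Lemma \ref{blerk} controls $\||F-x|^{-(n-1)}\|_{q}$ only for $q=\frac{p_1}{p_1-1}$, the \emph{conjugate} of $p_1$, and this is forced, because $\E\big(|F-x|^{-(n-1)a}\big)$ can only be finite when $(n-1)a<n$; for $a=p_1>n$ (or for $a=p_1/(n-1)$, which your inequality $\frac{n-1}{p_1}+\frac1{p_2}+\frac1{p_3}\le1$ suggests) the moment is generically infinite whenever $p_F$ does not vanish near $x$. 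The correct split is H\"older with exponents $(p_3,q,p_2)$, $q=\frac{p_1}{p_1-1}$, which sum exactly to one since $\frac1{p_3}=\frac1{p_1}-\frac1{p_2}$; the hypothesis $p_1>n$ enters precisely through $(n-1)q<n$, not through the inequality you quote. With that correction, the remaining ingredients you list for \eqref{de} (in particular $\|H_{(i)}(F,\phi^x_\rho(F))\|_{p_2}\le\rho^{-1}+\|H_{(i)}(F,1)\|_{p_2}$ from \eqref{ipfpd1}) do give the stated bound.

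The increment estimate \eqref{dmvte} has a more substantive gap. Differencing the first-order representation forces you to bound $\partial_iQ_n(F-x_1)-\partial_iQ_n(F-x_2)$ by $|x_1-x_2|\,|F-\zeta|^{-n}$, and a negative moment of order $n$ is not controlled by Lemma \ref{blerk} (one would need $nq<n$, impossible for $q\ge1$); indeed $\E\big(|F-\zeta|^{-n}\big)$ is generically infinite, so no choice of exponents rescues this route — consistently, the target bound carries only $\|H_{(i)}(F,1)\|_{p_1}^{n-1}$, i.e.\ a moment of order $n-1$. Moreover $H_{(i,j)}(F,1)$ cannot appear ``from a second application of \eqref{ipfpd2}'' in your representation, which contains only first-order weights. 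The intended argument integrates by parts once more \emph{before} differencing: use the variant formula \eqref{dfrkip}, $p_F(x)=-\sum_i\E\big[Q_n(F-x)H_{(i,i)}(F,\phi^y_\rho(F))\big]$ with the cutoff centered at the common point $y$, difference the kernel $Q_n$ itself so the mean value theorem only produces $|\nabla Q_n|\lesssim|F-\zeta|^{-(n-1)}$ (handled by Lemma \ref{blerk} in $L^q$ as above), and expand $H_{(i,i)}(F,\phi^y_\rho(F))$ via \eqref{ipfpd2}; this is exactly where the factor $\rho^{-2}+2\rho^{-1}\|H_{(i)}(F,1)\|_{p_2}+\|H_{(i,j)}(F,1)\|_{p_2}$ and the probability $\mathbb{P}(|F-y|<4\rho)^{1/p_3}$ in \eqref{dmvte} come from.
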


\textbf{Remark}: The inequalities stated in Theorem \ref{tdpdift} are an improved version of those estimates by Bally and Caramillino (see Theorem 8 of \cite{spa-11-bally-caramellino}). We refer to Nualart and Nualart (see Lemma 7.3.2 of \cite{cambridge-18-nualart-nualart}) for a related result. For the sake of completeness, we present below a proof of Theorem  \ref{tdpdift}. The proof follows the same idea as in Theorem 8 of \cite{spa-11-bally-caramellino}. The only difference occurs when choosing the radius of the ball in the estimate for the Poisson kernel. If we optimize the radius, then the exponent of $\|H_{(i)}(F,1)\|_p$ is $n-1$, instead of $\frac{q_1(n-1)}{q_1-n}>n-1$ in \cite{spa-11-bally-caramellino}.  In order to prove Theorem \ref{tdpdift}, we first give the estimate for the Poisson kernel:
\begin{lemma}\label{blerk}
Suppose that $F$ satisfy the conditions in  Theorem \ref{bedf}. For any $p>n$, let $q=\frac{p}{p-1}$. Then, there exists a constant $C>0$ depends on $m$ and $p$, such that
\begin{align}\label{rke}
\sup_{x\in\mathbb{R}^n}\left\|\partial_iQ_n(F-x)\right\|_q\leq \sup_{x\in\mathbb{R}^n}\left\||F-x|^{-(n-1)}\right\|_q\leq C \max_{1\leq i\leq n}\left\|H_{(i)}(F, 1)\right\|_p^{n-1}.
\end{align}
\end{lemma}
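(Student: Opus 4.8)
The first inequality in (\ref{rke}) is immediate from the explicit form of the Poisson kernel: since $\partial_iQ_n(z)=c_nz_i|z|^{-n}$, we have $|\partial_iQ_n(z)|\le c_n|z|^{-(n-1)}$, whence $\|\partial_iQ_n(F-x)\|_q\le c_n\||F-x|^{-(n-1)}\|_q$ for every $x$, with $c_n$ harmless. So the content of the lemma is the bound on the negative moment $\sup_x\||F-x|^{-(n-1)}\|_q$ in terms of $\max_i\|H_{(i)}(F,1)\|_p$, and the plan is to get it by a short bootstrap based on the density formula (\ref{dfrk}). The role of the hypothesis $p>n$ is exactly that, with $q=p/(p-1)$, it forces $\alpha:=q(n-1)<n$ (indeed $q(n-1)<n\iff p(n-1)<n(p-1)\iff p>n$), so $|F-x|^{-(n-1)}$ is a subcritical negative power and $\E(|F-x|^{-\alpha})$ is controllable by the sup of the density.

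First I would put $K:=\max_{1\le i\le n}\|H_{(i)}(F,1)\|_p$ and $M:=\sup_{z\in\R^n}\E(|F-z|^{-\alpha})=\sup_{z}\||F-z|^{-(n-1)}\|_q^q$, and record two estimates. (i) Plugging $|\partial_iQ_n(z)|\le c_n|z|^{-(n-1)}$ into the representation (\ref{dfrk}) and using H\"older with $\frac1q+\frac1p=1$ gives $p_F(y)\le c_n\sum_i\||F-y|^{-(n-1)}\|_q\|H_{(i)}(F,1)\|_p\le c_nn\,M^{1/q}K$ for all $y$, hence $\|p_F\|_\infty\le c_nn\,M^{1/q}K$. (ii) For any $z$ and $R>0$, splitting $\E(|F-z|^{-\alpha})=\int|y-z|^{-\alpha}p_F(y)\,dy$ at the ball of radius $R$ about $z$ and using $\int p_F=1$ gives $\E(|F-z|^{-\alpha})\le \frac{A_n}{n-\alpha}\|p_F\|_\infty R^{n-\alpha}+R^{-\alpha}$; since $\alpha<n$, minimizing over $R$ (the minimum is at $R^n\sim\|p_F\|_\infty^{-1}$) yields $M\le C(n,q)\|p_F\|_\infty^{\alpha/n}$. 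This radius optimization in (ii) is precisely where we gain the exponent $n-1$ rather than the larger exponent of the cruder argument of Bally and Caramellino (as noted in the Remark following Theorem \ref{tdpdift}).

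Combining (i) and (ii) and using $\frac{\alpha}{nq}=\frac{n-1}{n}$ gives the self-improving inequality $M\le C'\,M^{(n-1)/n}K^{q(n-1)/n}$ with $C'=C'(n,p)$; since $\frac{n-1}{n}<1$, this forces $M^{1/n}\le C'K^{q(n-1)/n}$, so $M\le (C')^nK^{q(n-1)}$ and therefore $\sup_x\||F-x|^{-(n-1)}\|_q=M^{1/q}\le (C')^{n/q}K^{n-1}$, which is (\ref{rke}). The main obstacle is the one step where the bootstrap is not self-justifying: one must know a priori that $M<\infty$ (equivalently $\|p_F\|_\infty<\infty$) before dividing by $M^{(n-1)/n}$. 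I would secure this by a standard approximation — replace $F$ by nondegenerate smooth approximations $F_\varepsilon$ (e.g. $F$ plus an independent small Gaussian, or a mollification), run the whole estimate for each $F_\varepsilon$ with a constant independent of $\varepsilon$, and pass to the limit — or else invoke the regularity of $p_F$ already available from the standing hypotheses $F\in\cap_p\D^{2,p}(\R^n)$ and $\sigma_F^{ij}\in\cap_p\D^{1,p}$. The rest (tracking the dependence of the constants on $n,p$, and the H\"older bookkeeping) is routine.
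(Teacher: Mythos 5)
Your proposal is correct and follows essentially the same route as the paper's proof: bound $\|p_F\|_\infty$ via the density formula (\ref{dfrk}) and H\"older's inequality, bound the negative moment $\E(|F-x|^{-(n-1)q})$ by splitting the integral at a ball of optimized radius (the optimization being exactly what yields the exponent $n-1$), close the resulting self-improving inequality, and dispose of the a priori finiteness of $\|p_F\|_\infty$ by the standard approximation argument (the paper defers this to Theorem 5 of Bally--Caramellino). The only difference is the order in which the two estimates are combined before optimizing, which is an algebraically equivalent rearrangement.
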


\begin{proof}
Assume that 
\[
\|p_F\|_{\infty}:=\sup_{x\in\R^d}p_F(x)<\infty.
\]
Denote by $\displaystyle M=\sup_{1\leq i\leq n}\|H_{(i)}(F,1)\|_p$. Then by H\"{o}lder's inequality, for all $x\in\R^d$, we have
\begin{align*}
p_F(x)=&\sum_{i=1}^n\E\big( \partial_i Q_n (F-x)H_{(i)}(F,1)\big)\leq \sum_{i=1}^m\|\partial_i Q_n (F-x)\|_q\|H_{(i)}(F,1)\|_p\\
\leq & n\sup_{x\in\mathbb{R}^n}\left\||F-x|^{-(n-1)}\right\|_qM,\nonumber
\end{align*}
which implies
\begin{align}\label{rke1}
\|p_F\|_{\infty}\leq n\sup_{x\in\mathbb{R}^n}\left\||F-x|^{-(n-1)}\right\|_qM.
\end{align}

In order to estimate $\||F-x|^{-(n-1)}\|_q$, choose any $\rho>0$. Then for all $x\in\R^n$,
\begin{align}\label{rke2}
\E(|F-x|^{-(n-1)q})=&\int_{\R^d}|y-x|^{-(n-1)q}p_F(y)dy\nonumber\\
=&\int_{|y-x|\leq \rho}|y-x|^{-(n-1)q}p_F(y)dy+\int_{|y-x|>\rho}|y-x|^{-(n-1)q}p_F(y)dy\nonumber\\
\leq &\|p_F\|_{\infty}\int_0^{\rho}r^{-(n-1)q}r^{n-1}dr+\rho^{-(n-1)q}\nonumber\\
=&k_{n,q}\|p_F\|_{\infty}\rho^{1-(n-1)(q-1)}+\rho^{-(n-1)q},
\end{align}
where $k_{n,q}=[1-(n-1)(q-1)]^{-1}$. The last equality is due to the fact that $1-(n-1)(q-1)>0$.

Combining (\ref{rke1}) and (\ref{rke2}), we have
\begin{align}\label{rke3}
\|p_F\|_{\infty}\leq\Big[ nk_{n,q}^{\frac{1}{q}}\|p_F\|_{\infty}^{\frac{1}{q}}\rho^{\frac{1-(n-1)(q-1)}{q}}+\rho^{-(n-1)}\Big]M.
\end{align}
By optimizing the right-hand side of (\ref{rke3}), we choose
\[
\rho=\rho^*:=\Big[\frac{(n-1)q}{n}\Big]^{\frac{q}{n}}\|p_F\|_{\infty}^{-\frac{1}{n}}.
\]
Plugging $\rho^*$ into (\ref{rke3}), we obtain
\begin{align*}
\|p_F\|_{\infty}\leq \bigg(nk_{n,q}^{\frac{1}{q}}\Big[\frac{(n-1)q}{n}\Big]^{\frac{1-(n-1)(q-1)}{n}}+\Big[\frac{(n-1)qM}{n}\Big]^{-\frac{q(n-1)}{n}}\bigg)M|p_F\|_{\infty}^{\frac{n-1}{n}}.
\end{align*}
Then, it follows that
\begin{align}\label{dinm}
\|p_F\|_{\infty}\leq CM^{n}=C\max_{1\leq i\leq n}\|H_{(i)}(F,1)\|_p^n
\end{align}
where $C$ is a constant that depends on $p$ and $n$. Thus (\ref{rke}) follows from (\ref{rke3}) and (\ref{dinm}).

The result can be generalized to the case without the assumption $\|p_F\|_{\infty}<\infty$ by the same argument as in Theorem 5 of \cite{spa-11-bally-caramellino}.
\end{proof}

\begin{proof}[Proof of Theorem \ref{tdpdift}]
	Choose $p_2>p_1>n$, let $p_3=\frac{p_1p_2}{p_2-p_1}$ and $q=\frac{p_1}{p_1-1}$. Then $\frac{1}{q}+\frac{1}{p_2}+\frac{1}{p_3}=1$. Thus by density formula (\ref{dfrk}) and H\"{o}lder's inequality, we have
	\begin{align}\label{de1}
p_F(x)\leq \sum_{i=1}^n\| \1_{B_{(x,2\rho)}}(F)\|_{p_3}\|\partial_i Q_n (F-x)\|_{q}\|H_{(i)}(F,\phi_{\rho}^x(F))\|_{p_2}.	
	\end{align}
	Then, (\ref{de}) is a consequence of (\ref{de1}), Lemma \ref{ipfpd} and \ref{blerk}. The inequality (\ref{dmvte}) can be proved similarly.
\end{proof}

 \end{appendix}

\end{document}